\selectfont\symbol{62}\fontencoding{\encodingdefault}}
\newcommand{\assign}{:=}
\newcommand{\cdummy}{\cdot}
\newcommand{\comma}{{,}}
\newcommand{\mathD}{\mathrm{D}}
\newcommand{\mathd}{\mathrm{d}}
\newcommand{\nocomma}{}
\newcommand{\noplus}{}
\newcommand{\nosymbol}{}
\newcommand{\tmSep}{; }
\newcommand{\tmaffiliation}[1]{\thanks{\textit{Affiliation:} #1}}
\newcommand{\tmdummy}{$\mbox{}$}
\newcommand{\tmem}[1]{{\em #1\/}}
\newcommand{\tmemail}[1]{\thanks{\textit{Email:} \texttt{#1}}}
\newcommand{\tmname}[1]{\textsc{#1}}
\newcommand{\tmop}[1]{\ensuremath{\operatorname{#1}}}
\newcommand{\tmscript}[1]{\text{\scriptsize{$#1$}}}
\newcommand{\tmtextbf}[1]{{\bfseries{#1}}}
\newcommand{\tmtextit}[1]{{\itshape{#1}}}
\newcommand{\upl}{+}
\newenvironment{enumeratenumeric}{\begin{enumerate}[1.] }{\end{enumerate}}
\newenvironment{enumerateroman}{\begin{enumerate}[i.] }{\end{enumerate}}
\newenvironment{proof}{\noindent\textbf{Proof\ }}{\hspace*{\fill}$\Box$\medskip}
\newtheorem{corollary}{Corollary}
\newtheorem{definition}{Definition}
{\theorembodyfont{\rmfamily}\newtheorem{example}{Example}}
{\theorembodyfont{\rmfamily\small\it}\newtheorem{exercise}{Exercise}}
\newtheorem{lemma}{Lemma}
{\theorembodyfont{\rmfamily}\newtheorem{remark}{Remark}}
\newtheorem{theorem}{Theorem}
\newcommand{\tmfloatcontents}{}
\newlength{\tmfloatwidth}
\newcommand{\tmfloat}[5]{
  \renewcommand{\tmfloatcontents}{#4}
  \setlength{\tmfloatwidth}{\widthof{\tmfloatcontents}+1in}
  \ifthenelse{\equal{#2}{small}}
    {\ifthenelse{\lengthtest{\tmfloatwidth > \linewidth}}
      {\setlength{\tmfloatwidth}{\linewidth}}{}}
    {\setlength{\tmfloatwidth}{\linewidth}}
  \begin{minipage}[#1]{\tmfloatwidth}
    \begin{center}
      \tmfloatcontents
      \captionof{#3}{#5}
    \end{center}
  \end{minipage}}
\newcommand{\zzone}{\text{\resizebox{.7em}{!}{\includegraphics{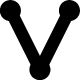}}}}
\newcommand{\zztwo}{\text{\resizebox{.7em}{!}{\includegraphics{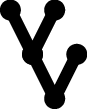}}}}
\newcommand{\zzthree}{\text{\resizebox{.7em}{!}{\includegraphics{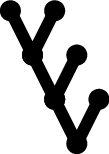}}}}
\newcommand{\zzfour}{\text{\resizebox{1em}{!}{\includegraphics{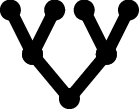}}}}
\newcommand{\LL}{\mathscr{L}}
\newcommand{\CA}{\mathscr{A}}
\newcommand{\CB}{\mathscr{B}}
\newcommand{\CF}{\mathscr{F}}
\newcommand{\CS}{\mathscr{S}}
\newcommand{\CC}{\mathscr{C}}
\newcommand{\CD}{\mathscr{D}}
\newcommand{\DD}{\mathscr{D}}
\newcommand{\R}{\mathbb{R}}
\newcommand{\lpara}{\,\mathord{\prec}\,}
\newcommand{\rpara}{\,\mathord{\succ}\,}
\newcommand{\reso}{\,\mathord{\circ}\,}
\newcommand{\renorm}{\,\mathord{\diamond}\,}
\begin{document}

\title{\bf{Lectures on singular stochastic PDEs}}

\author{
  M. Gubinelli
  \tmaffiliation{CEREMADE \& CNRS UMR 7534{\tmSep}
  Universit{\'e} Paris Dauphine and IUF, France{\tmSep}
  Hausdorff Center of Mathematics \& Institute of Applied Mathematics{\tmSep}
  Universit\"at Bonn, Germany}
  \tmemail{gubinelli@iam.uni-bonn.de}
  \and
  N. Perkowski
  \tmaffiliation{Humboldt--Universit\"at zu Berlin{\tmSep}
  Institut f\"ur Mathematik}
  \tmemail{perkowsk@math.hu-berlin.de}
}

\date{Version 1.10 -- October 2015}

\maketitle

\begin{abstract}
  These are the notes for a course at the 18th Brazilian School of Probability
  held from August 3rd to 9th, 2014 in Mambucaba. The aim of the course is to
  introduce the basic problems of non--linear PDEs with stochastic and
  irregular terms. We explain how it is possible to handle them using two main
  techniques: the notion of energy
  solutions~{\cite{goncalves_universality_2010,gubinelli_regularization_2013}}
  and that of paracontrolled distributions, recently introduced
  in~{\cite{Gubinelli2012}}. In order to maintain a link with physical
  intuitions, we motivate such singular SPDEs via a homogenization result for
  a diffusion in a random potential. \ 
\end{abstract}

{\tableofcontents}

\section{Introduction}

The aim of these lectures is to explain how to apply controlled path
ideas~{\cite{Gubinelli2004}} to solve basic problems in singular stochastic  parabolic
equations. The hope is that the insight gained by doing so can inspire new applications or
the construction of other more powerful tools to analyze a wider class of
problems.

\

To understand the origin
of such singular equations, we have chosen to present the example of a
homogenization problem for a singular potential in a linear parabolic
equation. This point of view has the added benefit that it allows us to track
back the renormalization needed to handle the singularities as effects living
on other scales than those of interest. The basic problem is that of having to
handle effects of the microscopic scales and their interaction through
non--linearities on the macroscopic behaviour of the solution.

\

Mathematically, this problem translates into the attempt of making Schwartz's
theory of distribution coexist with non--linear operations which are
notoriously not continuous in the usual topologies on distributions. This is a
very old problem of analysis and has been widely studied. The additional input
which is not present in the usual approaches is that the singularities which
force us to treat the problem in the setting of Schwartz's distributions are
of a stochastic nature. So we dispose of two handles on the problem: the
analytical one and the probabilistic one. The right mix of the two will
provide an effective solution to a wide class of problems.

\

A first and deep understanding of these problems has been obtained starting
from the late '90s by T. Lyons~{\cite{Lyons1998}}, who introduced a theory of
{\tmem{rough paths}} in order to settle the conflict of topology and
non--linearity in the context of driven differential equations, or more
generally in the context of the non--linear analysis of time--varying signals.
Nowadays there are many expositions of this
theory~{\cite{Lyons2002,Friz2010,Lyons2007,Friz2014}} and we refer the reader
to the literature for more details.

\

In~{\cite{Gubinelli2004,Gubinelli2010}}, the notion of {\tmem{controlled
paths}} has been introduced in order to extend the applicability of the rough
path ideas to a larger class of problems that are not necessarily related to
the integration of ODEs but which still retain the one--dimensional nature of
the directions in which the irregularity manifest itself. The controlled path
approach has been used to make sense of the evolution of irregular objects
such as vortex filaments and certain SPDEs. Later Hairer understood how to
apply these ideas to the long standing problem of the Kardar--Parisi--Zhang
equation~{\cite{Hairer2013b}}, and his insights prompted the researchers to
try more ambitious approaches to extend rough paths to a multidimensional
setting.

\

In~{\cite{Gubinelli2012}}, in collaboration with P. Imkeller, we introduced a
notion of {\tmem{paracontrolled distributions}} which is suitable to
handle a wide class of SPDEs which were well out of reach with previously
known methods. Paracontrolled distributions can be understood as an extension
of controlled paths to a multidimensional setting, and they are based on new
combinations of basic tools from harmonic analysis.

\

At the same time, Hairer managed to devise a vast generalization of the basic
construction of controlled rough paths in the multidimensional and
distributional setting, which he called the theory of {\tmem{regularity
structures}}~{\cite{Hairer2014}} and which subsumes standard analysis based on
H{\"o}lder spaces and controlled rough path theory but goes well beyond that.
Just few days after the lectures in Mambucaba took place, it was announced
that Martin Hairer was awarded a Fields Medal for his work on SPDEs and in
particular for his theory of regularity structures~{\cite{Hairer2014}} as a
tool for dealing with singular SPDEs. This prize witnesses the exciting period
we are experiencing: we now understand sound lines of attack to long standing
problems, and there are countless opportunities to apply similar ideas to new
problems.

\

\

The plan of the lectures is the following. We start by discussing \emph{energy
solutions}~{\cite{goncalves_universality_2010,gubinelli_regularization_2013}} of the stationary stochastic Burgers equation (one of the avatars of the Kardar--Parisi--Zhang equation). Energy solutions have the advantage of being relatively easy to handle and of being based on tools that are familiar to probabilists. On the other side, they only apply in the specific example of the stochastic Burgers equation in equilibrium, and here we will only focus on the existence but not on the uniqueness of energy solutions. Starting our lectures in this way will allow us to introduce the reader to SPDEs in a progressive manner, and
also to introduce Gaussian tools on the way (Wick products, hypercontractivity) and to
present some of the basic phenomena that appear when dealing with singular
SPDEs. Next we set up the analytical tools we need in the rest of the
lectures: Besov spaces and some basic harmonic analysis based on the
Littlewood--Paley decomposition of distributions. In order to motivate the reader and to
provide a physical ground for the intuition to stand on, we then discuss a
homogenization problem for the linear heat equation with random potential 
which describes diffusion in a random environment.
This will allow us to derive the need for the weak topologies we shall use and
for irregular objects like the white noise from first principles and ``concrete''
applications. The homogenization problem also allows us to see that there are naturally appearing
renormalization effects and to keep track of their mathematical
meaning. Starting from these problems we introduce the two--dimensional
parabolic Anderson model, the simplest SPDE in which most of the features of
more difficult problems are already present, and we explain how to use
paraproducts and the paracontrolled ansatz in order to keep the non--linear
effect of the singular data under control. Then we return to the stochastic Burgers equation and show how to apply paracontrolled distribution in order to obtain the existence and uniqueness of solutions also in the non--stationary case.

\paragraph{Acknowledgements.} The authors would like to thank the two anonymous referees for the careful reading and the manifold suggestions which helped up to greatly improve the manuscript. We would also like to thanks the organisers of the Brazilian Summer Schools in Probability for the invitation and the researchers who attended the meeting for the wonderful atmosphere.\

The main part of the research was carried out while N.P. was employed by Universit\'e Paris Dauphine. N.P. was supported by the Fondation Sciences Math\'ematiques de Paris (FSMP) and by a public grant overseen by the French National Research Agency (ANR) as part of the ``Investissements d'Avenir'' program (reference: ANR-10-LABX-0098).

\ \

\paragraph{Conventions and notations.}We write $a \lesssim b$ if there exists
a constant $C > 0$, independent of the variables under consideration, such
that $a \leqslant C b$. Similarly we define $\gtrsim$. We write $a \simeq b$
if $a \lesssim b$ and $b \lesssim a$. If we want to emphasize the dependence
of $C$ on the variable $x$, then we write $a (x) \lesssim_x b (x)$.

If $a$ is a complex number, we write $a^\ast$ for its complex conjugate.

If $i$ and $j$ are index variables of Littlewood--Paley blocks (to be defined
below), then $i \lesssim j$ is to be interpreted as $2^i \lesssim 2^j$, and
similarly for $\simeq$ and $\lesssim$. In other words, $i \lesssim j$ means $i
\leqslant j + N$ for some fixed $N \in \mathbb{N}$ that does not depend on
$i$ or $j$.

We use standard multi-index notation: for ${\mu} \in \mathbb{N}^d_0$ we
write $| {\mu} | ={\mu}_1 + \ldots +{\mu}_d$ and
$\partial^{{\mu}} = \partial^{| {\mu} |} /
\partial^{{\mu}_1}_{x_1} \ldots \partial^{{\mu}_d}_{x_d}$, as well as
$x^{{\mu}} = x^{{\mu}_1}_1 \cdummy \ldots \cdummy x^{{\mu}_d}_d$
for $x \in \mathbb{R}^d$.

For $\alpha > 0$ we write $C^{\alpha}_b$ for the bounded functions $F : \mathbb{R}
\rightarrow \mathbb{R}$ which are $\lfloor \alpha \rfloor$ times continuously
differentiable with bounded and $(\alpha - \lfloor \alpha \rfloor)$--H{\"o}lder continuous
derivatives of order $\lfloor \alpha \rfloor$, equipped with the norm
$$\| F \|_{C^{\alpha}_b} =\sup_{\mu: 0\le |\mu| \le \lfloor \alpha \rfloor}\|\partial^{{\mu}} F\|_{L^\infty} + \sup_{\mu : |\mu| = \lfloor \alpha \rfloor}\sup_{x\neq y} \frac{|\partial^{{\mu}} F(x)-\partial^{{\mu}} F(y)|}{|x-y|^{\alpha - \lfloor \alpha \rfloor}}.$$

If we write $u \in \CC^{\alpha -}$, then that means that $u$ is in
$\CC^{\alpha - \varepsilon}$ for all $\varepsilon > 0$. The $\CC^{\alpha}$
spaces will be defined below.

If $\mathbb{X}$ is a Banach space with norm $\| \cdummy
\|_{\mathbb{X}}$ and if $T > 0$, then we define $C\mathbb{X}$ and $C_T
\mathbb{X}$ as the spaces of continuous functions from $[0, \infty)$
respectively $[0, T]$ to $\mathbb{X}$, and $C_T \mathbb{X}$ is equipped with
the supremum norm $\| \nosymbol \cdummy \|_{C_T \mathbb{X}}$. If $\alpha \in
(0, 1)$ then we write $C^{\alpha} \mathbb{X}$ for the functions in
$C\mathbb{X}$ that are $\alpha$--H{\"o}lder continuous on every interval $[0,
T]$, and we write
\[ \| f \|_{C^{\alpha}_T \mathbb{X}} = \sup_{0 \leqslant s < t \leqslant T}
   \frac{\| f (t) - f (s) \|}{| t - s |^{\alpha}} . \]

\section{Energy solutions}\label{sec:energy}

The first issue one encounters when dealing with singular SPDEs is the
ill--posed character of the equation, even in a weak sense. Typically, the equation features some non--linearity that does not make sense in the natural spaces where solutions live
and one has to provide a suitable smaller space in which it is possible to
give an appropriate interpretation to ``ambiguous quantities'' that appear in the equation.

\

Energy
solutions~{\cite{goncalves_universality_2010,gubinelli_regularization_2013}}
are a relatively simple tool in order to come up with well--defined
non--linearities. Moreover, proving existence of energy solutions or even convergence
to energy solutions is usually a quite simple problem, at least compared to
the other approaches like paracontrolled solutions or regularity structures,
where already existence requires quite a large amount of computations but
where uniqueness can be established quite easily afterwards.
The main drawback is that we lack of general uniqueness results for energy solutions. Only very recently, after the completion of these notes, we were able to prove that energy solutions for the stationary stochastic Burgers equation are unique. This topic will not be touched upon here. The interested reader can find the details in the preprint~\cite{Gubinelli2015b}. 

\subsection{Distributions}

We will need to use distributions defined on the $d$-dimensional torus $\mathbb{T}^d$ where $\mathbb{T}=\mathbb{R}/ (2 \pi \mathbb{Z})$. We collect here some basic results and definitions. The space of distributions $\CS' = \CS' (\mathbb{T}^d)$ is the set of linear
maps $f$ from $\CS = C^{\infty} (\mathbb{T}^d, \mathbb{C})$ to
$\mathbb{C}$, such that there exist $k \in \mathbb{N}$ and $C > 0$ with
\[ | \langle f, \varphi \rangle | : = | f (\varphi) | \leqslant C \sup_{|
   {\mu} | \leqslant k} \| \partial^{{\mu}} \varphi \|_{L^{\infty}
   (\mathbb{T}^d)} \]
for all $\varphi \in \CS$.

\begin{example}
  Clearly $L^p = L^p (\mathbb{T}^d) \subset \CS'$ for all $p \geqslant 1$,
  and more generally the space of finite signed measures on $(\mathbb{T}^d, \mathcal{\CB} (\mathbb{T}^d))$ is contained in
  $\CS'$. Another example of a distribution is $\varphi \mapsto
  \partial^{{\mu}} \varphi (x)$ for ${\mu} \in \mathbb{N}^d_0$ and $x
  \in \mathbb{T}$.
\end{example}

In particular, the Fourier transform $\CF f : \mathbb{Z}^d \rightarrow
\mathbb{C}$,
\[ \CF f (k) = \hat{f} (k) = \langle f, e_k \rangle, \]
with $e_k = e^{- i \langle k, \cdummy \rangle} / (2 \pi)^{d / 2}$, is defined
for all $f \in \CS'$, and it satisfies $| \CF f (k) | \leqslant | P
(k) |$ for a suitable polynomial $P$. Conversely, if $(g (k))_{k \in
\mathbb{Z}^d}$ is at most of polynomial growth, then its inverse Fourier
transform
\[ \CF^{- 1} g = \sum_{k \in \mathbb{Z}^d} g (k) e_k^{\ast} \]
defines a distribution (here $e_k^\ast = e^{i \langle k, \cdummy \rangle} / (2 \pi)^{d / 2}$ is the complex conjugate of $e_k$).

\begin{exercise}
  \label{exo:fourier inversion}Show that the Fourier transform of $\varphi \in
  \CS$ decays faster than any rational function (we say that it is of
  \tmtextit{rapid decay}). Combine this with the fact that $\CF$ defines a
  bijection from $L^2 (\mathbb{T}^d)$ to $\ell^2 (\mathbb{Z}^d)$ with
  inverse $\CF^{- 1}$ to show that $\CF^{- 1} \CF f = f$ for all $f \in \CS'$
  and $\CF \CF^{- 1} g = g$ for all $g$ of polynomial growth. Extend the
  Parseval formula
  \[ \langle f, \varphi^{\ast} \rangle_{L^2 (\mathbb{T}^d)} =
     \int_{\mathbb{T}^d} f (x) \varphi (x)^{\ast} \mathd x = \sum_k \hat{f} (k)
     \hat{\varphi} (k)^{\ast} \]
  from $f, \varphi \in L^2 (\mathbb{T}^d)$ to $f \in \CS'$ and $\varphi \in
  \CS$.
\end{exercise}

\begin{exercise}
  \label{exo:wn}Fix a complete probability space $(\Omega, \mathcal{F},
  \mathbb{P})$. On that space let $\xi$ be a spatial white noise on
  $\mathbb{T}^d$, i.e. $\xi$ is a centered Gaussian process indexed by $L^2
  (\mathbb{T}^d)$, with covariance
  \[ \mathbb{E} [\xi (f) \xi (g)] = \int_{\mathbb{T}^d} f (x) g (x) \mathd
     x. \]
  Show that there exists $\tilde{\xi}$ with $\mathbb{P} (\tilde{\xi} (f) =
  \xi (f)) = 1$ for all $f \in L^2$, such that $\tilde{\xi} (\omega) \in \CS'$
  for all $\omega \in \Omega$.
  
  \tmtextbf{Hint:} Show that $\mathbb{E} [ \sum_{k \in \mathbb{Z}^d}
  \exp (\lambda | \xi (e_k) |^2) / (1 + | k |^{d + 1}) ] < \infty$ for
  some suitable $\lambda > 0$. 
\end{exercise}

Linear maps on $\CS'$ can be defined by duality: if $A : \CS \rightarrow \CS$
is such that for all $k \in \mathbb{N}$ there exists $n \in \mathbb{N}$ and
$C > 0$ with $\sup_{| {\mu} | \leqslant k} \| \partial^{{\mu}} (A
\varphi) \|_{L^{\infty}} \leqslant C \sup_{| {\mu} | \leqslant n} \|
\partial^{{\mu}} \varphi \|_{L^{\infty}}$, then we set $\langle^t A f,
\varphi \rangle = \langle f, A \varphi \rangle$. Differential operators are
defined by $\langle \partial^{{\mu}} f, \varphi \rangle = (- 1)^{|
{\mu} |} \langle f, \partial^{{\mu}} \varphi \rangle$. If $\varphi :
\mathbb{Z}^d \rightarrow \mathbb{C}$ grows at most polynomially, then it
defines a \tmtextit{Fourier multiplier}
\[
   \varphi (\mathD) : \CS' \rightarrow \CS', \hspace{2em} \varphi (\mathD) f = \CF^{- 1} ( \varphi \CF f ).
\]
\begin{exercise}\label{exercise:convolution}
  Use the Fourier inversion formula of
  Exercise~\ref{exo:fourier inversion} to show that for $f \in \CS'$, $\varphi
  \in \CS$ and for $u, v : \mathbb{Z}^d \rightarrow \mathbb{C}$ with $u$ of
  polynomial growth and $v$ of rapid decay
  \[ \CF (f \varphi) (k) = (2 \pi)^{- d / 2} \sum_{\ell} \hat{f} (k - \ell)
     \hat{\varphi} (\ell) \hspace{1em} \tmop{and} \hspace{1em} \CF^{- 1} (u v)
     (x) = (2 \pi)^{d / 2} \langle \CF^{- 1} u, (\CF^{- 1} v) (x - \cdot)
     \rangle . \]
\end{exercise}

\subsection{The Stochastic Burgers equation}

Our aim here is to motivate the ideas at the base of the notion of energy
solutions. We will not insist on a detailed formulation of all the available
results. The reader can always refer to the original
paper~{\cite{gubinelli_regularization_2013}} for missing details. Applications
to the large scale behavior of particle systems are studied
in~{\cite{goncalves_universality_2010,goncalves_kpznew_2014}}.

\

We will study  the case of the stochastic Burgers equation on the
torus $\mathbb{T}$. The solution of the stochastic Burgers equation is the derivative of the solution of the Kardar--Parisi--Zhang equation, a universal model for the fluctuations in random interface growth which has been at the center of several spectacular results of the past years. Excellent surveys on the KPZ equation and related areas are~\cite{CorwinSurvey2012, QuastelSurvey2012, Quastel2015}.

 The unknown $u :
\mathbb{R}_+ \times \mathbb{T} \rightarrow \mathbb{R}$ should satisfy
\[ \partial_t u = \Delta u + \partial_x u^2 + \partial_x \xi, \]
where $\xi : \mathbb{R}_+ \times \mathbb{T} \rightarrow \mathbb{R}$ is a
space--time white noise defined on a given probability space $(\Omega,
\mathcal{F}, \mathbb{P})$ fixed once and for all. That is, $\xi$ is a
centered Gaussian process indexed by $L^2 (\mathbb{R}_+ \times \mathbb{T})$
with covariance
\[ \mathbb{E} [\xi (f) \xi (g)] = \int_{\mathbb{R}_+ \times \mathbb{T}} f
   (t, x) g (t, x) \mathd t \mathd x. \]
The equation has to be understood as a relation for processes which are
distributions in space with sufficiently regular time dependence. In particular, if we test the above
relation with $\varphi \in \CS \assign \CS (\mathbb{T}) : = C^{\infty}
(\mathbb{T})$, denote with $u_t (\varphi)$ the pairing of the distribution $u
(t, \cdot)$ with $\varphi$, and integrate in time over the interval $[0, t]$,
we formally get
\[ u_t (\varphi) = u_0 (\varphi) + \int_0^t u_s (\Delta \varphi) \mathd s -
   \int_0^t \langle u^2_s, \partial_x \varphi \rangle \mathd s - \int_0^t
   \xi_s (\partial_x \varphi) \mathd s. \]
Let us discuss the various terms in this equation. In order to make sense of
$u_t (\varphi)$ and $\int_0^t u_s (\Delta \varphi) \mathd s$, it is enough to
assume that for all $\varphi \in \CS$ the mapping $(t, \omega) \mapsto u_t
(\varphi) (\omega)$ is a stochastic process with continuous trajectories.
Next, if we denote $M_t (\varphi) = -\int_0^t \xi_s (\partial_x \varphi) \mathd
s$ then, at least by a formal computation, we have that $(M_t (\varphi))_{t
\geqslant 0, \varphi \in \CS}$ is a Gaussian random field with covariance
\[ \mathbb{E} [M_t (\varphi) M_s (\psi)] = (t \wedge s) \langle \partial_x
   \varphi, \partial_x \psi \rangle_{L^2 (\mathbb{T})} . \]
In particular, for every $\varphi \in \CS$ the stochastic process $(M_t
(\varphi))_{t \geqslant 0}$ is a Brownian motion with covariance
\[ \| \varphi \|_{H^1 (\mathbb{T})}^2 \assign \langle \partial_x \varphi,
   \partial_x \varphi \rangle_{L^2 (\mathbb{T})} . \]
   We will use this fact to have a rigorous interpretation of the white noise $\xi$ appearing in the equation.
Here we used the notation $M$ in order to stress the fact that $M_t (\varphi)$
is a martingale in its natural filtration and more generally in the filtration
$\mathcal{F}_t = \sigma (M_s (\varphi) : s \leqslant t, \varphi \in H^1
(\mathbb{T}))$, $t \geqslant 0$.

\

The most difficult term is of course the nonlinear one: $\int_0^t \langle
u^2_s, \partial_x \varphi \rangle \mathd s$. In order to define it, we need to
square the distribution $u_t$, an operation which in general can be quite
dangerous. A natural approach would be to define it as the limit of some
regularizations. For example, if $\rho : \mathbb{R} \rightarrow
\mathbb{R}_+$ is a compactly supported $C^{\infty}$ function such that
$\int_{\mathbb{R}} \rho (x) \mathd x = 1$, and we set $\rho_{\varepsilon}
(\cdot) = \rho (\cdot / \varepsilon) / \varepsilon$, then we can set
$\mathcal{N}_{t, \varepsilon} (u) (x) = \int_0^t ((\rho_{\varepsilon} \ast
u_s) (x))^2 \mathd s$ \ and define $\mathcal{N}_t (u) = \lim_{\varepsilon
\rightarrow 0} \mathcal{N}_{t, \varepsilon} (u)$ whenever the limit exists in
$\CS' \assign \CS' (\mathbb{T})$, the space of distributions on
$\mathbb{T}$. Then the question arises which properties $u$ should have for
this convergence to occur.

\subsection{The Ornstein--Uhlenbeck process}

\

Let us simplify the problem and start by studying the linearized equation
obtained by neglecting the non--linear term. Let $X$ be a solution to
\begin{equation}
  X_t (\varphi) = X_0 (\varphi) + \int_0^t X_s (\Delta \varphi) \mathd s + M_t
  (\varphi) \label{eq:ou-weak}
\end{equation}
for all $t \geqslant 0$ and $\varphi \in \CS$. This equation has \tmtextit{at
most} one solution (for fixed $X_0$). Indeed, the difference $D$ between two
solutions should satisfy $D_t (\varphi) = \int_0^t D_s (\Delta \varphi) \mathd
s$, which means that $D$ is a distributional solution to the heat equation.
Taking $\varphi (x) = e_k (x)$, where
\[ e_k (x) \assign \exp (- i kx) / \sqrt{2 \pi}, \hspace{2em} k \in
   \mathbb{Z}, \]
we get $D_t (e_k) = - k^2 \int_0^t D_s (e_k) \mathd s$ and then by Gronwall's
inequality $D_t (e_k) = 0$ for all $t \geqslant 0$. This easily implies that
$D_t = 0$ in $\CS'$ for all $t \geqslant 0$.

To obtain the \tmtextit{existence} of a solution, observe that
\[ X_t (e_k) = X_0 (e_k) - k^2 \int_0^t X_s (e_k) \mathd s + M_t (e_k) \]
and that $M_t (e_0) = 0$, while for all $k \neq 0$ the process $\beta_t (k) =
M_t (e_k) / (- i k)$ is a complex valued Brownian motion (i.e. real and
imaginary part are independent Brownian motions with the same variance). The
covariance of $\beta$ is given by
\[ \mathbb{E} [\beta_t (k) \beta_s (m)] = (t \wedge s) \delta_{k + m = 0} \]
and moreover $\beta_t (k)^{\ast} = \beta_t (- k)$ for all $k \neq 0$ (where
$\cdummy^{\ast}$ denotes complex conjugation), as well as $\beta_t (0) = 0$.
In other words, $(X_t (e_k))$ is a complex--valued Ornstein--Uhlenbeck process~(\cite{karazas}, Example 5.6.8)
which solves a linear one--dimensional SDE and has an explicit representation
given by the variation of constants formula
\[ X_t (e_k) = e^{- k^2 t} X_0 (e_k) - i k \int_0^t e^{- k^2 (t - s)} \mathd_s
   \beta_s (k) . \]
This is enough to determine $X_t (\varphi)$ for all $t \geqslant 0$ and
$\varphi \in \CS$.

\begin{exercise}
  Show that $(X_t (e_k) : t \in \mathbb{R}_+, k \in \mathbb{Z})$ is a
  complex Gaussian random field, that is for all $n \in \mathbb{N}$, for all
  $t_1, \ldots, t_n \in \mathbb{R}_+$, $k_1, \ldots, k_n \in \mathbb{Z}$,
  the vector
  \[ (\tmop{Re} (X_{t_1} (k_1)), \ldots, \tmop{Re} (X_{t_n} (k_n)), \tmop{Im}
     (X_{t_1} (k_1)), \ldots, \tmop{Im} (X_{t_n} (k_n))) \]
  is multivariate Gaussian. Show that $X$ has mean $\mathbb{E} [X_t (e_k)] =
  e^{- k^2 t} X_0 (e_k)$ and covariance
  \[ \mathbb{E} [(X_t (e_k) -\mathbb{E} [X_t (e_k)]) (X_s (e_m) -\mathbb{E}
     [X_s (e_m)])] = k^2 \delta_{k + m = 0} \int_0^{t \wedge s} e^{- k^2 (t -
     r) - k^2 (s - r)} \mathd r \]
  as well as
  \[ \mathbb{E} [(X_t (e_k) -\mathbb{E} [X_t (e_k)]) (X_s (e_m) -\mathbb{E}
     [X_s (e_m)])^{\ast}] = k^2 \delta_{k = m} \int_0^{t \wedge s} e^{- k^2 (t
     - r) - k^2 (s - r)} \mathd r. \]
  In particular,
  \[ \mathbb{E} [| X_t (e_k) -\mathbb{E} [X_t (e_k)] |^2] = \frac{1 - e^{- 2
     k^2 t}}{2} . \]
\end{exercise}

\

Next we examine the Sobolev regularity of $X$. For this purpose, we need the
following definition.

\begin{definition}
  Let $\alpha \in \mathbb{R}$. Then the Sobolev space $H^{\alpha}$ is defined
  as
  \[ H^{\alpha} \assign H^{\alpha} (\mathbb{T}) \assign \left\{ \rho \in \CS'
     : \| \rho \|^2_{H^{\alpha}} \assign \sum_{k \in \mathbb{Z}} (1 + | k
     |^2)^{\alpha} | \rho (e_k) |^2 < \infty \right\} . \]
  We also write $C H^{\alpha}$ for the space of continuous functions from
  $\mathbb{R}_+$ to $H^{\alpha}$.
\end{definition}

\begin{lemma}\label{lemma:ou-regularity}
Let $\gamma \leqslant - 1 / 2$ and assume that
  $X_0 \in H^{\gamma}$. Then almost surely $X \in C H^{\gamma -}$.
\end{lemma}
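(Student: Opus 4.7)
The proof proceeds from the explicit mode--by--mode representation
\[
X_t(e_k) = e^{-k^2 t} X_0(e_k) + i k\int_0^t e^{-k^2(t-r)}\,\mathd\beta_r(k),
\]
which shows that $X$ is Gaussian when tested against each $e_k$ and hence is a Gaussian random element of any $H^\alpha$ in which the norm is almost surely finite. The strategy is to fix $\epsilon>0$, set $\alpha\assign\gamma-\epsilon$, and verify the hypotheses of Kolmogorov's continuity criterion for the $H^\alpha$--valued process $(X_t)_{t\geqslant 0}$ on each bounded time interval.

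First I would control the $L^2$--size of $X_t$ in $H^\alpha$. Using the variance bound $\mathbb{E}|X_t(e_k)-\mathbb{E}X_t(e_k)|^2 = (1-e^{-2k^2 t})/2\leqslant 1/2$ from the previous exercise, together with $|\mathbb{E}X_t(e_k)|^2\leqslant |X_0(e_k)|^2$, summation yields
\[
\mathbb{E}\|X_t\|_{H^\alpha}^2 \leqslant \|X_0\|_{H^\gamma}^2 + \tfrac{1}{2}\sum_{k\in\mathbb{Z}}(1+|k|^2)^\alpha,
\]
which is finite since $\alpha<-1/2$.

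Next I would quantify the time regularity. For $0\leqslant s<t$, I decompose $X_t(e_k)-X_s(e_k)$ into the deterministic difference $(e^{-k^2 t}-e^{-k^2 s})X_0(e_k)$, the fresh stochastic increment $i k\int_s^t e^{-k^2(t-r)}\,\mathd\beta_r(k)$, and the kernel--difference term $i k\int_0^s (e^{-k^2(t-r)}-e^{-k^2(s-r)})\,\mathd\beta_r(k)$. Using the elementary bound $1-e^{-x}\leqslant x^\eta$ valid for all $x\geqslant 0$ and $\eta\in[0,1]$, each contribution to $\mathbb{E}|X_t(e_k)-X_s(e_k)|^2$ is dominated by a constant times $k^{2\eta}|t-s|^\eta(1+|X_0(e_k)|^2)$. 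Summing against $(1+|k|^2)^\alpha$ and choosing $\eta<\epsilon$ small enough that $\alpha+\eta<-1/2$ and $\alpha+\eta\leqslant\gamma$, one obtains
\[
\mathbb{E}\|X_t-X_s\|_{H^\alpha}^2 \lesssim |t-s|^\eta.
\]

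To conclude, I would invoke Gaussian hypercontractivity: since each increment $X_t(e_k)-X_s(e_k)$ lies in the first Wiener chaos, the squared norm $\|X_t-X_s\|_{H^\alpha}^2$ sits in the second chaos, so for every $q\geqslant 1$ one has $\mathbb{E}\|X_t-X_s\|_{H^\alpha}^{2q}\lesssim_q (\mathbb{E}\|X_t-X_s\|_{H^\alpha}^2)^q\lesssim|t-s|^{\eta q}$. Taking $q$ large enough that $\eta q>1$, Kolmogorov's continuity theorem produces a modification of $X$ whose paths are almost surely continuous with values in $H^{\gamma-\epsilon}$ on each $[0,T]$; letting $\epsilon$ run through a sequence tending to $0$ yields $X\in CH^{\gamma-}$ almost surely. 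The main technical nuisance is balancing the time H\"older exponent $\eta$ (which costs a factor $k^{2\eta}$ in frequency) against the spatial summability constraint $\alpha+\eta<-1/2$, which forces the loss $\alpha<\gamma$ and is precisely what prevents the statement from holding at the endpoint $\alpha=\gamma$.
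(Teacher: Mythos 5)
Your proof is correct and follows essentially the same route as the paper: the explicit variance estimate $\mathbb{E}|X_t(e_k)-X_s(e_k)|^2 \lesssim (k^2|t-s|)^\eta\,(1+|X_0(e_k)|^2)$ encoding the frequency--time tradeoff, summed against $(1+|k|^2)^{\alpha}$ with $\alpha=\gamma-\varepsilon$, followed by a lift to higher moments and Kolmogorov's continuity criterion. The only cosmetic difference is in the moment lift: you apply Gaussian hypercontractivity to the full second-chaos random variable $\|X_t-X_s\|_{H^\alpha}^2$, whereas the paper reaches the same bound slightly more elementarily via Cauchy--Schwarz on the $p$-fold product and Gaussian moment equivalence mode by mode.
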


\begin{proof}
  Let $\alpha = \gamma - \varepsilon$ and consider
  \[ \| X_t - X_s \|_{H^{\alpha}}^2 = \sum_{k \in \mathbb{Z}} (1 + | k
     |^2)^{\alpha} | X_t (e_k) - X_s (e_k) |^2 . \]
  Let us estimate the $L^{2 p} (\Omega)$ norm of this quantity for $p \in
  \mathbb{N}$ by writing
  \[ \mathbb{E} \| X_t - X_s \|_{H^{\alpha}}^{2 p} = \sum_{k_1, \ldots, k_p
     \in \mathbb{Z}} \prod_{i = 1}^p (1 + | k_i |^2)^{\alpha} \mathbb{E}
     \prod_{i = 1}^p | X_t (e_{k_i}) - X_s (e_{k_i}) |^2 . \]
  By H\"older inequality, we get
  \[ \mathbb{E} \| X_t - X_s \|_{H^{\alpha}}^{2 p} \lesssim \sum_{k_1,
     \ldots, k_p \in \mathbb{Z}} \prod_{i = 1}^p (1 + | k_i |^2)^{\alpha}
     \prod_{i = 1}^p (\mathbb{E} | X_t (e_{k_i}) - X_s (e_{k_i}) |^{2 p})^{1
     / p} . \]
  Note now that $X_t (e_{k_i}) - X_s (e_{k_i})$ is a Gaussian random variable,
  so that there exists a universal constant $C_p$ for which
  \[ \mathbb{E} | X_t (e_{k_i}) - X_s (e_{k_i}) |^{2 p} \leqslant C_p
     (\mathbb{E} | X_t (e_{k_i}) - X_s (e_{k_i}) |^2)^p . \]
  Moreover,
  \[ X_t (e_k) - X_s (e_k) = (e^{- k^2 (t - s)} - 1) X_s (e_k) + ik \int_s^t
     e^{- k^2 (t - r)} \mathd_r \beta_r (k), \]
  leading to
  \begin{align*}
     &\mathbb{E} | X_t (e_k) - X_s (e_k) |^2 = (e^{- k^2 (t - s)} - 1)^2 \mathbb{E} | X_s (e_k) |^2 + k^2 \int_s^t e^{- 2 k^2 (t - r)} \mathd r \\
     &\hspace{40pt} = (e^{- k^2 (t - s)} - 1)^2 e^{- 2 k^2 s} | X_0 (e_k) |^2 + (e^{- k^2 (t - s)} - 1)^2 k^2 \int_0^s e^{- 2 k^2 (s - r)} \mathd r  \\
     &\hspace{40pt} \qquad + k^2 \int_s^t e^{- 2 k^2 (t - r)} \mathd r \\
     &\hspace{40pt} = (e^{- k^2 t} - e^{- k^2 s})^2 | X_0 (e_k) |^2 + \frac{1}{2} (e^{- k^2 (t - s)} - 1)^2 (1 - e^{- 2 k^2 s}) + \frac{1}{2} (1 - e^{- 2 k^2 (t - s)}).
  \end{align*}
  For any $\kappa \in [0, 1]$ and $k \neq 0$, we thus have
  \[ \mathbb{E} | X_t (e_k) - X_s (e_k) |^2 \lesssim (k^2 (t - s))^{\kappa}
     (| X_0 (e_k) |^2 + 1), \]
  while for $k = 0$ we have $\mathbb{E} | X_t (e_0) - X_s (e_0) |^2 = 0$. Let us introduce the notation $\mathbb{Z}_0 =\mathbb{Z} \setminus \{ 0 \}$.
  Therefore,
  \begin{align*}
     \mathbb{E} \| X_t - X_s \|_{H^{\alpha}}^{2 p} & \lesssim \sum_{k_1, \ldots, k_p \in \mathbb{Z}_0} \prod_{i = 1}^p (1 + | k_i |^2)^{\alpha} \prod_{i = 1}^p \mathbb{E} | X_t (e_{k_i}) - X_s (e_{k_i}) |^2 \\
     & \lesssim (t - s)^{\kappa p} \sum_{k_1, \ldots, k_p \in \mathbb{Z}_0}  \prod_{i = 1}^p (1 + | k_i |^2)^{\alpha} (k_i^2)^{\kappa} (| X_0 (e_{k_i}) |^2 + 1) \\
     & \lesssim (t - s)^{\kappa p} \Big[ \sum_{k \in \mathbb{Z}_0} (1 + | k
     |^2)^{\alpha} (k^2)^{\kappa} (| X_0 (e_k) |^2 + 1) \Big]^p \\
     & \lesssim (t - s)^{\kappa p} \Big(\| X_0 \|^{2 p}_{H^{\alpha + \kappa} (\mathbb{T})} + \Big[\sum_{k \in \mathbb{Z}_0} (1 + | k |^2)^{\alpha} (k^2)^{\kappa}\Big]^p\Big) .
  \end{align*}
  If $\alpha < - 1 / 2 - \kappa$, the sum on the right hand side is finite and
  we obtain an estimation for the \ modulus of continuity of $t \mapsto X_t$
  in $L^{2 p} (\Omega ; H^{\alpha})$:
  \[ \mathbb{E} \| X_t - X_s \|_{H^{\alpha}}^{2 p} \lesssim (t - s)^{\kappa
     p} [1 + \| X_0 \|^{2 p}_{H^{\alpha + \kappa}}] . \]
  Now Kolmogorov's continuity criterion allows us to conclude that almost
  surely $X \in C H^{\alpha}$ whenever $X_0 \in H^{\alpha + \kappa}$.
\end{proof}

Now note that the regularity of the Ornstein--Uhlenbeck process does not allow
us to form the quantity $X^2_t$ point--wise in time since by Fourier inversion
$X_t = \sum_k X_t (e_k) e_k^{\ast}$, and therefore we should have
\[ X_t^2 (e_k) = (2 \pi)^{- 1 / 2} \sum_{\ell + m = k} X_t (e_{\ell}) X_t
   (e_m) . \]
Of course, at the moment this expression is purely formal since we cannot
guarantee that the infinite sum converges. A reasonable thing to try is to
approximate the square by regularizing the distribution, taking the square,
and then trying to remove the regularization. Let $\Pi_N$ be the projector of
a distribution onto a finite number of Fourier modes:
\[ (\Pi_N \rho) (x) = \sum_{| k | \leqslant N} \rho (e_k) e^{\ast}_k (x) . \]
Then $\Pi_N X_t (x)$ is a smooth function of $x$ and we can consider $(\Pi_N
X_t)^2$ which satisfies
\[ (\Pi_N X_t)^2 (e_k) = (2 \pi)^{- 1 / 2} \sum_{\ell + m = k} \mathbb{I}_{|
   \ell | \leqslant N, | m | \leqslant N} X_t (e_{\ell}) X_t (e_m) . \]
We would then like to take the limit $N \rightarrow + \infty$. For
convenience, we will perform the computations below in the limit $N = +
\infty$, but one has to come back to the case of finite $N$ in order to make
it rigorous.

Then
\begin{align*}
   \mathbb{E} [X_t^2 (e_k)] & = (2 \pi)^{- 1 / 2} \delta_{k = 0} \sum_{m \in \mathbb{Z}_0} \mathbb{E} [X_t (e_{- m}) X_t (e_m)] \\
   & = (2 \pi)^{- 1 / 2} \delta_{k = 0} \sum_{m \in \mathbb{Z}_0} e^{- 2 m^2 t} | X_0 (e_m) |^2 + (2 \pi)^{- 1 / 2} \delta_{k = 0} \sum_{m \in \mathbb{Z}_0} m^2 \int_0^t e^{- 2 m^2 (t - s)} \mathd s
\end{align*}
and
\[ \sum_{m \in \mathbb{Z}_0} m^2 \int_0^t e^{- 2 m^2 (t - s)} \mathd s =
   \frac{1}{2} \sum_{m \in \mathbb{Z}_0} (1 - e^{- 2 m^2 t}) = + \infty . \]
This is not really a problem since in Burgers equation only components of $u^2_t
(e_k)$ with $k \neq 0$ appear (due to the presence of the derivative). However, $X^2_t (e_k)$ is not even a
well--defined random variable. For the remainder of this subsection let us assume that $X_0 = 0$,
which will slightly simplify the computation. If $k \neq 0$, we have
\[ \mathbb{E} [| X^2_t (e_k) |^2] =\mathbb{E} [X^2_t (e_k) X^2_t (e_{- k})]
   = (2 \pi)^{- 1} \sum_{\ell + m = k} \sum_{\ell' + m' = - k} \mathbb{E}
   [X_t (e_{\ell}) X_t (e_m) X_t (e_{\ell'}) X_t (e_{m'})] . \]
By Wick's theorem (see {\cite{Janson1997}}, Theorem~1.28), the expectation can
be computed in terms of the covariances of all possible pairings of the four
Gaussian random variables (3 possible combinations):
\begin{align*}
   \mathbb{E} [X_t (e_{\ell}) X_t (e_m) X_t (e_{\ell'}) X_t (e_{m'})] &=\mathbb{E} [X_t (e_{\ell}) X_t (e_m)] \mathbb{E} [X_t (e_{\ell'}) X_t(e_{m'})] \\
   &\quad + \mathbb{E} [X_t (e_{\ell}) X_t (e_{\ell'})] \mathbb{E} [X_t (e_m) X_t(e_{m'})] \\
   &\quad +\mathbb{E} [X_t (e_{\ell}) X_t (e_{m'})] \mathbb{E} [X_t (e_m) X_t (e_{\ell'})].
\end{align*}
Since $k \neq 0$, we have $\ell + m \neq 0$ and $\ell' + m' \neq 0$ which
allows us to neglect the first term since it is zero. By symmetry of the
summations, the two other give the same contribution and we remain with
\begin{align}\label{eq:OU square variance}
    \mathbb{E} [| X^2_t (e_k) |^2] & = \frac{1}{\pi} \sum_{\ell + m = k} \sum_{\ell' + m' = -k} \mathbb{E} [X_t (e_{\ell}) X_t (e_{\ell'})] \mathbb{E} [X_t (e_m) X_t (e_{m'})] \\ \nonumber
    & =  \frac{1}{\pi} \sum_{\ell + m = k} \mathbb{E} [X_t (e_{\ell}) X_t (e_{- \ell})] \mathbb{E} [X_t (e_m) X_t (e_{- m})] \\ \nonumber
    & = \frac{1}{4\pi} \sum_{\ell + m = k} (1 - e^{- 2 \ell^2 t}) (1 - e^{- 2 m^2 t}) = + \infty.
\end{align}
This shows that even when tested against smooth test functions, $X_t^2$ is not
in $L^2 (\Omega)$. This indicates that there are problems with $X_t^2$ and
indeed one can show that $X_t^2 (e_k)$ does not make sense as a random
variable.

\

To understand this better, observe that the Ornstein--Uhlenbeck process can
be decomposed as
\[ X_t (e_k) = ik \int_{- \infty}^t e^{- k^2 (t - s)} \mathd \beta_s (k) - ik
   e^{- k^2 t} \int_{- \infty}^0 e^{k^2 s} \mathd \beta_s (k), \]
where we extended the Brownian motions $(\beta_s (k))_{s \geqslant 0}$ to two
sided complex Brownian motions by considering independent copies. The interest in this decomposition is in the fact
that it is not
difficult to show that the second term gives rise to a smooth function if $t >
0$, so all the irregularity of $X_t$ is described by the first term which we
call $Y_t (e_k)$ and which is stationary in time. Note that $Y_t (e_k) \sim \mathcal{N}_{\mathbb{C}} (0, 1 /
2)$ for all $k \in \mathbb{Z}_0$ and $t \in \mathbb{R}$, where we write
\[ U \sim \mathcal{N}_{\mathbb{C}} (0, \sigma^2) \]
if $U = V + i W$, where $V$ and $W$ are independent random variables with
distribution $\mathcal{N} (0, \sigma^2 / 2)$. The random distribution $Y_t$
then satisfies $Y_t (\varphi) \sim \mathcal{N} (0, \| \varphi \|_{L^2
(\mathbb{T})}^2 / 2)$, and moreover it is ($1 / \sqrt{2}$ times) the
white noise on $\mathbb{T}$. It is also possible to deduce that the white
noise on $\mathbb{T}$ is indeed the invariant measure of the
Ornstein--Uhlenbeck process, that it is the only one, and that it is
approached quite fast~\cite{karazas}.

\

So we should expect that, at fixed time, the regularity of the
Ornstein--Uhlenbeck process is like that of the space white noise and this is
a way of understanding our difficulties in defining $X^2_t$ since this will
be, modulo smooth terms, the square of the space white noise.

\

A different matter is to make sense of the time--integral of $\partial_x
X^2_t$. Let us give it a name and call it $J_t (\varphi) = \int_0^t \partial_x
X_s^2 (\varphi) \mathd s$. For $J_t (e_k)$, the computation of its variance
gives a quite different result.

\begin{lemma}
  \label{lem:OU square}Almost surely, $J \in C^{1 / 2 -} H^{- 1 / 2 -}$.
\end{lemma}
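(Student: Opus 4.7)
The plan is to parallel the proof of Lemma~\ref{lemma:ou-regularity}: estimate $\mathbb{E}\|J_t - J_s\|_{H^\alpha}^{2p}$ for a suitable Sobolev exponent $\alpha$ and $p \in \mathbb{N}$, then appeal to Kolmogorov's continuity criterion. The crucial structural difference is that, for $k \neq 0$, the increment $J_t(e_k) - J_s(e_k) = ik \int_s^t X_r^2(e_k)\,\mathd r$ is, upon expanding $X_r^2(e_k) = (2\pi)^{-1/2} \sum_{\ell+m=k} X_r(e_\ell) X_r(e_m)$, an element of the second homogeneous Wiener chaos generated by the Brownian motions $(\beta(k))_{k\in\mathbb{Z}_0}$, rather than the first. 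Gaussian hypercontractivity on a fixed chaos (as in~\cite{Janson1997}) then replaces the elementary Gaussian moment inequality used previously: it gives $\mathbb{E}|J_t(e_k) - J_s(e_k)|^{2p} \leq C_p (\mathbb{E}|J_t(e_k)- J_s(e_k)|^2)^p$, so the whole argument reduces to an $L^2(\Omega)$ computation, mode by mode.

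For this $L^2$ bound I would apply Wick's theorem exactly as in the derivation of~\eqref{eq:OU square variance}: the ``diagonal'' pairing vanishes because $\ell + m = k \neq 0$, leaving
\[
\mathbb{E}[X_r^2(e_k) X_{r'}^2(e_{-k})] = \frac{1}{\pi} \sum_{\ell+m=k,\ \ell, m \neq 0} C_\ell(r,r')\, C_m(r,r'),
\]
where, as computed in the exercise, $C_\ell(r,r') = \mathbb{E}[X_r(e_\ell) X_{r'}(e_{-\ell})] = \tfrac{1}{2}(e^{-\ell^2|r-r'|} - e^{-\ell^2(r+r')})$. Bounding $|C_\ell C_m| \lesssim e^{-(\ell^2+m^2)|r-r'|}$, integrating over $[s,t]^2$ via the interpolation $\int_s^t \int_s^t e^{-c|r-r'|}\,\mathd r\,\mathd r' \lesssim (t-s)^{1+\kappa}/c^{1-\kappa}$ valid for $\kappa \in [0, 1]$, and then summing the series $\sum_{\ell+m=k} 1/(\ell^2+m^2)^{1-\kappa}$ (comparable to the integral $\int \mathd\ell/(\ell^2 + (k-\ell)^2)^{1-\kappa}$, convergent for $\kappa < 1/2$) yields a schematic bound of the form $\mathbb{E}|J_t(e_k) - J_s(e_k)|^2 \lesssim (t-s)^{1+\kappa} |k|^{2\kappa + 1}$. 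Weighting by $(1+k^2)^\alpha$ and summing in $k$ gives $\mathbb{E}\|J_t - J_s\|_{H^\alpha}^2 \lesssim (t-s)^{1+\kappa}$ whenever $\alpha + \kappa$ is sufficiently negative; hypercontractivity together with Minkowski in $\ell^2_k(L^p(\Omega))$ then upgrades this to $\mathbb{E}\|J_t - J_s\|_{H^\alpha}^{2p} \lesssim (t-s)^{(1+\kappa)p}$, after which Kolmogorov's criterion with $p$ large produces H{\"o}lder-$\beta$ continuity in time for any $\beta < (1+\kappa)/2$.

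The main difficulty is the coupling between the time integration and the Fourier convolution over $\ell + m = k$. Indeed, \eqref{eq:OU square variance} shows that $X_r^2(e_k)$ at fixed $r$ is not even a square-integrable random variable, so all of the regularity of $J$ must come from the smoothing effect of integrating in time; concretely, it is the exponential decorrelation $e^{-(\ell^2+m^2)|r-r'|}$ of the OU modes which, upon double integration in $r,r'$, produces the crucial $1/(\ell^2+m^2)$ damping that tames the $\ell$-sum. The remaining technical care lies in choosing the interpolation exponent $\kappa$ optimally to obtain the claimed $C^{1/2-} H^{-1/2-}$ regularity, a diagonal balance between time H{\"o}lder gain and spatial loss that reflects the parabolic scaling $\partial_t \sim \partial_x^2$ of the OU dynamics.
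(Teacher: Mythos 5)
Your proposal follows essentially the same route as the paper: Wick's theorem to reduce $\mathbb{E}|J_t(e_k)-J_s(e_k)|^2$ to a convolution of Ornstein--Uhlenbeck covariances, the bound $|C_\ell C_m|\lesssim e^{-(\ell^2+m^2)|r-r'|}$ to exploit temporal decorrelation, integration of the exponential in time to produce the $1/(\ell^2+m^2)$ damping, Gaussian hypercontractivity to upgrade the second-moment bound to $L^{2p}$, and Kolmogorov. The paper simply works at what amounts to your $\kappa=0$ endpoint, obtaining $\mathbb{E}|J_t(e_k)-J_s(e_k)|^2\lesssim |k|\,|t-s|$ and deducing regularity from there; your one-parameter interpolation $(t-s)^{1+\kappa}/c^{1-\kappa}$ is a genuine refinement that yields the whole family $C^{(1+\kappa)/2-}H^{-1-\kappa-}$, $\kappa\in[0,1/2)$, which is nice extra information though not used in the paper.

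One thing is off, however: you cannot recover the stated $H^{-1/2-}$ by ``choosing $\kappa$ optimally.'' Your own bound $\mathbb{E}|J_t(e_k)-J_s(e_k)|^2\lesssim (t-s)^{1+\kappa}|k|^{1+2\kappa}$ forces $\alpha<-1-\kappa$ for the $H^\alpha$ series to converge, and this is monotone in $\kappa$: the best spatial regularity is attained at $\kappa=0$ and is $H^{-1-}$, not $H^{-1/2-}$. This is in fact also what the paper's displayed estimate gives, and it is consistent with the later exercise where the limit $\mathcal{N}$ is asserted to lie in $C^{1/2-}H^{-1-}$. So the $-1/2$ in the lemma statement appears to be a typo; your method (like the paper's) correctly yields $J\in C^{1/2-}H^{-1-}$ and your concluding sentence about tuning $\kappa$ to reach $-1/2$ should be dropped. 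The parabolic heuristic $\partial_t\sim\partial_x^2$ is the right intuition for the $\kappa$-family, but it trades time regularity \emph{for} space regularity; it cannot gain both simultaneously.
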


\begin{proof}
  Proceeding as in~(\ref{eq:OU square variance}), we have now
  \[ \mathbb{E} [| J_t (e_k) |^2] = \frac{1}{\pi} k^2 \int_0^t \int_0^t \sum_{\ell + m =
     k} \mathbb{E} [X_s (e_{\ell}) X_{s'} (e_{- \ell})] \mathbb{E} [X_s
     (e_m) X_{s'} (e_{- m})] \mathd s \mathd s' . \]
  If $s > s'$, we have
  \[ \mathbb{E} [X_s (e_{\ell}) X_{s'} (e_{- \ell})] = \frac{1}{2} e^{-
     \ell^2 (s - s')} (1 - e^{- 2 \ell^2 s'}), \]
  and therefore
  \begin{align*}
     \mathbb{E} [| J_t (e_k) |^2] & = \frac{k^2}{4\pi} \int_0^t \int_0^t \sum_{\ell + m = k} e^{- (\ell^2 + m^2) | s - s' |} (1 - e^{- 2 \ell^2(s' \wedge s)}) (1 - e^{- 2 m^2 (s' \wedge s)}) \mathd s \mathd s' \\
      & \leqslant \frac{k^2}{4\pi} \int_0^t \int_0^t \sum_{\ell + m = k} e^{-(\ell^2 + m^2) | s - s' |} \mathd s \mathd s' \leqslant \frac{1}{2\pi} k^2 t \sum_{\ell + m = k} \int_0^{\infty} e^{- (\ell^2 + m^2) r} \mathd r  \\
      & = \frac{1}{2\pi} k^2 t \sum_{\ell + m = k} \frac{1}{\ell^2 + m^2}.
  \end{align*}
  Now for $k \neq 0$
  \[ \sum_{\ell + m = k} \frac{1}{\ell^2 + m^2} \lesssim \int_{\mathbb{R}}
     \frac{\mathd x}{x^2 + (k - x)^2} \lesssim \frac{1}{| k |} . \]
  So finally $\mathbb{E} [| J_t (e_k) |^2] \lesssim | k | t$. From which is easy to conclude that at 
  fixed $t$ the random field $J_t$ belongs almost surely to $H^{-1/2-}$. Redoing a
  similar computation in the case $J_t (e_k) - J_s (e_k)$, we obtain
  $\mathbb{E} [| J_t (e_k) - J_s (e_k) |^2] \lesssim | k | \times | t - s |$.
  To go from this estimate to a path--wise regularity result of the
  distribution $(J_t)_t$, following the line of reasoning of
  Lemma~\ref{lemma:ou-regularity}, we need to estimate the $p$-th moment of
  $J_t (e_k) - J_s (e_k)$. We already used in the proof of Lemma~\ref{lemma:ou-regularity} that all moments of a Gaussian random variable are comparable. By Gaussian hypercontractivity (see Theorem~3.50
  of~{\cite{Janson1997}}) this also holds for polynomials of Gaussian random variables, so that
  \[ \mathbb{E} [| J_t (e_k) - J_s (e_k) |^{2 p}] \lesssim_p (\mathbb{E} [|
     J_t (e_k) - J_s (e_k) |^2])^p . \]
  From here we easily derive that almost surely $J \in C^{1 / 2 -} H^{- 1 / 2
  -}$ which is the space of $1/2-$-Holder continuous functions with values in $ H^{- 1 / 2
  -}$.
\end{proof}

This shows that $\partial_x X^2_t$ exists as a space--time distribution but
not as a continuous function of time with values in distributions in space.
The key point in the proof of Lemma~\ref{lem:OU square} is the fact that the correlation $\mathbb{E} [X_s (e_{\ell}) X_{s'} (e_{- \ell})]$ of the
Ornstein--Uhlenbeck process decays quite rapidly in time.

\

The construction of the process $J$ does not solve our problem of constructing $\int_0^t \partial_x u_s^2 \mathd s$ since we need
similar properties for the full solution $u$ of the non--linear dynamics (or
for some approximations thereof), and all we have done so far relies on
explicit computations and the specific Gaussian features of the
Ornstein--Uhlenbeck process. But at least this give us a hint that indeed
there could exist a way of making sense of the term $\partial_x u (t, x)^2$,
even if only as a space--time distribution, and that in doing so we should
exploit some decorrelation properties of the dynamics.

\

So when dealing with the full solution $u$, we need a replacement for the
Gaussian computations based on the explicit distribution of $X$ that we used above. This will be provided, in the current
setting, by stochastic calculus along the time direction. Indeed, note that
for each $\varphi \in \CS$ the process $(X_t (\varphi))_{t \geqslant 0}$ is a
semimartingale in the filtration $(\mathcal{F}_t)_{t \geqslant 0}$.

\

Before proceeding with these computations, we need to develop some tools to
describe the It{\^o} formula for functions of the Ornstein--Uhlenbeck process.
This will also serve us as an opportunity to set up some analysis on
Gaussian spaces.

\subsection{Gaussian computations}

For cylindrical functions $F : \CS' \rightarrow \mathbb{R}$ of the form $F
(\rho) = f (\rho (\varphi_1), \ldots, \rho (\varphi_n))$ with $\varphi_1,
\ldots, \varphi_n \in \CS$ and $f : \mathbb{R}^n \rightarrow \mathbb{R}$ at
least $C^2_b$, we have by It{\^o}'s formula
\[ \mathd_t F (X_t) = \sum_{i = 1}^n F_i (X_t) \mathd X_t (\varphi_i) +
   \frac{1}{2} \sum_{i, j = 1}^n F_{i, j} (X_t) \mathd \langle X (\varphi_i),
   X (\varphi_j) \rangle_t, \]
where $\langle \rangle_t$ denotes the quadratic covariation of two continuous
semimartingales and where $F_i (\rho) = \partial_i f (\rho (\varphi_1),
\ldots, \rho (\varphi_n))$ and $F_{i, j} (\rho) = \partial_{i, j}^2 f (\rho
(\varphi_1), \ldots, \rho (\varphi_n))$, with $\partial_i$ denoting the
derivative with respect to the $i$-th argument. Now recall that $\mathd X_t(\varphi_i) = X_t (\Delta \varphi_i) \mathd t + \mathd M_t(\varphi_i)$ is a continuous semimartingale, and therefore
\[ \mathd \langle X (\varphi_i), X (\varphi_j) \rangle_t = \mathd \langle M
   (\varphi_i), M (\varphi_j) \rangle_t = \langle \partial_x \varphi_i,
   \partial_x \varphi_j \rangle_{L^2 (\mathbb{T})} \mathd t, \]
and then
\[ \mathd_t F (X_t) = \sum_{i = 1}^n F_i (X_t) \mathd M_t (\varphi_i) + L_0 F
   (X_t) \mathd t, \]
where $L_0$ is the second--order differential operator defined on cylindrical
functions $F$ as
\begin{equation}
  L_0 F (\rho) = \sum_{i = 1}^n F_i (\rho) \rho (\Delta \varphi_i) + \sum_{i,
  j = 1}^n \frac{1}{2} F_{i, j} (\rho) \langle \partial_x \varphi_i,
  \partial_x \varphi_j \rangle_{L^2 (\mathbb{T})} .
\end{equation}
Another way to describe the generator $L_0$ is to give its value on the
functions $\rho \mapsto \exp (\rho (\psi))$ for $\psi \in \CS$, which is
\[ L_0 e^{\rho (\psi)} = e^{\rho (\psi)} (\rho (\Delta \psi) - \frac{1}{2}
   \langle \psi, \Delta \psi \rangle_{L^2 (\mathbb{T})}) . \]
If $F, G$ are two cylindrical functions (which we can take of the form $F
(\rho) = f (\rho (\varphi_1), \ldots, \rho (\varphi_n))$ and $G (\rho) = g
(\rho (\varphi_1), \ldots, \rho (\varphi_n))$ for the same $\varphi_1, \ldots,
\varphi_n \in \CS$), we can check that
\begin{equation}
  \label{eq:L0 and E} L_0 (F G) = (L_0 F) G + F (L_0 G) +\mathcal{E} (F, G),
\end{equation}
where the quadratic form $\mathcal{E}$ is given by
\begin{equation}
  \mathcal{E} (F, G) (\rho) = \sum_{i, j} F_i (\rho) G_j (\rho) \langle
  \partial_x \varphi_i, \partial_x \varphi_j \rangle_{L^2 (\mathbb{T})} .
\end{equation}
In particular, the quadratic variation of the martingale obtained in the It\^o formula for $F$ is given by
\[
   \mathd \Big\langle \int_0^\cdot \sum_{i = 1}^n F_i (X_s) \mathd M_s (\varphi_i) \Big\rangle_t = \mathcal{E} (F, F) (X_t) \mathd t.
\]
\begin{lemma}
  (Gaussian integration by parts) Let $(Z_i)_{i = 1, \ldots, M}$ be an
  $M$-dimensional Gaussian vector with zero mean and covariance $(C_{i,
  j})_{i, j = 1, \ldots, M}$. Then for all $g \in C^1_b (\mathbb{R}^M)$ we
  have
  \[ \mathbb{E} [Z_k g (Z)] = \sum_{\ell} C_{k, \ell} \mathbb{E} \left[
     \frac{\partial g (Z)}{\partial Z_{\ell}} \right] . \]
\end{lemma}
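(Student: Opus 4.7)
The plan is to reduce the statement to a direct integration by parts against the explicit Gaussian density, handling the possibly degenerate case by an approximation argument.

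First I would assume that the covariance matrix $C=(C_{k,\ell})$ is strictly positive definite, so that $Z$ admits the smooth density
\[ p(z) = \frac{1}{(2\pi)^{M/2}\sqrt{\det C}}\exp\Big(-\tfrac{1}{2}z^{T}C^{-1}z\Big) \]
on $\mathbb{R}^{M}$. A direct computation gives $\partial_{\ell} p(z) = -(C^{-1}z)_{\ell}\, p(z)$, and multiplying by $C_{k,\ell}$ and summing over $\ell$ yields the key identity
\[ z_{k}\, p(z) = -\sum_{\ell=1}^{M} C_{k,\ell}\,\partial_{\ell} p(z). \]
Integrating $g$ against this identity, and then integrating by parts in the $\ell$-th variable, one obtains
\[ \mathbb{E}[Z_{k}\, g(Z)] = -\sum_{\ell} C_{k,\ell}\int_{\mathbb{R}^{M}} g(z)\,\partial_{\ell} p(z)\,\mathd z = \sum_{\ell} C_{k,\ell}\int_{\mathbb{R}^{M}} \partial_{\ell} g(z)\,p(z)\,\mathd z, \]
which is exactly the claim. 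The boundary terms vanish at infinity because $g$ is bounded and $p$ decays like a Gaussian, and all integrals are absolutely convergent since $\partial_{\ell}g$ is bounded as well (this is where $g\in C^{1}_{b}$ enters).

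For the general (possibly degenerate) case, I would approximate: let $Z^{\varepsilon} = Z + \varepsilon W$ with $W$ an independent standard normal vector, so that $Z^{\varepsilon}$ has covariance $C+\varepsilon^{2} I$, which is strictly positive definite. The non-degenerate case just proved gives
\[ \mathbb{E}[Z_{k}^{\varepsilon}\, g(Z^{\varepsilon})] = \sum_{\ell}(C_{k,\ell}+\varepsilon^{2}\delta_{k,\ell})\,\mathbb{E}\Big[\frac{\partial g}{\partial z_{\ell}}(Z^{\varepsilon})\Big]. \]
Since $g$ and $\partial_{\ell} g$ are bounded continuous functions and $Z^{\varepsilon}\to Z$ in $L^{2}$ (hence in distribution), dominated convergence lets us pass to the limit $\varepsilon\downarrow 0$ on both sides, recovering the identity for the original $Z$.

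The only delicate point is the justification of the integration by parts, but since $g$ is $C^{1}_{b}$ and $p$ together with its derivatives decays super-polynomially, this is routine; the rest of the argument is essentially algebraic.
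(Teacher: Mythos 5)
Your proof is correct but takes a genuinely different route from the paper's. The paper works on the Fourier side: it computes $\mathbb{E}[Z_k e^{i\langle Z,\lambda\rangle}]$ by differentiating the characteristic function $e^{-\langle\lambda, C\lambda\rangle/2}$ in $\lambda_k$, which yields the identity directly for complex exponentials; the relation then extends to Schwartz $g$ by Fourier inversion and to $C^1_b$ by a density/limiting argument. You instead work on the physical side, integrating by parts against the explicit Gaussian density via the identity $\partial_\ell p(z) = -(C^{-1}z)_\ell\, p(z)$, and then remove the non-degeneracy hypothesis by convolving $Z$ with a small independent Gaussian. Both arguments are sound. The paper's Fourier-side argument has the advantage that it never touches $C^{-1}$ — the characteristic function exists for degenerate $C$ just as well — so no regularization is needed; your density-based argument is more elementary and makes it very explicit where integration by parts happens and why the boundary terms vanish, at the cost of the extra $\varepsilon$-regularization step, which you handle correctly (pointwise convergence $Z^\varepsilon\to Z$ together with boundedness of $g$ and $\partial_\ell g$ justifies passing to the limit by dominated convergence).
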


\begin{proof}
  Use that $\mathbb{E} [e^{i \langle Z, \lambda \rangle}] = e^{- \langle
  \lambda, C \lambda \rangle / 2}$ and moreover that
  \[ \mathbb{E} [Z_k e^{i \langle Z, \lambda \rangle}] = (- i)
     \frac{\partial}{\partial \lambda_k} \mathbb{E} [e^{i \langle Z, \lambda
     \rangle}] = (- i) \frac{\partial}{\partial \lambda_k} e^{- \langle
     \lambda, C \lambda \rangle / 2} = i (C \lambda)_k e^{- \langle \lambda, C
     \lambda \rangle / 2} \]
  \[ = i \sum_{\ell} C_{k, \ell} \lambda_{\ell} \mathbb{E} [e^{i \langle Z,
     \lambda \rangle}] = \sum_{\ell} C_{k, \ell} \mathbb{E}
     [\frac{\partial}{\partial Z_{\ell}} e^{i \langle Z, \lambda \rangle}] .
  \]
  The relation is true for trigonometric functions and taking Fourier
  transforms we see that it holds for all $g \in \CS$. Is then a matter of
  taking limits to show that we can extend it to any $g \in C^1_b
  (\mathbb{R}^M)$.
\end{proof}

As a first application of this formula let us show that $\mathbb{E} [L_0 F
(\eta)] = 0$ for every cylindrical function, where $\eta$ is a space white
noise with mean zero, i.e. $\eta (\varphi) \sim \mathcal{N} (0, \| \varphi \|_{L^2 (\mathbb{T})}^2 / 2)$ for all $\varphi \in L^2_0(\mathbb{T})$, and $\eta(1) = 0$. Here we write $L^2_0(\mathbb{T)}$ for the subspace of all $\varphi \in L^2(\mathbb{T})$ with $\int_{\mathbb{T}} \varphi \mathd x = 0$. Indeed, note that by polarization $\mathbb{E} [\eta (\varphi_i) \eta (\Delta
\varphi_j)] = \frac{1}{2} \langle \varphi_i, \Delta \varphi_j \rangle_{L^2
(\mathbb{T})}$, leading to
\begin{align*}
   \mathbb{E} \sum_{i, j = 1}^n \frac{1}{2} F_{i, j} (\eta) \langle \partial_x \varphi_i, \partial_x \varphi_j \rangle_{L^2 (\mathbb{T})} &= -\mathbb{E} \sum_{i, j = 1}^n \frac{1}{2} F_{i, j} (\eta) \langle \varphi_i, \Delta \varphi_j \rangle_{L^2 (\mathbb{T})} \\
   & = - \frac{1}{2} \sum_{i, j = 1}^n \langle \varphi_i, \Delta \varphi_j \rangle_{L^2 (\mathbb{T})} \mathbb{E} \frac{\partial}{\partial \eta (\varphi_i)} F_j (\eta) \\
   & = - \sum_{j = 1}^n \mathbb{E} [\eta (\Delta \varphi_j) F_j (\eta)],
\end{align*}
so that $\mathbb{E} [L_0 F (\eta)] = 0$ (here we interpreted $\partial_j f$ as a function of $n+1$ variables, with trivial dependence on the $(n+1)$-th one). In combination with It{\^o}'s
formula, this indicates that the white noise law should indeed be a stationary
distribution for $X$ (onvince yourself of it! ). From now on we fix the initial distribution
$X_0 \sim \eta$, which means that $X_t \sim \eta$ for all $t \geqslant 0$.

As another application of the Gaussian integration by parts formula, we get
\begin{align*}
  \frac{1}{2} \mathbb{E} [\mathcal{E} (F, G) (\eta)] & = - \frac{1}{2} \sum_{i, j} \mathbb{E} [F_i (\eta) G_j (\eta)] \langle \varphi_i, \Delta \varphi_j \rangle_{L^2 (\mathbb{T})} . \\
   & = - \frac{1}{2} \sum_{i, j} \mathbb{E} [(F (\eta) G_j (\eta))_i] \langle \varphi_i, \Delta \varphi_j \rangle_{L^2 (\mathbb{T})} \\
   &\qquad + \frac{1}{2}  \sum_{i, j} \mathbb{E} [F (\eta) G_{i j} (\eta)] \langle \varphi_i, \Delta \varphi_j \rangle_{L^2 (\mathbb{T})} \\
   & = - \sum_j \mathbb{E} [F (\eta) G_j (\eta) \eta (\Delta \varphi_j)] + \frac{1}{2} \sum_{i, j} \mathbb{E} [F (\eta) G_{i j} (\eta)] \langle \varphi_i, \Delta \varphi_j \rangle_{L^2 (\mathbb{T})} \\
   & = -\mathbb{E} [(FL_0 G) (\eta)].
\end{align*}
Combining this with~(\ref{eq:L0 and E}) and with $\mathbb{E} [L_0 (F G)
(\eta)] = 0$, we obtain $\mathbb{E} [(FL_0 G) (\eta)] =\mathbb{E} [(GL_0 F)
(\eta)]$. That is, $L_0$ is a symmetric operator with respect to the law of
$\eta$.

\

Consider now the operator $\mathD$, defined on cylindrical functions $F$ by
\begin{equation}
  \mathD F (\rho) = \sum_i F_i (\rho) \varphi_i
\end{equation}
so that $\mathD F$ takes values in $\CS'$, the continuous linear functionals
on $\CS$.

\begin{exercise}
  Show that $\mathD$ is independent of the specific representation of $F$,
  that is if
  \[ F (\rho) = f (\rho (\varphi_1), \ldots, \rho (\varphi_n)) = g (\rho
     (\psi_1), \ldots, \rho (\psi_m)) \]
  for all $\rho \in \CS'$, then
  \[ \sum_i \partial_i f (\rho (\varphi_1), \ldots, \rho (\varphi_n))
     \varphi_i = \sum_j \partial_j g (\rho (\psi_1), \ldots, \rho (\psi_m))
     \psi_m . \]
  \tmtextbf{Hint:} One possible strategy is to show that for all $\theta \in
  \CS$,
  \[ \langle \mathD F (\rho), \theta \rangle = \frac{\mathd}{\mathd
     \varepsilon} F (\rho + \varepsilon \theta) |_{\varepsilon = 0} . \]
\end{exercise}

\

By Gaussian integration by parts we get
\[ \mathbb{E} [F (\eta) \langle \psi, \mathD G (\eta) \rangle] +\mathbb{E}
   [G (\eta) \langle \psi, \mathD F (\eta) \rangle] = \sum_i \mathbb{E} [(F
   G)_i (\eta) \langle \psi, \varphi_i \rangle] = 2\mathbb{E} [\eta (\psi)
   (FG) (\eta)], \]
and therefore
\[
   \mathbb{E} [F (\eta) \langle \psi, \mathD G (\eta) \rangle] =\mathbb{E}[G (\eta) \langle \psi, - \mathD F (\eta) + 2 \rho F (\eta) \rangle].
\]
So if we consider the space $L^2 (\tmop{law} (\eta))$ with inner product $\mathbb{E}[F(\eta) G(\eta)]$, then the
adjoint of $\mathD$ is given by $\mathD^{\ast} F (\rho) = - \mathD F (\rho) + 2 \rho F (\rho)$. Let
$\mathD_{\psi} F(\rho) = \langle \psi, \mathD F(\rho) \rangle$ and similarly for
$\mathD^{\ast}_{\psi} F(\rho) = - \mathD_{\psi} F(\rho) + 2 \rho (\psi) F(\rho)$.

\begin{exercise}
  Let $(e_n)_{n \geqslant 1}$ be an orthonormal basis of $L^2 (\mathbb{T})$.
  Show that
  \[ L_0 = \frac{1}{2} \sum_n \mathD^{\ast}_{e_n} \mathD_{\Delta e_n} . \]
\end{exercise}

\

Recall that the commutator between two operators $A$ and $B$ is defined as $[A,B] \assign AB - BA$. In our case we have
\[ [\mathD_{\theta}, \mathD^{\ast}_{\psi}] F (\rho) = (\mathD_{\theta}
   \mathD^{\ast}_{\psi} - \mathD^{\ast}_{\psi} \mathD_{\theta}) F (\rho) = 2
   \langle \psi, \theta \rangle_{L^2 (\mathbb{T})} F (\rho), \]
whereas $[\mathD^{\ast}_{\theta}, \mathD^{\ast}_{\psi}] = 0$. Therefore,
\[ [L_0, \mathD^{\ast}_{\psi}] = \frac{1}{2} \sum_n [\mathD^{\ast}_{e_n}
   \mathD_{\Delta e_n}, \mathD^{\ast}_{\psi}] = \frac{1}{2} \sum_n
   \mathD^{\ast}_{e_n} [\mathD_{\Delta e_n}, \mathD^{\ast}_{\psi}] +
   \frac{1}{2} \sum_n [\mathD^{\ast}_{e_n}, \mathD^{\ast}_{\psi}]
   \mathD_{\Delta e_n} \]
\[ = \sum_n \mathD^{\ast}_{e_n} \langle \psi, \Delta e_n \rangle_{L^2
   (\mathbb{T})} = \mathD^{\ast}_{\Delta \psi} . \]
So if $\psi$ is an eigenvector of $\Delta$ with eigenvalue $\lambda$, then
$[L_0, D^{\ast}_{\psi}] = \lambda \mathD^{\ast}_{\psi}$. Let now $(\psi_n)_{n
\in \mathbb{N}}$ be an orthonormal eigenbasis for $\Delta$ with eigenvalues
$\Delta \psi_n = \lambda_n \psi_n$ and consider the functions
\[ H (\psi_{i_1}, \ldots, \psi_{i_n}) : \CS' \rightarrow \mathbb{R},
   \hspace{2em} H (\psi_{i_1}, \ldots, \psi_{i_n}) (\rho) = (
   \mathD^{\ast}_{\psi_{i_1}} \cdots \mathD^{\ast}_{\psi_{i_n}} 1 )
   (\rho) . \]
Then
\begin{align}\label{eq:L0-eigenvalues}
   L_0 H (\psi_{i_1}, \ldots, \psi_{i_n}) & = L_0 \mathD^{\ast}_{\psi_{i_1}} \cdots \mathD^{\ast}_{\psi_{i_n}} 1 =  \mathD^{\ast}_{\psi_{i_1}} L_0 \mathD^{\ast}_{\psi_{i_2}} \cdots \mathD^{\ast}_{\psi_{i_n}} 1 + \lambda_{i_1} \mathD^{\ast}_{\psi_{i_1}} \cdots \mathD^{\ast}_{\psi_{i_n}} 1 \\ \nonumber
  & = \cdots = (\lambda_{i_1} + \cdots + \lambda_{i_n}) H (\psi_{i_1}, \ldots,
   \psi_{i_n}),
\end{align}
where we used that $L_0 1 = 0$. So these functions are eigenfunctions for
$L_0$ and the eigenvalues are all the possible combinations of $\lambda_{i_1}
+ \cdots + \lambda_{i_n}$ for $i_1, \ldots, i_n \in \mathbb{N}$. We have
immediately that for different $n$ these functions are orthogonal in $L^2
(\tmop{law} (\eta))$. They are actually orthogonal as soon as the indices $i$
differ since in that case there is an index $j$ which is in one but not in the
other and using the fact that $\mathD^{\ast}_{\psi_j}$ is adjoint to
$D_{\psi_j}$ and that $D_{\psi_j} G = 0$ if $G$ does not depend on $\psi_j$ we
get the orthogonality. The functions $H (\psi_{i_1}, \ldots, \psi_{i_n})$ are
polynomials and they are called \tmtextit{Wick polynomials}.

\begin{lemma}
  \label{lem:exp-D-star}For all $\psi \in \CS$, almost surely
  \[ (e^{\mathD^{\ast}_{\psi}} 1) (\eta) = e^{2 \eta (\psi) - \| \psi \|^2} .
  \]
\end{lemma}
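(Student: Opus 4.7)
The plan is to compute $(\mathD^{\ast}_{\psi})^n 1$ explicitly, identify these iterates with rescaled Hermite polynomials in the scalar $\rho(\psi)$, and then sum the resulting exponential series using the classical Hermite generating function. Throughout I write $\|\psi\|$ for $\|\psi\|_{L^2(\mathbb{T})}$.

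The first step is to show by induction that $(\mathD^{\ast}_{\psi})^n 1(\rho) = P_n(\rho(\psi))$ for some polynomial $P_n$ of degree $n$. If $h_n(\rho) = P_n(\rho(\psi))$, then the previous exercise gives $\mathD h_n(\rho) = P_n'(\rho(\psi))\psi$, so $\mathD_{\psi} h_n(\rho) = \|\psi\|^2 P_n'(\rho(\psi))$, and hence
\[
  h_{n+1}(\rho) = \mathD^{\ast}_{\psi} h_n(\rho) = 2\rho(\psi) P_n(\rho(\psi)) - \|\psi\|^2 P_n'(\rho(\psi)),
\]
which yields the recursion $P_{n+1}(z) = 2 z P_n(z) - \|\psi\|^2 P_n'(z)$ with $P_0 \equiv 1$. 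Comparing against the three-term recursion $H_{n+1}(z;\sigma^2) = z H_n(z;\sigma^2) - \sigma^2 H_n'(z;\sigma^2)$ satisfied by the Hermite polynomials attached to $\mathcal{N}(0,\sigma^2)$ with $\sigma^2 = \|\psi\|^2/2$, a direct induction gives $P_n(z) = 2^n H_n(z;\|\psi\|^2/2)$.

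It then remains to apply the classical Hermite generating function identity $\sum_n t^n H_n(z;\sigma^2)/n! = \exp(tz - t^2\sigma^2/2)$ at $t=2$, $\sigma^2 = \|\psi\|^2/2$, $z=\eta(\psi)$, which delivers
\[
  (e^{\mathD^{\ast}_{\psi}} 1)(\eta) = \sum_{n\ge 0} \frac{2^n}{n!} H_n(\eta(\psi);\|\psi\|^2/2) = \exp(2\eta(\psi) - \|\psi\|^2),
\]
the stated identity. Convergence of the series in $L^2(\mathrm{law}(\eta))$ is not a serious issue: Wick polynomials of different degree are mutually orthogonal (as in the computation leading to~(\ref{eq:L0-eigenvalues})) and satisfy $\mathbb{E}[H_n(\eta(\psi);\sigma^2)^2] = n!\,\sigma^{2n}$, so the partial sums are Cauchy in $L^2$, and the $L^2$-norm squared of the full sum equals $\sum_n (2\|\psi\|^2)^n/n! = e^{2\|\psi\|^2}$, which matches $\mathbb{E}[e^{4\eta(\psi) - 2\|\psi\|^2}] = e^{2\|\psi\|^2}$. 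This $L^2$ equality upgrades (along a subsequence) to the almost sure identity of the statement.

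The main obstacle is really only the bookkeeping of constants: the factor of $2$ coming from $\mathD^{\ast}_{\psi} = -\mathD_{\psi} + 2\rho$ and the factor $\tfrac12$ in the variance of $\eta(\psi)$ (and hence in the Hermite parameter $\sigma^2 = \|\psi\|^2/2$) have to be threaded consistently, which is why the exponent in the generating function must be evaluated at $t=2$ rather than $t=1$. Once these constants are aligned, the identification is essentially algebraic.
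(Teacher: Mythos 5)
Your proof is correct, and it takes a genuinely different route from the paper's. You compute the iterates $(\mathD^{\ast}_{\psi})^n 1$ explicitly, using the scalar recursion $P_{n+1}(z) = 2z P_n(z) - \|\psi\|^2 P_n'(z)$ to identify them with rescaled Hermite polynomials $2^n H_n(\cdot; \|\psi\|^2/2)$ (the variance matching $\tmop{law}(\eta(\psi))$), and then sum the exponential series via the Hermite generating function. This is the ``strong'' or chaos-expansion approach: the identity is established pointwise in $\rho$, and the $L^2$ and a.s.\ statements follow because the partial sums converge both deterministically (by absolute convergence of the generating series) and in $L^2(\tmop{law}(\eta))$ (by Wick orthogonality). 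By contrast, the paper argues ``weakly'': it tests $(e^{\mathD^{\ast}_{\psi}}1)(\eta)$ against an arbitrary cylindrical $F$, moves the exponential across the pairing by adjointness, recognizes $e^{\mathD_{\psi}}$ as the shift $F \mapsto F(\cdot + \psi)$ (solving the transport equation $\partial_t \Psi_t = \mathD_\psi \Psi_t$), and finishes with a Gaussian change of variables $\mathbb{E}[F(\eta+\psi)] = \mathbb{E}[F(\eta) e^{2\eta(\psi) - \|\psi\|^2}]$. The paper's proof is a Cameron--Martin-type argument that avoids any explicit Hermite computation; yours makes the Wick-polynomial structure visible immediately, which in fact feeds directly into the Theorem that follows, where the paper expands $e^{\mathD^{\ast}_{\psi}}1$ in Wick polynomials anyway. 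Both routes are standard in Gaussian analysis and entirely sound. One small expository point: your final sentence suggests that the matching of $L^2$-norms ``upgrades'' to an a.s.\ identity; by itself coincidence of norms does not give equality. What actually closes the argument (and you have all the ingredients) is that the partial sums converge a.s.\ to $e^{2\eta(\psi)-\|\psi\|^2}$ by the pointwise generating-function identity, and along a subsequence also a.s.\ to the $L^2$-limit, so the two limits coincide a.s.; the norm computation is then a sanity check rather than the final step.
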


\begin{proof}
  If $F$ is a cylindrical function of the form $F (\rho) = f (\rho
  (\varphi_1), \ldots, \rho (\varphi_m))$ with $f \in \CS (\mathbb{R}^m)$,
  then
  \[ \mathbb{E} [F (\eta) (e^{\mathD^{\ast}_{\psi}} 1) (\eta)] =\mathbb{E}
     [e^{\mathD_{\psi}} F (\eta)] =\mathbb{E} [F (\eta + \psi)] =\mathbb{E}
     [F (\eta) e^{2 \eta (\psi) - \| \psi \|^2}], \]
  where the second step follows from the fact that if we note $\Psi_t (\eta) =
  F (\eta + t \psi)$ (note that every $\psi \in \CS$ can be interpreted as an element of $\CS'$) we have $\partial_t \Psi_t (\eta) = \mathD_{\psi} \Psi_t
  (\eta)$ and $\Psi_0 (\eta) = F (\eta)$ so that $\Psi_t (\eta) = (e^{t
  \mathD_{\psi}} F) (\eta)$ for all $t \geqslant 0$ and in particular for $t =
  1$. The last step is simply a Gaussian change of variables. Indeed if we
  take $\varphi_1 = \psi$ and $\varphi_k \bot \psi$ for $k \geqslant 2$ we
  have
  \[ \mathbb{E} [F (\eta + \psi)] =\mathbb{E} [f (\eta (\psi) + \langle
     \psi, \psi \rangle, \eta (\varphi_2), \ldots, \eta (\varphi_m))] \]
  since $(\eta + \psi) (\varphi_k) = \eta (\varphi_k)$ for $k \geqslant 2$.
  Now observe that $\eta (\psi)$ is independent of $(\eta (\varphi_2), \ldots,
  \eta (\varphi_m))$ so that
  \[ \mathbb{E} [f (\eta (\psi) + \langle \psi, \psi \rangle, \eta
     (\varphi_2), \ldots, \eta (\varphi_m))] = \int_{\mathbb{R}} \frac{e^{-
     z^2 / \| \psi \|^2}}{\sqrt{\pi \| \psi \|^2}} \mathbb{E} [f (z + \langle
     \psi, \psi \rangle, \eta (\varphi_2), \ldots, \eta (\varphi_m))] \]
  \[ = \int_{\mathbb{R}} \frac{e^{- z^2 / \| \psi \|^2}}{\sqrt{\pi \| \psi
     \|^2}} e^{2 z - \| \psi \|^2} \mathbb{E} [f (z, \eta (\varphi_2),
     \ldots, \eta (\varphi_m))] =\mathbb{E} [F (\eta) e^{2 \eta (\psi) - \|
     \psi \|^2}] . \]
  To conclude the proof, it suffices to note that $\mathbb{E} [F (\eta) (e^{\mathD^{\ast}_{\psi}} 1) (\eta)] =\mathbb{E}
     [F (\eta) e^{2 \eta (\psi) - \| \psi \|^2}]$ implies that $(e^{\mathD^{\ast}_{\psi}} 1) (\eta) =e^{2 \eta (\psi) - \| \psi \|^2}$.
\end{proof}

\begin{theorem}
  The Wick polynomials $\{ H (\psi_{i_1}, \ldots, \psi_{i_n}) (\eta) : n
  \geqslant 0, i_1, \ldots, i_n \in \mathbb{N} \}$ form an orthogonal basis
  of $L^2 (\tmop{law} (\eta))$.
\end{theorem}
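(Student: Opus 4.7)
Orthogonality was essentially established in the discussion preceding the statement: if $(i_1,\dots,i_n)$ and $(j_1,\dots,j_m)$ differ as multisets, then some index $k$ appears more often in one, say the first. Repeatedly moving $\mathD^{\ast}_{\psi_k}$ across the inner product as the adjoint of $\mathD_{\psi_k}$, using the commutator $[\mathD_{\psi_k},\mathD^{\ast}_{\psi_k}]=2\|\psi_k\|^2$ together with $\mathD_{\psi_k}G=0$ whenever $G$ does not depend on $\psi_k$, reduces the inner product to zero. The real content of the theorem is therefore completeness.

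I would proceed in two steps. First, I would identify the closed linear span of the Wick polynomials with that of the \emph{stochastic exponentials} $\{e^{2\eta(\psi)-\|\psi\|^2}:\psi\in\CS\}$. Since $\mathD^{\ast}_{t\psi}=t\mathD^{\ast}_\psi$, replacing $\psi$ by $t\psi$ in Lemma~\ref{lem:exp-D-star} gives
\[ e^{2t\eta(\psi)-t^2\|\psi\|^2}=\sum_{n\geqslant 0}\frac{t^n}{n!}\bigl((\mathD^{\ast}_\psi)^n 1\bigr)(\eta), \]
and the series converges in $L^2(\tmop{law}(\eta))$ by an $L^2$-check using the orthogonality of the $(\mathD^{\ast}_\psi)^n 1$ already noted. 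Taking $\psi=\sum_{j=1}^N t_j\psi_{i_j}$ in the eigenbasis of $\Delta$ and expanding $(\mathD^{\ast}_\psi)^n 1$ multinomially, each iterated operator lies in the span of the Wick polynomials $H(\psi_{j_1},\dots,\psi_{j_n})$; conversely, the individual Wick polynomials are recovered by differentiating in the parameters $t_j$ at $t=0$. Hence the two closed linear spans coincide.

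Second, I would show that these exponentials are total in $L^2(\tmop{law}(\eta))$. The variables $(\eta(\psi_n))_{n\geqslant 1}$ are independent centred Gaussians generating the $\sigma$-algebra on which $\eta$ is measurable, so by martingale convergence it suffices to treat the conditional expectations $F_N=\mathbb{E}[F\mid\eta(\psi_1),\dots,\eta(\psi_N)]$ of any $F\in L^2(\tmop{law}(\eta))$ orthogonal to every exponential. For such an $F$ one has $\mathbb{E}\bigl[F_N e^{\sum_{j=1}^N t_j\eta(\psi_j)}\bigr]=0$ for all $t\in\mathbb{R}^N$. Gaussian integrability combined with Cauchy--Schwarz shows that this Laplace transform extends to an entire function on $\mathbb{C}^N$, so its vanishing on $\mathbb{R}^N$ propagates to $i\mathbb{R}^N$; but the restriction to $i\mathbb{R}^N$ is the Fourier transform of the finite signed measure $F_N\,d\mathbb{P}$. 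By Fourier uniqueness $F_N=0$ almost surely, and letting $N\to\infty$ yields $F=0$.

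The main obstacle is the analytic-extension step in the last paragraph: one must check carefully that the Laplace transform of $F_N\,d\mathbb{P}$ really is entire (so that vanishing on the real axes forces vanishing on the imaginary axes), which rests on the interplay between Gaussian tails and $F_N\in L^2$. Everything else---reduction from Wick polynomials to exponentials via Lemma~\ref{lem:exp-D-star}, and reduction from infinite to finite dimensions by conditioning on the coordinates $\eta(\psi_n)$---is routine.
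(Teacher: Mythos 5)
Your proposal is correct and takes essentially the same route as the paper: both use Lemma~\ref{lem:exp-D-star} to identify the closed span of the Wick polynomials with that of the stochastic exponentials, then show that a vector orthogonal to all these exponentials must vanish by passing to the Fourier transform of its finite-dimensional marginals. The only notable refinement on your side is making the step from real exponentials to imaginary ones explicit via the entire Laplace transform (the paper instead detours through "orthogonality to all polynomials"), which is a small but welcome tightening of the same argument.
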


\begin{proof}
  Taking $\psi = \sum_i \sigma_i \psi_i$ in Lemma~\ref{lem:exp-D-star}, we get
  \begin{align*}
     e^{2 \sum_i \sigma_i \eta (\psi_i) - \sum_i \sigma_i^2 \| \psi_i \|^2} & = (e^{\mathD^{\ast}_{\psi}} 1) (\eta) = \sum_{n \geqslant 0} \frac{((\mathD^{\ast}_{\psi})^n 1) (\eta)}{n!} \\
     & = \sum_{n \geqslant 0} \sum_{i_1, \ldots, i_n} \frac{\sigma_{i_1} \cdots \sigma_{i_n}}{n!} H(\underbrace{_{} \psi_{i_1}, \ldots, \psi_{i_n}}_{n \tmop{times}})(\eta),
  \end{align*}
  which is enough to show that any random variable in $L^2$ can be expanded in
  a series of Wick polynomials showing that the Wick polynomials are an
  orthogonal basis of $L^2 (\tmop{law} (\eta))$ (but they are still not
  normalized). Indeed assume that $Z \in L^2 (\tmop{law} (\eta))$ but $Z \bot
  H (\psi_{i_1}, \ldots, \psi_{i_n}) (\eta)$ for all $n \geqslant 0$, $i_1,
  \ldots, i_n \in \mathbb{N}$, then
  \[ 0 = e^{\sum_i \sigma_i^2 \| \psi_i \|^2} \mathbb{E} [Z
     (e^{\mathD^{\ast}_{\psi}} 1) (\eta)] = e^{\sum_i \sigma_i^2 \| \psi_i
     \|^2} \mathbb{E} [Z e^{2 \sum_i \sigma_i \eta (\psi_i) - \sum_i
     \sigma_i^2 \| \psi_i \|^2}] =\mathbb{E} [Z e^{2 \sum_i \sigma_i \eta
     (\psi_i)}] . \]
  Since the $\sigma_i$ are arbitrary, this means that $Z$ is orthogonal to any
  polynomial in $\eta$ (consider the derivatives in $\sigma \equiv 0$) and then that it is orthogonal also to $\exp ( i
  \sum_i \sigma_i \eta (\psi_i) )$. So let $f \in \CS (\mathbb{R}^M)$
  and $\sigma_i = 0$ for $i > m$, and observe that
  \[ 0 = (2 \pi)^{- m / 2} \int \mathd \sigma_1 \cdots \mathd \sigma_m \CF f
     (\sigma_1, \ldots, \sigma_m) \mathbb{E} [Z e^{i \sum_i \sigma_i \eta
     (\psi_i)}] =\mathbb{E} [Z f (\eta (\psi_1), \ldots, \eta (\psi_M))], \]
  which means that $Z$ is orthogonal to all the random variables in $L^2$
  which are measurable with respect to the $\sigma$--field generated by $(\eta
  (\psi_n))_{n \geqslant 0}$. This implies $Z = 0$. That is, Wick polynomials
  form a basis for $L^2 (\tmop{law} (\eta))$.
\end{proof}

\begin{example}
  The first few (un--normalized) Wick polynomials are
  \[ H (\psi_i) (\rho) = \mathD^{\ast}_{\psi_i} 1(\rho) = 2 \rho (\psi_i), \]
  \[ H (\psi_i, \psi_j) (\rho) = \mathD^{\ast}_{\psi_i} \mathD^{\ast}_{\psi_j}
     1 = 2 \mathD^{\ast}_{\psi_i} \rho (\psi_j) = - 2 \delta_{i = j} + 4 \rho
     (\psi_i) \rho (\psi_j), \]
  and
  \begin{align*}
     H (\psi_i, \psi_j, \psi_k) (\rho) & = \mathD^{\ast}_{\psi_i} (- 2 \delta_{j= k} + 4 \rho (\psi_j) \rho (\psi_k)) \\
     & = - 4 \delta_{j = k} \rho (\psi_i) - 4 \delta_{i = j} \rho (\psi_k) - 4 \delta_{i = k} \rho (\psi_j) + 8 \rho (\psi_i) \rho (\psi_j) \rho (\psi_k).
  \end{align*}
\end{example}

Some other properties of Wick polynomials can be derived using the commutation
relation between $\mathD$ and $\mathD^{\ast}$. By linearity
$\mathD^{\ast}_{\varphi + \psi} = \mathD^{\ast}_{\varphi} +
\mathD^{\ast}_{\psi}$, so that using the symmetry of $H$ we get
\[ H_n (\varphi + \psi) \assign H \underbrace{(\varphi + \psi, \ldots, \varphi
   + \psi)}_n = \sum_{0 \leqslant k \leqslant n} \binom{n}{k} H
   (\underbrace{\varphi, \ldots, \varphi}_k, \underbrace{\psi, \ldots,
   \psi}_{n - k}) . \]
Then note that by Lemma~\ref{lem:exp-D-star} we have
\begin{align*}
  (e^{\mathD^{\ast}_{\varphi}} 1) (\eta) (e^{\mathD^{\ast}_{\psi}} 1) (\eta)& = e^{ 2\eta (\varphi) - \| \varphi \|^2} e^{2 \eta (\psi) - \| \psi \|^2} = e^{2 \eta (\varphi + \psi) - \| \varphi + \psi \|^2 + 2 \langle \varphi, \psi \rangle} \\
  & = (e^{\mathD^{\ast}_{\varphi + \psi}} 1) (\eta) e^{2 \langle \varphi, \psi \rangle}.
\end{align*}
Expanding the exponentials,
\[ \sum_{m, n}  \frac{H_m (\varphi)}{m!} \frac{H_n (\psi)}{n!} = \sum_{r,
   \ell} \frac{H_r (\varphi + \psi)}{r!} \frac{\left(2\langle
   \varphi, \psi \rangle \right)^{\ell}}{\ell !} = \sum_{p, q, \ell} \frac{H
   (\overbrace{\varphi, \ldots, \varphi}^p, \overbrace{\psi, \ldots,
   \psi}^q)}{p!q!} \frac{\left( 2 \langle \varphi, \psi \rangle
   \right)^{\ell}}{\ell !}, \]
and identifying the terms of the same homogeneity in $\varphi$ and $\psi$
respectively we get
\begin{equation}
  \label{eq:wick polynomial product} H_m (\varphi) H_n (\psi) = \sum_{p + \ell
  = m} \sum_{q + \ell = n} \frac{m!n!}{p!q! \ell !} H (\overbrace{\varphi,
  \ldots, \varphi}^p, \overbrace{\psi, \ldots, \psi}^q) \left( 2
  \langle \varphi, \psi \rangle \right)^{\ell} .
\end{equation}
This gives a general formula for such products. By polarization of this
multilinear form, we can also get a general formula for the products of
general Wick polynomials. Indeed taking $\varphi = \sum_{i = 1}^m \kappa_i
\varphi_i$ and $\psi = \sum_{j = 1}^n \lambda_j \psi_j$ for arbitrary real
coefficients $\kappa_1, \ldots, \kappa_m$ and $\lambda_1, \ldots, \lambda_n$,
we have
\begin{align*}
    &H_m (\sum_{i = 1}^m \kappa_i \varphi_i) H_n (\sum_{j = 1}^n \lambda_j \psi_j) \\
    &\hspace{50pt} = \sum_{i_1, \ldots, i_m} \sum_{j_1, \ldots, j_n} \kappa_{i_1} \cdots \kappa_{i_m} \lambda_{j_1} \cdots \lambda_{j_m} H (\varphi_{i_1}, \ldots, \varphi_{i_m}) H (\psi_{j_1}, \ldots, \psi_{j_n}).
\end{align*}
Deriving this with respect to all the $\kappa, \lambda$ parameters and setting
them to zero, we single out the term
\[ \sum_{\sigma \in S_m, \omega \in S_n} H (\varphi_{\sigma (1)}, \ldots,
   \varphi_{\sigma (m)}) H (\psi_{\omega (1)}, \ldots, \psi_{\omega (n)}) =
   m!n!H (\varphi_1, \ldots, \varphi_m) H (\psi_1, \ldots, \psi_n), \]
where $S_k$ denotes the symmetric group on $\{ 1, \ldots, k \}$, and where we
used the symmetry of the Wick polynomials. Doing the same also for the right
hand side of~(\ref{eq:wick polynomial product}) we get
\begin{align*}
   & H (\varphi_1, \ldots, \varphi_m) H (\psi_1, \ldots, \psi_n) \\
   &\hspace{50pt} = \sum_{p +\ell = m} \sum_{q + \ell = n} \frac{1}{p!q! \ell !} \sum_{i, j} H(\overbrace{\varphi_{i_1}, \ldots, \varphi_{i_p}}^p, \overbrace{\psi_{j_1}, \ldots, \psi_{j_q}}^q) \prod_{r = 1}^{\ell} (2 \langle \varphi_{i_{p + r}}, \psi_{j_{q + r}} \rangle),
\end{align*}
where the sum over $i, j$ runs over $i_1, \ldots, i_m$ permutation of $1,
\ldots, m$ and similarly for $j_1, \ldots, j_n$. Since $H(\varphi_{i_1}, \ldots, \varphi_{i_p}, \psi_{j_1}, \ldots, \psi_{j_q})(\eta)$ is orthogonal to $1$ whenever $p+q>0$, we obtain in particular
\[ \mathbb{E} [H (\psi_1, \ldots, \psi_n)(\eta) H (\psi_1, \ldots, \psi_n)(\eta)] =
   \frac{1}{n!} \sum_{i, j} \prod_{r = 1}^n (2 \langle \psi_{i_r}, \psi_{j_r}
   \rangle) = \sum_{\sigma \in S_n} \prod_{r = 1}^n ( 2 \langle \psi_r,
   \psi_{\sigma (r)} \rangle ). \]
In conclusion, we have shown that the family
\[
   \Big\{ \Big(\sum_{\sigma \in S_n} \prod_{r = 1}^n ( 2 \langle \psi_r,
   \psi_{\sigma (r)} \rangle )\Big)^{-1/2} H (\psi_{i_1}, \ldots, \psi_{i_n}) (\eta) : n \geqslant 0, i_1, \ldots, i_n \in \mathbb{N} \Big\}
\]
is an orthonormal basis of $L^2(\mathrm{law}(\eta))$.
\begin{remark}
  In our problem it will be convenient to take the Fourier basis as basis in
  the above computations. Let $e_k (x) = \exp (i k x) / \sqrt{2 \pi} = a_k (x)
  + ib_k (x)$ where $( \sqrt{2} a_k )_{k \in \mathbb{N}}$ and
  $( \sqrt{2} b_k )_{k \in \mathbb{N}}$ form together a real
  valued orthonormal basis for $L^2 (\mathbb{T})$. Then $\rho (e_k)^{\ast} =
  \rho (e_{- k})$ whenever $\rho$ is real valued, and we will denote $\mathD_k
  = \mathD_{e_k} = \mathD_{a_k} + i \mathD_{b_k}$ and similarly for
  $\mathD^{\ast}_k = \mathD_{a_k}^{\ast} - i \mathD_{b_k}^{\ast} = - \mathD_{-
  k} + 2 \rho (e_{- k})$. In this way, $\mathD^{\ast}_k$ is the adjoint of
  $\mathD_k$ with respect to the Hermitian scalar product on $L^2 (\Omega ;
  \mathbb{C})$ and the Ornstein--Uhlenbeck generator takes the form
  \begin{equation}
    L_0 = \sum_{k \in \mathbb{N}} (\mathD_{\partial_x a_k}^{\ast}
    \mathD_{\partial_x a_k} + \mathD_{\partial_x b_k}^{\ast}
    \mathD_{\partial_x b_k}) = \frac{1}{2} \sum_{k \in \mathbb{Z}} k^2
    \mathD_k^{\ast} \mathD_k
  \end{equation}
  (convince yourself of the last identity by observing that $D_{k}^\ast D_k + D_{-k}^\ast D_{-k} = 2(\mathD_{ a_k}^{\ast}  \mathD_{ a_k} + \mathD_{ b_k}^{\ast} \mathD_{b_k})$!). Similarly,
  \begin{equation}
    \mathcal{E} (F, G) = \sum_{k \in \mathbb{Z}} k^2 (\mathD_k F)^{\ast}
    (\mathD_k G) .
  \end{equation}
\end{remark}

\subsection{The It{\^o} trick}

We are ready now to start our computations. Recall that we want to analyse
$J_t (\varphi) = \int_0^t \partial_x X_s^2 (\varphi) \mathd s$ using It{\^o}
calculus with respect to the Ornstein--Uhlenbeck process. We want to
understand $J_t$ as a correction term in It{\^o}'s formula: If we can find
a function $G$ such that $L_0 G (X_t) = \partial_x X_t^2$, then we get from It\^o's formula
\[
   \int_0^t \partial_x X_s^2 \mathd s = G(X_t) - G(X_0) - M_{G,t},
\]
where $M_G$ is a martingale depending on $G$. Of course, $G$ will
not be a cylindrical function but we only defined $L_0$ on cylindrical
functions. So to make the following calculations rigorous we would again have
to replace $\partial_x X_t^2$ by $\partial_x \Pi_n X^2_t$ and then pass to the
limit, see the paper~{\cite{gubinelli_regularization_2013}} for details. As before we will perform the calculations already in the limit $N = + \infty$,
in order to simplify the computations and not to obscure the ideas through
technicalities. The next problem is that the pointwise evaluation $\int_0^t \partial_x X_s^2(x) \mathd s$ does not make any sense because the integral will only be defined as a space distribution. So we will consider
\[
   G \colon \CS' \to \CS'
\]
instead of $G \colon \CS' \to \mathbb{C}$. Note however that we can reduce every such $G$ to a function from $\CS'$ to $\mathbb{C}$ by considering $\rho \mapsto G(\rho)(e_k)$ for all $k$.

Now for a fixed $k$, we have
\begin{equation}\label{eq:square explicit}
   \partial_x X_t^2 (e_k) = \frac{ik}{\sqrt{2}} \sum_{\ell + m = k} X_t (e_{\ell}) X_t (e_m) = \frac{ik}{\sqrt{2}}  \sum_{\ell + m = k} H_{\ell, m} (X_t),
\end{equation}
where $H_{\ell, m} (\rho) = \frac{1}{4} (\mathD^{\ast}_{- \ell}
\mathD^{\ast}_{- m} 1) (\rho) = \rho (e_{\ell}) \rho (e_m) - \frac{1}{2}
\delta_{\ell + m = 0}$ is a second order Wick polynomial so that $L_0
H_{\ell, m} = - (\ell^2 + m^2) H_{\ell, m}$ by~(\ref{eq:L0-eigenvalues}).
Therefore, it is enough to take
\begin{equation}
  G (X_t) (e_k) = - ik \sum_{\ell + m = k} \frac{H_{\ell, m} (X_t)}{\ell^2 +
  m^2} .
\end{equation}
This corresponds to the distribution $G (X_t) (\varphi) = - \int_0^{\infty}
\partial_x (e^{s \Delta} X_t)^2 (\varphi) \mathd s$ (check it!). Then
\[ G (X_t) (\varphi) = G (X_0) (\varphi) + M_{G, t} (\varphi) + J_t (\varphi),
\]
where $M_{G, t} (\varphi)$ is a martingale with quadratic variation
\[ \mathd \langle M_{G, \ast} (\varphi), M_{G, \ast} (\varphi) \rangle_t
   =\mathcal{E} (G (\ast) (\varphi), G (\ast) (\varphi)) (X_t) \mathd t. \]
We can estimate
\[ \mathbb{E} [| J_t (\varphi) - J_s (\varphi) |^{2 p}] \lesssim_p
   \mathbb{E} [| M_{G, t} (\varphi) - M_{G, s} (\varphi) |^{2 p}]
   +\mathbb{E} [| G (X_t) (\varphi) - G (X_s) (\varphi) |^{2 p}] . \]
To bound the martingale expectation, we will use the following Burkholder
inequality:

\begin{lemma}
  Let $m$ be a continuous local martingale with $m_0 = 0$. Then for all $T
  \geqslant 0$ and $p > 1$,
  \[ \mathbb{E} [\sup_{t \leqslant T} | m_t |^{2 p}] \leqslant C_p
     \mathbb{E} [\langle m \rangle_T^p] . \]
\end{lemma}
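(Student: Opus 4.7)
The plan is to combine Itô's formula applied to $x\mapsto|x|^{2p}$ with Doob's maximal inequality and Hölder's inequality, following the classical Burkholder--Davis--Gundy argument in the $L^{2p}$ case, which avoids the good-$\lambda$ machinery needed for general $L^q$ exponents.

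First I will assume without loss of generality that $\mathbb{E}[\sup_{t\le T}|m_t|^{2p}]$ and $\mathbb{E}[\langle m\rangle_T^p]$ are finite; otherwise I localize by a sequence of stopping times $\tau_n=\inf\{t\ge 0:|m_t|+\langle m\rangle_t\ge n\}\wedge T$, prove the inequality for the bounded continuous martingales $m^{\tau_n}$, and pass to the limit via monotone convergence on both sides. Under this boundedness, Itô's formula applied to $F(x)=|x|^{2p}$ (which is $C^2$ since $2p>2$) gives
\begin{equation*}
|m_t|^{2p} \;=\; 2p\int_0^t |m_s|^{2p-2} m_s\,\mathd m_s \;+\; p(2p-1)\int_0^t |m_s|^{2p-2}\,\mathd\langle m\rangle_s,
\end{equation*}
and taking expectations kills the martingale term, yielding
\begin{equation*}
\mathbb{E}[|m_T|^{2p}] \;=\; p(2p-1)\,\mathbb{E}\!\left[\int_0^T |m_s|^{2p-2}\,\mathd\langle m\rangle_s\right] \;\le\; p(2p-1)\,\mathbb{E}\!\left[\sup_{s\le T}|m_s|^{2p-2}\,\langle m\rangle_T\right].
\end{equation*}

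Next I apply Doob's $L^{2p}$ maximal inequality to the submartingale $|m_t|$ to relate the supremum to the terminal value:
\begin{equation*}
\mathbb{E}\!\left[\sup_{t\le T}|m_t|^{2p}\right] \;\le\; \left(\tfrac{2p}{2p-1}\right)^{2p}\mathbb{E}[|m_T|^{2p}].
\end{equation*}
Combining this with the bound above and applying Hölder's inequality with exponents $\tfrac{p}{p-1}$ and $p$ to the product $|m_s|^{2p-2}\cdot\langle m\rangle_T$ produces
\begin{equation*}
\mathbb{E}\!\left[\sup_{t\le T}|m_t|^{2p}\right] \;\le\; C_p\,\mathbb{E}\!\left[\sup_{s\le T}|m_s|^{2p}\right]^{(p-1)/p}\,\mathbb{E}\!\left[\langle m\rangle_T^p\right]^{1/p}.
\end{equation*}
Since the left-hand side is finite by the localization step, I can divide both sides by $\mathbb{E}[\sup_{t\le T}|m_t|^{2p}]^{(p-1)/p}$ and raise to the $p$-th power, obtaining the desired inequality with constant $C_p^p$.

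The only delicate point, and the step I expect to require the most care, is the localization argument: one must confirm that the localized martingales $m^{\tau_n}$ satisfy $\mathbb{E}[\sup_{t\le T}|m^{\tau_n}_t|^{2p}]<\infty$ (immediate from boundedness) and that both sides converge to the correct limits as $n\to\infty$, which is ensured by monotone convergence since $t\mapsto\langle m\rangle_t$ is nondecreasing and $\sup_{t\le T}|m^{\tau_n}_t|^{2p}\uparrow\sup_{t\le T}|m_t|^{2p}$ almost surely by continuity of $m$.
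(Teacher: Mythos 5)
Your proof is correct and follows essentially the same route as the paper's: It\^o's formula applied to $|m_t|^{2p}$, expectation to kill the martingale term, bounding the remaining integral by $\sup_{s\le T}|m_s|^{2p-2}\langle m\rangle_T$, then combining H\"older's inequality with Doob's $L^{2p}$ maximal inequality, and finally localizing. The paper labels the key step ``Cauchy--Schwartz'' but it is exactly the H\"older application with exponents $p/(p-1)$ and $p$ that you carry out; beyond a harmless reordering of the Doob and H\"older steps and a minor sign typo in the paper's $\mathd m_t$ coefficient, the arguments coincide.
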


\begin{proof}
  Start by assuming that $m$ and $\langle m \rangle$ are bounded. It{\^o}'s
  formula yields
  \[ \mathd | m_t |^{2 p} = (2 p) | m_t |^{2 p - 1} \mathd m_t + \frac{1}{2}
     (2 p) (2 p - 1) | m_t |^{2 p - 2} \mathd \langle m \rangle_t, \]
  and therefore
  \[ \mathbb{E} [| m_T |^{2 p}] = C_p \mathbb{E} \Big[ \int_0^T | m_s |^{2
     p - 2} \mathd \langle m \rangle_s \Big] \leqslant C_p \mathbb{E}
     [\sup_{t \leqslant T} | m_t |^{2 p - 2} \langle m \rangle_T] . \]
  By Cauchy--Schwartz we get
  \[ \mathbb{E} [| m_T |^{2 p}] \leqslant C_p \mathbb{E} [\sup_{t \leqslant
     T} | m_t |^{2 p}]^{(2 p - 2) / 2 p} \mathbb{E} [\langle m
     \rangle_T^p]^{1 / p} . \]
  But now Doob's $L^p$ inequality yields $\mathbb{E} [\sup_{t \leqslant T} |
  m_t |^{2 p}] \leqslant C'_p \mathbb{E} [| m_T |^{2 p}]$, and this implies
  the claim in the bounded case. The unbounded case can be treated with a
  localization argument. 
\end{proof}

Applying Burkholder's inequality, we obtain
\begin{align*}
    \mathbb{E} [| J_t (\varphi) - J_s (\varphi) |^{2 p}] & \lesssim_p  \mathbb{E} \Big[ \Big| \int_s^t \mathcal{E} (G (\ast) (\varphi), G (\ast) (\varphi)) (X_r) \mathd r \Big|^p \Big] +\mathbb{E} [| G (X_t)(\varphi) - G (X_s) (\varphi) |^{2 p}] \\
     & \leqslant (t - s)^{p - 1} \int_s^t \mathbb{E} [| \mathcal{E} (G (\ast)(\varphi), G (\ast) (\varphi)) (X_r) |^p] \mathd r \\
     &\qquad +\mathbb{E} [| G (X_t) (\varphi) - G (X_s) (\varphi) |^{2 p}] \\
     & = (t - s)^p \mathbb{E} [| \mathcal{E} (G (\ast) (\varphi), G (\ast)(\varphi)) (\eta) |^p] +\mathbb{E} [| G (X_t) (\varphi) - G (X_s) (\varphi) |^{2 p}],
\end{align*}
using that $X_r \sim \eta$. Now
\[ \mathD_m G (\rho) (e_k) = - 2 ik \frac{\rho (e_{k - m})}{(k - m)^2 + 
   m^2}, \]
and therefore
\begin{align*}
    \mathcal{E} (G (\ast) (e_k), G (\ast) (e_k)) (\rho) & = \sum_m m^2 \mathD_{- m} G (\rho) (e_{- k}) \mathD_m G (\rho) (e_k) \\
    & = 4 k^2 \sum_{\ell + m = k} m^2 \frac{| \rho (e_{\ell}) |^2}{(\ell^2 + m^2)^2} \lesssim k^2 \sum_{\ell + m = k} \frac{| \rho (e_{\ell}) |^2}{\ell^2 + m^2},
 \end{align*}
which implies that
\[ \mathbb{E} [| \mathcal{E} (G (\ast) (e_k), G (\ast) (e_k)) (\eta) |]
   \lesssim k^2 \mathbb{E} \sum_{\ell + m = k} \frac{| \eta (e_{\ell})
   |^2}{\ell^2 + m^2} \lesssim k^2 \sum_{\ell + m = k} \frac{1}{\ell^2 + m^2}
   \lesssim | k | . \]
A similar computation gives also that
\[ \mathbb{E} [| \mathcal{E} (G (\ast) (e_k), G (\ast) (e_k)) (\eta) |^p]
   \lesssim | k |^p . \]
Further, we have
\begin{align*}
   \mathbb{E} [| G (X_t) (e_k) - G (X_s) (e_k) |^2] & \lesssim k^2 \sum_{\ell + m = k} \mathbb{E} \Big[ \frac{|H_{\ell, m} (X_t) - H_{\ell, m} (X_s))^2}{(\ell^2 + m^2|^2} \Big] \\
    & \lesssim k^2 | t - s | \sum_{\ell + m = k} \frac{m^2}{(\ell^2 + m^2)^2} \lesssim | k |  | t - s |.
\end{align*}
And finally, since $G$ is a second order polynomial of a Gaussian process we can apply once more Gaussian hypercontractivity to obtain
\[ \mathbb{E} [| J_t (e_k) - J_s (e_k) |^{2 p}] \lesssim_p (t - s)^p | k |^p
   . \]

The advantage of the It{\^o} trick with respect to the explicit Gaussian
computation is that it goes over to the non--Gaussian case. Indeed note that
while the boundary term $G (X_t) (\varphi) - G (X_s) (\varphi)$ has been
estimated using a lot of the Gaussian information about $X$, we used only the
law at a fixed time to handle the term $\int_s^t \mathcal{E} (G (\ast)
(\varphi), G (\ast) (\varphi)) (X_r) \mathd r$.

\

In order to carry over these computation to the full process $u$ solution of
the non--linear dynamics we need to replace the generator of $X$ with that of
$u$ and to have a way to handle the boundary terms. The idea is now to reverse
the Markov process $u$ in time, which will allow us to kill the antisymmetric
part of the generator and at the same time kill the boundary terms. Indeed
observe that if $u$ solves the stochastic Burgers equation, then formally we
have the It{\^o} formula

\[ \mathd_t F (u_t) = \sum_{i = 1}^n F_i (u_t) \mathd M_t (\varphi_i) + L F
   (u_t) \mathd t, \]
where $L$ is now the full generator of the non--linear dynamics, given by
\[ L F (\rho) = L_0 F (\rho) + \sum_i F_i (\rho) \langle \partial_x \rho^2,
   \varphi_i \rangle = L_0 F (\rho) + B F (\rho), \]
where
\[ B F (\rho) = \sum_k (\partial_x \rho^2) (e_k) \mathD_k F (\rho) . \]
Formally, the non--linear term is antisymmetric with respect to the
invariant measure of $L_0$. Indeed since $B$ is a first order operator
\begin{equation}
\label{eq:antisymmetry of nonlinear} 
\mathbb{E} [(B F (\eta)) G (\eta)] =\mathbb{E} [(B (F G) (\eta))]
   -\mathbb{E} [F (\eta) (B G (\eta))] =  -\mathbb{E} [F (\eta) (B G (\eta))]
\end{equation}
provided $\mathbb{E} [B F (\eta)]=0$ for any cylinder function $F$. Let us show this. 
\begin{equation*}
\begin{aligned}
\mathbb{E} [B F (\eta)] & = \sum_k \mathbb{E} [(\partial_x \eta^2) (e_k)
   \mathD_k F (\eta)]\\
   &  = - \sum_k \mathbb{E} [(\mathD_k (\partial_x \eta^2) (e_k)) F (\eta)] +
   \sum_k \mathbb{E} [\mathD_k [(\partial_x \eta^2) (e_k) F (\eta)]]
\end{aligned}
\end{equation*}
But now we get from~\eqref{eq:square explicit}
\begin{equation*}
\mathD_k (\partial_x \eta^2) (e_k) = \sqrt{2} i k \eta (e_0) = \pi^{-1/2} i k \langle \eta ,1 \rangle = 0,
\end{equation*}
where we used that $\langle \eta, 1 \rangle = 0$. 
Gaussian integration by parts then formally gives
\begin{equation*}
\begin{aligned}
\mathbb{E} [B F (\eta)] &= \sum_k \mathbb{E} [\mathD_k [(\partial_x
   \eta^2) (e_k) F (\eta)]] = \sum_k \mathbb{E} [\eta (e_k) (\partial_x
   \eta^2) (e_k) F (\eta)] \\
   & = \mathbb{E} [\langle \eta, \partial_x \eta^2 \rangle F (\eta)] = 
   \frac{1}{3} \mathbb{E} [\langle 1, \partial_x \eta^3 \rangle F (\eta)] = 0
\end{aligned}
\end{equation*}
since $ \langle 1, \partial_x \eta^3 \rangle = - \langle \partial_x 1, \eta^3
\rangle = 0$ (but of course $\langle \eta, \partial_x \eta^2 \rangle$ is not well defined).

\medskip
The dynamics of $u$ backwards in time has a Markovian description which is the
subject of the next exercise.

\begin{exercise}
  Let $(y_t)_{t \geqslant 0}$ be a stationary Markov process on a Polish
  space, with semigroup $(P_t)_{t \geqslant 0}$ and stationary distribution
  ${\mu}$. Show that if $P^{\ast}_t$ is the adjoint of $P_t$ in $L^2
  ({\mu})$, then $(P^{\ast}_t)$ is a semigroup of operators on $L^2
  ({\mu})$ (that is $P^{\ast}_0 = \tmop{id}$ and $P^{\ast}_{s + t} =
  P^{\ast}_s P^{\ast}_t$ as operators on $L^2 ({\mu})$). Show that if $y_0
  \sim {\mu}$, then for all $T > 0$ the process $\hat{y}_t = y_{T - t}$,
  $t \in [0, T]$, is also Markov, with semigroup $(P^{\ast}_t)_{t \in [0,
  T]}$, and that ${\mu}$ is also an invariant distribution for
  $(P^{\ast}_t)$. Show also that if $(P_t)$ has generator $L$ then
  $(P^{\ast}_t)$ has generator $L^{\ast}$ which is the adjoint of $L$ with
  respect to $L^2 ({\mu})$.
\end{exercise}

Now if we reverse the process in time letting $\hat{u}_t = u_{T - t}$, we have
by stationarity
\[ \mathbb{E} [F (\hat{u}_t) G (\hat{u}_0)] =\mathbb{E} [F (u_{T - t}) G
   (u_T)] =\mathbb{E} [F (u_0) G (u_t)] . \]
So if we denote by $\hat{L}$ the generator of $\hat{u}$:
\[ \mathbb{E} [\hat{L} F (\hat{u}_0) G (\hat{u}_0)] = \left.
   \frac{\mathd}{\mathd t} \right|_{t = 0} \mathbb{E} [F (\hat{u}_t) G
   (\hat{u}_0)] = \left. \frac{\mathd}{\mathd t} \right|_{t = 0} \mathbb{E}
   [F (u_0) G (u_t)] =\mathbb{E} [L G (u_0) F (u_0)], \]
which means that $\hat{L}$ is the adjoint of $L$, that is
\[ \hat{L} F (\rho) = L_0 F (\rho) - B F (\rho) = L_0 F (\rho) - \sum_k
   (\partial_x \rho^2) (e_k) \mathD_k F (\rho) . \]
   In other words, the reversed process solves
\[ \hat u_t (\varphi) = \hat u_0 (\varphi) + \int_0^t \hat u_s (\Delta \varphi) \mathd s +
   \int_0^t \langle \hat u^2_s, \partial_x \varphi \rangle \mathd s - \int_0^t
   \hat \xi_s (\partial_x \varphi) \mathd s \]
   for a different space-time white noise $\hat \xi$. Then It{\^o}'s formula for $\hat u$ gives
\[ \mathd_t F (\hat{u}_t) = \sum_{i = 1}^n F_i (\hat{u}_t) \mathd \hat{M}_t
   (\varphi_i) + \hat{L} F (\hat{u}_t) \mathd t, \]
where for all test functions $\varphi$, the process $\hat{M} (\varphi)$ is a
martingale in the filtration of $\hat{u}$ with covariance
\[ \mathd \langle \hat{M} (\varphi), \hat{M} (\psi) \rangle_t = \langle
   \partial_x \varphi, \partial_x \psi \rangle_{L^2 (\mathbb{T})} \mathd t.
\]
Combining the It{\^o} formulas for $u$ and $\hat{u}$, we get
\[ F (u_T) (\varphi) = F (u_0) (\varphi) + M_{F, T} (\varphi) + \int_0^T L F
   (u_s) (\varphi) \mathd s \]
and
\begin{align*}
   F (u_0) (\varphi) & = F (\hat{u}_T) (\varphi) = F (\hat{u}_0) (\varphi) +  \hat{M}_{F, T} (\varphi) + \int_0^T \hat{L} F (\hat{u}_s) (\varphi) \mathd s \\
   & = F (u_T) (\varphi) + \hat{M}_{F, T} (\varphi) + \int_0^T \hat{L} F (u_s) (\varphi) \mathd s,
\end{align*}
and summing up these two equalities gives
\[ 0 = M_{F, T} (\varphi) + \hat{M}_{F, T} (\varphi) + \int_0^T (\hat{L} + L)
   F (u_s) (\varphi) \mathd s, \]
that is
\[ 2 \int_0^T L_0 F (u_s) (\varphi) \mathd s = - M_{F, T} (\varphi) -
   \hat{M}_{F, T} (\varphi) . \]
An added benefit of this forward--backward representation is that the only term
which required quite a lot of informations about $X$, that is the boundary
term $F (X_t) (\varphi) - F (X_s) (\varphi)$ does not appear at all now. As
above if $2 L_0 F (\rho) = \partial_x \rho^2$, we end up with
\begin{equation}
  \label{eq:martingale trick sbe}  \int_0^T \partial_x u_s^2 (\varphi) \mathd
  s = - M_{F, T} (\varphi) - \hat{M}_{F, T} (\varphi) .
\end{equation}
\

\begin{exercise}
  Perform a similar formal calculation as in~(\ref{eq:antisymmetry of
  nonlinear}) to see that $\mathbb{E} [L F (\eta)] = 0$ for all cylindrical
  functions $F$, so that $\eta$ should also be invariant for the stochastic Burgers
  equation. Combine this with~(\ref{eq:martingale trick sbe}) to show that
  setting $\mathcal{N}_t^N (\varphi) = \int_0^t \partial_x (\Pi_N u_s)^2
  (\varphi) \mathd s$ we have
  \[ \mathbb{E} [| \mathcal{N}_t^N (e_k) -\mathcal{N}_s^N (e_k) |^{2 p}]
     \lesssim_p (t - s)^p | k |^p \]
  and letting $\mathcal{N}_t^{N, M} =\mathcal{N}_t^N -\mathcal{N}_t^M$ we get
  \[ \mathbb{E} [| \mathcal{N}_t^{N, M} (e_k) -\mathcal{N}_s^{N, M} (e_k)
     |^{2 p}] \lesssim_p (| k | / N)^{\varepsilon p} (t - s)^p | k |^p \]
  for all $1 \leqslant N \leqslant M$. Use this to derive that
  \[ (\mathbb{E} [\| \mathcal{N}_t^{N, M} -\mathcal{N}_s^{N, M} \|^{2
     p}_{H^{\alpha}}])^{1 / 2 p} \lesssim_{p, \alpha} N^{- \varepsilon / 2} (t
     - s)^{1 / 2} \]
  for all $\alpha < - 1 - \varepsilon$, and realize that this estimate allows
  you to prove compactness of the approximations $\mathcal{N}^N$ and then
  convergence to a limit $\mathcal{N}$ in $L^{2 p} (\Omega ; C^{1 / 2 -} H^{-
  1 -})$.
\end{exercise}

\subsection{Controlled distributions}

Let us cook up a definition which will allow us to rigorously perform the
formal computations above in a general setting.

\begin{definition}
  Let $u, \mathcal{A}: \mathbb{R}_+ \times \mathbb{T} \rightarrow
  \CS'(\mathbb{T})$ be a couple of generalized (i.e. distribution-valued) processes such that
  \begin{enumerateroman}
    \item For all $\varphi \in \CS (\mathbb{T})$ the process $t \mapsto u_t
    (\varphi)$ is a continuous semimartingale satisfying
    \[ u_t (\varphi) = u_0 (\varphi) + \int_0^t u_s (\Delta \varphi) \mathd s
       +\mathcal{A}_t (\varphi) + M_t (\varphi), \]
    where $t \mapsto M_t (\varphi)$ is a martingale with quadratic variation
    $\langle M (\varphi), M (\psi) \rangle_t = \langle \partial_x \varphi,
    \partial_x \psi \rangle_{L^2 (\mathbb{T})} t$ and $t \mapsto \mathcal{A}_t
    (\varphi)$ is a finite variation process with $\mathcal{A}_0(\varphi) = 0$.
    
    \item For all $t \geqslant 0$ the random distribution $\varphi \mapsto u_t
    (\varphi)$ is a zero mean space white noise with variance $\| \varphi
    \|_{L^2_0}^2 / 2$.
    
    \item For any $T > 0$ the reversed process $\hat{u}_t = u_{T - t}$ has
    again properties $i, i i$ with martingale $\hat{M}$ and finite variation
    part $\hat{\mathcal{A}}$ such that $\hat{\mathcal{A}}_t (\varphi) = -
    (\mathcal{A}_T (\varphi) -\mathcal{A}_{T - t} (\varphi))$.
  \end{enumerateroman}
  Any pair of processes $(u, \mathcal{A})$ satisfying these condition will be
  called \tmtextit{controlled by the Ornstein--Uhlenbeck process} and we will
  denote the set of all such processes with $\mathcal{Q}_{\tmop{ou}}$.
\end{definition}

\begin{theorem}[\cite{gubinelli_regularization_2013}, Lemma~1]
  Assume that $(u, \mathcal{A}) \in \mathcal{Q}_{\tmop{ou}}$ and for any $N
  \geqslant 1$, $t \geqslant 0$, $\varphi \in \CS$ let
  \[ \mathcal{N}_t^N (\varphi) = \int_0^t \partial_x (\Pi_N u_s)^2 (\varphi)
     \mathd s \]
  Then for any $p \geqslant 1$ $(\mathcal{N}^N)_{N \geqslant 1}$ converges in
  $L^p(\Omega)$ to a space--time distribution $\mathcal{N} \in C^{1 / 2 -} H^{-
  1 -}$.
\end{theorem}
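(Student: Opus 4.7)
The strategy is to lift the forward--backward It\^o trick developed for the Ornstein--Uhlenbeck process to the abstract controlled class $\mathcal{Q}_{\mathrm{ou}}$. Property~(iii) will play a key role: it ensures that the finite variation drift $\mathcal{A}$ cancels when we combine the forward and backward It\^o expansions, so that the whole computation reduces to martingale estimates driven only by the stationary law $\eta$.

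\emph{Step 1 (choice of a correcting functional).} For each $N$ I introduce the cylindrical functional
\[
F^N(\rho)(\varphi) \assign -\sum_{k\in\mathbb{Z}} \varphi(e_{-k})\, ik \sum_{\substack{\ell+m=k\\ |\ell|,|m|\le N}} \frac{H_{\ell,m}(\rho)}{\ell^2+m^2},
\]
which, exactly as in the unconditioned computation preceding the theorem, satisfies $2L_0 F^N(\rho)(\varphi) = \partial_x(\Pi_N\rho)^2(\varphi)$. Truncation by $\Pi_N$ keeps only finitely many Fourier modes, so $F^N$ is genuinely cylindrical and It\^o's formula applies.

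\emph{Step 2 (forward--backward identity).} Using property (i) for $u$ and for $\hat u_t := u_{T-t}$, the It\^o formula yields
\begin{align*}
F^N(u_T) - F^N(u_0) &= \int_0^T L_0 F^N(u_s)\,\mathd s + G^N_T(\varphi) + M^N_T(\varphi), \\
F^N(u_0) - F^N(u_T) &= \int_0^T L_0 F^N(u_s)\,\mathd s + \hat G^N_T(\varphi) + \hat M^N_T(\varphi),
\end{align*}
where $G^N,\hat G^N$ are the drifts coming from $\mathcal{A}$ and $\hat{\mathcal{A}}$. Property (iii) translates, after a change of variable $s\mapsto T-s$, into $\hat G^N = -G^N$, and the boundary terms telescope to zero when the two identities are summed. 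Consequently
\[
\mathcal{N}^N_T(\varphi) = 2\int_0^T L_0 F^N(u_s)(\varphi)\,\mathd s = -M^N_T(\varphi) - \hat M^N_T(\varphi).
\]

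\emph{Step 3 (Burkholder and Gaussian estimates).} Both martingales have quadratic variation of the form $\int_s^t \mathcal{E}(F^N(\cdot)(\varphi),F^N(\cdot)(\varphi))(u_r)\,\mathd r$, and property (ii) reduces any moment of the integrand at time $r$ to the corresponding moment under the white noise $\eta$. Burkholder's inequality followed by Jensen then gives, for $\varphi=e_k$,
\[
\mathbb{E}|\mathcal{N}^N_t(e_k)-\mathcal{N}^N_s(e_k)|^{2p} \lesssim_p (t-s)^p\,\mathbb{E}\bigl[\mathcal{E}(F^N,F^N)(e_k)(\eta)^p\bigr].
\]
The integrand is quadratic in $\eta$, hence lives in a fixed Wiener chaos, so Gaussian hypercontractivity converts the $L^p$ moment into the $L^1$ moment. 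The latter is computed exactly as for $J$ in Lemma~\ref{lem:OU square}, and yields the uniform in $N$ bound $\lesssim |k|^p$.

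\emph{Step 4 (differences and passage to the limit).} Applied to $F^{M,N}\assign F^M-F^N$ with $N<M$, the same computation restricts the Fourier sum to pairs $(\ell,m)$ with $\max(|\ell|,|m|)>N$, so one can trade a small amount of $k$-regularity for a factor $N^{-\varepsilon}$, giving
\[
\mathbb{E}|(\mathcal{N}^M_t-\mathcal{N}^N_t)(e_k) - (\mathcal{N}^M_s-\mathcal{N}^N_s)(e_k)|^{2p} \lesssim_p N^{-\varepsilon p}(t-s)^p |k|^{(1+\varepsilon)p}.
\]
Summing over $k$ with weights $(1+k^2)^\alpha$, $\alpha<-1-\varepsilon$, and using Minkowski in $L^p(\Omega)$ produces
\[
\bigl(\mathbb{E}\|\mathcal{N}^M_t-\mathcal{N}^N_t - (\mathcal{N}^M_s-\mathcal{N}^N_s)\|_{H^\alpha}^{2p}\bigr)^{1/2p} \lesssim N^{-\varepsilon/2}(t-s)^{1/2}.
\]
Kolmogorov's continuity criterion then upgrades this to Cauchyness of $(\mathcal{N}^N)$ in $L^{2p}(\Omega;C^{1/2-}H^\alpha)$ for every $\alpha<-1$, producing the claimed limit $\mathcal{N}\in C^{1/2-}H^{-1-}$. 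The only delicate piece is the bookkeeping in Step~3/4 that tracks both the $|k|$-dependence and the gain $N^{-\varepsilon}$; the remaining ingredients—It\^o calculus, reversibility, hypercontractivity, Kolmogorov—are applied off the shelf.
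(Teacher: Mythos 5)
Your proposal is correct and follows precisely the route the paper intends: the forward--backward It\^o trick combined with stationarity to reduce everything to white-noise moments, then Burkholder, Gaussian hypercontractivity, and Kolmogorov. This is exactly the computation laid out in the ``It\^o trick'' subsection and in the exercise immediately preceding the theorem, which the paper leaves to the reader as the proof.
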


We are now at a point where we can give a meaning to our original equation.

\begin{definition}
  A pair of random distribution $(u, \mathcal{A}) \in \mathcal{Q}_{\tmop{ou}}$
  is an {\tmem{energy solution}} to the stochastic Burgers equation if it
  satisfies
  \[ u_t (\varphi) = u_0 (\varphi) + \int_0^t u_s (\Delta \varphi) \mathd s
     +\mathcal{N}_t (\varphi) + M_t (\varphi) \]
  for all $t \geqslant 0$ and $\varphi \in \CS$. That is if
  $\mathcal{A}=\mathcal{N}$. 
\end{definition}

Now we are in a relatively standard setting of needing to prove existence and
uniqueness of such energy solutions. Note that in general the solutions are
{\tmem{pairs}} of processes $(u, \mathcal{A})$.

\begin{remark}
  The notion of energy solution has been introduced (in a slightly different
  way) in the work of Gon{\c c}alves and
  Jara~{\cite{goncalves_universality_2010}} on macroscopic universal
  fluctuations of weakly asymmetric interacting particle systems.
\end{remark}

\subsection{Existence of solutions}

For the existence the way to proceed is quite standard. We approximate the
equation, construct approximate solutions and then try to have enough
compactness to have limiting points which then naturally will satisfy the
requirements for energy solutions. For any $N \geqslant 1$ consider solutions
$u^N$ to
\[ \partial_t u^N = \Delta u^N + \partial_x \Pi_N (\Pi_N u^N)^2 + \partial_x
   \xi \]
These are generalized functions such that
\[ \mathd u^N_t (e_k) = - k^2 u^N_t (e_k) \mathd t + [\partial_x \Pi_N (\Pi_N
   u^N)^2] (e_k) \mathd t + ik \mathd \beta_t (k) \]
for $k \in \mathbb{Z}$ and $t \geqslant 0$. We take $u_0$ to be the white
noise with covariance $u_0 (\varphi) \sim \mathcal{N} (0, \| \varphi \|^2 /
2)$. The point of our choice of the non--linearity is that this
(infinite--dimensional) system of equations decomposes into a finite
dimensional system for $(v^N (k) = \Pi_N u^N (e_k))_{k : | k | \leqslant N}$
and an infinite number of one--dimensional equations for each $u^N (e_k)$ with
$| k | > N$. Indeed if $| k | > N$ we have $[\partial_x \Pi_N (\Pi_N u^N)^2]
(e_k) = 0$ so $u_t (e_k) = X_t (e_k)$ the Ornstein--Uhlenbeck process with
initial condition $X_0(e_k) = u_0(e_k)$ which renders it stationary in time (check it).
The equation for $(v^N (k))_{| k | \leqslant N}$ reads
\[ \mathd v^N_t (k) = - k^2 v^N_t (k) \mathd t + b_k (v^N_t) \mathd t + ik
   \mathd \beta_t (k), \hspace{2em} | k | \leqslant N, t \geqslant 0 \]
where
\[ b_k (v^N_t) = ik \sum_{\ell + m = k} \mathbb{I}_{| \ell |, | k |, | m |
   \leqslant N} v^N_t (\ell) v^N_t (m) . \]
This is a standard finite--dimensional ODE having global solutions for all
initial conditions which gives rise to a nice Markov process. The fact that
solutions do not blow up even if the interaction is quadratic can be seen
by computing the evolution of the norm
\[ A_t = \sum_{| k | \leqslant N} | v^N_t (k) |^2 \]
and by showing that
\[ \mathd A_t = 2 \sum_{| k | \leqslant N} v^N_t (- k) \mathd v^N_t (k) = -
   2 \sum_{|k| \le N} k^2 |v^N_t(k)|^2 \mathd t + 2 \sum_{| k | \leqslant N} v^N_t (- k) b_k (v^N_t)
   \mathd t + 2  i  k \sum_{| k | \leqslant N} v^N_t (- k) \mathd \beta_t
   (k). \]
Since $A$ is nonnegative, we increase its absolute value by omitting the first contribution. But now
\begin{align*}
   \sum_{| k | \leqslant N} v^N_t (- k) b_k (v^N_t) & = 2  i  \sum_{k, \ell, m : \ell + m = k} \mathbb{I}_{| \ell |, | k |, | m | \leqslant N} kv^N_t (\ell) v^N_t (m) v^N_t (- k) \\
    & = - 2  i  \sum_{k, \ell, m : \ell + m + k = 0} \mathbb{I}_{| \ell |, | k |, | m | \leqslant N}  (k) v^N_t (\ell) v^N_t (m) v^N_t (k)
\end{align*}
and by symmetry of this expression it is equal to
\[ = - \frac{2}{3}  i  \sum_{k, \ell, m : \ell + m + k = 0} \mathbb{I}_{|
   \ell |, | k |, | m | \leqslant N}  (k + \ell + m) v^N_t (\ell) v^N_t (m)
   v^N_t (k) = 0, \]
so $|A_t| \le |A_0 + M_t|$ where $\mathd M_t = 2 \sum_{| k | \leqslant N}
\mathbb{I}_{| k | \leqslant N} ( i  k) v^N_t (- k) \mathd \beta_t (k)$. Now
\[ \mathbb{E} [M_T^2] \lesssim \int_0^T \sum_{| k | \leqslant N} k^2 | v^N_t
   (k) |^2 \mathd t \lesssim N^2 \int_0^T A_t \mathd t \]
and then by martingales inequalities
\begin{align*}
   \mathbb{E} [\sup_{t \in [0, T]} (A_t)^2] & \leqslant 2\mathbb{E} [A_0^2] + 2\mathbb{E} [\sup_{t \in [0, T]} (M_t)^2] \leqslant 2\mathbb{E} [A_0^2]^{} + 8\mathbb{E} [M_T^2]^{} \\
    & \leqslant 2\mathbb{E} [A_0^2]^{} + CN^2 \int_0^T \mathbb{E} (A_t) \mathd t.
\end{align*}
Now Gronwall's inequality gives
\[ \mathbb{E} [\sup_{t \in [0, T]} (A_t)^2] \lesssim e^{CN^2 T} \mathbb{E}
   [A_0^2], \]
from where we can deduce (by a continuation argument) that almost surely there
is no blowup at finite time for the dynamics. The generator $L^N$ for the Galerkin dynamics is given by
\[ L^N F (\rho) = L_0 F (\rho) + B^N F (\rho), \]
where
\[ B^N F (\rho) = \sum_k  \mathbb{I}_{ | k | \leqslant N} (\partial_x \rho^2) (e_k) \mathD_k F (\rho) . \]
And again the non--linear  drift $B^N$ is antisymmetric with respect to the
invariant measure of $L_0$ by a computation similar to that for the full drift $B$. Next, using Echeverr\'ia's criterion~\cite{Echeverria1982} we can obtain the invariance of the white noise from its infinitesimal invariance which can be checked at the level of the generator $L^N$. 
Finally it is also possible to 
rigorously show that the reversed process is a Markov process with generator
\[ \hat L^N F (\rho) = L_0 F (\rho) - B^N F (\rho), \]
thus proving that the reversed non-linear drift is the opposite of the forward one. Taking
$$
\mathcal{A}^N_t(e_k) = \int_0^t b_k(v^N_s) ds
$$
we obtain that $(v^N, \mathcal{A}^N)\in \mathcal{Q}_{\tmop{ou}}$. Note that this result depends on the fact that we kept the full linear part $L_0$ of the generator. A more standard Galerkin truncation would have lead us to a process which is controlled by the Galerkin--truncated OU process. Estimates would have resulted in a similar way but our setup is simpler. 

Given that $(v^N, \mathcal{A}^N)$ is controlled by the OU process, the It{\^o} trick applied to $\mathcal{A}^N$ provides enough compactness in order to pass to the
limit as $N\to\infty$ and build an energy solution to the Stochastic Burgers equation. See~\cite{gubinelli_regularization_2013} for additional details on the limiting procedure and \cite{Russo2001} for details on how to implement the It\^o trick on the level of diffusions.

\begin{remark}
   There is however one small catch: For a controlled distribution $(u, \mathcal{A})$ we required $\mathcal{A}(\varphi)$ to be of finite variation for every test function $\varphi$. The solution $(v^N, \mathcal{A}^N)$ to the truncated equation will satisfy this, but in the limit $\mathcal{A}(\varphi)$ will only have vanishing quadratic variation and it will not be of finite variation (in other words $u(\varphi)$ is a Dirichlet process and not a semimartingale). Luckily in this setting it is still possible to derive an It\^o formula and everything goes through as described above, see~\cite{gubinelli_regularization_2013} for details.
\end{remark}

\section{Besov spaces}\label{sec:preliminaries}

Here we collect some classical results from harmonic analysis which we will
need in the following. We concentrate on distributions and SPDEs on the torus,
but everything in this Section applies mutatis mutandis on the full space
$\mathbb{R}^d$, see~{\cite{Gubinelli2012}}. The only problem is that then the
stochastic terms will no longer be in the Besov spaces $\CC^{\alpha}$ which we
encounter below but rather in weighted Besov spaces. Handling SPDEs in weighted function spaces is more delicate and we prefer here to concentrate on the simpler situation of the torus.

We will use Littlewood--Paley blocks to obtain a decomposition of
distributions into an infinite series of smooth functions. Of course, we have
already such a decomposition at our disposal: $f = \sum_k \hat{f} (k)
e^{\ast}_k$. But it turns out to be convenient not to consider each Fourier
coefficient separately, but to work with projections on dyadic Fourier blocks.

\begin{definition}
  A \tmtextit{dyadic partition of unity} $(\chi,\rho)$ consists of two nonnegative radial
  functions $\chi, \rho \in C^{\infty} (\mathbb{R}^d, \mathbb{R})$, where
  $\chi$ is supported in a ball $\CB = \{ | x | \leqslant c \}$ and $\rho$ is
  supported in an annulus $\CA = \{ a \leqslant | x | \leqslant b \}$ for
  suitable $a, b, c > 0$, such that
  \begin{enumerate}
    \item $\chi + \sum_{j \geqslant 0} \rho (2^{- j} \cdummy) \equiv 1$ and
    
    \item \label{property-2-dyadic}$\tmop{supp} (\chi) \cap \tmop{supp} (\rho
    (2^{- j} \cdummy)) = \emptyset$ for $j \geqslant 1$ and $\tmop{supp} (\rho
    (2^{- i} \cdummy)) \cap \tmop{supp} (\rho (2^{- j} \nosymbol \cdummy))
    = \emptyset$ for all $i, j \geqslant 0$ with $| i - j | \geqslant 1$.
  \end{enumerate}
  We will often write $\rho_{- 1} = \chi$ and $\rho_j = \rho (2^{- j}
  \cdummy)$ for $j \geqslant 0$.
\end{definition}

Dyadic partitions of unity exist, see~{\cite{Bahouri2011}}. From now on
we fix a dyadic partition of unity $(\chi, \rho)$ and define the dyadic blocks
\[ \Delta_j f = \rho_j (\mathD) f = \CF^{- 1} (\rho_j \hat{f}), \hspace{1em}
   j \geqslant - 1, \]
where here and in the following we use that every function on $\mathbb{R}^d$ can be naturally interpreted as a function on $\mathbb{Z}^d$. We also use the notation
\[ S_j f = \sum_{i \leqslant j - 1} \Delta_i f \]
as well as $K_i = (2 \pi)^{d / 2} \CF^{- 1} \rho_i$ so that
\[ K_i \ast f = \CF^{- 1} ( \rho_i \CF f ) = \Delta_i f. \]
From this representation we can also see the reason for considering smooth partitions rather than indicator functions: From Young's inequality we get only $\| \mathbb{I}_{[2^j, 2^{j + 1})} (| \mathD |) f \|_{L^{\infty}} \le \| \CF^{-1}\mathbb{I}_{[2^j, 2^{j + 1})}\|_{L^1} \| f \|_{L^{\infty}} \lesssim j \| f \|_{L^{\infty}}$ for $f \in L^\infty$, whereas $\| \rho_j (\mathD)
f \|_{L^{\infty}} \lesssim \| f \|_{L^{\infty}}$ uniformly in $j$.

Every dyadic block has a compactly supported Fourier transform and is
therefore in $\CS$. It is easy to see that $f = \sum_{j \geqslant - 1}
\Delta_j f = \lim_{j \rightarrow \infty} S_j f$ for all $f \in \CS'$.

For $\alpha \in \mathbb{R}$, the H{\"o}lder-Besov space $\CC^{\alpha}$ is
given by $\CC^{\alpha} = B^{\alpha}_{\infty, \infty} (\mathbb{T}^d,
\mathbb{R})$, where for $p, q \in [1, \infty]$ we define
\[
    B^{\alpha}_{p, q} = B^{\alpha}_{p, q} (\mathbb{T}^d, \mathbb{R}) = \Big\{ f \in \CS' : \|f\|_{B^{\alpha}_{p, q}} = \Big( \sum_{j \geqslant - 1} (2^{j \alpha} \| \Delta_j f\|_{L^p})^q \Big)^{1 / q} < \infty \Big\},
\]
with the usual interpretation as $\ell^{\infty}$ norm if $q = \infty$. Then
$B^{\alpha}_{p, q}$ is a Banach space and while the norm $\lVert \cdummy
\rVert_{B^{\alpha}_{p, q}}$ depends on $(\chi, \rho)$, the space
$B^{\alpha}_{p, q}$ does not and any other dyadic partition of unity
corresponds to an equivalent norm (for $(p,q) = (\infty, \infty)$ this follows from Lemma~\ref{lem: Besov
regularity of series} below, for the general case see~\cite{Bahouri2011}, Lemma~2.69). We write $\lVert \cdummy \rVert_{\alpha}$
instead of $\lVert \cdummy \rVert_{B^{\alpha}_{\infty, \infty}}$.

\begin{exercise}
  Let $\delta_0$ denote the Dirac delta in 0. Show that $\delta_0 \in \CC^{-d}$.
\end{exercise}

If $\alpha \in (0, \infty) \setminus \mathbb{N}$, then $\CC^{\alpha}$ is the
space of $\lfloor \alpha \rfloor$ times differentiable functions whose partial
derivatives of order $\lfloor \alpha \rfloor$ are ($\alpha - \lfloor \alpha
\rfloor$)-H{\"o}lder continuous (see page~99 of~{\cite{Bahouri2011}}). Note
however, that for $k \in \mathbb{N}$ the space $\CC^k$ is strictly larger
than $C^k$, the space of $k$ times continuously differentiable functions.
Below we will give the proof for $\alpha \in (0, 1)$, but before we still need
some tools.

Recall that Schwartz functions on $\mathbb{R}^d$ are functions $f\in C^\infty(\mathbb{R}^d)$ such that for every multiindex $\mu$ and all $n\ge 0$ we have
\[
\sup_{x\in\mathbb{R}^d}\, (1+|x|)^n |\partial^\mu f(x)| < \infty.
\]
\begin{lemma}
  \label{lem:poisson}(Poisson summation) Let $\varphi : \mathbb{R}^d
  \rightarrow \mathbb{C}$ be a Schwartz function. Then
  \[ \CF^{- 1} \varphi (x) = \sum_{k \in \mathbb{Z}^d} \CF^{-
     1}_{\mathbb{R}^d} \varphi (x + 2 \pi k), \]
  for all $x \in \mathbb{T}^d$, where $\CF^{- 1}_{\mathbb{R}^d} \varphi (x)
  = (2 \pi)^{- d / 2} \int_{\mathbb{R}^d} \varphi (y) e^{i \langle x, y
  \rangle} \mathd y$.
\end{lemma}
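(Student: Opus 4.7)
The plan is to recognize this as the classical Poisson summation formula and prove it by computing the Fourier coefficients (on the torus) of the periodized function on the right hand side, then invoking Fourier inversion on $\mathbb{T}^d$ (Exercise~\ref{exo:fourier inversion}).

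First I would set $\psi \assign \CF^{-1}_{\mathbb{R}^d} \varphi$, which is again a Schwartz function on $\mathbb{R}^d$ since the Schwartz class is preserved by the full-space Fourier transform, and define
\[ f(x) \assign \sum_{k \in \mathbb{Z}^d} \psi(x + 2\pi k). \]
The rapid decay of $\psi$ guarantees that the series converges absolutely and uniformly on compact sets, together with all its derivatives; hence $f$ is smooth, and by construction $2\pi$-periodic in every coordinate, so $f \in C^{\infty}(\mathbb{T}^d)$.

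Next I would compute its Fourier coefficients. For $k \in \mathbb{Z}^d$,
\[ \hat{f}(k) = \langle f, e_k \rangle = (2\pi)^{-d/2} \int_{\mathbb{T}^d} \sum_{j \in \mathbb{Z}^d} \psi(x + 2\pi j)\, e^{-i\langle k, x\rangle}\, \mathd x. \]
Dominated convergence (justified by the Schwartz bounds on $\psi$) allows me to swap sum and integral. In each summand I change variables $y = x + 2\pi j$, observing that $e^{-i\langle k, y - 2\pi j\rangle} = e^{-i\langle k, y\rangle}$ since $\langle k, 2\pi j\rangle \in 2\pi\mathbb{Z}$, and that the translates $\mathbb{T}^d + 2\pi j$ tile $\mathbb{R}^d$ up to a null set. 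This gives
\[ \hat{f}(k) = (2\pi)^{-d/2} \int_{\mathbb{R}^d} \psi(y)\, e^{-i\langle k, y\rangle}\, \mathd y = \CF_{\mathbb{R}^d} \psi(k) = \CF_{\mathbb{R}^d}\CF^{-1}_{\mathbb{R}^d}\varphi(k) = \varphi(k), \]
where I used Fourier inversion on $\mathbb{R}^d$ for Schwartz functions.

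Finally, since $f \in C^{\infty}(\mathbb{T}^d) \subset \CS'$, Fourier inversion on the torus (Exercise~\ref{exo:fourier inversion}) applied to $f$ yields
\[ f(x) = \sum_{k \in \mathbb{Z}^d} \hat{f}(k)\, e^{\ast}_k(x) = \sum_{k \in \mathbb{Z}^d} \varphi(k)\, e^{\ast}_k(x) = \CF^{-1}\varphi(x), \]
which is exactly the stated identity. The only real subtlety is the tiling/change-of-variables step and verifying that swapping $\sum$ with $\int$ is legal; both are routine thanks to the Schwartz hypothesis on $\varphi$, so I do not expect any genuine obstacle — the argument is essentially a bookkeeping of normalization constants matched to the conventions fixed in the preceding paragraphs.
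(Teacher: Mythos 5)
Your proof is correct and follows essentially the same route as the paper: periodize $\CF^{-1}_{\mathbb{R}^d}\varphi$, compute the torus Fourier coefficients of the periodization by collapsing the sum and integral into a single integral over $\mathbb{R}^d$ (tiling by translates of the fundamental domain), identify them with $\varphi(k)$ via Fourier inversion on $\mathbb{R}^d$, and conclude by Fourier inversion on $\mathbb{T}^d$. The only cosmetic difference is that you phrase the last step via Exercise~\ref{exo:fourier inversion} whereas the paper simply recognizes $\CF g = \varphi$ and inverts.
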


\begin{proof}
  Let $g (x) = \sum_{k \in \mathbb{Z}^d} \CF^{- 1}_{\mathbb{R}^d} \varphi (x
  + 2 \pi k)$. The function $\CF^{- 1}_{\mathbb{R}^d} \varphi$ is of rapid
  decay since $\varphi \in \CS$ so the sum converges absolutely and defines a
  continuous function $g : \mathbb{R}^d \rightarrow \mathbb{R}$ which is
  periodic of period $2 \pi$ in every direction. The Fourier transform over
  the torus $\mathbb{T}^d$ of this function is
  \[ \CF g (y) = \int_{\mathbb{T}^d} e^{- i \langle x, y \rangle} g (x)
     \frac{\mathd x}{(2 \pi)^{d / 2}} = \int_{\mathbb{T}^d} \sum_{k \in
     \mathbb{Z}^d} \CF^{- 1}_{\mathbb{R}^d} \varphi (x + 2 \pi k) e^{- i
     \langle x + 2 \pi k, y \rangle} \frac{\mathd x}{(2 \pi)^{d / 2}} \]
  since $e^{- i \langle 2 \pi k, y \rangle} = 1$ for all $y \in \mathbb{Z}^d$. By dominated convergence the
  sum and the integral can be combined in a overall integration over
  $\mathbb{R}^d$:
  \[ \CF g (y) = \int_{\mathbb{R}^d} \CF^{- 1}_{\mathbb{R}^d} \varphi (x)
     e^{- i \langle x, y \rangle} \frac{\mathd x}{(2 \pi)^{d / 2}} =
     \CF_{\mathbb{R}^d} \CF^{- 1}_{\mathbb{R}^d} \varphi (y) = \varphi (y),
  \]
  where $\CF_{\mathbb{R}^d} f (x) = \CF_{\mathbb{R}^d}^{-1} f (-x)$. So we deduce that $g (x) = \CF^{- 1} \varphi (x)$.
\end{proof}

\begin{exercise}\label{exercise:besov-space-inequalities}Show that $\lVert \cdummy
  \rVert_{\alpha} \leqslant \lVert \cdummy \rVert_{\beta}$ for $\alpha
  \leqslant \beta$, that $\lVert \cdummy \rVert_{L^{\infty}} \lesssim \lVert
  \cdummy \rVert_{\alpha}$ for $\alpha > 0$, that $\lVert \cdummy
  \rVert_{\alpha} \lesssim \lVert \cdummy \rVert_{L^{\infty}}$ for $\alpha
  \leqslant 0$, and that $\|S_j \cdummy \|_{L^{\infty}} \lesssim 2^{j \alpha}
  \| \cdummy \|_{\alpha}$ for $\alpha < 0$. These inequalities will be very important for us in the following and we will often use them without mentioning it specifically.
  
  \tmtextbf{Hint:} When proving $\lVert \cdummy \rVert_{\alpha} \lesssim
  \lVert \cdummy \rVert_{L^{\infty}}$ for $\alpha \leqslant 0$, you might need
  Poisson's summation formula.
\end{exercise}

The following Bernstein inequality is extremely useful when dealing with
functions with compactly supported Fourier transform.

\begin{lemma}
  \label{lemma:Bernstein}(Bernstein inequality) Let $\CB$ be a ball and $k \in
  \mathbb{N}_0$. For any $\lambda \geqslant 1$, $1 \leqslant p \leqslant q
  \leqslant \infty$, and $f \in L^p$ with $\tmop{supp} (\CF f) \subseteq
  \lambda \CB$ we have
  \[ \max_{{\mu} \in \mathbb{N}^d : |{\mu}| = k} \|
     \partial^{{\mu}} f\|_{L^q} \lesssim_{k, \CB} \lambda^{k + d (
     \frac{1}{p} - \frac{1}{q} )} \|f\|_{L^p} . \]
\end{lemma}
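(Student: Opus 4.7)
The plan is to reduce the statement, by frequency localisation, to Young's convolution inequality plus an $L^r$ estimate of a scaled kernel, and then to transfer the kernel estimate from $\mathbb{R}^d$ to $\mathbb{T}^d$ via Poisson summation.

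First I would choose an auxiliary cutoff $\psi \in C_c^\infty(\mathbb{R}^d)$ such that $\psi \equiv 1$ on a neighbourhood of $\mathcal{B}$. Then for every $k \in \mathbb{Z}^d$ in the support of $\hat f$ (which lies in $\lambda \mathcal{B}$) we have $\psi(k/\lambda) = 1$, hence $\hat f(k) = \psi(k/\lambda) \hat f(k)$ as functions on $\mathbb{Z}^d$. For a multi-index ${\mu}$ with $|{\mu}| = k$, this gives
\[
\widehat{\partial^{\mu} f}(n) \;=\; (in)^{\mu}\,\psi(n/\lambda)\, \hat f(n), \qquad n \in \mathbb{Z}^d.
\]
By the convolution identity of Exercise~\ref{exercise:convolution}, this means $\partial^{\mu} f = K_\lambda^{\mu} \ast f$ where $K_\lambda^{\mu}(x) = \mathcal{F}^{-1}\bigl((i\cdot)^{\mu}\psi(\cdot/\lambda)\bigr)(x)$ is a smooth function on $\mathbb{T}^d$. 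Young's inequality on the torus with exponents satisfying $1 + 1/q = 1/r + 1/p$ (which has a solution $r \in [1,\infty]$ precisely because $q \geqslant p$) then yields
\[
\|\partial^{\mu} f\|_{L^q} \;\leqslant\; \|K_\lambda^{\mu}\|_{L^r}\,\|f\|_{L^p}.
\]

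The remaining task is to show $\|K_\lambda^{\mu}\|_{L^r(\mathbb{T}^d)} \lesssim_{k,\mathcal{B}} \lambda^{k + d(1/p - 1/q)}$, noting that $1 - 1/r = 1/p - 1/q$. For this I would introduce the $\mathbb{R}^d$-kernel $g^{\mu} = \mathcal{F}_{\mathbb{R}^d}^{-1}\bigl((i\cdot)^{\mu}\psi\bigr)$, which is Schwartz because $(i\cdot)^{\mu}\psi \in C_c^\infty$. A straightforward scaling shows that $g_\lambda^{\mu}(x) := \mathcal{F}_{\mathbb{R}^d}^{-1}\bigl((i\cdot)^{\mu}\psi(\cdot/\lambda)\bigr)(x) = \lambda^{k+d} g^{\mu}(\lambda x)$. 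Poisson summation (Lemma~\ref{lem:poisson}) then gives
\[
K_\lambda^{\mu}(x) \;=\; \sum_{n \in \mathbb{Z}^d} g_\lambda^{\mu}(x + 2\pi n) \;=\; \lambda^{k+d} \sum_{n \in \mathbb{Z}^d} g^{\mu}\bigl(\lambda(x+2\pi n)\bigr).
\]

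Finally I would estimate the $L^r(\mathbb{T}^d)$ norm of the right-hand side. The $n=0$ term, $\lambda^{k+d}g^{\mu}(\lambda\cdot)$, has $L^r(\mathbb{T}^d)$ norm bounded by $\lambda^{k+d}\|g^{\mu}(\lambda\cdot)\|_{L^r(\mathbb{R}^d)} = \lambda^{k + d(1 - 1/r)}\|g^{\mu}\|_{L^r(\mathbb{R}^d)}$, which matches the target $\lambda^{k+d(1/p-1/q)}$. For $|n|\geqslant 1$ and $x \in [-\pi,\pi)^d$ we have $|\lambda(x+2\pi n)| \gtrsim \lambda|n|$, so the Schwartz decay of $g^{\mu}$ gives $|g^{\mu}(\lambda(x+2\pi n))| \lesssim_N (\lambda|n|)^{-N}$ for any $N \in \mathbb{N}$; summing over $n$ and using $\lambda \geqslant 1$ makes the tail contribution $\lesssim_N \lambda^{k+d-N}$, which is negligible for $N$ sufficiently large. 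Combining the two pieces yields $\|K_\lambda^{\mu}\|_{L^r} \lesssim_{k,\mathcal{B}} \lambda^{k + d(1/p - 1/q)}$ and hence the Bernstein inequality. The only mildly subtle step is the tail estimate after Poisson summation, which is the place where working on the torus (rather than $\mathbb{R}^d$) actually requires a small argument.
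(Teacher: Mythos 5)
Your proof is correct and follows essentially the same route as the paper's: reduce to a convolution by a smooth compactly supported Fourier cutoff, apply Young's inequality, and transfer the resulting kernel estimate from $\mathbb{R}^d$ to $\mathbb{T}^d$ via Poisson summation (Lemma~\ref{lem:poisson}). The one small deviation is that you estimate the $L^r$ norm of the scaled kernel directly (separating the $n=0$ term from the Schwartz tail), whereas the paper estimates the $L^1$ and $L^{\infty}$ norms separately and interpolates via $\|\cdot\|_{L^r}\leqslant\|\cdot\|_{L^1}^{1/r}\|\cdot\|_{L^\infty}^{1-1/r}$; this is an equally valid and if anything marginally more streamlined variant.
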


\begin{proof}
  Let $\psi$ be a compactly supported $C^{\infty}$ function on $\mathbb{R}^d$
  such that $\psi \equiv 1$ on $\CB$ and write $\psi_{\lambda} (x) = \psi
  (\lambda^{- 1} x)$. Then
  \begin{align*}
      \partial^{{\mu}} f (x) & = \partial^{{\mu}} \CF^{- 1} ( \psi_{\lambda} \CF f ) (x) = (2 \pi)^{d / 2} \langle f,    \partial^{{\mu}} ( \CF^{- 1} \psi_{\lambda} ) (x - \cdummy) \rangle \\
      & = (2 \pi)^{d / 2} ( f \ast     \partial^{{\mu}} ( \CF^{- 1} \psi_{\lambda} ) ) (x).
   \end{align*}
  By Young's inequality, we get
  \[ \| \partial^{{\mu}} f \|_{L^q} \lesssim \| f \|_{L^p} \|
     \partial^{{\mu}} ( \CF^{- 1} \psi_{\lambda} )
     \|_{L^r}, \]
  where $1 + 1 / q = 1 / p + 1 / r$. Now it is a short exercise to verify $\|
  \cdummy \|_{L^r} \leqslant \| \cdummy \|_{L^1}^{1 / r} \| \cdummy
  \|_{L^{\infty}}^{1 - 1 / r}$, and
  \begin{align*}
     \left\| \partial^{{\mu}} \left( \CF^{- 1} \psi_{\lambda} \right) \right\|_{L^1} & = \int_{\mathbb{T}^d} \Big| \sum_k \partial^{{\mu}} \left( \CF^{- 1}_{\mathbb{R}^d} \psi_{\lambda} \right) (x + 2 \pi k) \Big| \mathd x \leqslant \int_{\mathbb{R}^d} | \partial^{{\mu}} ( \CF^{- 1}_{\mathbb{R}^d} \psi_{\lambda} ) (x) | \mathd x \\
      & = \lambda^{| {\mu} |} \int_{\mathbb{R}^d} \lambda^d | ( \partial^{{\mu}} \CF^{- 1}_{\mathbb{R}^d} \psi ) (\lambda x)| \mathd x \simeq \lambda^{| {\mu} |} \nocomma,
  \end{align*}
  whereas
  \begin{align*}
     \sup_{x \in \mathbb{T}^d} \Big| \sum_k \partial^{{\mu}} (\CF^{- 1}_{\mathbb{R}^d} \psi_{\lambda}) (x + 2 \pi k) \Big| & =  \lambda^{d + | {\mu} |} \sup_{x \in \mathbb{T}^d} \Big| \sum_k ( \partial^{{\mu}} \CF^{- 1}_{\mathbb{R}^d} \psi ) (\lambda (x + 2 \pi k)) \Big| \\
     & \lesssim \lambda^{d + | {\mu} |} \sup_{x \in \mathbb{T}^d} \sum_k (1 + \lambda | x + 2 \pi k |)^{- 2 d} \\
     & \lesssim \lambda^{d + | {\mu} |} \sup_{x \in \mathbb{T}^d} \sum_k (1 + | x + 2 \pi k |)^{- 2 d} \lesssim \lambda^{d + | {\mu} |}.
  \end{align*}
  We end up with
  \[ \| \partial^{{\mu}} f \|_{L^q} \lesssim \| f \|_{L^p} \|
     \partial^{{\mu}} ( \CF^{- 1} \psi_{\lambda} )
     \|_{L^r} \lesssim \| f \|_{L^p} \lambda^{| {\mu} | / r}
     \lambda^{(d + | {\mu} |) (1 - 1 / r)} = \| f \|_{L^p} \lambda^{|
     {\mu} | + d (1 / p - 1 / q)} . \]
  
\end{proof}

It then follows immediately that for $\alpha \in \mathbb{R}$, $f \in
\CC^{\alpha}$, ${\mu} \in \mathbb{N}^d_0$, we have $\partial^{{\mu}}
f \in \CC^{\alpha - | {\mu} |}$. Another simple application of the
Bernstein inequalities is the Besov embedding theorem, the proof of which we
leave as an exercise.

\begin{lemma}
  \label{lem:besov embedding}(Besov embedding) Let $1 \leqslant p_1 \leqslant
  p_2 \leqslant \infty$ and $1 \leqslant q_1 \leqslant q_2 \leqslant \infty$,
  and let $\alpha \in \mathbb{R}$. Then $B^{\alpha}_{p_1, q_1}$ is
  continuously embedded into $B^{\alpha - d (1 / p_1 - 1 / p_2)}_{p_2, q_2}$.
\end{lemma}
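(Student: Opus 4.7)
The plan is to apply the Bernstein inequality blockwise and then use the monotonicity of $\ell^q$ norms (under counting measure) to pass from $q_1$ to $q_2$. Denote $\sigma = d(1/p_1 - 1/p_2) \geqslant 0$.

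The first step is to control each Littlewood--Paley block. Since $\Delta_j f$ has Fourier support contained in an annulus of size $\sim 2^j$ for $j \geqslant 0$ (and in a ball of radius $\sim 1$ for $j = -1$), Lemma~\ref{lemma:Bernstein} applied with $k = 0$ and $\lambda \simeq 2^j$ gives
\[ \|\Delta_j f\|_{L^{p_2}} \lesssim 2^{j \sigma} \|\Delta_j f\|_{L^{p_1}}, \qquad j \geqslant -1. \]
Multiplying by $2^{j(\alpha - \sigma)}$ produces $2^{j \alpha} \|\Delta_j f\|_{L^{p_1}}$ on the right, which are exactly the summands appearing in $\|f\|_{B^{\alpha}_{p_1, q_1}}$.

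The second step is the $\ell^q$ comparison. For a nonnegative sequence $(a_j)_{j \geqslant -1}$ and $q_1 \leqslant q_2$, one has $\|a\|_{\ell^{q_2}} \leqslant \|a\|_{\ell^{q_1}}$ by the standard argument: after normalizing so that $\|a\|_{\ell^{q_1}} = 1$ one has $a_j \leqslant 1$ for all $j$, hence $a_j^{q_2} \leqslant a_j^{q_1}$, so $\|a\|_{\ell^{q_2}}^{q_2} \leqslant 1$. Taking $a_j = 2^{j \alpha} \|\Delta_j f\|_{L^{p_1}}$ and combining with the previous step yields
\[ \|f\|_{B^{\alpha - \sigma}_{p_2, q_2}} = \bigl\| 2^{j(\alpha-\sigma)} \|\Delta_j f\|_{L^{p_2}} \bigr\|_{\ell^{q_2}} \lesssim \bigl\| 2^{j \alpha} \|\Delta_j f\|_{L^{p_1}} \bigr\|_{\ell^{q_2}} \leqslant \|f\|_{B^{\alpha}_{p_1, q_1}}, \]
which is the desired continuous embedding. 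The cases $q_i \in \{\infty\}$ are handled by the usual convention of replacing sums by suprema.

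There is no serious obstacle: the Bernstein estimate is exactly designed for this, and the $\ell^{q_1} \hookrightarrow \ell^{q_2}$ step is elementary. The only minor care needed is the $j = -1$ block, which behaves like a low--frequency ball rather than an annulus; but Lemma~\ref{lemma:Bernstein} is stated for balls, so it applies directly there as well.
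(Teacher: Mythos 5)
Your proof is correct and follows exactly the route the paper sketches: the lemma is stated right after the Bernstein inequality and explicitly left as an exercise ("another simple application of the Bernstein inequalities"), and your blockwise Bernstein bound $\|\Delta_j f\|_{L^{p_2}} \lesssim 2^{jd(1/p_1-1/p_2)}\|\Delta_j f\|_{L^{p_1}}$ combined with the monotonicity $\ell^{q_1}\hookrightarrow\ell^{q_2}$ is the intended argument. Your remark about the $j=-1$ block is the right caveat (one takes $\lambda=1$ there, and the resulting constant-factor mismatch with $2^{-\sigma}$ is absorbed into the implicit constant), so nothing is missing.
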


\begin{exercise}
  \label{exo:wn besov}In the setting of Exercise~\ref{exo:wn}, use Besov
  embedding to show that $\mathbb{E} [\| \tilde{\xi}
  \|_{- d / 2 - \varepsilon}^p] < \infty$ for all $p \geqslant 1$ and
  $\varepsilon > 0$ (in particular $\tilde{\xi} \in \CC^{- d / 2 -}$ almost
  surely).
  
  \tmtextbf{Hint:} Estimate $\mathbb{E} [\| \tilde{\xi} \|_{B^{\alpha}_{2 p,
  2 p}}^{2 p}]$ using Gaussian hypercontractivity (equivalence of moments).
\end{exercise}

As another application of the Bernstein inequality, let us show that
$\CC^{\alpha} = C^{\alpha}$ for $\alpha \in (0, 1)$.

\begin{lemma}
  For $\alpha \in (0, 1)$ we have $\CC^{\alpha} = C^{\alpha}$, the space of
  $\alpha$-H{\"o}lder continuous functions, and
  \[ \| f \|_{\alpha} \simeq \| f \|_{C^{\alpha}} = \| f \|_{L^{\infty}} +
     \sup_{x \neq y} \frac{| f (x) - f (y) |}{d_{\mathbb{T}^d} (x,
     y)^{\alpha}}, \]
  where $d_{\mathbb{T}^d} (x, y)$ denotes the canonical distance on
  $\mathbb{T}^d$.
\end{lemma}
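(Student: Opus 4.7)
The plan is to prove both inclusions with equivalence of norms via the Littlewood-Paley decomposition. Write any $f$ as $f = \sum_{j \geq -1} \Delta_j f$; the key is to relate pointwise Hölder behavior to the $L^\infty$ size of the dyadic blocks, using Bernstein's inequality (Lemma \ref{lemma:Bernstein}) in one direction and the vanishing moment of the kernels $K_j$ for $j \geq 0$ in the other.

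For the inclusion $C^\alpha \hookrightarrow \CC^\alpha$, I would estimate $\|\Delta_j f\|_{L^\infty}$ directly. For $j = -1$ we clearly have $\|\Delta_{-1} f\|_{L^\infty} \lesssim \|f\|_{L^\infty} \leq \|f\|_{C^\alpha}$. For $j \geq 0$, since $\rho_j(0) = 0$ we have $\int K_j(y)\,dy = 0$, so
\[ \Delta_j f(x) = (K_j \ast f)(x) = \int K_j(y)\,(f(x-y) - f(x))\,dy, \]
and the Hölder bound gives $|\Delta_j f(x)| \leq \|f\|_{C^\alpha}\int |K_j(y)|\,|y|^\alpha\,dy$. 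A change of variables (after applying Poisson summation to compare with the $\mathbb{R}^d$ kernel, as in Lemma \ref{lem:poisson}) shows $\int |K_j(y)|\,|y|^\alpha\,dy \lesssim 2^{-j\alpha}$, so $2^{j\alpha}\|\Delta_j f\|_{L^\infty} \lesssim \|f\|_{C^\alpha}$ uniformly in $j$.

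For the reverse inclusion $\CC^\alpha \hookrightarrow C^\alpha$, the $L^\infty$ bound is easy: $\|f\|_{L^\infty} \leq \sum_{j\geq -1} \|\Delta_j f\|_{L^\infty} \leq \|f\|_\alpha \sum_{j \geq -1} 2^{-j\alpha} \lesssim \|f\|_\alpha$ because $\alpha > 0$. For the Hölder seminorm, fix $x \neq y$ and choose $N \geq 0$ with $2^{-N} \simeq d_{\mathbb{T}^d}(x,y)$. Split
\[ f(x) - f(y) = \sum_{j < N}(\Delta_j f(x) - \Delta_j f(y)) + \sum_{j \geq N}(\Delta_j f(x) - \Delta_j f(y)). \]
For the low-frequency part, Bernstein gives $\|\nabla \Delta_j f\|_{L^\infty} \lesssim 2^j \|\Delta_j f\|_{L^\infty} \lesssim 2^{j(1-\alpha)} \|f\|_\alpha$, hence $|\Delta_j f(x) - \Delta_j f(y)| \lesssim d_{\mathbb{T}^d}(x,y)\,2^{j(1-\alpha)}\|f\|_\alpha$, and summing the geometric series (using $\alpha < 1$) yields $\lesssim d_{\mathbb{T}^d}(x,y) \cdot 2^{N(1-\alpha)}\|f\|_\alpha \lesssim d_{\mathbb{T}^d}(x,y)^\alpha \|f\|_\alpha$. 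For the high-frequency part, the trivial bound $|\Delta_j f(x) - \Delta_j f(y)| \leq 2\|\Delta_j f\|_{L^\infty} \lesssim 2^{-j\alpha}\|f\|_\alpha$ sums (using $\alpha > 0$) to $\lesssim 2^{-N\alpha}\|f\|_\alpha \simeq d_{\mathbb{T}^d}(x,y)^\alpha \|f\|_\alpha$.

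The only real technical point is the decay estimate $\int|K_j(y)|\,|y|^\alpha dy \lesssim 2^{-j\alpha}$ on the torus, since $K_j$ is the periodized version of the $\mathbb{R}^d$ kernel $2^{jd}(\mathcal{F}^{-1}_{\mathbb{R}^d}\rho)(2^j \cdot)$; one uses Poisson summation together with the rapid decay of $\mathcal{F}^{-1}_{\mathbb{R}^d}\rho$ to reduce to the analogous $\mathbb{R}^d$ bound and then rescales. Everything else is the standard dyadic-splitting argument; the interplay $0 < \alpha < 1$ is precisely what makes both halves of the geometric sum in the second direction converge.
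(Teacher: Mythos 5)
Your proof is correct and follows essentially the same strategy as the paper: for $\CC^\alpha \hookrightarrow C^\alpha$ you split the telescoping sum at a scale $2^{-N} \simeq d_{\mathbb{T}^d}(x,y)$, using Bernstein for low frequencies and the trivial bound for high ones, and for $C^\alpha \hookrightarrow \CC^\alpha$ you use the vanishing moment $\int K_j = 0$ together with Poisson summation to reduce to the $\mathbb{R}^d$ kernel estimate. The only cosmetic difference is the order in which you treat the two inclusions.
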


\begin{proof}
  Start by noting that for $f \in \CC^{\alpha}$ we have $\| f \|_{L^{\infty}}
  \leqslant \sum_j \| \Delta_j f \|_{L^{\infty}} \leqslant \sum_j 2^{- j
  \alpha} \| f \|_{\alpha} \lesssim \| f \|_{\alpha}$. Let now $x \neq y \in
  \mathbb{T}^d$ and choose $j_0$ with $2^{- j_0} \simeq d_{\mathbb{T}^d} (x,
  y)$. For $j \leqslant j_0$ we use Bernstein's inequality to obtain
  \[ | \Delta_j f (x) - \Delta_j f (y) | \lesssim \| \mathD \Delta_j f
     \|_{L^{\infty}} d_{\mathbb{T}^d} (x, y) \lesssim 2^j \| \Delta_j f
     \|_{L^{\infty}} d_{\mathbb{T}^d} (x, y) \leqslant 2^{j (1 - \alpha)} \| f
     \|_{\alpha} d_{\mathbb{T}^d} (x, y), \]
  whereas for $j > j_0$ we simply estimate
  \[ | \Delta_j f (x) - \Delta_j f (y) | \lesssim \| \Delta_j f
     \|_{L^{\infty}} \lesssim 2^{- j \alpha} \| f \|_{\alpha} . \]
  Summing over $j$, we get
  \begin{align*}
     | f (x) - f (y) | & \leqslant \sum_{j \leqslant j_0} 2^{j (1 - \alpha)} \| f \|_{\alpha} d_{\mathbb{T}^d} (x, y) + \sum_{j > j_0} 2^{- j \alpha} \| f \|_{\alpha} \\
     & \simeq \| f \|_{\alpha} (2^{j_0 (1 - \alpha)} d_{\mathbb{T}^d} (x, y) +  2^{- j_0 \alpha}) \simeq \| f \|_{\alpha} d_{\mathbb{T}^d} (x, y)^{\alpha}.
  \end{align*}
  Conversely, if $f \in C^{\alpha}$, then we estimate $\| \Delta_{- 1} f
  \|_{L^{\infty}} \lesssim \| f \|_{L^{\infty}}$. For $j \geqslant 0$, the
  function $\rho_j$ satisfies $\int (\CF^{-1} \rho_j)(x) \mathd x = 0$, and therefore
  \begin{align*}
     | \Delta_j f (x) | & = \Big| \int_{\mathbb{T}^d} \CF^{- 1} \rho_j (x - y) (f (y) - f (x)) \mathd y \Big| \\
     & = \Big| \int_{\mathbb{T}^d} \sum_k \CF^{- 1}_{\mathbb{R}^d} \rho_j (x - y + 2 \pi k) (f (y) - f (x)) \mathd y \Big| \\
     & = \Big| \int_{\mathbb{R}^d} \CF^{- 1}_{\mathbb{R}^d} \rho_j (x - y) (f(y) - f (x)) \mathd y \Big|.
  \end{align*}
  Now $| f (y) - f (x) | \leqslant \| f \|_{C^{\alpha}} d_{\mathbb{T}^d} (x,
  y)^{\alpha} \leqslant \| f \|_{C^{\alpha}} | x - y |^{\alpha}$, and
  thus we end up with
  \begin{align*}
     | \Delta_j f (x) | & \leqslant \| f \|_{C^{\alpha}} \Big| 2^{j d} \int_{\mathbb{R}^d} | ( \CF^{- 1}_{\mathbb{R}^d} \rho ) (2^j (x - y)) |  | x - y |^{\alpha} \mathd y \Big| \\
     & = \| f \|_{C^{\alpha}} 2^{-j\alpha} \Big| 2^{j d} \int_{\mathbb{R}^d} | ( \CF^{- 1}_{\mathbb{R}^d} \rho ) (2^j (x - y)) |  | 2^j (x - y) |^{\alpha} \mathd y \Big| \lesssim \| f \|_{C^{\alpha}} 2^{- j \alpha}.
  \end{align*}
\end{proof}

The following lemma, a characterization of Besov regularity for functions that
can be decomposed into pieces which are localized in Fourier space, will be
immensely useful in what follows.

\begin{lemma}
  \label{lem: Besov regularity of series}{\tmdummy}
  
  \begin{enumeratenumeric}
    \item Let $\CA$ be an annulus, let $\alpha \in \mathbb{\mathbb{R}}$, and
    let $(u_j)$ be a sequence of smooth functions such that $\CF u_j$ has its
    support in $2^j \CA$, and such that $\|u_j \|_{L^{\infty}} \lesssim 2^{- j
    \alpha}$ for all $j$. Then
    \[ u = \sum_{j \geqslant - 1} u_j \in \CC^{\alpha} \hspace{2em} \tmop{and}
       \hspace{2em} \|u\|_{\alpha} \lesssim \sup_{j \geqslant - 1} \{2^{j
       \alpha} \|u_j \|_{L^{\infty}} \} . \]
    \item Let $\CB$ be a ball, let $\alpha > 0$, and let $(u_j)$ be a sequence
    of smooth functions such that $\CF u_j$ has its support in $2^j \CB$, and
    such that $\|u_j \|_{L^{\infty}} \lesssim 2^{- j \alpha}$ for all $j$.
    Then
    \[ u = \sum_{j \geqslant - 1} u_j \in \CC^{\alpha} \hspace{2em} \tmop{and}
       \hspace{2em} \|u\|_{\alpha} \lesssim \sup_{j \geqslant - 1} \{2^{j
       \alpha} \|u_j \|_{L^{\infty}} \} . \]
  \end{enumeratenumeric}
\end{lemma}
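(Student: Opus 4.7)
The plan is to control $\|\Delta_i u\|_{L^\infty}$ for each $i\geqslant -1$ by exploiting the fact that the Littlewood--Paley projector $\Delta_i$ kills most of the pieces $u_j$ because of frequency localization. Recall that $\CF(\Delta_i u_j)=\rho_i\,\CF u_j$, and that for $i\geqslant 0$ the function $\rho_i$ is supported in a scaled annulus $2^i\tilde\CA$ (with $\tilde\CA$ depending only on the dyadic partition), while $\rho_{-1}=\chi$ is supported in a fixed ball. The starting bound, used throughout, is $\|\Delta_i u_j\|_{L^\infty}=\|K_i*u_j\|_{L^\infty}\leqslant \|K_i\|_{L^1}\|u_j\|_{L^\infty}\lesssim \|u_j\|_{L^\infty}$, where the uniform control $\sup_i\|K_i\|_{L^1}<\infty$ follows from Poisson summation (Lemma~\ref{lem:poisson}) together with the scaling $K_i(x)=2^{id}K_0(2^i x)$ (with $K_0$ replaced by $\chi$ for $i=-1$).

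For part~1, the support of $\CF u_j$ is the scaled annulus $2^j\CA$. Since both $2^i\tilde\CA$ and $2^j\CA$ are annuli bounded away from $0$ and $\infty$, their intersection is empty unless $|i-j|\leqslant N_0$ for some fixed $N_0$ depending only on $\CA$ and the dyadic partition; similarly for $i=-1$ only finitely many $j$ contribute. Therefore
\[
\Delta_i u=\sum_{j:\,|i-j|\leqslant N_0}\Delta_i u_j,
\]
and the triangle inequality, together with the assumed estimate, gives
\[
\|\Delta_i u\|_{L^\infty}\lesssim \sum_{j:\,|i-j|\leqslant N_0}\|u_j\|_{L^\infty}\lesssim 2^{-i\alpha}\sup_{j\geqslant -1}\bigl\{2^{j\alpha}\|u_j\|_{L^\infty}\bigr\},
\]
which is exactly the Besov estimate claimed.

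For part~2, the support of $\CF u_j$ is the ball $2^j\CB$, so the geometric constraint becomes one-sided: the set $2^i\tilde\CA\cap 2^j\CB$ is empty unless $j\geqslant i-N_0$ for some fixed $N_0$. Thus for $i\geqslant 0$ one obtains
\[
\Delta_i u=\sum_{j\geqslant i-N_0}\Delta_i u_j,\qquad
\|\Delta_i u\|_{L^\infty}\lesssim \sum_{j\geqslant i-N_0}2^{-j\alpha}\sup_{k\geqslant -1}\bigl\{2^{k\alpha}\|u_k\|_{L^\infty}\bigr\}.
\]
Here is the only step where the hypothesis $\alpha>0$ enters: it is exactly what makes the geometric series convergent and produces the bound $\lesssim 2^{-i\alpha}\sup_k 2^{k\alpha}\|u_k\|_{L^\infty}$. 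The low-frequency piece $i=-1$ is handled directly by summing $\sum_{j\geqslant -1}\|u_j\|_{L^\infty}<\infty$.

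I expect no serious obstacle beyond bookkeeping. The one subtle point is verifying the uniform bound $\sup_i\|K_i\|_{L^1}<\infty$ on the torus (since $K_i$ is not literally a rescaling of a single kernel because of periodization): this is where Poisson summation is invoked, exactly as in the proof of the Bernstein inequality (Lemma~\ref{lemma:Bernstein}). Once this is in hand, both parts reduce to the elementary support analysis of annulus-annulus and annulus-ball intersections, with the crucial analytic difference being the need for $\alpha>0$ in part~2 to sum the tail.
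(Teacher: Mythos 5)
Your proof is correct and takes essentially the same route as the paper's: estimate each Littlewood--Paley block $\|\Delta_i u\|_{L^\infty}$ by noting that spectral localization forces $\Delta_i u_j=0$ unless $i\sim j$ (part~1, finite sum, any $\alpha$) or $i\lesssim j$ (part~2, geometric tail, requiring $\alpha>0$), and then sum. You spell out one step the paper leaves implicit, namely $\|\Delta_i u_j\|_{L^\infty}\lesssim\|u_j\|_{L^\infty}$ via Young's inequality and the uniform $L^1$ bound on the periodized kernels $K_i$ (justified via Poisson summation, just as in the Bernstein lemma); otherwise the two arguments coincide.
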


\begin{proof}
  If $\CF u_j$ is supported in $2^j \CA$, then $\Delta_i u_j \neq 0$ only for
  $i \sim j$. Hence, we obtain
  
  \begin{gather*}
    \| \Delta_i u\|_{L^{\infty}} \leqslant \sum_{j : j \sim i} \| \Delta_i u_j
    \|_{L^{\infty}} \leqslant \sup_{k \geqslant - 1} \{2^{k \alpha} \|u_k
    \|_{L^{\infty}} \}  \sum_{j : j \sim i} 2^{- j \alpha} \simeq \sup_{k
    \geqslant - 1} \{2^{k \alpha} \|u_k \|_{L^{\infty}} \} 2^{- i \alpha} .
  \end{gather*}
  
  If $\CF u_j$ is supported in $2^j \CB$, then $\Delta_i u_j \neq 0$ only for
  $i \lesssim j$. Therefore,
  \[ \| \Delta_i u\|_{L^{\infty}} \leqslant \sum_{j : j \gtrsim i} \| \Delta_i
     u_j \|_{L^{\infty}} \leqslant \sup_{k \geqslant - 1} \{2^{k \alpha} \|u_k
     \|_{L^{\infty}} \}  \sum_{j : j \gtrsim i} 2^{- j \alpha} \lesssim
     \sup_{k \geqslant - 1} \{2^{k \alpha} \|u_k \|_{L^{\infty}} \} 2^{- i
     \alpha}, \]
  using $\alpha > 0$ in the last step.
\end{proof}

When solving SPDEs, we will need the smoothing properties of the heat
semigroup. 
We define $\LL^{\alpha} = C \CC^{\alpha} \cap C^{\alpha / 2} L^{\infty}$
for $\alpha \in (0, 2)$. For $T > 0$ we set $\LL^{\alpha}_T = C_T \CC^{\alpha}
\cap C^{\alpha / 2}_T L^{\infty}$ and we equip $\LL^{\alpha}_T$ with the norm
\[ \| \cdummy \|_{\LL^{\alpha}_T} = \max \{ \| \cdummy \|_{C_T \CC^{\alpha}},
   \| \cdummy \|_{C^{\alpha / 2}_T L^{\infty}} \} . \]
The notation $\LL^{\alpha}$ is chosen to be reminiscent of the operator $\LL =
\partial_t - \Delta$ and indeed the parabolic spaces $\LL^{\alpha}$ are
adapted to $\LL$ in the sense that the temporal regularity ``counts twice'',
which is due to the fact that $\LL$ contains a first order temporal but a
second order spatial derivative. If we would replace $\Delta$ by a fractional
Laplacian $- (- \Delta)^{\sigma}$, then we would have to consider the space $C
\CC^{\alpha} \cap C^{\alpha / (2 \sigma)} L^{\infty}$ instead of
$\LL^{\alpha}$.

We have the following Schauder estimate on the scale of $( \LL^{\alpha})_{\alpha}$ spaces:

\begin{lemma}
  \label{lem:schauder}Let $\alpha \in (0, 2)$ and let $(P_t)_{t \geqslant 0}$
  be the semigroup generated by the periodic Laplacian, $\CF (P_t f) (k) =
  e^{- t | k |^2} \CF f (k)$. For $f \in C \CC^{\alpha - 2}$ define $J f (t) =
  \int_0^t P_{t - s} f_s \mathd s$. Then $J f$ is the solution to $\LL J f = f$, $J f(0) = 0$, and we have
  \[ \| J f \|_{\LL^{\alpha}_T} \lesssim (1 + T) \| f \|_{C_T \CC^{\alpha -
     2}} \]
  for all $T > 0$. If $u \in \CC^{\alpha}$, then $t \mapsto P_t u$ is the solution to $\LL P_\cdot u = 0$, $P_0 u = u$, and we have
  \[ \| t \mapsto P_t u \|_{\LL^{\alpha}_T} \lesssim \| u \|_{\alpha} . \]
\end{lemma}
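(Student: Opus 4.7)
The plan is to use Littlewood--Paley decomposition together with a single key multiplier estimate for the heat semigroup: if $\mathcal{F} g$ is supported in an annulus of size $2^j$ (with $j \geqslant 0$), then $\|P_t g\|_{L^\infty} \lesssim e^{-ct 2^{2j}} \|g\|_{L^\infty}$, while for $j = -1$ one just has $\|P_t \Delta_{-1} g\|_{L^\infty} \lesssim \|\Delta_{-1} g\|_{L^\infty}$. This follows by writing $P_t g = \mathcal{F}^{-1}(e^{-t|\cdummy|^2} \mathcal{F} g)$, inserting a smooth cutoff $\tilde{\rho}_j$ with $\tilde{\rho}_j \equiv 1$ on the support of $\rho_j$, and running the same Young/Bernstein argument as in the proof of Lemma~\ref{lemma:Bernstein}, where the Fourier multiplier becomes $e^{-t|\cdummy|^2} \tilde{\rho}_j$ whose $L^1$ kernel norm is controlled by $e^{-ct 2^{2j}}$ after a scaling. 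Analogously, $\|(P_\tau - I) g\|_{L^\infty} \lesssim \min(1, \tau 2^{2j}) \|g\|_{L^\infty}$ on the same annulus, since $|e^{-\tau|k|^2} - 1| \leqslant \min(1, \tau|k|^2)$.

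Granting this, the homogeneous estimate $\|t \mapsto P_t u\|_{\mathscr{L}^\alpha_T} \lesssim \|u\|_\alpha$ is quick. The spatial bound $\|P_t u\|_\alpha \lesssim \|u\|_\alpha$ is immediate since $\|\Delta_j P_t u\|_{L^\infty} \lesssim \|\Delta_j u\|_{L^\infty} \lesssim 2^{-j\alpha}\|u\|_\alpha$. For the temporal bound, write $P_t u - P_s u = P_s(P_{t-s} - I) u$, contract the outer $P_s$ in $L^\infty$, and estimate
\[
\|(P_{t-s}-I)\Delta_j u\|_{L^\infty} \lesssim \min(1,(t-s)2^{2j}) \, 2^{-j\alpha}\|u\|_\alpha.
\]
Summing over $j$ by splitting at $j_0$ with $2^{2j_0}(t-s) \simeq 1$: the part $j \leqslant j_0$ contributes $(t-s)\sum 2^{j(2-\alpha)} \lesssim (t-s)^{\alpha/2}$, and the part $j > j_0$ contributes $\sum 2^{-j\alpha} \lesssim (t-s)^{\alpha/2}$, yielding the desired $|t-s|^{\alpha/2}$-Hölder bound.

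For the inhomogeneous estimate $\|Jf\|_{\mathscr{L}^\alpha_T} \lesssim (1+T)\|f\|_{C_T \mathcal{C}^{\alpha-2}}$, the spatial bound is obtained by
\[
\|\Delta_j Jf(t)\|_{L^\infty} \leqslant \int_0^t e^{-c(t-s)2^{2j}} \|\Delta_j f_s\|_{L^\infty}\, \mathd s \lesssim 2^{-j(\alpha-2)} \|f\|_{C_T \mathcal{C}^{\alpha-2}} \int_0^t e^{-c(t-s)2^{2j}} \mathd s,
\]
and the last integral is $\lesssim 2^{-2j}$ for $j \geqslant 0$ and $\leqslant T$ for $j = -1$, giving the factor $(1+T) 2^{-j\alpha}$. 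For the temporal bound, split
\[
Jf(t) - Jf(s) = \int_s^t P_{t-r} f_r\, \mathd r + \int_0^s (P_{t-r}-P_{s-r}) f_r\, \mathd r.
\]
The first piece is handled by first establishing $\|P_\tau g\|_{L^\infty} \lesssim (1 + \tau^{-(2-\alpha)/2}) \|g\|_{\mathcal{C}^{\alpha-2}}$ (again a dyadic split at $2^{2j_0}\tau \simeq 1$, using $\alpha < 2$ so the sum of $e^{-c\tau 2^{2j}} 2^{j(2-\alpha)}$ is geometric and controlled by $\tau^{-(2-\alpha)/2}$), then integrating the integrable singularity from $s$ to $t$ to get $\lesssim (1+T)(t-s)^{\alpha/2}\|f\|_{C_T \mathcal{C}^{\alpha-2}}$. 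The second piece uses $P_{t-r}-P_{s-r} = P_{s-r}(P_{t-s}-I)$, so that on the $j$-th annulus its multiplier is bounded by $e^{-c(s-r)2^{2j}}\min(1,(t-s)2^{2j})$; integrating in $r$ produces a factor $(1+T)2^{-2j}$ and summing via the same $j_0$ split yields $(1+T)(t-s)^{\alpha/2}\|f\|_{C_T \mathcal{C}^{\alpha-2}}$.

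The main obstacle is bookkeeping: the proof is a systematic exploitation of one multiplier estimate, but one has to be careful to split every dyadic sum at the right scale $j_0$ determined by the time increment, treat the low frequency block $j=-1$ separately to produce the $(1+T)$ factor, and verify that $\alpha/2 < 1$ and $\alpha-2 < 0$ are the exact exponents making the relevant geometric sums and time integrals converge. No single step is deep, but no step should be skipped if one wants the constants to come out right.
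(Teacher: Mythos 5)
The paper does not actually prove this lemma; it states the Schauder estimates as classical and refers to~\cite{Gubinelli2012,Gubinelli2014}. Your argument is correct and is essentially the standard Littlewood--Paley proof used in those references: the two multiplier bounds $\|P_t\Delta_j g\|_{L^\infty}\lesssim e^{-ct2^{2j}}\|\Delta_j g\|_{L^\infty}$ for $j\geqslant 0$ (with only a contraction bound at $j=-1$) and $\|(P_\tau - I)\Delta_j g\|_{L^\infty}\lesssim\min(1,\tau 2^{2j})\|\Delta_j g\|_{L^\infty}$ are exactly the right inputs, both obtained by the same scaling/Young argument as in Lemma~\ref{lemma:Bernstein}, and the block-by-block bookkeeping with a split at the scale $2^{2j_0}|t-s|\simeq 1$, with the low-frequency block $j=-1$ supplying the $(1+T)$ factor and the restriction $\alpha\in(0,2)$ making the geometric sums and time integrals converge, is the correct mechanism. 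The only loose formulation is in the second piece of the temporal estimate, where the claim that integrating in $r$ ``produces a factor $(1+T)2^{-2j}$'' conflates the $j\geqslant 0$ case (which gives $2^{-2j}$) with the $j=-1$ case (which alone gives $T$); since you explicitly flag the special role of $j=-1$ in your final paragraph, this is a presentational imprecision and not a gap.
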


\tmtextbf{Bibliographic notes.} For a gentle introduction to Littlewood--Paley
theory and Besov spaces see the recent monograph~{\cite{Bahouri2011}}, where
most of our results are taken from. There the case of tempered distributions
on $\mathbb{R}^d$ is considered. The theory on the torus is developed
in~{\cite{Schmeisser1987}}. The Schauder estimates for the heat semigroup are
classical and can be found in~{\cite{Gubinelli2012,Gubinelli2014}}.

\section{Diffusion in a random environment}\label{sec:homogenization}

Let us consider the following $d$-dimensional homogenization problem. Fix
$\varepsilon > 0$ and let $u^{\varepsilon} : \mathbb{R}_+ \times
\mathbb{T}^d \rightarrow \mathbb{R}$ be the solution to the Cauchy problem
\begin{equation}\label{eq:oscillating potential}
   \partial_t u^{\varepsilon} (t, x) = \Delta u^{\varepsilon} (t, x) +
   \varepsilon^{- \alpha} V (x / \varepsilon) u^{\varepsilon} (t, x),
   \hspace{2em} u^{\varepsilon} (0) = u_0,
\end{equation}
where $V : \mathbb{T}^d_{\varepsilon} \rightarrow \mathbb{R}$ is a random
field defined on the rescaled torus $\mathbb{T}^d_{\varepsilon} =
(\mathbb{R}/ (2 \pi \varepsilon^{- 1} \mathbb{Z}))^d$. This model describes
the diffusion of particles in a random medium (replacing $\partial_t$ by $i
\partial_t$ gives the Schr{\"o}dinger equation of a quantum particle evolving
in a random \ potential). For a review of related results the reader can give
a look at the recent paper of Bal and Gu~{\cite{bal_limiting_2013}}. The limit
$\varepsilon \rightarrow 0$ corresponds to looking at the large scale behavior
of the model since~\eqref{eq:oscillating potential} can be understood as the equation for the
{\tmem{macroscopic}} density $u^{\varepsilon} (t, x) = u (t / \varepsilon^2, x
/ \varepsilon)$ which corresponds to a {\tmem{microscopic}} density $u :
\mathbb{R}_+ \times \mathbb{T}_{\varepsilon}^d \rightarrow \mathbb{R}$
evolving according to the parabolic equation
\[ \partial_t u (t, x) = \Delta u (t, x) + \varepsilon^{2 - \alpha} V (x) u
   (t, x), \hspace{2em} u (0, \cdummy) = u_0 (\varepsilon
   \cdummy). \]
Slightly abusing notation, we do not index $u$ or $V$ by $\varepsilon$ despite the fact that they of course depend on it.   
We assume that $V \colon \mathbb{T}^d_{\varepsilon} \rightarrow \mathbb{R}$ is
Gaussian and has mean zero and homogeneous correlation function
$C_{\varepsilon}$ given by
\[ C_{\varepsilon} (x - y) =\mathbb{E} [V (x) V (y)] = ( \varepsilon /
   \sqrt{2 \pi} )^d \sum_{k \in \varepsilon \mathbb{Z}^d} e^{i
   \langle x - y, k \rangle} R (k). \]
On $R \colon\mathbb{R}^d \rightarrow \mathbb{R}_+$ we make the following hypothesis: for
some $\beta \in (0, d]$ we have $R (k) = | k |^{\beta - d} \tilde{R} (k)$
where $\tilde{R} \in \CS (\mathbb{R}^d)$ is a smooth radial function of rapid
decay. For $\beta < d$ it would be equivalent to require that spatial
correlations (in the limit $\varepsilon \rightarrow 0$) decay as $| x |^{-
\beta}$. For $\beta = d$ this hypothesis means that spatial correlations are
of rapid decay. Indeed by dominated convergence
\begin{align*}
   \lim_{\varepsilon \rightarrow 0} C_{\varepsilon} (x) & = \int_{\mathbb{R}^d} \frac{\mathd k}{(2 \pi)^{d / 2}} e^{i \langle x, k \rangle} R (k) = \int_{\mathbb{R}^d} \frac{\mathd k}{(2 \pi)^{d / 2}} e^{i \langle x, k \rangle} | k |^{\beta - d} \tilde{R} (k) \\
    & = (2 \pi)^{d / 2} \left( \CF^{- 1}_{\mathbb{R}^d} (| \cdot |^{\beta - d}) \ast \CF_{\mathbb{R}^d}^{- 1} (\tilde{R}) \right) (x) .
\end{align*}
Here we applied the formula of Exercise~\ref{exercise:convolution}, which also holds for the Fourier transform on $\mathbb{R}^d$. Now $\CF^{- 1}_{\mathbb{R}^d} (\tilde{R}) \in \CS (\mathbb{R}^d)$ and
$\CF^{- 1}_{\mathbb{R}^d} (| \cdot |^{\beta - d}) (x) \simeq | x |^{- \beta}$
if $0 < \beta < d$ (see for example Proposition~1.29 of~{\cite{Bahouri2011}}),
so $\lim_{\varepsilon \rightarrow 0} | C_{\varepsilon} (x) | \lesssim | x |^{-
\beta}$ for $| x | \rightarrow + \infty$.

\

Let us write $V_{\varepsilon} (x) = \varepsilon^{- \alpha} V (x /
\varepsilon)$ so that~\eqref{eq:oscillating potential} can be rewritten as $\partial_t u^{\varepsilon} = \Delta u^{\varepsilon} +
V_{\varepsilon} u^{\varepsilon}$, and let us compute the variance of the
Littlewood--Paley blocks of $V_{\varepsilon}$.

In order to perform more easily some computations we can introduce a family of
centered complex Gaussian random variables $\{ g (k) \}_{k \in \varepsilon
\mathbb{Z}_0}$ such that $g (k)^{\ast} = g (- k)$ and $\mathbb{E} [g (k) g
(k')] = \delta_{k + k' = 0}$ and represent $V_{\varepsilon} (x)$ as
\[ V_{\varepsilon} (x) = \frac{\varepsilon^{d / 2 - \alpha}}{( \sqrt{2
   \pi} )^{d / 2}} \sum_{k \in \varepsilon \mathbb{Z}^d} e^{i \langle
   x, k / \varepsilon \rangle} \sqrt{R (k)} g (k) \]

\begin{lemma}\label{lem:Veps reg}
  Assume $\beta - 2 \alpha \geqslant 0$.We have for any $\varepsilon > 0$ and
  $i \geqslant 0$ and any $0 \leqslant \kappa \leqslant \beta - 2 \alpha$:
  \[ \mathbb{E} [| \Delta_i V_{\varepsilon} (x) |^2] \lesssim 2^{(2 \alpha +
     \kappa) i} \varepsilon^{\kappa} . \]
  This estimate implies that if $\beta > 2 \alpha$, then for all  $\delta > 0$ we have $V_{\varepsilon} \rightarrow 0$ in $L^2 (\Omega ; B^{- \alpha - \delta}_{2, 2} (\mathbb{T}^d))$ as $\varepsilon \rightarrow 0$.
\end{lemma}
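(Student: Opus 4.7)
The plan is to compute $\mathbb{E}[|\Delta_i V_\varepsilon(x)|^2]$ explicitly using the given Fourier representation, reduce it to a lattice sum with the kernel $R(\varepsilon\ell)=|\varepsilon\ell|^{\beta-d}\tilde R(\varepsilon\ell)$, and then extract the desired scaling by an interpolation argument that exploits both the rapid decay of $\tilde R$ and the dyadic localization provided by $\rho_i$.

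First I would reindex the sum in the representation of $V_\varepsilon$ by $\ell=k/\varepsilon\in\mathbb{Z}^d$, so that
\[
V_\varepsilon(x)=\frac{\varepsilon^{d/2-\alpha}}{(2\pi)^{d/4}}\sum_{\ell\in\mathbb{Z}^d}e^{i\langle x,\ell\rangle}\sqrt{R(\varepsilon\ell)}\,g(\varepsilon\ell).
\]
Applying $\Delta_i$ multiplies each Fourier coefficient by $\rho_i(\ell)$, and by the orthogonality relations for the $g(\varepsilon\ell)$ one obtains (up to an absolute constant)
\[
\mathbb{E}[|\Delta_i V_\varepsilon(x)|^2]\simeq \varepsilon^{d-2\alpha}\sum_{\ell\in\mathbb{Z}^d}\rho_i(\ell)^2 R(\varepsilon\ell)=\varepsilon^{\beta-2\alpha}\sum_{\ell\in\mathbb{Z}^d}\rho_i(\ell)^2 |\ell|^{\beta-d}\tilde R(\varepsilon\ell),
\]
where the second equality uses the factorization $R(k)=|k|^{\beta-d}\tilde R(k)$ and pulls out a power of $\varepsilon$. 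The key point is that the apparent singularity of $|\ell|^{\beta-d}$ at the origin is harmless because $\rho_i$ is supported in an annulus of size $2^i$ for $i\geq 0$ (and $\rho_{-1}$ is supported in a ball, but then $|\ell|^{\beta-d}$ is bounded on the nonzero integers).

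Next I would split the analysis according to whether $\varepsilon 2^i\lesssim 1$ or $\varepsilon 2^i\gtrsim 1$. In the low-frequency regime $\varepsilon 2^i\lesssim 1$, the factor $\tilde R(\varepsilon\ell)$ is uniformly bounded on the support of $\rho_i$, so counting the $\sim 2^{id}$ lattice points with $|\ell|\sim 2^i$ and using $|\ell|^{\beta-d}\simeq 2^{i(\beta-d)}$ yields
\[
\mathbb{E}[|\Delta_i V_\varepsilon(x)|^2]\lesssim \varepsilon^{\beta-2\alpha}2^{i\beta}=\varepsilon^\kappa 2^{i(2\alpha+\kappa)}(\varepsilon 2^i)^{\beta-2\alpha-\kappa}\lesssim \varepsilon^\kappa 2^{i(2\alpha+\kappa)},
\]
using $\beta-2\alpha-\kappa\geq 0$ in the last step. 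In the high-frequency regime $\varepsilon 2^i\gtrsim 1$, I would invoke the rapid decay $\tilde R(\varepsilon\ell)\lesssim_M(\varepsilon 2^i)^{-M}$ for arbitrarily large $M$; choosing $M$ large enough makes the resulting bound $\varepsilon^{\beta-2\alpha-M}2^{i(\beta-M)}$ negligible compared to $\varepsilon^\kappa 2^{i(2\alpha+\kappa)}$.

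Combining the two regimes gives the stated estimate uniformly in $i\geq -1$ and $\varepsilon>0$ for every $\kappa\in[0,\beta-2\alpha]$. For the convergence corollary, I would fix $\kappa\in(0,\beta-2\alpha]$ and pick an auxiliary $\kappa'\in(0,\kappa)$; by translation invariance of the law of $V_\varepsilon$,
\[
\mathbb{E}\|V_\varepsilon\|_{B^{-\alpha-\kappa/2}_{2,2}}^2\simeq \sum_{i\geq -1}2^{-i(2\alpha+\kappa)}\,\mathbb{E}[|\Delta_i V_\varepsilon(0)|^2]\lesssim \varepsilon^{\kappa'}\sum_{i\geq -1}2^{i(\kappa'-\kappa)}\lesssim \varepsilon^{\kappa'},
\]
which tends to zero. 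The main obstacle I expect is the careful interpolation that lets a single exponent $\kappa$ appear on both the $\varepsilon$ and the $2^i$ side; the trick is to recognize $\varepsilon^{\beta-2\alpha}2^{i\beta}=\varepsilon^\kappa 2^{i(2\alpha+\kappa)}(\varepsilon 2^i)^{\beta-2\alpha-\kappa}$ and then split according to the size of $\varepsilon 2^i$ to absorb the last factor either by monotonicity of the exponent or by the Schwartz decay of $\tilde R$.
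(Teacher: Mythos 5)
Your proof is correct and follows essentially the same route as the paper: the same spectral computation reducing to $\varepsilon^{d-2\alpha}\sum_{k}\rho_i(k/\varepsilon)^2 R(k)$, the same split into the regimes $\varepsilon 2^i\lesssim 1$ and $\varepsilon 2^i\gtrsim 1$, and the same interpolation identity $\varepsilon^{\beta-2\alpha}2^{i\beta}=\varepsilon^{\kappa}2^{(2\alpha+\kappa)i}(\varepsilon 2^i)^{\beta-2\alpha-\kappa}$ to absorb the low-frequency contribution using $\beta-2\alpha-\kappa\geq 0$. The only minor deviation is the high-frequency regime: the paper bounds the lattice sum there by the Riemann-sum approximation of $\int_{\mathbb{R}^d} R<\infty$ to obtain $\mathbb{E}[|\Delta_i V_\varepsilon(x)|^2]\lesssim\varepsilon^{-2\alpha}\lesssim 2^{(2\alpha+\kappa)i}\varepsilon^{\kappa}$ directly from $\varepsilon 2^i>1$, whereas you invoke the arbitrary-order Schwartz decay of $\tilde R$ — both are valid, with the paper's version needing only integrability of $R$ rather than the full rapid decay.
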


\begin{proof}
  A spectral computation gives
  \[ \Delta_i V_{\varepsilon} (x) = \frac{\varepsilon^{d / 2 - \alpha}}{(
     \sqrt{2 \pi} )^{d / 2}} \sum_{k \in \varepsilon \mathbb{Z}^d}
     e^{i \langle x, k / \varepsilon \rangle} \rho_i (k / \varepsilon) \sqrt{R
     (k)} g (k) \]
  so
  \begin{equation}
    \begin{array}{lll}
      \mathbb{E} [| \Delta_i V_{\varepsilon} (x) |^2] & = & \varepsilon^d
      ( \sqrt{2 \pi} )^{-d} \varepsilon^{- 2 \alpha} \sum_{k \in
      \varepsilon \mathbb{Z}^d} \rho_i (k / \varepsilon)^2 R (k)\\
      & = & ( \sqrt{2 \pi} )^{-d} \varepsilon^{d - 2 \alpha} \sum_{k
      \in \varepsilon \mathbb{Z}^d} \rho (k / (\varepsilon 2^i))^2 R (k)\\
      & \lesssim & \varepsilon^{d - 2 \alpha} 2^{id} \sup_{k \in \varepsilon
      2^i \CA} R (k),
    \end{array} \label{eq:est-lp-v}
  \end{equation}
  where $\CA$ is the annulus in which $\rho$ is supported. Now recall that $\beta \le d$ so that $(\varepsilon
  2^i)^{\beta-d}\ge1$ whenever $\varepsilon
  2^i \leqslant 1$, which leads to $\mathbb{E} [| \Delta_i V_{\varepsilon} (x) |^2]
  \lesssim 2^{id} \varepsilon^{d - 2 \alpha} (\varepsilon 2^i)^{\beta - d} =
  \varepsilon^{\beta - 2 \alpha} 2^{i \beta}$ in that case. The assumption $\beta - 2
  \alpha \geqslant 0$ then implies $\mathbb{E} [| \Delta_i V_{\varepsilon}
  (x) |^2] \lesssim 2^{(2 \alpha + \kappa) i} \varepsilon^{\kappa}$ for any $0
  \leqslant \kappa \leqslant \beta - 2 \alpha$. In the case $\varepsilon 2^i >
  1$ we use that $\int_{\R^d} R (k) \mathd k < + \infty$ to estimate
  \[ \varepsilon^d \sum_{k \in \varepsilon \mathbb{Z}^d} \rho (k /
     (\varepsilon 2^i))^2 R (k) \leqslant \varepsilon^d \sum_{k \in
     \mathbb{Z}^d} R (\varepsilon k) \lesssim \int_{\mathbb{R}^d} R (k)
     \mathd k < + \infty, \]
  and then $\mathbb{E} [| \Delta_i V_{\varepsilon} (x) |^2] \lesssim
  \varepsilon^{- 2 \alpha} \lesssim 2^{2 \alpha i} (\varepsilon 2^i)^{\kappa}$
  for any small $\kappa > 0$. 
\end{proof}

\begin{remark}
   Using Gaussian hypercontractivity, we get from Lemma~\ref{lem:Veps reg} that
   \[
      \mathbb{E} [| \Delta_i V_{\varepsilon} (x) |^{2p}] \lesssim \mathbb{E} [| \Delta_i V_{\varepsilon} (x) |^{2}]^p \lesssim 2^{(2 \alpha +  \kappa) p i} \varepsilon^{\kappa p}
   \]
   whenever $p \ge 1$, and therefore
   \[
      \lim_{\varepsilon \to 0} \mathbb{E} [ \| V_{\varepsilon} \|^{2p}_{B_{2p,2p}^{- \alpha - \delta}} ] = \lim_{\varepsilon \to 0} \sum_{i \ge -1} 2^{i(-\alpha-\delta)2p} \int_\mathbb{T} \mathbb{E} [| \Delta_i V_{\varepsilon} (x) |^{2p}] \mathd x = 0
   \]
   whenever $\delta > 0$. By the Besov embedding theorem, this shows that for all $p,\delta > 0$
   \[
      \lim_{\varepsilon \to 0} \mathbb{E} [ \| V_{\varepsilon} \|^{p}_{\CC^{- \alpha - \delta}} ] = 0.
   \]
\end{remark}

Slightly improving the computation carried out in equation~(\ref{eq:est-lp-v}) we can also see that if $\beta - 2 \alpha < 0$, then essentially $V_{\varepsilon}$ does not converge in any reasonable sense since the variance of the Littlewood--Paley blocks explodes.

\begin{remark}
  \label{rmk:different V blocks orthogonal}The same calculation as
  in~(\ref{eq:est-lp-v}) shows that
  \[ \mathbb{E} [\Delta_i V_{\varepsilon} (x) \Delta_j V_{\varepsilon} (x)] =
     0 \]
  whenever $| i - j | > 1$, because in that case $\rho_i \rho_j \equiv 0$.
\end{remark}

The previous analysis shows that it is reasonable to take $\alpha \leqslant
\beta / 2$ in order to have some hope of obtaining a well defined limit as
$\varepsilon \rightarrow 0$. In this case $V_{\varepsilon}$ stays bounded in probability (at
least) in spaces of distributions of regularity $-\alpha -$. This brings us to
the problem of obtaining estimates for the parabolic PDE
\[ \LL u^{\varepsilon} (t, x) = (\partial_t - \Delta) u^{\varepsilon} (t, x) =
   V_{\varepsilon} (x) u^{\varepsilon} (t, x), \hspace{2em} (t, x) \in [0, T]
   \times \mathbb{T}^d, \]
depending only on negative regularity norms of $V_{\varepsilon}$. On one side
the regularity of $u^{\varepsilon}$ is then limited by the regularity of the
right hand side which cannot be better than that of $V_{\varepsilon}$. On the
other side the product of $V_{\varepsilon}$ with $u^{\varepsilon}$ can cause
problems since we try to multiply an (a priori) irregular object with one of
limited regularity.

Assume that $V_{\varepsilon}$ converges to zero in $\CC^{\gamma - 2}$ for $\gamma > 0$. It is then reasonable to assume that also $V_{\varepsilon} u^{\varepsilon} \in C_T
\CC^{\gamma - 2}$, uniformly in $\varepsilon > 0$, and that $u^{\varepsilon} \in C_T \CC^{\gamma}$ as a
consequence of the regularising effect of the heat operator
(Lemma~\ref{lem:schauder}). We will see in Section~\ref{ssec:bony} below that
the product $V_{\varepsilon} u^{\varepsilon}$ is under control only if $\gamma
+ \gamma - 2 > 0$, that is if $\gamma > 1$. If $V_{\varepsilon} \rightarrow
0$ in $\CC^{- 1 +}$, it is not difficult to show that $u^{\varepsilon}$
converges as $\varepsilon \rightarrow 0$ to the solution $u$ of the linear
equation $\LL u = 0$ (for example this will follow from our analysis below, but in fact it is much simpler to show). In this case the random potential will not have any
effect in the limit.

The interesting situation then is when $\gamma \leqslant 1$. To understand
what could happen in this case let us use a simple transformation of the
solution. Write $u^{\varepsilon} = \exp (X^{\varepsilon}) v^{\varepsilon}$
where $X^{\varepsilon}$ satisfies the equation $\LL X^{\varepsilon} =
V_{\varepsilon}$ with initial condition $X^{\varepsilon} (0, \cdot) = 0$. Then
\[ \LL u^{\varepsilon} = \exp (X^{\varepsilon}) \left( v^{\varepsilon} \LL
   X^{\varepsilon} + \LL v^{\varepsilon} - v^{\varepsilon} (\partial_x
   X^{\varepsilon})^2 - 2 \langle \partial_x X^{\varepsilon}, \partial_x
   v^{\varepsilon} \rangle_{\mathbb{R}^d} \right) = \exp (X^{\varepsilon})
   v^{\varepsilon} V_{\varepsilon} . \]
Since $\exp (X^{\varepsilon}) > 0$ on $[0, T] \times \mathbb{T}^d$, this
implies that $v^{\varepsilon}$ satisfies
\[ \LL v^{\varepsilon} - v^{\varepsilon} | \partial_x X^{\varepsilon} |^2 - 2
   \langle \partial_x X^{\varepsilon}, \partial_x v^{\varepsilon}
   \rangle_{\mathbb{R}^d} = 0, \hspace{2em} (t, x) \in [0, T] \times
   \mathbb{T}^d . \]
Our Schauder estimates imply that $X^{\varepsilon} = J V_\varepsilon \in C_T \CC^{\gamma}$ with
uniform bounds in $\varepsilon > 0$, so that the problematic term is $|
\partial_x X^{\varepsilon} |^2$ for which this estimate does not guarantee
existence.

Note that $J (e^{i\langle \cdot, k \rangle})(t,x) = e^{i\langle x, k \rangle} (1 - e^{-t |k|^2})/|k|^2$, which yields
\begin{equation} \label{eq:formula-X}
  \partial_x X^{\varepsilon} (t, x) = \frac{\varepsilon^{d / 2 -
  \alpha}}{( \sqrt{2 \pi} )^{d / 2}} \sum_{k \in \varepsilon
  \mathbb{Z}^d_0} e^{i \langle x, k / \varepsilon \rangle} G_{\varepsilon}
  (t, k) g (k)
\end{equation}
where $\mathbb{Z}^d_0 =\mathbb{Z}^d \backslash \{ 0 \}$ and where
\[ G_{\varepsilon} (t, k) = i \frac{k}{\varepsilon} \frac{[1 - e^{- t | k /
   \varepsilon |^2}]}{| k / \varepsilon |^2} \sqrt{R (k)} . \]
\begin{lemma}\label{lem:Dx squared reg}
  Assume that
  \[ \sigma^2 = ( \sqrt{2 \pi} )^d \int_{\mathbb{R}^d} \frac{R
     (k)}{k^2} \mathd k < + \infty . \]
  Then if $\alpha = 1$ and $t>0$ we have
  \[ \lim_{\varepsilon \rightarrow 0} \mathbb{E} [| \partial_x
     X^{\varepsilon} |^2 (t, x)] = \sigma^2, \]
  and if $\alpha < 1$ and $t>0$
  \[ \lim_{\varepsilon \rightarrow 0} \mathbb{E} [(| \partial_x
     X^{\varepsilon} |)^2 (t, x)] = 0. \]
  Moreover 
  \[ \tmop{Var} [\Delta_q (| \partial_x X^{\varepsilon} |^2) (t, x)] \lesssim \varepsilon^{4 - 4 \alpha} \min(\sigma^4,  (\varepsilon 2^q)^{\beta - 2}  \| \tilde R \|_{\infty} \sigma^2)
     . \]
\end{lemma}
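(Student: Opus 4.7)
The first moment is a direct Gaussian computation. Using the orthogonality $\mathbb{E}[g(k)g(k')] = \delta_{k+k'=0}$ in~(\ref{eq:formula-X}), only diagonal pairs survive, giving
\[
\mathbb{E}[|\partial_x X^\varepsilon|^2(t,x)] = \frac{\varepsilon^{d-2\alpha}}{(\sqrt{2\pi})^d}\sum_{k \in \varepsilon\mathbb{Z}^d_0} |G_\varepsilon(t,k)|^2 = \frac{\varepsilon^{d-2\alpha+2}}{(\sqrt{2\pi})^d}\sum_{k \in \varepsilon\mathbb{Z}^d_0}\frac{(1-e^{-t|k/\varepsilon|^2})^2 R(k)}{|k|^2}.
\]
Factoring out $\varepsilon^d$ turns the remaining sum into a Riemann sum of spacing $\varepsilon$ for $\int_{\mathbb{R}^d} R(k)/|k|^2\, \mathd k = \sigma^2/(\sqrt{2\pi})^d$. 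For fixed $k\ne 0$ and $t>0$ the cutoff $(1-e^{-t|k/\varepsilon|^2})^2 \to 1$ as $\varepsilon\to 0$, and finiteness of $\sigma^2$ supplies an integrable dominating function, so dominated convergence applies. The leftover prefactor $\varepsilon^{2-2\alpha}$ gives the limit $\sigma^2$ when $\alpha=1$ and $0$ when $\alpha<1$, up to an absorbed constant.

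For the variance, expand $\Delta_q|\partial_x X^\varepsilon|^2$ as a quadratic form in the Gaussian family:
\[
\Delta_q |\partial_x X^\varepsilon|^2(t,x) = \frac{\varepsilon^{d-2\alpha}}{(\sqrt{2\pi})^d}\sum_{k_1,k_2 \in \varepsilon\mathbb{Z}^d_0}\Phi(k_1,k_2)\, g(k_1)g(k_2),
\]
with $\Phi(k_1,k_2) = \rho_q((k_1+k_2)/\varepsilon)\, e^{i\langle x,(k_1+k_2)/\varepsilon\rangle}\, G_\varepsilon(t,k_1)\cdot G_\varepsilon(t,k_2)$. Applying Wick's theorem to the four-point function $\mathbb{E}[g(k_1)g(k_2)g(k_3)^* g(k_4)^*]$, the pairing $(k_1,k_2)$--$(k_3,k_4)$ reproduces $|\mathbb{E}[\Delta_q|\partial_x X^\varepsilon|^2]|^2$ and cancels, while the remaining two pairings contribute equally by the symmetry $\Phi(k_1,k_2) = \Phi(k_2,k_1)$, yielding
\[
\mathrm{Var}[\Delta_q|\partial_x X^\varepsilon|^2(t,x)] \lesssim \varepsilon^{2d-4\alpha}\sum_{k_1,k_2\in\varepsilon\mathbb{Z}^d_0}\rho_q\!\left(\tfrac{k_1+k_2}{\varepsilon}\right)^{\!2} \bigl|G_\varepsilon(t,k_1)\cdot G_\varepsilon(t,k_2)\bigr|^2.
\]

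For the $\sigma^4$ bound, drop $\rho_q\le 1$ and apply Cauchy--Schwarz to the dot product, so the double sum factorizes and is controlled by $2(\mathbb{E}[|\partial_x X^\varepsilon|^2])^2 \lesssim \varepsilon^{4-4\alpha}\sigma^4$. For the shell bound, substitute $|G_\varepsilon(t,k)|^2 \le \varepsilon^2 R(k)/|k|^2$, change variables to $s = k_1+k_2$ (constrained by $\rho_q$ to $|s|\sim \varepsilon 2^q$) and $k_1$, and bound one copy via $R(s-k_1)/|s-k_1|^2 \le \|\tilde R\|_\infty |s-k_1|^{\beta-d-2}$ while keeping the other as $R(k_1)/|k_1|^2$. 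The resulting double sum is the discrete analogue of a weighted convolution $(R/|\cdot|^2) \ast |\cdot|^{\beta-d-2}$ evaluated and then summed over the shell $|s|\sim \varepsilon 2^q$; integrability of $R/|\cdot|^2$ supplies the factor $\sigma^2$, while the shell localization extracts $(\varepsilon 2^q)^{\beta-2}$. The delicate point, and the main obstacle, is precisely this convolution estimate: Cauchy--Schwarz on the dot product discards the correlation between $k_1$ and $k_2$ imposed by $\rho_q$, and the sharp scaling $(\varepsilon 2^q)^{\beta-2}$ must be recovered by partitioning the $k_1$-range into the regimes $|k_1|\ll|s|$, $|k_1|\sim|s|$, and $|k_1|\gg|s|$, in each one combining the near-origin asymptotic $R(k)\lesssim \|\tilde R\|_\infty |k|^{\beta-d}$, the rapid decay of $R$ at infinity, and the integrability of $R/|\cdot|^2$, in a manner parallel to the analysis of $C_\varepsilon$ in the preceding lemma.
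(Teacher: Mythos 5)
Your plan mirrors the paper's proof almost step for step. The first-moment computation is identical: the lattice Gaussian representation of $\partial_x X^{\varepsilon}$ reduces $\mathbb{E}[|\partial_x X^{\varepsilon}|^2]$ to $\varepsilon^{2-2\alpha}$ times a Riemann sum of spacing $\varepsilon$ for $\int R(k)/|k|^2\,\mathrm{d}k$, dominated convergence (with $R/|\cdot|^2$ as the majorant, valid since $\sigma^2<\infty$ forces $\beta>2$) handles the cutoff $(1-e^{-t|k/\varepsilon|^2})^2$, and the prefactor distinguishes $\alpha=1$ from $\alpha<1$. The Wick expansion of the variance into the two equal non-trivial pairings, the dropping of $\rho_q\le 1$ with Cauchy--Schwarz on the dot product for the $\sigma^4$ branch, and the elementary bound $|G_{\varepsilon}(t,k)|^2\le \varepsilon^2 R(k)/|k|^2$ are also exactly the paper's moves.

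The one place where your bookkeeping genuinely differs is the shell bound. The paper does not form the convolution $(R/|\cdot|^2)\ast|\cdot|^{\beta-d-2}$; it splits by the relative sizes of $|k_1|$ and $|k_2|$ (either $|k_2|\lesssim|k_1|\sim\varepsilon 2^q$ or $\varepsilon 2^q\lesssim |k_1|\sim|k_2|$) and in each case bounds the factor whose argument is the larger one by $\|\tilde R\|_\infty(\varepsilon 2^q)^{\beta-d-2}$, keeping the factor with the smaller argument to be summed against the $\sigma^2$ Riemann integral. The asymmetry matters: with your fixed prescription of always bounding the $R(s-k_1)/|s-k_1|^2$ copy, the middle regime $|k_1|\sim|s|$ (where $|s-k_1|$ is the smaller of the two arguments) produces the singular convolution piece $\|\tilde R\|_\infty|s|^{\beta-d-2}\int_{|y|\lesssim|s|}|y|^{\beta-d-2}\mathrm{d}y \simeq \|\tilde R\|_\infty|s|^{2\beta-d-4}$, giving an extra $\varepsilon^{4-4\alpha}(\varepsilon 2^q)^{2(\beta-2)}\|\tilde R\|_\infty^2$ term in the variance beyond the target $\varepsilon^{4-4\alpha}(\varepsilon 2^q)^{\beta-2}\|\tilde R\|_\infty\sigma^2$. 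This is in fact harmless --- writing $A=(\varepsilon 2^q)^{\beta-2}\|\tilde R\|_\infty$ one checks that $\min(\sigma^4, A\sigma^2+A^2)\le 2\min(\sigma^4, A\sigma^2)$ --- but the cleaner route, and the one the paper takes, is to bound whichever of the two factors has the larger argument (so the kept factor always closes against $\sigma^2$); your sketch should make this adaptive choice explicit in the $|k_1|\sim|s|$ regime rather than committing to a fixed copy.
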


\begin{proof}
  A computation similar to that leading to equation~(\ref{eq:est-lp-v}) gives
  \begin{align*}
      \mathbb{E} [| \partial_x X^{\varepsilon} |^2 (t, x)] & = \varepsilon^d ( \sqrt{2 \pi} )^d \varepsilon^{- 2 \alpha} \sum_{k \in \varepsilon \mathbb{Z}^d_0} | k / \varepsilon |^2 \Big[ \int_0^t e^{-(t - s)  | k / \varepsilon |^2} \mathd s \Big]^2 R (k) \\
      & = \varepsilon^d ( \sqrt{2 \pi} )^d \varepsilon^{2 - 2 \alpha} \sum_{k \in \varepsilon \mathbb{Z}^d_0} \frac{[1 - e^{- t (k / \varepsilon)^2}]^2}{k^2} R (k),
  \end{align*}
  which for $\varepsilon \rightarrow 0$, $t > 0$, and $\alpha \le 1$ tends to
  \[ \lim_{\varepsilon \rightarrow 0} \mathbb{E} [| \partial_x
     X^{\varepsilon} |^2 (t, x)] =\mathbb{I}_{\alpha =1} ( \sqrt{2 \pi})^d \int_{\mathbb{R}^d} \frac{R (k)}{k^2} \mathd k
     =\mathbb{I}_{\alpha = 1} \sigma^2 . \]
  Let us now study the variance of $| \partial_x X^{\varepsilon} |^2 (t, x)$.
  Using equation~\eqref{eq:formula-X} we have
  \[
     \Delta_q (| \partial_x X^{\varepsilon} |^2) (t, x) = \frac{\varepsilon^{d - 2\alpha}}{(2 \pi)^{d/2}}   \sum_{k_1, k_2 \in \varepsilon \mathbb{Z}^d_0} e^{i \langle k_1 + k_2, x / \varepsilon \rangle} \rho_q ((k_1 + k_2) / \varepsilon) G_\varepsilon (t,k_1) G_\varepsilon (t,k_2) g (k_1) g (k_2).
  \]
  By Wick's theorem~({\cite{Janson1997}}, Theorem~1.28)
  \begin{align*}
     \tmop{Cov} (g (k_1) g (k_2), g (k_1') g (k_2')) & = \mathbb{E} [g (k_1) g(k_1')] \mathbb{E} [g (k_2) g (k_2')] +\mathbb{E} [g (k_1) g (k_2')] \mathbb{E} [g (k_2) g (k_1')] \\
      & =\mathbb{I}_{k_1 + k_1' = k_2 + k_2' = 0} +\mathbb{I}_{k_1 + k_2' = k_2 + k_1' = 0},
  \end{align*}
  which implies
  \[ \tmop{Var} [\Delta_q (| \partial_x X^{\varepsilon} |^2) (t, x)] = 2
     \frac{\varepsilon^{2d  - 4\alpha}}{(2 \pi )^{d}} \sum_{k_1, k_2 \in \varepsilon \mathbb{Z}^d_0} (\rho_q ((k_1 +  k_2) / \varepsilon))^2 | G_\varepsilon (t,k_1) |^2 | G_\varepsilon (t,k_2) |^2 . \]
  For any $q \geqslant 0$ (the case $q = - 1$ is left to the reader), the variables $k_1$ and $k_2$ are bounded away from 0 and we have
  \[ \tmop{Var} [\Delta_q (| \partial_x X^{\varepsilon} |^2) (t, x)] \lesssim
     \varepsilon^{2 d + 4 - 4 \alpha} \sum_{k_1, k_2 \in \varepsilon
     \mathbb{Z}^d_0} (\rho_q ((k_1 + k_2) / \varepsilon))^2 \frac{| R (k_1) |
     | R (k_2) |}{| k_1 |^2  | k_2 |^2} . \]
A first estimate is obtained by just dropping the factor $ \rho_q ((k_1 + k_2) / \varepsilon)$ and results in the bound
\[
\tmop{Var} [\Delta_q (| \partial_x X^{\varepsilon} |^2) (t, x)] \lesssim
     \varepsilon^{2 d + 4 - 4 \alpha} \sum_{k_1, k_2 \in \varepsilon
     \mathbb{Z}^d_0} \frac{| R (k_1) |
     | R (k_2) |}{| k_1 |^2  | k_2 |^2} \lesssim    \varepsilon^{ 4 - 4 \alpha} \sigma^4
\]
Another estimate proceeds by taking into account the constraint given by the support of  $ \rho_q ((k_1 + k_2) / \varepsilon)$. In
 order to satisfy $k_1 + k_2 \sim \varepsilon 2^q$ we must have $k_2
  \lesssim k_1 \sim \varepsilon 2^q$ or $\varepsilon 2^q \lesssim k_1 \sim
  k_2$. In the first case
  \begin{align*}
     \varepsilon^{2 d + 4 - 4 \alpha} \sum_{k_1, k_2 \in \varepsilon \mathbb{Z}^d_0} & \mathbb{I}_{k_2 \lesssim k_1 \sim \varepsilon 2^q} \frac{| R (k_1) |  | R (k_2) |}{| k_1 |^2  | k_2 |^2}  \lesssim 2^{q(\beta-2)} \varepsilon^{ d  + \beta+ 2 - 4 \alpha} \| \tilde R \|_{\infty} \sum_{k_2 \in \varepsilon \mathbb{Z}^d_0} \mathbb{I}_{k_2 \lesssim \varepsilon 2^q} \frac{| R (k_2) |}{| k_2 |^2} \\
     & \lesssim (\varepsilon 2^q)^{\beta - 2} \| \tilde R \|_{\infty} \int \mathd k  \frac{| R (k) |}{| k |^2} \lesssim  (\varepsilon 2^q)^{\beta - 2} \varepsilon^{4 - 4 \alpha} \| \tilde R \|_{\infty} \sigma^2
  \end{align*}
  since $| R (k_1) | / | k_1 |^2 \lesssim \| \tilde R \|_{\infty} (\varepsilon 2^q)^{\beta-d-2}$. If $\varepsilon 2^q \lesssim k_1 \sim k_2$ we similarly have
  \begin{align*}
     \varepsilon^{2 d + 4 - 4 \alpha} \sum_{k_1, k_2 \in \varepsilon \mathbb{Z}^d_0} & \mathbb{I}_{\varepsilon 2^q \lesssim k_1 \sim k_2} \frac{| R (k_1) |  | R (k_2) |}{| k_1 |^2  | k_2 |^2} \lesssim 2^{q(\beta-2)} \varepsilon^{d +\beta + 2 - 4 \alpha} \|\tilde R \|_{\infty} \sum_{k_2 \in \varepsilon \mathbb{Z}^d_0} \mathbb{I}_{\varepsilon 2^q \lesssim k_2} \frac{ | R (k_2) |}{ | k_2 |^2} \\
     & \lesssim  (\varepsilon 2^q)^{\beta - 2} \varepsilon^{4 - 4 \alpha} \| \tilde R \|_{\infty} \int \mathd k \frac{| R (k) |}{| k |^2}
       \lesssim  (\varepsilon 2^q)^{\beta - 2} \varepsilon^{4 - 4 \alpha} \| \tilde R \|_{\infty} \sigma^2.
  \end{align*}
\end{proof}

This lemma shows that the interesting situation is $\alpha = 1$. Then provided
$\sigma^2 < + \infty$ and $\beta > 2$ we have $| \partial_x X^{\varepsilon} |^2
(t) \rightarrow \sigma^2$ in $L^2(\Omega; \CC^{0 -})$ for all $t > 0$, and in fact the
convergence is uniform for $t \in [c, C]$ whenever $0 < c < C$. Since all the operations that appear in the equation for $v^\varepsilon$ are continuous, it is then easy to see that $v^{\varepsilon}$ converges to the solution of the PDE
\begin{equation}
  \LL v = \sigma^2 v \label{eq:homog}
\end{equation}
and since $X^\varepsilon$ is a continuous linear functional of $V_\varepsilon$, we have $X^{\varepsilon} \rightarrow 0$ in $C_T \CC^{\gamma}$ and thus we finally obtain the convergence of $(u^{\varepsilon})_{\varepsilon > 0}$ to the same
$v$.

Thus, we have (modulo technical details) shown the following theorem:
\begin{theorem}\label{thm:homogenization}
   Let $\beta \in (0,d]$ and let $R = | \cdot |^{\beta - d} \tilde{R}$, where $\tilde{R} \in \CS (\mathbb{R}^d)$ is a smooth radial function of rapid decay, and assume that $\sigma^2 = ( \sqrt{2 \pi} )^d \int_{\mathbb{R}^d} R(k)/k^2 \mathd k < \infty$. Let $V \colon \mathbb{T}^d_{\varepsilon} \rightarrow \mathbb{R}$ be a continuous Gaussian function with mean zero and correlation
\[
   \mathbb{E} [V (x) V (y)] = C_{\varepsilon} (x - y) = ( \varepsilon / \sqrt{2 \pi} )^d \sum_{k \in \varepsilon \mathbb{Z}^d} e^{i \langle x - y, k \rangle} R (k).
\]
Consider the solution $u^{\varepsilon} : \mathbb{R}_+ \times
\mathbb{T}^d \rightarrow \mathbb{R}$ to the Cauchy problem
\[
   \partial_t u^{\varepsilon} (t, x) = \Delta u^{\varepsilon} (t, x) +
   \varepsilon^{- \alpha} V (x / \varepsilon) u^{\varepsilon} (t, x),
   \hspace{2em} u^{\varepsilon} (0) = u_0,
\]
where $u_0 \in C^{\infty}(\mathbb{T}^d)$. If $\alpha \in (0, 1 \wedge \beta/2)$, then $u^\varepsilon$ converges to the solution $u$ of
\[
   \partial_t u (t, x) = \Delta u (t, x), \hspace{2em} u (0) = u_0.
\]
However, if $1 = \alpha < \beta/2$, then $u^\varepsilon$ converges to the solution $v$ of
\[
   \partial_t v (t, x) = \Delta v (t, x) + \sigma^2 v(t,x), \hspace{2em} v(0) = u_0.
\]
\end{theorem}

\subsection{The 2d generalized parabolic Anderson model}

The case $\alpha = 1$ and $\beta = 2$ remains open in the previous analysis.  When
$\beta = 2$ we cannot expect $\sigma^2$ to be finite and moreover from the above
computations we see that the variance of $| \partial_x X^{\varepsilon} |^2$
remains finite and does not go to zero so the limiting object should satisfy a
stochastic PDE rather than a deterministic one. If we let
$\sigma^2_{\varepsilon} (t) =\mathbb{E} [| \partial_x X^{\varepsilon} |^2 (t,
x)]$ (which depends on time but which is easily shown to be independent of $x
\in \mathbb{T}^2$), then we expect that solving the {\tmem{renormalized}}
equation
\[ \LL \tilde{u}^{\varepsilon} = V_{\varepsilon} \tilde{u}^{\varepsilon} -
   \sigma^2_{\varepsilon} \tilde{u}^{\varepsilon} \]
should give rise in the limit to a well defined random field $\tilde{u}$
satisfying $\tilde{u} = e^X \tilde{v}$, where
\[ \LL \tilde{v} = \tilde{v} \zeta + 2 \langle \partial_x X, \partial_x
   \tilde{v} \rangle_{\mathbb{R}^d} \]
and where $X$ is the limit of $X^{\varepsilon}$ as $\varepsilon \rightarrow 0$
while $\zeta$ is the limit of $(\partial_x X^{\varepsilon})^2 -
\sigma^2_{\varepsilon} $. The relation of $u^{\varepsilon}$ with
$\tilde{u}^{\varepsilon}$ is $\tilde{u}^{\varepsilon} (t, x)
= e^{- \int_0^t \sigma^2_{\varepsilon} (s) \mathd s} u^{\varepsilon} (t, x)$.
The renormalization procedure is therefore equivalent to a time--dependent
rescaling of the solution to the initial problem. Without renormalization, the solution will simply drift of to $+\infty$, so in order to see a nontrivial behavior, we have to put ourselves in a different reference frame by multiplying with $ e^{- \int_0^t \sigma^2_{\varepsilon} (s) \mathd s}$. One familiar situation where such a need for renormalization arises is in the central limit theorem: If $(Y_n)$ is a sequence of i.i.d. random variables with unit variance and mean $\mu > 0$, then $(n^{-1/2} \sum_{k=1}^n Y_k)$ diverges to $+\infty$, but once we subtract the diverging constants $n^{1/2} \mu$ we get that $(n^{-1/2} \sum_{k=1}^n Y_k - n^{1/2}\mu)$ converges weakly to a standard Gaussian distribution.

We will study the renormalization and convergence problem for a more general
equation of the form
\begin{equation}
  \LL u^{\varepsilon} = F (u^{\varepsilon}) V_{\varepsilon}, \label{eq:gpam}
\end{equation}
where $F : \mathbb{R} \rightarrow \mathbb{R}$ is a sufficiently smooth
function, in general non--linear. One possible motivation is that if $z^{\varepsilon}$ solves
the linear PDE $\LL z^\varepsilon = z^\varepsilon V_\varepsilon$ and we set $u^{\varepsilon} = \varphi (z^{\varepsilon})$ for
some invertible $\varphi : \mathbb{R} \rightarrow \mathbb{R}$ such that
$\varphi' > 0$, then
\[ \LL u^{\varepsilon} = \varphi' (z^{\varepsilon}) \LL z^{\varepsilon} -
   \varphi'' (z^{\varepsilon}) | \partial_x z^{\varepsilon} |^2 = \varphi'
   (z^{\varepsilon}) z^{\varepsilon} V_{\varepsilon} - \varphi''
   (z^{\varepsilon}) (\varphi' (z^{\varepsilon}))^{- 2} | \partial_x
   u^{\varepsilon} |^2 \]
and thus $u^{\varepsilon}$ satisfies the PDE
\[ \LL u^{\varepsilon} = F_1 (u^{\varepsilon}) V_{\varepsilon} + F_2
   (u^{\varepsilon}) (\partial_x u^{\varepsilon})^2 \]
where $F_1 (x) = \varphi' (\varphi^{- 1} (x)) \varphi^{- 1} (x)$ and $F_2 (x)
= - \varphi'' (\varphi^{- 1} (x)) (\varphi' (\varphi^{- 1} (x)))^{- 2}$. In
the situation we are interested in, the second term in the right hand side is
simpler to treat than the first term so, for the time being, we will drop it
and we will concentrate on the equation~(\ref{eq:gpam}) in $d = 2$ with
$\alpha = 1$ and short ranged ($\beta = d$) potential $V$ which we refer to as
{\tmem{generalized parabolic Anderson model}} ({\tmname{gpam}}).

Under these conditions $V_{\varepsilon}$ converges to the white noise in
space which we usually denote with $\xi$ and our aim will be to set up a
theory in which the operations involved in the definition of the
dynamics of the {\tmname{gpam}} are well defined, including the possibility of
the renormalization which already appears in the linear case as hinted above.

While the reader should always have in mind a limiting procedure from a well
defined model like the ones we were considering so far, in the following we
will mostly discuss the limiting equation. The specific phenomena appearing
when trying to track the oscillations of the term $F (u^{\varepsilon})
V_{\varepsilon}$ as $\varepsilon \rightarrow 0$ will be described by a
{\tmem{renormalized product}} $F (u) \renorm \xi$ and so we write the
{\tmname{gpam}} as
\begin{equation}
  \label{eq:pam-intro} \LL u (t, x) = F (u (t, x)) \renorm \xi (x),
  \hspace{2em} u (0) = u_0 .
\end{equation}
In the linear case $F (u) = u$, the problem of the renormalization can be
solved along the lines suggested above. Another possible line of attack comes
from the theory of Gaussian spaces and in particular from Wick products, see
for example~{\cite{Hu2002}}. However, the definition of the Wick product
relies on the concrete chaos expansion of its factors, and since nonlinear
functions change the chaos expansion in a complicated way, there is little
hope of directly extending the Wick product approach to the nonlinear case and
moreover using these non--local (in the probability space) objects can deliver
solutions which are not physically acceptable~{\cite{Chan2000}}.

Equation~(\ref{eq:pam-intro}) is structurally very similar to the stochastic
differential equation
\begin{equation}
  \label{eq:sde-intro} \partial_t v (t) = F (v (t)) \partial_t B^H (t),
  \hspace{2em} v (0) = v_0,
\end{equation}
where $B^H$ denotes a fractional Brownian motion with Hurst index $H \in (0,
1)$. There are many ways to solve~(\ref{eq:sde-intro}) in the Brownian case.
Since we are interested in a way that might extend to~(\ref{eq:pam-intro})
where the irregularity appears along the two--dimensional spatial variable
$x$, we should exclude all approaches based on information, filtrations, and a
direction of time; in particular, any approach that works for $H \neq 1 / 2$
might seem promising. But Lyons' theory of rough paths~{\cite{Lyons1998}}
equips us exactly with the techniques we need to solve~(\ref{eq:sde-intro})
for general $H$. More precisely, if for $H > 1 / 3$ we are given
$\int_0^{\cdot} B^H_s \mathd B_s^H$, then we can use the controlled rough path
integral~{\cite{Gubinelli2004}} to make sense of $\int_0^{\cdot} f_s \mathd
B^H_s$ for any $f$ which ``looks like'' $B^H$, and this allows us to
solve~(\ref{eq:sde-intro}). So the main ingredients required for controlled
rough paths are the integral $\int_0^{\cdot} B^H_s \mathd B_s^H$ for the
reference path $B^H$, and the fact that we can describe paths which look like
$B^H$. It is worthwhile to note that while we need probability theory to
construct $\int_0^{\cdot} B^H_s \mathd B_s^H$, the construction of
$\int_0^{\cdot} f_s \mathd B^H_s$ is achieved using pathwise arguments and it
is given as a continuous map of $f$ and $( B^H, \int_0^{\cdot} B^H_s
\mathd B^H_s )$. As a consequence, the solution to the
SDE~(\ref{eq:sde-intro}) depends pathwise continuously on $( B^H,
\int_0^{\cdot} B^H_s \mathd B^H )$.

By the structural similarity of~(\ref{eq:pam-intro}) and~(\ref{eq:sde-intro}),
we might hope to extend the rough path approach to~(\ref{eq:pam-intro}). The
equivalent of $B^H$ is given by the solution $\vartheta$ to $\LL \vartheta =
\xi$, $\vartheta (0) = 0$, and the equivalent of $\int_0^{\cdot} B^H_s
\mathd B^H_s$ turns out to be the renormalized product
$\vartheta \renorm \xi$. Then we might hope that given $\vartheta \renorm \xi$ we are able to define $f \renorm
\xi$ for all $f$ that ``look like $\vartheta$'', however this is to be
interpreted. Of course, rough paths can only be applied to functions of a
one--dimensional index variable, while for~(\ref{eq:pam-intro}) the problem
lies in the irregularity of $\xi$ in the spatial variable $x \in
\mathbb{T}^2$.

In the following we combine the ideas from controlled rough paths with Bony's
paraproduct, a tool from functional analysis that allows us to extend rough paths to
functions of a multidimensional parameter. Using the paraproduct, we are able
to make precise in a simple way what we mean by ``distributions looking like a
reference distribution''. We can then define products of suitable
distributions and solve~(\ref{eq:pam-intro}) as well as many other interesting
singular SPDEs.

\subsection{More singular problems}\label{sec:rbe,kpz,she}

Keeping the homogenization problem as leitmotiv for these lectures, we could
consider also space--time varying environments $V_{\varepsilon} (t, x) =
\varepsilon^{- \alpha} V (t / \varepsilon^2, x / \varepsilon)$. The scaling of
the temporal variable is chosen so that it is compatible with the diffusive
scaling from a microscopic description, where $V (t, x)$ has typical variation
in space and time in scales of order $1$. Assume that $d = 1$, then when the
random field $V$ is Gaussian, zero mean, and with short--range space--time
correlations, the natural choice for the magnitude of the macroscopic
fluctuations is $\alpha = 3 / 2$. \ In this case $V_{\varepsilon}$ converges
as $\varepsilon \rightarrow 0$ to a space--time white noise $\xi$.
Understanding the limit dynamics as $\varepsilon \rightarrow 0$ of the
solution $u^{\varepsilon}$ to the linear equation $\LL u^{\varepsilon} =
V_{\varepsilon} u^{\varepsilon}$ represents now a more difficult problem than
in the time independent situation. A Gaussian computation shows that the
random field $X^{\varepsilon}$, solution to $\LL X^{\varepsilon} =
V_{\varepsilon}$ (e.g. with zero initial condition), stays bounded in $C_T
\CC^{1 / 2 -}$ as $\varepsilon \rightarrow 0$. Since $\LL$ is a second order
operator (if we use an appropriate parabolic weighting of the time and space
regularities), $\xi$ is expected to live in a space of distributions of
regularity $- 3 / 2 -$. This is to be compared with the $- 1 -$ of the space
white noise which had to be dealt with in the {\tmname{gpam}}. Renormalization
effects are then expected to be stronger in this setting and the limiting
object, which we denote with $w$, should satisfy a (suitably renormalized)
linear stochastic heat equation with multiplicative noise ({\tmname{she}})
\begin{equation}
  \label{eq:heat-intro} \LL w (t, x) = w (t, x) \renorm \xi (t, x),
  \hspace{2em} w (0) = w_0 .
\end{equation}
As indicated by the computations in the more regular case, it is useful to
consider the change of variables $w = e^h$ which is called Cole--Hopf
transformation. Here $h : [0, \infty) \times \mathbb{T} \rightarrow
\mathbb{R}$ is a new unknown which satisfies now the Kardar--Parisi--Zhang
({\tmname{kpz}}) equation:
\begin{equation}
  \label{eq:kpz-intro} \LL h (t, x) = (\partial_x h (t, x))^{\renorm 2} + \xi
  (t, x), \hspace{2em} h (0) = h_0
\end{equation}
where the difficulty comes now from the squaring of the derivative but which
has the nice feature to be additively perturbed by the space--time white
noise, a feature which simplifies many considerations. Another relevant model
in applications is obtained by taking the space derivative of {\tmname{kpz}}
and letting $u (t, x) = \partial_x h (t, x)$ in order to obtain the stochastic
conservation law
\begin{equation}
  \label{eq:burgers-intro} \LL u (t, x) = \partial_x (u (t, x))^{\renorm 2} +
  \partial_x \xi (t, x), \hspace{2em} u (0) = u_0,
\end{equation}
which we will refer to as the stochastic Burgers equation ({\tmname{sbe}}). In
all these cases, $\renorm$ denotes a suitably renormalized product.

The {\tmname{kpz}} equation was derived by Kardar--Parisi--Zhang in 1986 as a
universal model for the random growth of an interface~{\cite{Kardar1986}}. For
a long time it could not be solved due to the fact that there was no way to
make sense of the nonlinearity $(\partial_x h)^{\renorm 2}$
in~(\ref{eq:kpz-intro}). The only way to make sense of {\tmname{kpz}} was to
apply the Cole-Hopf transform~{\cite{Bertini1997}}: solve
{\tmname{she}}~(\ref{eq:heat-intro}) (which is accessible to It{\^o}
integration) and set $h = \log w$. But there was no intrinsic interpretation
of what it means to solve~(\ref{eq:kpz-intro}). Finally, in 2011
Hairer~{\cite{Hairer2013b}} used rough paths to give a meaning to the equation
and to obtain solutions directly at the {\tmname{kpz}} level. In
Section~\ref{sec:rbe} we will sketch how to recover his solution in the
paracontrolled setting. Applications of the techniques used by Hairer to solve
the {\tmname{kpz}} problem to a more general homogenization problem with
ergodic potentials (not necessarily Gaussian) have been studied
in~{\cite{hairer_random_2013}}.

\subsection{Hairer's regularity structures}

In~{\cite{Hairer2014}}, Hairer introduces a theory of regularity structures
which can also be considered a generalization of the theory of controlled
rough paths to functions of a multidimensional index variable. Hairer
fundamentally rethinks the notion of regularity. Usually a function is called
smooth if it can be approximated around every point by a polynomial of a given
degree (the Taylor polynomial). Naturally, the solution to an SPDE driven by
--say-- Gaussian space-time white noise is not smooth in that sense. So in
Hairer's theory, a function is called smooth if locally it can be approximated
by the noise (and higher order terms constructed from the noise). This induces
a natural topology in which the solutions to semilinear SPDEs depend
continuously on the driving signal.

At this date it seems that the theory of regularity structures has a wider
range of applicability than the paracontrolled approach described
in~{\cite{Gubinelli2012}}, but also at the expense of a very deep conceptual
sophistication. There are problems (like the one--dimensional heat equation
with multiplicative noise and general nonlinearity) that cannot be solved
using paracontrolled distributions, but these problems seem also quite
difficult (even if doable and there is work in progress) to tackle with regularity
structures. Moreover, equations of a more general kind, say dispersive
equations or wave equations, are still poorly (or not at all) understood in
both approaches.

\section{The paracontrolled PAM}

As we have tried to motivate in the previous sections we are looking for a
theory for {\tmname{pam}} which describes the possible limits of the equation
\begin{equation}
  \LL u = F (u) \eta \label{eq:pam-eta}
\end{equation}
driven by sufficiently regular $\eta$ but as $\eta$ is converging to the space
white noise $\xi$. From this point of view we are looking for a priori
estimates on the solution $u$ to~(\ref{eq:pam-eta}) which depend only on
distributional norms of $\eta$. So in the following we will assume that we
have at hand only a uniform control of $\eta$ in $C_T \CC^{\gamma - 2}$ for
some $\gamma > 0$. For the application to the 2d space white noise we could
take $\gamma = 1 -$, but we will not use this specific information in order to
probe the range of applicability of our approach and we will only assume that
the exponent $\gamma$ is such that $3 \gamma - 2 > 0$.

\

Assume for a moment that we are in the simpler situation $\gamma > 1$ and
$u_0 \in \CC^{\gamma}$ and let us try to solve equation~(\ref{eq:pam-eta}) via
Picard iterations $(u^n)_{n \geqslant 0}$ starting from $u^0 \equiv u_0$.
Since $F$ preserves the $C \CC^{\gamma}$-regularity (which can be seen by
identifying $C \CC^{\gamma}$ with the classical space of bounded
H{\"o}lder--continuous functions of space), the product $F (u^0 (t)) \eta$ is
well defined as an element of $\CC^{\gamma - 2}$ for all $t \geqslant 0$ since
$2 \gamma - 2 > 0$ and we are in condition to apply
Corollary~\ref{cor:product} below on the product of elements in
H{\"o}lder--Besov spaces. Now by Lemma~\ref{lem:schauder}, the heat semigroup generated by the Laplacian gains
two degrees of regularity so that the solution $u^1$ to $\LL u^1 = F (u^0)
\eta$, $u^1 (0) = u_0$, is in $C \CC^{\gamma}$. From here we obtain a
contraction on $C_T \CC^{\gamma}$ for some small $T > 0$ whose value does not
depend on $u_0$, which gives us global in time existence and uniqueness of
solutions. Note that in one dimension the space white noise has regularity
$\CC^{- 1 / 2 -}$ (see Exercise~\ref{exo:wn besov}) so taking $\gamma = 3 / 2
-$ we have determined that the one--dimensional {\tmname{pam}} can be solved
globally in time with standard techniques.

\

When the condition $2 \gamma - 2 > 0$ is not satisfied we still have that if
$\eta \in C_T \CC^{\gamma - 2}$ then $u \in \LL^{\gamma} = C_T \CC^{\gamma -
2} \cap C_T^{\gamma / 2} L^{\infty}$ by the standard parabolic estimates of
Lemma~\ref{lem:schauder}. However with the regularities at hand we cannot use
Corollary~\ref{cor:product} anymore to guarantee the continuity of the
operator $(u, \eta) \mapsto F (u) \eta$. Moreover, as already seen in the
simpler homogenization problems of Theorem~\ref{thm:homogenization} above this is not a technical difficulty but a
real issue of the regime $\gamma \leqslant 1$. We expect that controlling the
model in this regime can be quite tricky since limits exists when $\eta
\rightarrow 0$ but the limiting solution still feels residual order one
effects from the vanishing driving signal $\eta$. This situation cannot be
improved from the point of view of standard analytic considerations. What is
needed is a finer control of the solution $u$ which allows to analyse in more
detail the possible resonances between the fluctuations of $u$ and those of
$\eta$.

\

Before going on we will revise the problem of multiplication of distributions
in the scale of H{\"o}lder--Besov spaces, introducing the basic tool of our
general analysis: Bony's paraproduct.

\subsection{The paraproduct and the resonant term}\label{ssec:bony}

Paraproducts are bilinear operations introduced by Bony~{\cite{Bony1981}} to
linearize a class of nonlinear hyperbolic PDEs in order to analyse the
regularity of their solutions. In terms of Littlewood--Paley blocks, a general
product $fg$ of two distributions can be (at least formally) decomposed as
\[ fg = \sum_{j \geqslant - 1} \sum_{i \geqslant - 1} \Delta_i f \Delta_j g =
   f \lpara g + f \rpara g + f \reso g. \]
Here $f \lpara g$ is the part of the double sum with $i < j - 1$, $f \rpara g$
is the part with $i > j + 1$, and $f \reso g$ is the ``diagonal'' part, where
$|i - j| \leqslant 1$. More precisely, we define
\[ f \lpara g = g \rpara f = \sum_{j \geqslant - 1} \sum_{i = - 1}^{j - 2}
   \Delta_i f \Delta_j g \hspace{2em} \text{and} \hspace{2em} f \reso g =
   \sum_{|i - j| \leqslant 1} \Delta_i f \Delta_j g. \]
Of course, the decomposition depends on the dyadic partition of unity used to
define the blocks $\Delta_j$, and also on the particular choice of the pairs
$(i, j)$ in the diagonal part. The choice of taking all $(i, j)$ with $|i - j|
\leqslant 1$ into the diagonal part corresponds to the fact that the partition
of unity can be chosen such that $\tmop{supp} \CF (\Delta_i f \Delta_j g)
\subseteq 2^j \CA$ if $i < j - 1$, where $\CA$ is a suitable annulus. If $| i
- j | \leqslant 1$, the only apriori information on the spectral support of
the various term in the double sum is $\tmop{supp} \CF (\Delta_i f \Delta_j g)
\subseteq 2^j \CB$, that is they are supported in balls and in particular they
can have non--zero contributions to very low wave vectors. \ We call $f \lpara
g$ and $f \rpara g$ \tmtextit{paraproducts}, and $f \reso g$ the
\tmtextit{resonant} term.

Bony's crucial observation is that $f \lpara g$ (and thus $f \rpara g$) is
always a well-defined distribution. Heuristically, $f \lpara g$ behaves at
large frequencies like $g$ (and thus retains the same regularity), and $f$
provides only a frequency modulation of $g$. The only difficulty in
constructing $fg$ for arbitrary distributions lies in handling the diagonal
term $f \reso g$. The basic result about these bilinear operations is given by
the following estimates.

\begin{theorem}
  (Paraproduct estimates)\label{thm:paraproduct} For any $\beta \in
  \mathbb{R}$ and $f, g \in \CS'$ we have
  \begin{equation}
    \label{eq:para-1} \|f \lpara g\|_{\beta} \lesssim_{\beta}
    \|f\|_{L^{\infty}} \|g\|_{\beta},
  \end{equation}
  and for $\alpha < 0$ furthermore
  \begin{equation}
    \label{eq:para-2} \|f \lpara g\|_{\alpha + \beta} \lesssim_{\alpha, \beta}
    \|f\|_{\alpha} \|g\|_{\beta} .
  \end{equation}
  For $\alpha + \beta > 0$ we have
  \begin{equation}
    \label{eq:para-3} \|f \reso g\|_{\alpha + \beta} \lesssim_{\alpha, \beta}
    \|f\|_{\alpha} \|g\|_{\beta} .
  \end{equation}
\end{theorem}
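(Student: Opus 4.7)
The strategy for all three estimates is uniform: decompose the bilinear operation as a series $\sum_j w_j$ of smooth functions whose Fourier transforms are localised at scale $2^j$, obtain a uniform $L^\infty$ bound of the form $\|w_j\|_{L^\infty} \lesssim 2^{-j\gamma}(\cdots)$, and then read off the Besov regularity $\gamma$ from Lemma~\ref{lem: Besov regularity of series}. The split between paraproduct and resonant term corresponds precisely to whether $\mathrm{supp}\,\widehat{w_j}$ lies in an annulus or merely in a ball.

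For \eqref{eq:para-1} and \eqref{eq:para-2}, I would write $f \lpara g = \sum_{j\geq 1} S_{j-1}f\,\Delta_j g$. In Fourier, the support of $S_{j-1}f\,\Delta_j g$ is contained in the sum of $2^{j-1}\CB$ (a ball) and $2^j\CA$ (an annulus), and by property~2 of the dyadic partition of unity — which in particular forces a spectral gap between $S_{j-1}$ and $\Delta_j$ — this sum is contained in a dilated annulus $2^j\CA'$. Hence part~1 of Lemma~\ref{lem: Besov regularity of series} applies. For \eqref{eq:para-1}, I would use that $S_{j-1}$ acts boundedly on $L^\infty$ uniformly in $j$ (its kernel is $2^{(j-1)d}\CF^{-1}_{\mathbb{R}^d}\sigma(2^{j-1}\cdot)$ periodised, which has uniformly bounded $L^1$-norm by Poisson summation) together with $\|\Delta_j g\|_{L^\infty}\leq 2^{-j\beta}\|g\|_\beta$, giving the target bound with $\gamma=\beta$. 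For \eqref{eq:para-2}, I would instead invoke Exercise~\ref{exercise:besov-space-inequalities} to get $\|S_{j-1}f\|_{L^\infty}\lesssim 2^{-j\alpha}\|f\|_\alpha$ for $\alpha<0$ (a geometric sum of the block bounds $\|\Delta_i f\|_{L^\infty}\lesssim 2^{-i\alpha}\|f\|_\alpha$), and then combine to reach $\gamma=\alpha+\beta$.

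For \eqref{eq:para-3}, I would regroup
\[
f\reso g \;=\; \sum_{j\geq -1} u_j,\qquad u_j := \Delta_j f\sum_{|i-j|\leq 1}\Delta_i g.
\]
Now both factors are Fourier-supported in balls of radius $\sim 2^j$, so the product $u_j$ is Fourier-supported only in a ball $2^j\CB'$. The pointwise estimate $\|u_j\|_{L^\infty}\lesssim 2^{-j(\alpha+\beta)}\|f\|_\alpha\|g\|_\beta$ is immediate. To finish, one applies part~2 of Lemma~\ref{lem: Besov regularity of series}, whose hypothesis $\alpha+\beta>0$ is exactly what is assumed.

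The main conceptual point — and the place where the two regimes really differ — is the support analysis, not any hard estimate. In the paraproduct case the annular localisation at scale $2^j$ makes the contributions at different Littlewood–Paley scales essentially orthogonal (each $\Delta_i$ of the sum picks up only finitely many $j\sim i$), so any regularity $\alpha+\beta$ can be extracted. In the resonant case the ball localisation allows $u_j$ to feed into $\Delta_i$ for all $i\lesssim j$, and closing the resulting geometric sum $\sum_{j\gtrsim i}2^{-j(\alpha+\beta)}$ forces the sign condition $\alpha+\beta>0$. This is the only delicate point; the rest is bookkeeping.
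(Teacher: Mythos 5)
Your proof is correct and follows essentially the same route as the paper's: write $f\lpara g = \sum_j S_{j-1}f\,\Delta_j g$ and $f\reso g$ as a sum of terms $\Delta_j f\sum_{|i-j|\leq 1}\Delta_i g$, observe the annular (resp.\ ball) localisation of the Fourier support, obtain the uniform $L^\infty$ bound on each piece, and invoke Lemma~\ref{lem: Besov regularity of series}, noting that its second part requires $\alpha+\beta>0$. The paper's proof is terser but structurally identical; your additional remarks on the uniform $L^\infty$-boundedness of $S_{j-1}$ and on the geometric sum $\|S_{j-1}f\|_{L^\infty}\lesssim 2^{-j\alpha}\|f\|_\alpha$ for $\alpha<0$ are exactly the fine details the paper leaves implicit.
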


\begin{proof}
  There exists an annulus $\CA$ such that $S_{j - 1} f \Delta_j g$ has Fourier
  transform supported in $2^j \CA$, and for $f \in L^{\infty}$ we have
  \[ \|S_{j - 1} f \Delta_j g\|_{L^{\infty}} \leqslant \|S_{j - 1}
     f\|_{L^{\infty}}  \| \Delta_j g\|_{L^{\infty}} \lesssim
     \|f\|_{L^{\infty}} 2^{- j \beta} \|g\|_{\beta} . \]
  By Lemma~\ref{lem: Besov regularity of series}, we thus
  obtain~(\ref{eq:para-1}). The proof of~(\ref{eq:para-2})
  and~(\ref{eq:para-3}) works in the same way, where for estimating $f \reso
  g$ we need $\alpha + \beta > 0$ because the terms of the series are
  supported in a ball and not in an annulus.
\end{proof}

In combination with Exercise~\ref{exercise:besov-space-inequalities} above, we deduce the following simple corollary:

\begin{corollary}
  \label{cor:product}Let $f \in \CC^{\alpha}$ and $g \in \CC^{\beta}$ with
  $\alpha + \beta > 0$, then the product $(f, g) \mapsto fg$ is a bounded
  bilinear map from $\CC^{\alpha} \times \CC^{\beta}$ to $\CC^{\alpha \wedge
  \beta}$. While $f \lpara g$, $f \rpara g$, and $f \reso g$ depend on the
  specific dyadic partition of unity, the product $f g$ does not.
\end{corollary}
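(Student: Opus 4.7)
The plan is to apply Bony's decomposition $fg = f \lpara g + f \reso g + f \rpara g$ and bound each of the three pieces separately using Theorem~\ref{thm:paraproduct}, then recover partition-independence through a density argument.

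For the two paraproducts I would distinguish cases according to the sign of $\alpha$. If $\alpha > 0$, the embedding $\CC^\alpha \hookrightarrow L^\infty$ from Exercise~\ref{exercise:besov-space-inequalities} combined with~(\ref{eq:para-1}) gives $\|f \lpara g\|_\beta \lesssim \|f\|_\alpha \|g\|_\beta$, so $f \lpara g \in \CC^\beta \hookrightarrow \CC^{\alpha \wedge \beta}$. If $\alpha \leq 0$, then~(\ref{eq:para-2}) (applied, in the borderline case $\alpha = 0$, after a harmless passage to $\CC^{-\delta}$ for small $\delta > 0$) yields $\|f \lpara g\|_{\alpha + \beta} \lesssim \|f\|_\alpha \|g\|_\beta$. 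In this regime $\alpha + \beta > 0$ forces $\beta > 0$, hence $\alpha \wedge \beta = \alpha \leq \alpha + \beta$ and the result again lies in $\CC^{\alpha \wedge \beta}$. By symmetry the same analysis applies to $f \rpara g = g \lpara f$. For the resonant term, estimate~(\ref{eq:para-3}) applies directly under $\alpha + \beta > 0$, giving $\|f \reso g\|_{\alpha + \beta} \lesssim \|f\|_\alpha \|g\|_\beta$ with $\alpha + \beta \geq \alpha \wedge \beta$. Summing the three contributions and invoking the continuous embeddings $\CC^{\gamma'} \hookrightarrow \CC^{\gamma}$ for $\gamma' \geq \gamma$ proves the bilinear boundedness.

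For the independence of $fg$ from the choice of partition, I would argue by density. If $f, g \in \CS$ are smooth, then the three Bony pieces are ordinary pointwise products whose finite sum equals the pointwise product $fg$, independently of the partition. For general $f \in \CC^\alpha$ and $g \in \CC^\beta$, I approximate by the smooth truncations $S_N f, S_N g$, which converge to $f, g$ in $\CC^{\alpha - \delta}$ and $\CC^{\beta - \delta}$ at rate $2^{-N\delta}$. Choosing $\delta > 0$ small enough that $(\alpha - \delta) + (\beta - \delta) > 0$ still holds, the bilinear continuity established above lets me pass to the limit with respect to either partition of unity and conclude that the two limiting products agree.

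The main conceptual obstacle is the resonant term $f \reso g$: the paraproduct pieces are essentially routine because their spectral supports lie in annuli indexed by the larger frequency, and~(\ref{eq:para-1})--(\ref{eq:para-2}) follow almost mechanically from Lemma~\ref{lem: Besov regularity of series}. The diagonal piece, by contrast, has blocks supported only in balls rather than annuli, so the corresponding series is summable only if the two regularities overcompensate; this is precisely the content of~(\ref{eq:para-3}) and the reason the hypothesis $\alpha + \beta > 0$ is exactly what is needed.
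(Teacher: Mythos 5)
Your proposal is correct and follows essentially the same route the paper takes: apply Theorem~\ref{thm:paraproduct} to the three Bony pieces (with the case distinction on the sign of $\alpha$ to bridge between~(\ref{eq:para-1}) and~(\ref{eq:para-2})) and recover partition-independence by approximating with $S_N f, S_N g$ and passing to the limit. The paper simply states these steps as "a simple corollary" and "easily follows by taking smooth approximations," so you have filled in exactly the details it leaves implicit.
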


The independence of the product from the dyadic partition of unity easily
follows by taking smooth approximations.

\

The ill--posedness of $f \reso g$ for $\alpha + \beta \leqslant 0$ can be
interpreted as a resonance effect since $f \reso g$ contains exactly those
part of the double series where $f$ and $g$ are in the same frequency range.
The paraproduct $f \lpara g$ can be interpreted as frequency modulation of $g$,
which should become more clear in the following example.

\begin{example}
  In Figure~\ref{fig:slow} we see a slowly oscillating positive function $u$,
  while Figure~\ref{fig:fast} depicts a fast sine curve $v$. The product $u
  v$, which here equals the paraproduct $u \lpara v$ since $u$ has no rapidly
  oscillating components, is shown in Figure~\ref{fig:modulation}. We see that
  the local fluctuations of $u v$ are due to $v$, and that $u v$ is
  essentially oscillating with the same speed as $v$.

\begin{figure}[h]
\centering
\begin{minipage}{0.45\textwidth}
\centering
 {\resizebox{5.5cm}{1.5cm}{\includegraphics{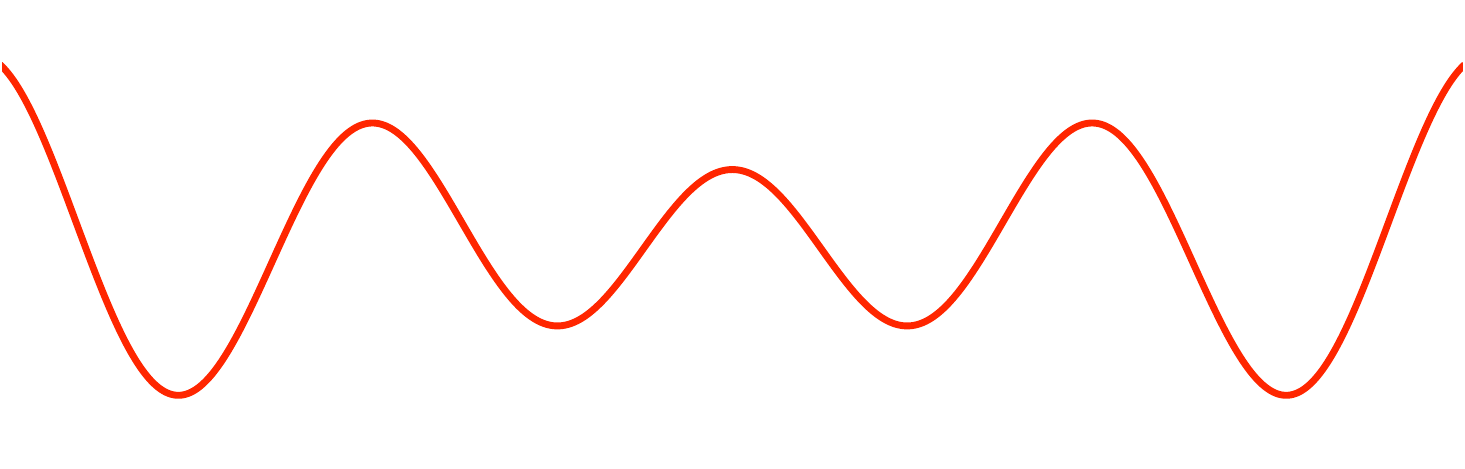}}}
\caption{The function $u$}
 \label{fig:slow}
\end{minipage}\hfill
\begin{minipage}{0.45\textwidth}
\centering
 {\resizebox{5.5cm}{1.5cm}{\includegraphics{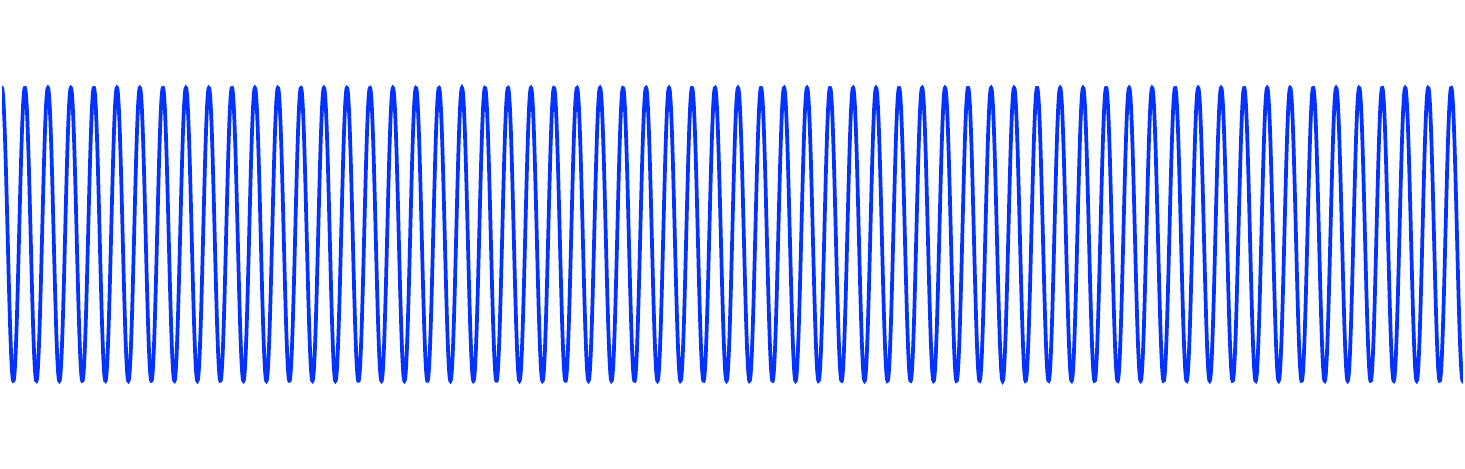}}}
\caption{The function $v$}
\label{fig:fast}
\end{minipage}\\
\begin{minipage}{0.45\textwidth}
\centering
 {\resizebox{5.5cm}{1.5cm}{\includegraphics{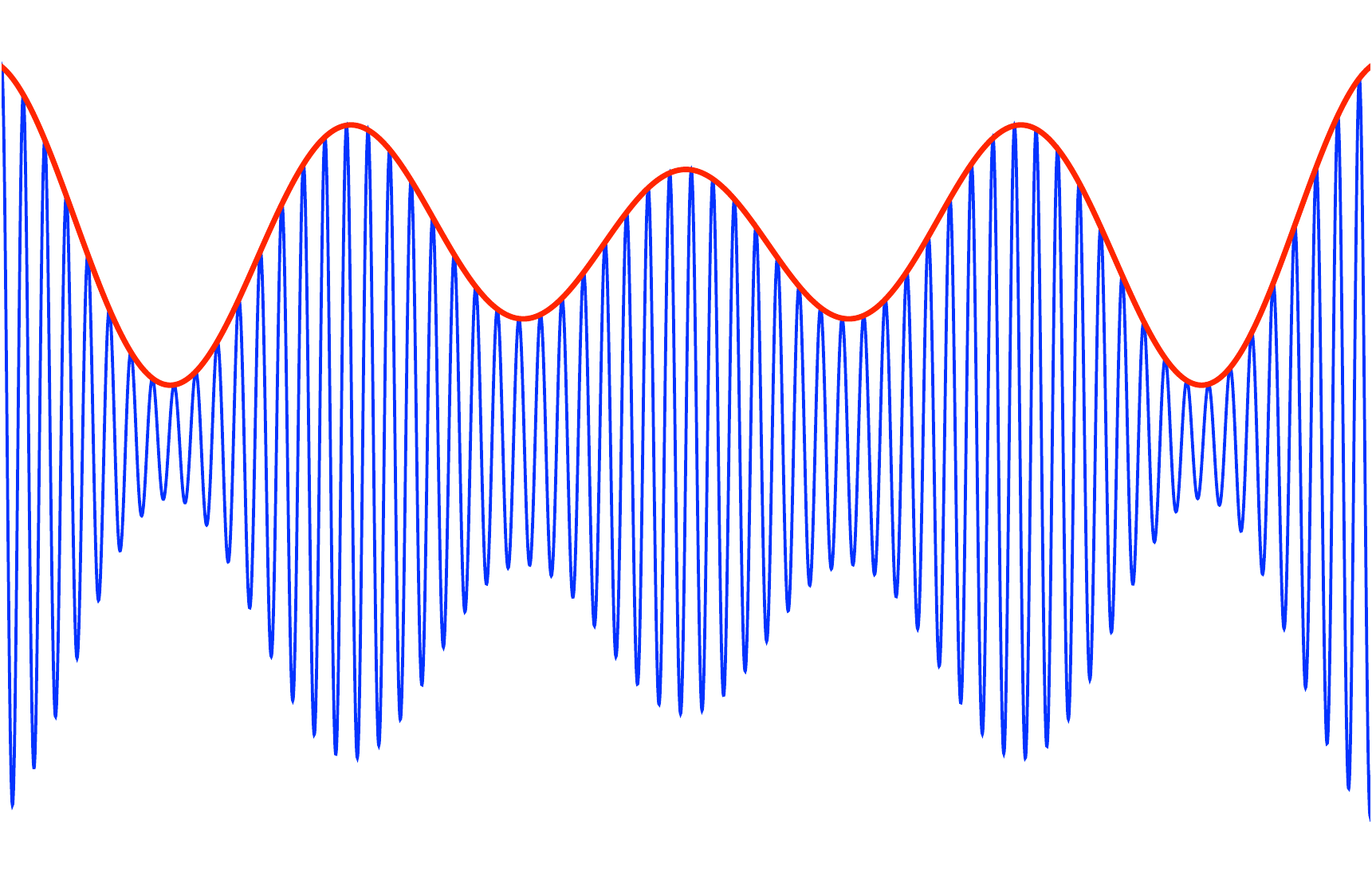}}}
\caption{The function $u \lpara v$}
\label{fig:modulation}
\end{minipage}
\end{figure}

\end{example}

\begin{example}
  If $f \in \CC^{\gamma} (\mathbb{T})$ and $g \in \CC^{\delta} (\mathbb{T})$
  with $\gamma + \delta > 1$, then we can define $\int f \mathd g \assign \int
  (f \partial_t g)$, which is well defined since $\partial_t g \in \CC^{\delta
  - 1}$ and $\gamma + \delta - 1 > 0$, and since integration is a linear map.
  In this way we recover the Young integral~{\cite{Young1936}}.
\end{example}

\begin{example}
  Let $B^H$ be a fractional Brownian bridge on $\mathbb{T}$ (or simply a
  fractional Brownian motion on $[0, \pi]$, reflected on $[\pi, 2 \pi]$) and
  assume that $H > 1 / 2$. We have $\varphi (B^H) \in \CC^{H -}$ for all
  Lipschitz continuous $\varphi$, and $\partial_t B^H \in \CC^{(H - 1) -}$,
  and in particular $\varphi (B^H) \partial_t B^H$ is well-defined. This can
  be used to solve SDEs driven by $B^H$ in a pathwise sense.
\end{example}

The condition $\alpha + \beta > 0$ is essentially sharp, at least at this
level of generality, see~{\cite{Young1936}} for counterexamples. It excludes
of course the Brownian case: if $B$ is a Brownian motion, then almost surely
$B \in \CC^{\alpha}_{\tmop{loc}}$ for all $\alpha < 1 / 2$ (meaning that $\varphi B \in \CC^\alpha$ whenever $\varphi $ is a smooth compactly supported function), so that
$\partial_t B \in \CC^{\alpha - 1}_{\tmop{loc}}$ and thus $B \reso \partial_t
B$ fails to be well defined. See also~{\cite{Lyons2007}}, Proposition~1.29 for
an instructive example which shows that this is not a shortcoming of our
description of regularity, but that it is indeed impossible to define the
product $B \partial_t B$ as a continuous bilinear operation on distribution
spaces.

\

Other counterexamples are given by our discussion of the homogenization
problem in Theorem~\ref{thm:homogenization} above. More simply, one can consider the following situation.

\begin{example}
  Consider the sequence of functions $f_n : \mathbb{T} \rightarrow
  \mathbb{C}$ given by$f_n (x) = e^{in^2 x} / n$. Then it is easy to show
  that $\| f_n \|_{\gamma} \rightarrow 0$ for all $\gamma < 1 / 2$. However
  let
  \[
     g_n (x) = \tmop{Re} f_n (x) \tmop{Im} \partial_x f_n (x) = (\cos (n^2 x))^2 = \frac{\cos(2 n^2 x) + 1}{2}
  \]
  Then $g_n \rightarrow 1 / 2$ in $\CC^{0 -}$ which shows that the map
  $f \mapsto (\tmop{Re} f)  (\partial_x \tmop{Im} f)$ cannot be continuous in
  $\CC^{\gamma}$ if $\gamma < 1 / 2$. Pictorially the situation is summarized
  in Figure~\ref{fig:resonances}, where we sketched the three dimensional
  curve given by $x \mapsto ( \tmop{Re} f_n (x) \nocomma, \tmop{Im} f_n
  (x), \int_0^x g_n (y) \mathd y )$ for various values of $n$ and in the
  limit.
  \begin{figure}[h]
  \centering
    \resizebox{5.5cm}{1.5cm}{\includegraphics{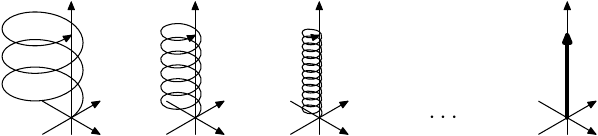}}
    \caption{\label{fig:resonances}Resonances give macroscopic effects}
  \end{figure}
\end{example}

\subsection{Commutator estimates and paralinearization}

The product $F (u) \eta$ appearing in the right hand side of {\tmname{pam}}
can be decomposed via the paraproduct $\lpara$ as a sum of three terms
\[ F (u) \eta = F (u) \lpara \eta + F (u) \reso \eta + F (u) \rpara \eta . \]
The first and the last of these terms are continuous in any topology we will choose
for $F (u)$ and $\eta$. The {\tmem{resonant term}} $F (u) \reso \eta$ however
is problematic. It gathers the products of the oscillations of $F (u)$ and
$\eta$ on comparable dyadic scales and these products can contribute to all
larger scales in such a way that microscopic oscillations might build up to a
macroscopic effect which does not disappear in the limit (as we  have already
seen in Theorem~\ref{thm:homogenization}). If the function $F$ is smooth enough, then we expect the resonances
between $F (u)$ and $\eta$ to correspond to the resonances between $u$ and
$\eta$, and as we will see this is justified.

\

The expected regularity of the different terms is
\begin{equation}
  \label{eq:para-rhs-intro} \underbrace{F (u) \lpara \eta}_{\gamma - 2} +
  \underbrace{F (u) \reso \eta}_{2 \gamma - 2} + \underbrace{F (u) \rpara
  \eta}_{2 \gamma - 2},
\end{equation}
but unless $2 \gamma - 2 > 0$ the resonant term $F (u) \reso \eta$ cannot be
controlled using only the $C \CC^{\gamma}$--norm of $u$ and the $C \CC^{\gamma
- 2}$--norm of $\eta$. If $F$ is at least $C^2$, we can use a
{\tmem{paralinearization}} result (stated precisely in
Lemma~\ref{lemma:para-taylor} below) to rewrite this term as
\begin{equation}
  \label{eq:rde expand Fu} F (u) \reso \eta = F' (u)  (u \reso \eta) + \Pi_F
  (u, \eta),
\end{equation}
with a remainder $\Pi_F (u, \eta) \in \CC^{2 \gamma - 2}$ provided $3 \gamma -
2 > 0$. The difficulty is now localized in the linearized resonant product $u
\reso \eta$. In order to control this term, we would like to exploit the fact
that the function $u$ is not a generic element of $C \CC^{\gamma}$ but that it
has a specific structure, since $\LL u$ has to match the paraproduct
decomposition given in~(\ref{eq:para-rhs-intro}) where the least regular term
is expected to be $F (u) \lpara \eta \in C \CC^{\gamma - 2}$.

In order to do so, we postulate that the solution $u$ is given by the
following \tmtextit{paracontrolled ansatz:}
\[ u = u^X \lpara X + u^{\sharp}, \]
for functions $u^X, X, u^{\sharp}$ such that $u^X, X \in C \CC^{\gamma}$ and the
remainder $u^{\sharp} \in C \CC^{2 \gamma}$. This decomposition allows for a
finer analysis of the resonant term $u \reso \eta$: indeed, we have
\begin{equation}
  \label{eq:rde expand resonant term} u \reso \eta = (u^X \lpara X) \reso \eta
  + u^{\sharp} \reso \eta = u^X  (X \reso \eta) + C (u^X, X, \eta) +
  u^{\sharp} \reso \eta,
\end{equation}
where the {\tmem{commutator}} is defined by $C (u^X, X, \eta) = (u^X \lpara X)
\reso \eta - u^X  (X \reso \eta)$. Observe now that the term $u^{\sharp} \reso
\eta$ does not pose any further problem, as it can be controlled in $C \CC^{3 \gamma
- 2}$. The key point is now that the commutator is a bounded multilinear
function of its arguments as long as the sum of their regularities is strictly
positive, see Lemma~\ref{lemma:commutator} below. By assumption, we have $3
\gamma - 2 > 0$, and therefore $C (u^X, X, \eta) \in C \CC^{2 \gamma - 2}$.

The only problematic term which remains to be handled is thus the bilinear
functional of the noise given by $X \reso \eta$. Here we need to make the
assumption that $X \reso \eta \in C \CC^{2 \gamma - 2}$ in order for the
product $u^X  (X \reso \eta)$ to be well defined. This assumption is not
guaranteed by the analytical estimates at hand, and it has to be added as a
further requirement to our construction.

Granting this last step, we have obtained that the right hand side of
equation~(\ref{eq:pam-eta}) is well defined and a continuous function of $(u,
u^X, u^{\sharp}, X, \eta, X \reso \eta) \in C\CC^\gamma \times C \CC^\gamma \times C \CC^{2\gamma} \times C \CC^\gamma \times C \CC^{\gamma-2} \times C \CC^{2\gamma-2}$.

\

It remains to check that the paracontrolled ansatz is coherent with the
equation satisfied by solutions to {\tmname{pam}}. Let us first consider the
linear example $F (u) = u$. Here we saw that the solution is of the form $u =
e^X v$ with
\[ \LL v = v | \partial_x X |^2 + 2 \langle \partial_x v, \partial_x X
   \rangle_{\mathbb{R}^2}, \]
where $| \partial_x X |^2 \in C \CC^{2 \gamma - 2}$ by Lemma~\ref{lem:Dx squared reg} and $\partial_x X \in C
\CC^{\gamma - 1}$ and therefore $v \in C \CC^{2 \gamma}$ by the Bony and Schauder estimates. Note that here we have a clash of notation, because a priori the $X$ that we defined in Section~\ref{sec:homogenization} does not have to be equal to the paracontrolling distribution $X$. But of course, as the notation suggests we will see momentarily that we can choose them to be the same. In the setting of Section~\ref{sec:homogenization}, we have in particular
\[ u = e^X v = v \lpara e^X + C \CC^{2 \gamma} = v \lpara (e^X \lpara X) + C
   \CC^{2 \gamma}, \]
   where the notation $u = v \lpara e^X + C \CC^{2\gamma}$ means that $u - v \lpara e^X \in C \CC^{2\gamma}$, and where we used a paralinearization result in last step (see
Lemma~\ref{lemma:paralinearization} below). Now the double paraproduct $f
\lpara (g \lpara h)$ satisfies
\[ \| f \lpara (g \lpara h) - (f g) \lpara h \|_{\alpha + \beta} \lesssim \| f
   \|_{\alpha} \| g \|_{\alpha} \| h \|_{\beta}, \]
see~{\cite{Bony1981}}, and therefore $u = (v e^X) \lpara X + C \CC^{2 \gamma} =
u \lpara X + C \CC^{2 \gamma}$ which shows that the paracontrolled ansatz is at
least justified in the linear case and indeed we can choose the paracontrolling distribution to be $X$.

\

In the nonlinear case, the paracontrolled ansatz and the Leibniz rule for the
paraproduct imply that~(\ref{eq:pam-eta}) can be rewritten as
\[
   \LL u = \LL (u^X \lpara X + u^{\sharp}) = u^X \lpara \LL X + [ \LL, u^X \lpara ] X + \LL u^{\sharp} = F (u) \lpara \eta + F (u) \reso \eta + F(u) \rpara \eta,
\]
where we recall that $[ \LL, u^X \lpara ] X = \LL (u^X \lpara X) -
u^X \lpara \LL X$ denotes the commutator. If we choose $X$ such that $\LL X =
\eta$ and we set $u^X = F (u)$, then we can use~(\ref{eq:rde expand Fu})
and~(\ref{eq:rde expand resonant term}) to obtain the following equation for
the remainder $u^{\sharp}$:
\begin{equation}
  \begin{array}{ll}
    \LL u^{\sharp} & = F' (u) F (u)  (X \reso \eta) + F (u) \rpara \eta -
    [ \LL, F (u) \lpara ] X\\
    & \hspace{2em} + F' (u) C (F (u), X, \eta) + F' (u)  (u^{\sharp} \reso
    \eta) + \Pi_F (u, \eta) .
  \end{array} \label{eq:u-sharp}
\end{equation}
Lemma~\ref{lemma:comm-L-para} below ensures that $J [ \LL, F (u)
\lpara ] X \in C \CC^{2 \gamma}$ whenever $F(u) \in \LL^{\gamma}$ (which easily follows from $u \in \LL^\gamma$ by using the increment characterization of $\CC^\gamma$ regularity), and combining the paraproduct estimates with the estimates for $C$ and $\Pi_F$ that we discussed above, we see that all the other terms on the right hand side are in $C
\CC^{2 \gamma - 2}$. So the Schauder estimate Lemma~\ref{lem:schauder} allows us to control $u^{\sharp}$ in $C
\CC^{2 \gamma}$. Together with $u = F (u) \lpara X + u^{\sharp}$,
equation~(\ref{eq:u-sharp}) gives an equivalent description of the solution, because we only rewrote the original problem. This allows us to obtain a priori estimates on $u$ and $u^{\sharp}$ in terms of
$(u_0, \| \eta \|_{\gamma - 2}, \|X \reso \eta \|_{2 \gamma - 2})$, see Chapter~5 of~\cite{Gubinelli2012} for details. It is now
straightforward to show that if $F \in C^3_b$, then $u$ depends continuously on the data
$(u_0, \eta, X \reso \eta)$, so that we have a robust strategy to pass to the
limit in~(\ref{eq:gpam}) and to make sense of the solution
to~(\ref{eq:pam-eta}) also for irregular $\eta \in C \CC^{\gamma - 2}$ as long
as $\gamma > 2 / 3$.

\

In the remainder of this section we will prove the results (paralinearization
and various key commutators) which we used in the discussion above, before
going on to gather the consequences of our analysis in the next section. When
the time dependence does not play any role we state the results for
distributions depending only on the space variable as the extension to time
varying functions will not add further difficulty.

\begin{lemma}
  \label{lemma:commutator}Assume that $\alpha, \beta, \gamma \in \mathbb{R}$
  are such that $\alpha + \beta + \gamma > 0$ and $\beta + \gamma \neq 0$.
  Then for $f, g, h \in C^{\infty}$ the trilinear operator
  \[ C (f, g, h) = ((f \lpara g) \reso h) - f (g \reso h) \]
  satisfies
  \begin{equation}
    \label{eq:commutator bound} \|C (f, g, h)\|_{\beta + \gamma} \lesssim
    \|f\|_{\alpha} \|g\|_{\beta} \|h\|_{\gamma},
  \end{equation}
  and can thus be uniquely extended to a bounded trilinear operator from
  $\CC^{\alpha} \hspace{-0.17em} \times \hspace{-0.17em} \CC^{\beta}
  \hspace{-0.17em} \times \hspace{-0.17em} \CC^{\alpha}$ to $\CC^{\beta +
  \gamma}$.
\end{lemma}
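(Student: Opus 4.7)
The plan is to write $C(f,g,h)$ as an explicit triple Littlewood--Paley series, exhibit the cancellation between the two constituent terms, and then estimate the resulting series via Lemma~\ref{lem: Besov regularity of series}. First, I expand $f \lpara g = \sum_j S_{j-1}f \cdot \Delta_j g$ and use that each summand has Fourier support in an annulus $2^j \CA''$, so that
\[ \Delta_i(f \lpara g) = \sum_{j : |j - i| \leqslant N_0} \Delta_i(S_{j-1}f \, \Delta_j g) \]
for some fixed $N_0$. Replacing the outer $\Delta_i$ by multiplication by $S_{j-1}f$ produces a commutator $[\Delta_i, S_{j-1}f](\Delta_j g)$ whose $L^\infty$ norm, by a standard Taylor/Bernstein argument, is smaller than the naive product bound by a factor $2^{-i}$ times a derivative of $S_{j-1}f$; these commutators are thus strictly more regular than the bulk and can be absorbed in a remainder. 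To leading order one therefore obtains
\[ (f \lpara g) \reso h \;=\; \sum_i \sum_{|k-i|\leqslant 1} S_{i-1}f \cdot \Delta_i g \cdot \Delta_k h \;+\; (\text{more regular error}). \]

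Next, I decompose $f = S_{j-1}f + \sum_{p\geqslant j-1}\Delta_p f$ inside $f(g \reso h) = \sum_{|j-k|\leqslant 1} f\cdot \Delta_j g \cdot \Delta_k h$. The $S_{j-1}f$--piece cancels exactly the leading order above, leaving the explicit triple series
\[ C(f,g,h) \;=\; -\sum_{|j-k|\leqslant 1}\sum_{p\geqslant j-1} \Delta_p f \cdot \Delta_j g \cdot \Delta_k h \;+\; (\text{more regular error}). \]
I then split the $p$--sum into the ``high frequency'' branch $p \geqslant j+2$, where the triple product has spectral support in an annulus at scale $2^p$, and the ``same scale'' branch $p \in \{j-1, j, j+1\}$, where it sits in a ball at scale $2^j$ but is a pure triple product with $L^\infty$ bound $\lesssim 2^{-j(\alpha+\beta+\gamma)}\|f\|_\alpha\|g\|_\beta\|h\|_\gamma$. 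For each branch, Lemma~\ref{lem: Besov regularity of series} (part~1 for annuli, part~2 for balls — the latter requiring a positive exponent, which is furnished by $\alpha+\beta+\gamma > 0$) converts these dyadic $L^\infty$ bounds into the target Besov estimate.

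The main obstacle is the bookkeeping of the partial sum $\sum_{j\leqslant p-2} 2^{-j(\beta+\gamma)}$ that appears when collecting terms in the high--frequency branch. This partial sum is bounded when $\beta+\gamma > 0$, of order $2^{-p(\beta+\gamma)}$ when $\beta+\gamma < 0$, and of order $p$ when $\beta+\gamma = 0$ — the last case destroying geometric convergence and explaining why the hypothesis $\beta+\gamma \neq 0$ is imposed. The hypothesis $\alpha+\beta+\gamma > 0$ plays a dual role: it makes the same--scale branch summable via the ball version of Lemma~\ref{lem: Besov regularity of series}, and, together with the previous observation, it ensures that the outer sum over the high--frequency scale $p$ is a convergent geometric series. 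Finally, the trilinear estimate for smooth triples extends by density to $\CC^\alpha \times \CC^\beta \times \CC^\gamma$, giving the continuous trilinear extension of $C$ claimed in the statement.
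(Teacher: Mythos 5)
Your route is genuinely different from the paper's. You reduce $C(f,g,h)$ to the explicit triple series $-\sum_{|j-k|\leq 1}\sum_{p\geq j-1}\Delta_p f\,\Delta_j g\,\Delta_k h$ plus commutator errors, by replacing $\Delta_i(S_{j-1}f\,\Delta_j g)$ with $S_{i-1}f\,\Delta_i g$ and cancelling against the $S_{j-1}f$--part of $f(g\reso h)$. The paper instead works with the raw difference of indicator functions $\mathbb{I}_{j<k-1}\mathbb{I}_{|i-\ell|\leq 1}-\mathbb{I}_{|k-\ell|\leq 1}$, adds and subtracts $f\sum_{2\leq|k-\ell|\leq N}\Delta_k g\,\Delta_\ell h$, and estimates the two resulting sums directly. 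Your argument is essentially that of~\cite{Gubinelli2012}, and the bookkeeping in the triple series (the roles of $\beta+\gamma\neq 0$ and $\alpha+\beta+\gamma>0$, the ball/annulus split) is sound.

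The gap is the commutator step. Your assertion that $[\Delta_i,S_{j-1}f]\Delta_j g$ is ``strictly more regular than the bulk'' rests on the chain $\|[\Delta_i,S_{j-1}f]\Delta_j g\|_{L^\infty}\lesssim 2^{-i}\|\nabla S_{j-1}f\|_{L^\infty}\|\Delta_j g\|_{L^\infty}$, combined with the Bernstein bound $\|\nabla S_{j-1}f\|_{L^\infty}\lesssim 2^{j(1-\alpha)}\|f\|_{\alpha}$; only the latter turns the single factor $2^{-i}$ into the needed $2^{-j\alpha}$ (since $i\sim j$), and it requires $\alpha<1$. It also only helps when $\alpha>0$, since the resulting commutator block sits in $\CC^{\alpha+\beta+\gamma}$, which embeds in $\CC^{\beta+\gamma}$ exactly when $\alpha\geq 0$. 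For $\alpha\geq 1$ the first-order Taylor expansion gains only $2^{-j}$, the block has $L^\infty$ size $\sim 2^{-j(1+\beta+\gamma)}$ in a ball at scale $2^j$, and part 2 of Lemma~\ref{lem: Besov regularity of series} then needs $1+\beta+\gamma>0$, which is \emph{not} implied by the hypotheses (take $\alpha=3$, $\beta=\gamma=-1.4$). This restriction to $\alpha\in(0,1)$ is precisely what the bibliographic notes say the present proof is designed to remove: the paper's estimate never differentiates $f$, it only sums $\sum_{j\lesssim m}2^{-j\alpha}$ using $\alpha>0$. To cover the lemma as stated you would have to either run the paper's computation, or push the commutator Taylor expansion to order $N>\alpha$ (using that all moments of $K_i$ vanish for $i\geq 0$) and handle the $\beta+\gamma>0$ regime separately via Theorem~\ref{thm:paraproduct} --- neither of which is contained in your sketch.
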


\begin{proof}
  For $\beta + \gamma > 0$ this follows from the paraproduct estimates, so let
  $\beta + \gamma < 0$. By definition
  \begin{align*}
     C (f, g, h) & = \sum_{i, j, k, \ell} \Delta_i (\Delta_j f \Delta_k g) \Delta_{\ell} h (\mathbb{I}_{j < k - 1} \mathbb{I}_{| i - \ell | \leqslant 1} - \mathbb{I}_{| k - \ell | \leqslant 1}) \\
      & = \sum_{i, j, k, \ell} \Delta_i (\Delta_j f \Delta_k g) \Delta_{\ell} h (\mathbb{I}_{j < k - 1} \mathbb{I}_{| i - \ell | \leqslant 1}\mathbb{I}_{| k - \ell | \leqslant N} - \mathbb{I}_{| k - \ell |\leqslant 1}),
  \end{align*}
  where we used that $\CF(S_{k - 1} f \Delta_k g)$ has support in an annulus $2^k
  \CA$, so that $\Delta_i (S_{k - 1} f \Delta_k g) \neq 0$ only if $| i - k |
  \leqslant N - 1$ for some fixed $N \in \mathbb{N}$, which in combination
  with $| i - \ell | \leqslant 1$ yields $| k - \ell | \leqslant N$. Now the assumptions on our partition of unity guarantee that for
  fixed $k$, the term $\sum_{\ell} \mathbb{I}_{2 \leqslant | k - \ell |
  \leqslant N} \Delta_k g \Delta_{\ell} h$ is spectrally supported in an
  annulus $2^k \CA$, so that $\sum_{k, \ell} \mathbb{I}_{2 \leqslant | k -
  \ell | \leqslant N} \Delta_k g \Delta_{\ell} h \in \CC^{\beta + \gamma}$ and
  we may add and subtract $f \sum_{k, \ell} \mathbb{I}_{2 \leqslant | k -
  \ell | \leqslant N} \Delta_k g \Delta_{\ell} h$ to $C (f, g, h)$ while
  maintaining the bound~(\ref{eq:commutator bound}). It remains to treat
  \begin{align}\label{eq:commutator pr1} \nonumber
     &\sum_{i, j, k, \ell} \Delta_i  (\Delta_j f \Delta_k g) \Delta_{\ell} h \mathbb{I}_{| k - \ell | \leqslant N} (\mathbb{I}_{j < k - 1} \mathbb{I}_{| i - \ell | \leqslant 1} - 1) \\
     &\hspace{100pt} = - \sum_{i, j, k, \ell} \Delta_i  (\Delta_j f \Delta_k g) \Delta_{\ell} h \mathbb{I}_{| k - \ell | \leqslant N}(\mathbb{I}_{j \geqslant k - 1} + \mathbb{I}_{j < k - 1}\mathbb{I}_{| i - \ell | > 1}) .
  \end{align}
  We estimate both terms on the right hand side separately. For $m \geqslant - 1$ we have (recall that for indices of Littlewood--Paley blocks, $i \lesssim j$ is to be read as $2^i \lesssim 2^j$, that is $i \le j + c$ for some fixed $c$):
  \begin{align*}
     &\Big\| \Delta_m \Big( \sum_{i, j, k, \ell} \Delta_i  (\Delta_j f \Delta_k g) \Delta_{\ell} h \mathbb{I}_{| k - \ell | \leqslant N} \mathbb{I}_{j \geqslant k - 1} \Big) \Big\|_{L^{\infty}} \\
     &\hspace{35pt} \leqslant \sum_{j, k, \ell} \mathbb{I}_{| k - \ell | \leqslant N} \mathbb{I}_{j \geqslant k - 1} \| \Delta_m (\Delta_j f \Delta_k g \Delta_{\ell} h) \|_{L^{\infty}} \lesssim \sum_{j \gtrsim m} \sum_{k \lesssim j} 2^{- j \alpha} \| f \|_{\alpha} 2^{- k \beta} \| g \|_{\beta} 2^{- k \gamma} \| h \|_{\gamma} \\
     &\hspace{35pt} \lesssim \sum_{j \gtrsim m} 2^{- j (\alpha + \beta + \gamma)} \| f \|_{\alpha} \| g \|_{\beta} \| h \|_{\gamma} \lesssim 2^{- m (\alpha +  \beta + \gamma)} \| f \|_{\alpha} \| g \|_{\beta} \| h \|_{\gamma},
  \end{align*}
  using $\beta + \gamma < 0$ to get $\sum_{k\lesssim j}2^{k(\beta+\gamma)} \lesssim 2^{j(\alpha+\beta)}$. It remains to estimate the second term
  in~(\ref{eq:commutator pr1}). For $| i - \ell | > 1$ and $i \sim k \sim
  \ell$, any term of the form $\Delta_i (\cdot) \Delta_{\ell} (\cdot)$ is spectrally
  supported in an annulus $2^{\ell} \CA$, and therefore
  \begin{align*}
      & \Big\| \Delta_m \Big( \sum_{i, j, k, \ell} \Delta_i  (\Delta_j f \Delta_k g) \Delta_{\ell} h \mathbb{I}_{| k - \ell | \leqslant N} \mathbb{I}_{j < k - 1} \mathbb{I}_{| i - \ell | > 1} \Big) \Big\|_{L^{\infty}} \\
      &\hspace{80pt} \lesssim \sum_{i, j, k, \ell} \mathbb{I}_{j < k - 1} \mathbb{I}_{i \sim k \sim \ell \sim m} \|\Delta_i  (\Delta_j f \Delta_k g) \Delta_{\ell} h \|_{L^{\infty}} \\
      &\hspace{80pt} \lesssim \sum_{j \lesssim m} 2^{- j \alpha} \| f \|_{\alpha} 2^{- m \beta} \| g \|_{\beta} 2^{- m \gamma} \| h \|_{\gamma} \lesssim 2^{- m(\beta + \gamma)} \| f \|_{\alpha} \| g \|_{\beta} \| h \|_{\gamma}.
  \end{align*}
  
\end{proof}

\begin{remark}
  For $\beta + \gamma = 0$ we can apply the commutator estimate with $\gamma'
  < \gamma$, as long as $\alpha + \beta + \gamma' > 0$.
\end{remark}

Our next result is a simple paralinearization lemma for non--linear operators.

\begin{lemma}[see also~{\cite{Bahouri2011}}, Theorem 2.92]
  \label{lemma:paralinearization}Let $\alpha \in (0, 1)$, $\beta \in (0,
  \alpha]$, and let $F \in C^{1 + \beta / \alpha}_b$. There exists a locally
  bounded map $R_F : \CC^{\alpha} \rightarrow \CC^{\alpha + \beta}$ such that
  \begin{equation}
    \label{eq:para-linearization} F (f) = F' (f) \lpara f + R_F (f)
  \end{equation}
  for all $f \in \CC^{\alpha}$. More precisely, we have
  \[ \|R_F (f)\|_{\alpha + \beta} \lesssim \|F\|_{C^{1 + \beta / \alpha}_b} 
     (1 +\|f\|_{\alpha}^{1 + \beta / \alpha}) . \]
  If $F \in C_b^{2 + \beta / \alpha}$, then $R_F$ is locally Lipschitz
  continuous:
  \[ \|R_F (f) - R_F (g)\|_{\alpha + \beta} \lesssim \|F\|_{C^{2 + \beta /
     \alpha}_b}  (1 +\|f\|_{\alpha} +\|g\|_{\alpha})^{1 + \beta / \alpha}  \|f
     - g\|_{\alpha} . \]
\end{lemma}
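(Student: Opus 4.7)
I would prove this by a Littlewood--Paley telescoping expansion in the spirit of Bony's original paralinearization argument (compare \cite{Bahouri2011}, Theorem~2.92). The starting point is to write
\[
F(f) \;=\; F(0) + \sum_{j \geq -1} \bigl(F(S_{j+1} f) - F(S_j f)\bigr) \;=\; F(0) + \sum_j m_j \, \Delta_j f,
\]
with $m_j \assign \int_0^1 F'(S_j f + t\,\Delta_j f)\,\mathrm{d}t$, using $S_{j+1}f = S_j f + \Delta_j f$ and the mean value theorem. Since $F'(f) \lpara f = \sum_j S_{j-1}F'(f)\,\Delta_j f$, subtracting yields
\[
R_F(f) \;=\; F(0) + \sum_{j} \bigl(m_j - S_{j-1}F'(f)\bigr)\,\Delta_j f.
\]

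Next I would split the coefficient as $m_j - S_{j-1}F'(f) = (m_j - F'(f)) + (F'(f) - S_{j-1}F'(f))$ and bound each piece in $L^\infty$ by $\lesssim 2^{-j\beta}$. The first piece is controlled by the H{\"o}lder property $F' \in C^{\beta/\alpha}_b$ together with $\|S_j f + t\Delta_j f - f\|_{L^\infty} \lesssim 2^{-j\alpha}\|f\|_\alpha$. For the second, I would use the identification $\CC^\alpha = C^\alpha$ established in the preceding lemma and the classical composition rule for H{\"o}lder functions to conclude that $F'(f) \in \CC^\beta$ with $\|F'(f)\|_\beta \lesssim \|F'\|_{C^{\beta/\alpha}_b}(1 + \|f\|_\alpha^{\beta/\alpha})$; Exercise~\ref{exercise:besov-space-inequalities} then gives $\|F'(f) - S_{j-1}F'(f)\|_{L^\infty} \lesssim 2^{-j\beta}\|F'(f)\|_\beta$. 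Combined with Bernstein's bound $\|\Delta_j f\|_{L^\infty} \lesssim 2^{-j\alpha}\|f\|_\alpha$, each summand has $L^\infty$ norm of order $2^{-j(\alpha+\beta)} \|F\|_{C^{1+\beta/\alpha}_b}(1+\|f\|_\alpha^{1+\beta/\alpha})$, the targeted decay.

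The main obstacle is converting this pointwise decay into a $\CC^{\alpha+\beta}$ bound, since the summands are not \emph{a priori} spectrally supported in a dyadic annulus ($m_j$ has no compactly supported Fourier transform). I would resolve this by applying Bony's decomposition one more time to the coefficient $c_j \assign m_j - S_{j-1}F'(f)$, separating a low--frequency part (frequencies $\lesssim 2^j$) from a high--frequency remainder. After multiplication by $\Delta_j f$ (spectrally supported in $2^j \CA$), the low--frequency piece is supported in a slightly enlarged annulus $2^j \CA'$, so Lemma~\ref{lem: Besov regularity of series}(1) applies. The high--frequency remainder inherits extra decay from $F'(f) \in \CC^\beta$ and produces a summand supported in a ball $2^j \CB$ with $L^\infty$ bound still $\lesssim 2^{-j(\alpha+\beta)}$, so Lemma~\ref{lem: Besov regularity of series}(2) closes the estimate, crucially using $\alpha+\beta > 0$.

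For the Lipschitz estimate with $F \in C^{2+\beta/\alpha}_b$, I would rerun the same scheme on $R_F(f) - R_F(g)$, expanding $m_j(f) - m_j(g)$ and $F'(S_j f) - F'(S_j g)$ via the mean value theorem applied to $F'$ (which now requires the extra derivative) to extract the factor $\|f - g\|_\alpha$; the same frequency decomposition then yields the Lipschitz bound in $\CC^{\alpha+\beta}$.
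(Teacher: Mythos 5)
Your telescoping expansion $F(f) = F(0) + \sum_j m_j\,\Delta_j f$ with $m_j = \int_0^1 F'(S_jf + t\Delta_j f)\,\mathd t$ is a genuinely different starting point from the paper's, and the $L^\infty$ decay $\|(m_j - S_{j-1}F'(f))\Delta_j f\|_{L^\infty}\lesssim 2^{-j(\alpha+\beta)}$ is obtained correctly. You also correctly identify the central obstruction: $m_j$ is not spectrally localized, so $(m_j - S_{j-1}F'(f))\Delta_j f$ is not a legitimate input to Lemma~\ref{lem: Besov regularity of series}.

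The proposed repair, however, has a gap. Splitting $c_j := m_j - S_{j-1}F'(f)$ into low and high frequencies, the low-frequency piece $(S_{j+N}c_j)\Delta_j f$ is indeed supported in a ball $2^j\CB$ with the right $L^\infty$ decay. But the high-frequency remainder $\big(\sum_{k>j+N}\Delta_k c_j\big)\Delta_j f$ is \emph{not} supported in $2^j\CB$ — its Fourier support lies at frequencies of order $2^k$ for every $k > j$, since $\Delta_k c_j \cdot \Delta_j f$ sits in an annulus $2^k\CA'$. The natural reorganization is to sum over the dominant scale, $v_k := \sum_{j<k-N}(\Delta_k c_j)(\Delta_j f)$, and then one needs $\|v_k\|_{L^\infty}\lesssim 2^{-k(\alpha+\beta)}$. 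But for $k>j$ one has $\Delta_k c_j = \Delta_k m_j$, and the composition estimate $F'\circ(\cdot):\CC^\alpha\to\CC^\beta$ only yields $\|\Delta_k m_j\|_{L^\infty}\lesssim 2^{-k\beta}$, uniformly in $j$. Summing $\sum_{j<k-N}2^{-k\beta}\,2^{-j\alpha}$ over $j$ is dominated by the smallest $j$, giving $\|v_k\|_{L^\infty}\lesssim 2^{-k\beta}$ — enough for $\CC^\beta$, but short of $\CC^{\alpha+\beta}$. To close the gap you would need a sharper bound on $\|\Delta_k m_j\|_{L^\infty}$ that exploits the compact Fourier support of $S_jf + t\Delta_j f$ in $2^{j+1}\CB$, which is a nontrivial claim you neither state nor justify.

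The paper's proof avoids the problem entirely by choosing a decomposition in which each summand is automatically localized: it writes $R_F(f) = \sum_i u_i$ with $u_i = \Delta_i F(f) - S_{i-1}F'(f)\,\Delta_i f$, and each $u_i$ has Fourier support in a ball $2^i\CB$ because $\Delta_i F(f)$ does and $S_{i-1}F'(f)\,\Delta_i f$ does. The $L^\infty$ bound $\|u_i\|_{L^\infty}\lesssim 2^{-i(\alpha+\beta)}$ is then extracted by writing $u_i$ as a double convolution against $K_i$ and $K_{<i-1}$, using $\int K_i = 0$ and $\int K_{<i-1}=1$ to insert the subtraction that sets up the first-order Taylor remainder. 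This yields the same pointwise Taylor cancellation you are after but with the spectral localization built in from the start; if you want to keep the telescoping route you should look at how BCD (Theorem 2.92) deals with the non-localized $m_j$, as the step you gloss over is where the real work lies.
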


\begin{remark}
   Since every element of $\CC^\alpha$ is bounded, the result immediately extends to unbounded $F \in C^{1+\beta/\alpha}$: Simply replace $F$ by an element of $C^{1+\beta/\alpha}_b$ which agrees with $F$ on the image of $f$.
\end{remark}

\begin{proof}[Proof of Lemma~\ref{lemma:paralinearization}]
  The difference $F (f) - F' (f) \lpara f$ is given by
  \[ R_F (f) = F (f) - F' (f) \lpara f = \sum_{i \geqslant - 1} [\Delta_i F (f)
     - S_{i - 1} F' (f) \Delta_i f] = \sum_{i \geqslant - 1} u_i, \]
  and every $u_i$ is spectrally supported in a ball $2^i \CB$. For $i < 1$, we
  simply estimate $\|u_i \|_{L^{\infty}} \lesssim \|F\|_{C^1_b}  (1
  +\|f\|_{\alpha})$. For $i \geqslant 1$ we use the fact that $f$ is a bounded
  function to write the Littlewood--Paley projections as convolutions and
  obtain
  \begin{align*}
    u_i (x) & = \int K_i  (x - y) K_{< i - 1}  (x - z)  [F (f (y)) - F' (f
    (z)) f (y)] \mathd y \mathd z\\
    & = \int K_i  (x - y) K_{< i - 1}  (x - z)  [F (f (y)) - F (f (z)) - F'
    (f (z)) (f (y) - f (z))] \mathd y \mathd z,
  \end{align*}
  where $K_i = \CF^{- 1} \rho_i$, $K_{< i - 1} = \sum_{j < i - 1} K_j$, and
  where we used that $\int K_i (y) \mathd y = \rho_i (0) = 0$ for $i \geqslant
  0$ and $\int K_{< i - 1} (z) \mathd z = 1$ for $i \geqslant 1$. Now we can
  apply a first order Taylor expansion to $F$ and use the $\beta /
  \alpha$--H{\"o}lder continuity of $F'$ in combination with the
  $\alpha$--H{\"o}lder continuity of $f$, to deduce
  \begin{align*}
    |u_i (x) | & \lesssim \|F\|_{C^{1 + \beta / \alpha}_b} \|f\|_{\alpha}^{1 +
    \beta / \alpha}  \int |K_i (x - y) K_{< 0} (x - z) | \times |z -
    y|^{\alpha + \beta} \mathd y \mathd z\\
    & = \|F\|_{C^{1 + \beta / \alpha}_b} \|f\|_{\alpha}^{1 + \beta /
    \alpha} 2^{- (i-1) (\alpha + \beta)} \\
    &\qquad \times \int |2^{(i-1)d}K_1 (2^{i-1}(x - y)) 2^{(i-1)d}K_{<0} (2^{i-1}(x - z)) | \times |2^{i-1}(z -  y)|^{\alpha + \beta} \mathd y \mathd z \\
    & \lesssim \|F\|_{C^{1 + \beta / \alpha}_b} \|f\|_{\alpha}^{1 + \beta /
    \alpha} 2^{- i (\alpha + \beta)} .
  \end{align*}
  Therefore, the estimate for $R_F (f)$ follows from Lemma~\ref{lem: Besov
  regularity of series}. The estimate for $R_F (f) - R_F (g)$ is shown in the
  same way.
\end{proof}

Let $g$ be a distribution belonging to $\CC^{\beta}$ for some $\beta < 0$.
Then the map $f \mapsto f \reso g$ behaves, modulo smoother correction terms,
like a derivative operator:

\begin{lemma}
  \label{lemma:para-taylor}Let $\alpha \in (0, 1)$, $\beta \in (0, \alpha]$,
  $\gamma \in \mathbb{R}$ be such that $\alpha + \beta + \gamma > 0$ and
  $\alpha + \gamma \neq 0$. Let $F \in C^{1 + \beta / \alpha}_b$. Then there
  exists a locally bounded map $\Pi_F : \CC^{\alpha} \times \CC^{\gamma}
  \rightarrow \CC^{\alpha + \gamma}$ such that
  \begin{equation}
    \label{eq:para-taylor} F (f) \reso g = F' (f)  (f \reso g) + \Pi_F (f, g)
  \end{equation}
  for all $f \in \CC^{\alpha}$ and all smooth $g$. More precisely, we have
  \[ \| \Pi_F (f, g)\|_{\alpha + \gamma} \lesssim \|F\|_{C^{1 + \beta /
     \alpha}_b}  (1 +\|f\|_{\alpha}^{1 + \beta / \alpha}) \|g\|_{\gamma} . \]
  If $F \in C^{2 + \beta / \alpha}_b$, then $\Pi_F$ is locally Lipschitz
  continuous:
  \begin{align*}
     &\| \Pi_F (f, g) - \Pi_F (u, v)\|_{\alpha + \gamma} \\
     &\hspace{50pt} \lesssim \|F\|_{C^{2 + \beta / \alpha}_b}  (1 +\|f\|_{\alpha} +\|u\|_{\alpha})^{1 + \beta / \alpha} (1 +\|v\|_{\gamma})  (\|f - u\|_{\alpha} +\|g - v\|_{\gamma}).
  \end{align*}
\end{lemma}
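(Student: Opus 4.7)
The plan is to reduce the identity to the paralinearization lemma (Lemma~\ref{lemma:paralinearization}) and the commutator estimate (Lemma~\ref{lemma:commutator}). First I would apply paralinearization to write $F(f) = F'(f) \lpara f + R_F(f)$ with $R_F(f) \in \CC^{\alpha+\beta}$, then take the resonant product with $g$:
\[
F(f) \reso g = (F'(f) \lpara f) \reso g + R_F(f) \reso g.
\]
The second summand lies in $\CC^{\alpha+\beta+\gamma}$, hence in $\CC^{\alpha+\gamma}$, by the resonant-product bound~(\ref{eq:para-3}) since $(\alpha+\beta) + \gamma > 0$. For the first, I would insert and subtract $F'(f)(f \reso g)$ so that what remains is exactly the commutator
\[
(F'(f) \lpara f) \reso g - F'(f)(f \reso g) = C(F'(f), f, g)
\]
that is controlled by Lemma~\ref{lemma:commutator}.

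To apply the commutator lemma I need the composition $F'(f)$ to carry a controlled H\"older regularity. Since $f \in \CC^{\alpha} = C^{\alpha}$ (as $\alpha \in (0,1)$) and $F'$ is $(\beta/\alpha)$-H\"older by hypothesis, a direct pointwise estimate gives $F'(f) \in \CC^{\beta}$ with $\|F'(f)\|_{\beta} \lesssim \|F\|_{C^{1+\beta/\alpha}_b}(1 + \|f\|_{\alpha}^{\beta/\alpha})$. Lemma~\ref{lemma:commutator} applied to the triple of regularities $(\beta, \alpha, \gamma)$---whose hypotheses $\beta + \alpha + \gamma > 0$ and $\alpha + \gamma \neq 0$ are exactly those of the statement---yields
\[
\|C(F'(f), f, g)\|_{\alpha+\gamma} \lesssim \|F'(f)\|_{\beta} \|f\|_{\alpha} \|g\|_{\gamma}.
\]
Setting $\Pi_F(f, g) := R_F(f) \reso g + C(F'(f), f, g)$ and combining this with the paralinearization bound $\|R_F(f)\|_{\alpha+\beta} \lesssim \|F\|_{C^{1+\beta/\alpha}_b}(1 + \|f\|_{\alpha}^{1+\beta/\alpha})$ delivers the stated estimate for smooth $g$, and the continuity in $g$ of the two building blocks allows the identity to extend by density to all $g \in \CC^{\gamma}$.

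For the Lipschitz bound under the stronger assumption $F \in C^{2+\beta/\alpha}_b$, I would decompose $\Pi_F(f, g) - \Pi_F(u, v)$ by telescoping: the resonant product is bilinear and the commutator is trilinear and continuous in the relevant H\"older norms, while $R_F$ is locally Lipschitz from $\CC^{\alpha}$ to $\CC^{\alpha+\beta}$ by Lemma~\ref{lemma:paralinearization}. The one additional ingredient needed is local Lipschitzness of $f \mapsto F'(f)$ from $\CC^{\alpha}$ to $\CC^{\beta}$, which follows from the identity $F'(f) - F'(u) = (f-u)\int_0^1 F''(tf + (1-t)u)\,dt$ together with $F'' \in C_b^{\beta/\alpha}$: the integrand is bounded in $\CC^{\beta}$ uniformly in $t$, and its product with $f - u \in \CC^{\alpha}$ lies in $\CC^{\beta}$ with the appropriate polynomial dependence on $\|f\|_{\alpha}, \|u\|_{\alpha}$. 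The main obstacle is purely the bookkeeping of regularities and polynomial dependencies across the telescoping; no new analytic ingredient beyond the two lemmas is needed.
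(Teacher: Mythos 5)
Your proof is correct and takes essentially the same route as the paper: write $F(f) = F'(f)\lpara f + R_F(f)$ via Lemma~\ref{lemma:paralinearization}, take the resonant product with $g$, and identify $\Pi_F(f,g) = R_F(f)\reso g + C(F'(f),f,g)$, which is then controlled by~(\ref{eq:para-3}) and Lemma~\ref{lemma:commutator} with the triple of regularities $(\beta,\alpha,\gamma)$. Your additional observations — the composition estimate $\|F'(f)\|_{\beta}\lesssim \|F\|_{C^{1+\beta/\alpha}_b}(1+\|f\|_{\alpha}^{\beta/\alpha})$ and the local Lipschitz bound for $f\mapsto F'(f)$ via the integral-form mean-value theorem — are exactly the routine ingredients the paper leaves implicit in its one-line justification.
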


\begin{proof}
  Use the paralinearization and commutator lemmas above to deduce that
  \begin{align*}
    \Pi_F (f, g) & = F (f) \reso g - F' (f)  (f \reso g) = R_F (f) \reso g + (F'
    (f) \lpara f) \reso g - F' (f)  (f \reso g)\\
    & = R_F (f) \reso g + C (F' (f), f, g),
  \end{align*}
  so that the claimed bounds easily follow from Lemma~\ref{lemma:commutator}
  and Lemma~\ref{lemma:paralinearization}.
\end{proof}

Besides this sort of chain rule, we also have a Leibniz rule for $f \mapsto f
\reso g$:

\begin{lemma}
  \label{lem:para-taylor product}Let $\alpha \in (0, 1)$ and $\gamma < 0$ be
  such that $2 \alpha + \gamma > 0$ and $\alpha + \gamma \neq 0$. Then there
  exists a bounded trilinear operator $\Pi_{\times} : \CC^{\alpha} \times
  \CC^{\alpha} \times \CC^{\gamma} \rightarrow \CC^{\alpha + \gamma}$, such
  that
  \[ (fu) \reso g = f (u \reso g) + u (f \reso g) + \Pi_{\times} (f, u, g) \]
  for all $f, u \in \CC^{\alpha} (\mathbb{R})$ and all smooth $g$.
\end{lemma}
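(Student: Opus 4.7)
The plan is to expand $fu$ via Bony's decomposition, apply the commutator lemma (Lemma~\ref{lemma:commutator}) twice to generate the two ``Leibniz'' terms $f(u\reso g)$ and $u(f\reso g)$, and show that every error term is controlled in $\CC^{\alpha+\gamma}$.

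First, I would write
\[
    fu \;=\; f\lpara u \;+\; u\lpara f \;+\; f\reso u,
\]
using $f\rpara u = u\lpara f$. Taking the resonant product with $g$ yields
\[
    (fu)\reso g \;=\; (f\lpara u)\reso g \;+\; (u\lpara f)\reso g \;+\; (f\reso u)\reso g.
\]
Now Lemma~\ref{lemma:commutator} applied with exponents $(\alpha,\alpha,\gamma)$ (which satisfy $\alpha+\alpha+\gamma>0$ by hypothesis and $\alpha+\gamma\neq 0$ also by hypothesis) gives
\[
    (f\lpara u)\reso g \;=\; f\,(u\reso g) + C(f,u,g),\qquad (u\lpara f)\reso g \;=\; u\,(f\reso g) + C(u,f,g),
\]
with $\|C(f,u,g)\|_{\alpha+\gamma}\lesssim \|f\|_{\alpha}\|u\|_{\alpha}\|g\|_{\gamma}$ and similarly for $C(u,f,g)$.

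For the remaining term, since $\alpha>0$ we have $2\alpha>0$, so by the resonant estimate~\eqref{eq:para-3} in Theorem~\ref{thm:paraproduct}, $f\reso u\in\CC^{2\alpha}$ with $\|f\reso u\|_{2\alpha}\lesssim \|f\|_{\alpha}\|u\|_{\alpha}$. Then the hypothesis $2\alpha+\gamma>0$ lets us apply \eqref{eq:para-3} again (combined with the paraproduct estimates for the off-diagonal part of the product) to conclude that $(f\reso u)\reso g\in\CC^{2\alpha+\gamma}$, which embeds continuously into $\CC^{\alpha+\gamma}$ since $\alpha>0$. Hence setting
\[
    \Pi_{\times}(f,u,g) \;:=\; C(f,u,g)+C(u,f,g)+(f\reso u)\reso g
\]
gives the required identity for smooth $f,u,g$, together with the trilinear bound
\[
    \|\Pi_{\times}(f,u,g)\|_{\alpha+\gamma}\;\lesssim\;\|f\|_{\alpha}\|u\|_{\alpha}\|g\|_{\gamma}.
\]
By density and continuity this extends uniquely to a bounded trilinear operator on $\CC^{\alpha}\times\CC^{\alpha}\times\CC^{\gamma}$.

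The only subtle point is verifying that the hypotheses match the commutator lemma's requirements in both orderings (it is symmetric in the first two arguments via the swap $f\leftrightarrow u$), and that the case $\alpha+\gamma=0$ is genuinely excluded: this is the standard borderline where the resonant product $\cdot\reso g$ lands exactly at regularity $0$ and $\CC^{2\alpha+\gamma}\hookrightarrow\CC^{\alpha+\gamma}$ fails by a logarithm. Modulo that, everything is book-keeping around Bony's decomposition and Lemma~\ref{lemma:commutator}; no hard new estimate is needed.
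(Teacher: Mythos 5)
Your proof is correct and follows essentially the same route as the paper's: Bony decomposition of $fu$, two applications of Lemma~\ref{lemma:commutator}, and the resonant estimate~\eqref{eq:para-3} for $(f\reso u)\reso g$. One small inaccuracy in your closing remark: the embedding $\CC^{2\alpha+\gamma}\hookrightarrow\CC^{\alpha+\gamma}$ holds whenever $\alpha\geq 0$ and does not fail at $\alpha+\gamma=0$; the restriction $\alpha+\gamma\neq 0$ is needed because it is the hypothesis $\beta+\gamma\neq 0$ of the commutator Lemma~\ref{lemma:commutator} (applied here with $\beta=\alpha$), not because of any borderline embedding issue.
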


\begin{proof}
  It suffices to note that $fu = f \lpara u + f \rpara u + f \reso u$, which
  leads to
  \[ \Pi_{\times} (f, u, g) = (fu) \reso g - f (u \reso g) - u (f \reso g) = C
     (f, u, g) + C (u, f, g) + (f \reso u) \reso g. \]
\end{proof}

\begin{lemma}
  \label{lemma:comm-L-para}Let $\beta < 1$, $\alpha \in \mathbb{R}$, and let
  $f \in \LL^{\beta}$ and $G \in C \CC^{\alpha}$ with $\LL G \in C \CC^{\alpha
  - 2}$. There exists $H = H (f, G)$ such that $\LL H = [ \LL, f \lpara
  ] G$ and $H (0) = 0$. Moreover $H \in C \CC^{\alpha + \beta} \cap
  C^{(\alpha \wedge \beta) / 2} L^{\infty}$ and for all $T > 0$
  \[ \| H \|_{C^{(\alpha \wedge \beta) / 2}_T L^{\infty}} + \| H \|_{C_T
     \CC^{\alpha + \beta}} \lesssim \| f \|_{\LL^{\beta}_T} ( \| G
     \|_{C_T \CC^{\alpha}} + \left\| \LL G \right\|_{C_T \CC^{\alpha - 2}}) . \]
\end{lemma}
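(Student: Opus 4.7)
The plan is to define $H$ explicitly via a Duhamel-type formula that never differentiates $f$ in time, and then to estimate it Littlewood--Paley block by block. Set
\[
  H(t) := f(t) \lpara G(t) - P_t\bigl(f(0) \lpara G(0)\bigr) - \int_0^t P_{t-s}\bigl(f(s) \lpara \LL G(s)\bigr)\, ds.
\]
The subtracted pieces $J(t) := P_t(f(0)\lpara G(0)) + \int_0^t P_{t-s}(f(s)\lpara \LL G(s))\,ds$ form the mild solution of $\LL J = f \lpara \LL G$ with $J(0) = f(0)\lpara G(0)$, so $H(0)=0$ and $\LL H = \LL(f\lpara G) - f\lpara \LL G = [\LL, f\lpara]G$ as required. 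This is the correct starting point precisely because $f \in \LL^\beta$ is only H\"older-$\beta/2$ in time: the formal identity $[\LL, f\lpara]G = (\partial_t f)\lpara G - [\Delta, f\lpara]G$ requires a nonexistent time derivative of $f$.

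Decompose the paraproduct as $f\lpara G = \sum_i S_{i-1}f \cdot \Delta_i G$ and write $H = \sum_i H_i$ accordingly. Each $H_i$ is spectrally supported in an annulus $2^i\CA$, so by Lemma~\ref{lem: Besov regularity of series} the $C_T\CC^{\alpha+\beta}$ bound reduces to the pointwise estimate $\|H_i(t)\|_{L^\infty} \lesssim 2^{-i(\alpha+\beta)}$. Substituting the Duhamel identity $\Delta_i G(t) = P_t \Delta_i G(0) + \int_0^t P_{t-s}\Delta_i \LL G(s)\,ds$ into $H_i$ and regrouping yields $H_i = I_i + II_i$, where
\[
  I_i(t) = \bigl[S_{i-1}(f(t)-f(0))\bigr] P_t \Delta_i G(0) + \bigl[S_{i-1} f(0),\, P_t\bigr]\Delta_i G(0),
\]
and $II_i(t)$ is the analogous time integral against $\Delta_i \LL G(s)$. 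The $f$-increment summand in $I_i$ is bounded by combining $\|S_{i-1}(f(t)-f(0))\|_\infty \lesssim t^{\beta/2}\|f\|_{\LL^\beta}$, the spectral decay $\|P_t \Delta_i G(0)\|_\infty \lesssim e^{-c 2^{2i}t} 2^{-i\alpha}\|G\|_{C\CC^\alpha}$ and $t^{\beta/2}e^{-c 2^{2i}t} \lesssim 2^{-i\beta}$. The commutator summand is controlled through the heat-semigroup identity
\[
  [\phi, P_t] = -\int_0^t P_r\bigl(\Delta\phi \cdot P_{t-r} + 2\nabla\phi \cdot \nabla P_{t-r}\bigr)\, dr,
\]
together with the Bernstein estimates $\|\nabla S_{i-1} f\|_\infty \lesssim 2^{i(1-\beta)}\|f\|_{\CC^\beta}$ and $\|\Delta S_{i-1} f\|_\infty \lesssim 2^{i(2-\beta)}\|f\|_{\CC^\beta}$ (which is where the hypothesis $\beta < 1$ enters), plus the integral $\int_0^\infty r\, e^{-c 2^{2i}r}\,dr \simeq 2^{-4i}$; this again produces $2^{-i(\alpha+\beta)}$. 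The term $II_i$ is handled identically, with the two extra spatial derivatives in $\|\Delta_i \LL G\|_\infty \lesssim 2^{i(2-\alpha)}\|\LL G\|_{C\CC^{\alpha-2}}$ absorbed by one further integration against $e^{-c 2^{2i}(t-s)}$.

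The time regularity $\|H\|_{C^{(\alpha\wedge\beta)/2}_T L^\infty}$ is obtained from a block-by-block analysis of $H_i(t)-H_i(s)$ along the same lines: each piece of the decomposition generates either a time increment of $f$ (controlled by $|t-s|^{\beta/2}$) or a heat-kernel difference $(P_t-P_s)$ acting on a frequency block at scale $2^i$ (controlled by $\min(1, |t-s|2^{2i})^{\alpha/2}$ times the usual exponential factor). Summing in $i$ and splitting at the threshold $2^{2i} \simeq |t-s|^{-1}$ balances the two exponents and yields the claim. The main obstacle throughout the argument is conceptual rather than technical: since $\partial_t f$ does not exist under the hypotheses, every step must be arranged so that only finite time differences of $f$ appear, which is exactly why the mild formulation of $H$ is the natural starting point, and why any attempt to treat $[\LL, f\lpara]G$ directly as a forcing term would break down.
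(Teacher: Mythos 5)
Your argument is correct and takes a genuinely different route from the paper's. The paper handles the missing time derivative of $f$ by mollifying it: it introduces $f_\varepsilon = \rho_\varepsilon \ast f$ with $\|\partial_t f_\varepsilon\|_{C_T L^\infty} \lesssim \varepsilon^{\beta/2-1}\|f\|_{\LL^\beta_T}$ and $\|f-f_\varepsilon\|_{C_T L^\infty}\lesssim \varepsilon^{\beta/2}\|f\|_{\LL^\beta_T}$, fixes $\varepsilon = 2^{-2i}$ on the $i$-th Littlewood--Paley block, writes
\[
\LL\bigl(\Delta_i (H - (f-f_\varepsilon)\lpara G)\bigr) = \Delta_i\bigl[(f_\varepsilon-f)\lpara \LL G\bigr] + \Delta_i\bigl[(\LL f_\varepsilon)\lpara G - 2\,\partial_x f_\varepsilon \lpara \partial_x G\bigr],
\]
using the Leibniz rule for $\LL$ applied to the now time-differentiable $f_\varepsilon$, and invokes the Schauder estimate of Lemma~\ref{lem:schauder} blockwise. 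You instead never introduce any mollification: you define $H$ by a mild (Duhamel) formula, decompose into blocks, and eliminate the time derivative of $G$ via $\Delta_i G(t) = P_t\Delta_i G(0) + \int_0^t P_{t-s}\Delta_i\LL G(s)\,ds$, reducing everything to finite time increments $f(t)-f(s)$ (paid for by $|t-s|^{\beta/2}$) and heat-kernel commutators $[S_{i-1}f,\,P_\tau]$ (paid for by the $\beta<1$ Bernstein bounds $\|\nabla S_{i-1}f\|_\infty \lesssim 2^{i(1-\beta)}$, $\|\Delta S_{i-1}f\|_\infty \lesssim 2^{i(2-\beta)}$). Both routes are organized around the same principle --- avoid $\partial_t f$ --- but yours realizes it structurally through the definition of $H$ rather than through smoothing, and it bypasses the Schauder estimate entirely in favour of direct kernel bounds. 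The paper's version is more modular (it reuses Lemma~\ref{lem:schauder}); yours is more self-contained and makes the role of the heat-semigroup commutator explicit.

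One small bookkeeping remark: in your discussion of the commutator summand of $I_i$ you invoke $\int_0^\infty r\, e^{-c2^{2i}r}\,dr \simeq 2^{-4i}$, but for $I_i$ the relevant factor is $t\,e^{-ct2^{2i}} \lesssim 2^{-2i}$ (or equivalently $\int_0^\infty e^{-c2^{2i}r}\,dr \simeq 2^{-2i}$), which, combined with the Bernstein factor $2^{i(2-\beta)}$ and the $2^{-i\alpha}$ from $\|\Delta_i G(0)\|_\infty$, yields $2^{-i(\alpha+\beta)}$ as claimed. The $2^{-4i}$ integral belongs instead to the $II_i$ commutator, where the outer $\int_0^t ds$ and the inner commutator integral combine with the extra $2^{2i}$ from $\|\Delta_i\LL G\|_\infty \lesssim 2^{i(2-\alpha)}\|\LL G\|_{C\CC^{\alpha-2}}$. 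The powers all cancel correctly in both cases, so the argument goes through; the misattribution is cosmetic.
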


\begin{proof}
  Let $T > 0$ and let $f_{\varepsilon}$ be a time mollification of $f$ such
  that $\| \partial_t f_{\varepsilon} \|_{C_T L^{\infty}} \lesssim
  \varepsilon^{\beta / 2 - 1} \| f \|_{\LL^{\beta}_T}$ and $\| f_{\varepsilon}
  - f \|_{C_T L^{\infty}} \lesssim \varepsilon^{\beta / 2} \| f
  \|_{\LL^{\beta}}$ for all $\varepsilon > 0$. For example we can take
  $f_{\varepsilon} = \rho_{\varepsilon} \ast f$ with $\rho_{\varepsilon} (t) =
  \rho (t / \varepsilon) / \varepsilon$ and $\rho : \mathbb{R} \rightarrow
  \mathbb{R}$ compactly supported, smooth, and of unit integral. For $i
  \geqslant - 1$ we have
  \[ \LL \Delta_i H = \Delta_i  \LL H = \Delta_i \left[ \LL ((f - f_{\varepsilon}) \lpara G) - (f
     - f_{\varepsilon}) \lpara \LL G \right] + \Delta_i \left[ \LL
     (f_{\varepsilon} \lpara G) - f_{\varepsilon} \lpara \LL G \right], \]
  so that
  \begin{align*}
      \LL \Delta_i (H - (f - f_{\varepsilon}) \lpara G) & = - \Delta_i \left[ (f - f_{\varepsilon}) \lpara \LL G \right] + \Delta_i \left[ \LL(f_{\varepsilon} \lpara G) - f_{\varepsilon} \lpara \LL G \right] \\
      & = \Delta_i \left[ (f_{\varepsilon} - f) \lpara \LL G \right] + \Delta_i \left[ \LL f_{\varepsilon} \lpara G - 2 \partial_x f_{\varepsilon} \lpara \partial_x G \right],
  \end{align*}
  with initial condition $\Delta_i (H - (f - f_{\varepsilon}) \lpara G) (0) = -
  (\Delta_i (f - f_{\varepsilon}) \lpara G) (0)$. The Schauder estimates for
  $\LL$ (Lemma~\ref{lem:schauder}) give
  \begin{align*}
      &\| \Delta_i (H + (f - f_{\varepsilon}) \lpara G) \|_{\LL^{\alpha \upl \beta}_T} \\
      &\hspace{60pt} \lesssim \left\| \Delta_i \left[ (f - f_{\varepsilon}) \lpara \LL G \right] + \Delta_i \left[ \left( \LL f_{\varepsilon} \right) \lpara G - 2 \partial_x f_{\varepsilon} \lpara \partial_x G \right] \right\|_{C_T \CC^{\alpha + \beta - 2}} \\
      &\hspace{60pt} \qquad + \| (\Delta_i (f - f_{\varepsilon}) \lpara G) (0) \|_{\alpha + \beta} .
  \end{align*}
  Choosing $\varepsilon = 2^{- 2 i}$, we have
  \begin{align*}
     \| \Delta_i ((f - f_{\varepsilon}) \lpara G) \|_{C_T \CC^{\alpha + \beta}} & \lesssim 2^{\beta i} \| \Delta_i ((f - f_{\varepsilon}) \lpara G) \|_{C_T \CC^{\alpha}} \lesssim 2^{\beta i} \| f - f_{\varepsilon} \|_{C_T L^{\infty}} \| G \|_{C_T \CC^{\alpha}} \\
     & \lesssim \| f \|_{\LL_T^{\beta}} \| G \|_{C_T \CC^{\alpha}}
  \end{align*}
  and exactly the same argument also gives
  \[ \left\| \Delta_i \left[ (f - f_{\varepsilon}) \lpara \LL G \right]
     \right\|_{C_T \CC^{\alpha + \beta - 2}} \lesssim \| f \|_{\LL_T^{\beta}}
     \left\| \LL G \right\|_{C_T \CC^{\alpha - 2}} . \]
  Since $\beta < 1$, we further get
  \begin{align*}
     \left\| \Delta_i \left[ \LL f_{\varepsilon} \lpara G + \partial_x f_{\varepsilon} \lpara \partial_x G \right] \right\|_{C_T \CC^{\alpha + \beta - 2}} & \lesssim 2^{i (\beta - 2)} \| \partial_t f_{\varepsilon} \|_{C_T L^{\infty}} \| G \|_{C_T \CC^{\alpha}} + \| f_{\varepsilon}  \|_{C_T \CC^{\beta}} \| G \|_{C_T \CC^{\alpha}} \\
     & \lesssim \| f \|_{\LL_T^{\beta}} \| G \|_{C_T \CC^{\alpha}} + \| f \|_{C_T \CC^{\beta}} \| G \|_{C_T \CC^{\alpha}}.
  \end{align*}
  Combining everything, we end up with
  \[ \| \Delta_i H \|_{C_T \CC^{\alpha + \beta}} \lesssim \| f
     \|_{\LL^{\beta}_T} ( \| G \|_{C_T \CC^{\alpha}} + \left\| \LL G
     \right\|_{C_T \CC^{\alpha - 2}} ), \]
  which gives the estimate for the space regularity of $H$ since $\| \Delta_i
  H \|_{C_T L^{\infty}} \lesssim 2^{- (\alpha + \beta) i} \| \Delta_i H
  \|_{C_T \CC^{\alpha + \beta}}$. The time regularity of $H$ can be controlled
  similarly by noting that $(f - f_{\varepsilon}) \lpara G \in C^{(\alpha
  \wedge \beta) / 2}_T L^{\infty}$, uniformly in $\varepsilon$. 
\end{proof}

\subsection{Paracontrolled distributions}

Here we build a calculus of distributions satisfying a paracontrolled ansatz.
We start by defining a suitable space of such objects.

\begin{definition}
  \label{def:paracontrolled parabolic}Let $\alpha > 0$ and $\beta \in (0,
  \alpha]$ be such that $\alpha + \beta \in (0, 2)$, and let $u \in
  \LL^{\alpha}$. A pair of distributions $(f, f^u) \in \LL^{\alpha} \times
  \LL^{\beta}$ is called \tmtextit{paracontrolled} by $u$ if $f^{\sharp} = f -
  f^u \lpara u \in C \CC^{\alpha + \beta} \cap \LL^{\beta}$. In that case we
  write $f \in \CD^{\beta} = \CD^{\beta} (u)$, and for all $T > 0$ we define
  the norm
  \[ \| f \|_{\DD_T^{\beta}} = \| f \|_{C^{\alpha / 2}_T} + \| f^u
     \|_{\LL^{\beta}_T} + \| f^{\sharp} \|_{C_T \CC^{\alpha + \beta}} + \|
     f^{\sharp} \|_{C_T^{\beta / 2} L^{\infty}} . \]
  If $\tilde{u} \in \LL^{\alpha}$ and $(\tilde{f}, \tilde{f}^{\tilde{u}}) \in
  \DD^{\beta} (\tilde{u}) \nocomma$, then we also write
  \[ d_{\DD^{\beta}_T} (f, \tilde{f}) = \| f^u - \tilde{f}^{\tilde{u}}
     \|_{\LL^{\beta}_T} + \|f^{\sharp} - \tilde{f}^{\sharp} \|_{C_T
     \CC^{\alpha + \beta}} + \|f^{\sharp} - \tilde{f}^{\sharp} \|_{C_T^{\beta
     / 2} L^{\infty}} . \]
     Note that in general $f$ and $\tilde f$ do not live on the same space, so $d_{\DD^{\beta}_T}$ is not a distance.
\end{definition}

Of course we should really write $(f, f^u) \in \CD^{\beta}$ since given $f$
and $u$, the derivative $f^u$ is usually not uniquely determined. But in the
applications there will always be an obvious candidate for the derivative, and
no confusion will arise.

\begin{remark}
  The space $\CD^{\beta}$ does not depend on the specific dyadic partition of
  unity. Indeed, Bony~{\cite{Bony1981}} has shown that if $\tilde{\lpara}$ is
  the paraproduct constructed from another partition of unity, then $\| f^u
  \lpara u - f^u \tilde{\lpara} u \|_{C_T \CC^{\alpha + \beta}} \lesssim \| f^u
  \|_{C_T \CC^{\beta}} \| u \|_{C_T \CC^{\alpha}}$.
\end{remark}

\paragraph{Nonlinear operations}As an immediate consequence of
Lemma~\ref{lemma:commutator} we can multiply any distribution that is paracontrolled by $u$ with a given $v$,
provided that we know how to multiply $u$ with $v$ (of course always under suitable regularity assumptions):

\begin{theorem}[also see Theorem~3.7 of~\cite{Gubinelli2012}]\label{thm:paracontrolled product}
   Let $\alpha, \beta \in \mathbb{R}$,
  $\gamma < 0$, with $\alpha + \beta + \gamma > 0$ and $\alpha + \gamma \neq
  0$. Let $u \in C \CC^{\alpha}$, $v \in C \CC^{\gamma}$, and let $\zeta \in C
  \CC^{\alpha + \gamma}$. Then
  \[ \DD^{\beta} (u) \ni f \mapsto f \cdummy v \assign f \lpara v + f \rpara v +
     f^{\sharp} \reso v + C (f^u, u, v) + f^u \zeta \in C \CC^{\gamma} \]
  defines a bounded linear operator and for all $T > 0$ we have the bound
  \[ \| (f v)^{\sharp} \|_{C_T \CC^{\alpha + \gamma}} \assign \| f \cdummy v -
     f \lpara v \|_{C_T \CC^{\alpha + \gamma}} \lesssim \| f \|_{\CD^{\beta}_T}
     \left( \| v \|_{C_T \CC^{\gamma}} + \| u \|_{C_T \CC^{\alpha}} \| v
     \|_{C_T \CC^{\gamma}} + \| \zeta \|_{C_T \CC^{\alpha + \gamma}} \right) .
  \]
  If there exist sequences of smooth functions $(u_n)$ and $(v_n)$ converging
  to $u$ and $v$ in $C \CC^{\alpha}$ and  $C \CC^{\gamma}$ respectively for which $(u_n \reso v_n)$ converges to $\zeta$ in $C \CC^{\alpha+\gamma}$,
  then $f \cdummy v$ does not depend on the dyadic partition of unity used to
  construct it.
  
  Furthermore, there exists a quadratic polynomial $P$ so that if $\tilde{u},
  \tilde{v}, \tilde{\zeta}$ satisfy the same assumptions as $u$, $v$, $\zeta$
  respectively, if $\tilde{f} \in \CD^{\beta} (\tilde{u})$, and if
  \[ M = \max \left\{ \| u \|_{C_T \CC^{\alpha}}, \| v \|_{C_T \CC^{\gamma}},
     \| \zeta \|_{C_T \CC^{\alpha + \gamma}}, \| \tilde{u} \|_{C_T
     \CC^{\alpha}}, \| \tilde{v} \|_{C_T \CC^{\gamma}}, \| \tilde{\zeta}
     \|_{C_T \CC^{\alpha + \gamma}}, \|f\|_{\DD^{\beta}_T (u)}, \| \tilde{f}
     \|_{\DD^{\beta}_T (\tilde{u})} \right\}, \]
  then
  \[ \| (f v)^{\sharp} - (\tilde{f} \tilde{v})^{\sharp} \|_{C_T\CC^{\alpha + \gamma}}
     \leqslant P (M) \left( d_{\DD^{\beta}} (f, \tilde{f}) +\|u - \tilde{u}
     \|_{C_T\CC^\alpha} +\|v - \tilde{v} \|_{C_T\CC^\gamma} +\| \zeta - \tilde{\zeta}
     \|_{C_T\CC^{\alpha + \gamma}} \right) . \]
\end{theorem}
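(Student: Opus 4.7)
The plan is to verify that the stated definition of $f\cdummy v$ is exactly what one obtains by formally expanding the product using Bony's decomposition and then rewriting the ill-defined resonant piece via the paracontrolled structure of $f$. Concretely, I would start from the formal identity
\[
f v = f \lpara v + f \rpara v + f \reso v,
\]
where the paraproducts $f\lpara v$ and $f\rpara v = v\lpara f$ are unambiguously defined by Theorem~\ref{thm:paraproduct}, and only the resonant part $f\reso v$ is problematic since $\alpha+\gamma<0$. Substituting the ansatz $f = f^u\lpara u + f^\sharp$ splits this as $(f^u\lpara u)\reso v + f^\sharp\reso v$; the second summand sits in $\CC^{\alpha+\beta+\gamma}$ by Theorem~\ref{thm:paraproduct} since $\alpha+\beta+\gamma>0$ and $f^\sharp\in C_T\CC^{\alpha+\beta}$. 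For the first summand I would invoke the commutator Lemma~\ref{lemma:commutator} (with exponents $\beta,\alpha,\gamma$, whose sum is positive) to write
\[
(f^u\lpara u)\reso v = f^u\,(u\reso v) + C(f^u,u,v),
\]
with $C(f^u,u,v)\in C_T\CC^{\alpha+\gamma}$. The resonance $u\reso v$ is still ill-defined analytically, so at that point I would \emph{postulate} that it has been provided as the extra datum $\eta\in C_T\CC^{\alpha+\gamma}$ and set $u\reso v:=\eta$. This yields exactly the prescribed formula for $f\cdummy v$ and, after subtracting $f\lpara v$, identifies
\[
(fv)^\sharp = f\rpara v + f^\sharp\reso v + C(f^u,u,v) + f^u\,\eta.
\]

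The quantitative bound then follows term by term: $\|f\rpara v\|_{\alpha+\gamma}\lesssim \|f\|_{L^\infty}\|v\|_\gamma$ by~(\ref{eq:para-2}); $\|f^\sharp\reso v\|_{\alpha+\beta+\gamma}\lesssim\|f^\sharp\|_{\alpha+\beta}\|v\|_\gamma$ by~(\ref{eq:para-3}); $\|C(f^u,u,v)\|_{\alpha+\gamma}\lesssim\|f^u\|_\beta\|u\|_\alpha\|v\|_\gamma$ by Lemma~\ref{lemma:commutator}; and $\|f^u\,\eta\|_{\alpha+\gamma}\lesssim\|f^u\|_\beta\|\eta\|_{\alpha+\gamma}$ by Corollary~\ref{cor:product} (since $\beta>0$ and $\beta+(\alpha+\gamma)=\alpha+\beta+\gamma>0$, and $\alpha+\gamma\neq 0$ guarantees we fall in the good regime of the resonant estimate; note $\alpha+\gamma\leqslant\beta$ in the regime of interest, otherwise one absorbs using Exercise~\ref{exercise:besov-space-inequalities}). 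Taking the supremum in time over $[0,T]$ and collecting the four contributions produces the stated estimate with the exhibited right-hand side.

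Independence from the dyadic partition of unity would come next. When smooth approximations $(u_n)$, $(v_n)$ exist with $u_n\reso v_n\to\eta$, one can form $f_n = f^u\lpara u_n + f^\sharp$ (which is paracontrolled by $u_n$) and expand the ordinary product $f_n v_n$ via Bony, so that $f_n v_n$ equals the formula defining $f\cdummy v$ but with $(u,v,\eta)$ replaced by $(u_n,v_n,u_n\reso v_n)$. Passing to the limit using the bilinear/trilinear continuity of $\lpara$, $\rpara$, $\reso$ and $C$ shows that $f_n v_n$ converges to our $f\cdummy v$ in $C_T\CC^{\alpha+\gamma}$; since the limit of an ordinary product is independent of the partition of unity used in its internal decomposition, so is $f\cdummy v$.

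Finally, for local Lipschitz continuity I would write the difference $(fv)^\sharp-(\tilde f\tilde v)^\sharp$ as a sum of the four corresponding term-by-term differences, and in each difference insert and subtract intermediate terms so as to isolate a single varying argument at a time (a telescoping trick). Each resulting increment is then controlled by the corresponding bilinear or trilinear continuity statement (Theorem~\ref{thm:paraproduct}, Lemma~\ref{lemma:commutator}, Corollary~\ref{cor:product}), producing a bound of the form $P(M)$ times the sum of the distances in $\DD^\beta$, $\CC^\alpha$, $\CC^\gamma$ and $\CC^{\alpha+\gamma}$; since each bound is at worst quadratic in the norms (via the commutator applied to triples involving $u$ and $\tilde u$), $P$ can be taken quadratic. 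The only mild subtlety is that $d_{\DD^\beta}(f,\tilde f)$ does not contain $\|u-\tilde u\|_\alpha$, so the insertion step must route the $u\leftrightarrow\tilde u$ discrepancy through the $\|u-\tilde u\|_\alpha$ term explicitly; this is the step that requires the most care but is routine once organized. No step involves a genuinely new idea beyond what Lemmas~\ref{lemma:commutator}--\ref{lemma:comm-L-para} already provide.
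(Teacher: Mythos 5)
Your proposal is correct and follows exactly the route the paper intends when it states that the proof is "straightforward" given Lemma~\ref{lemma:commutator} and Theorem~\ref{thm:paraproduct}: expand $fv$ by Bony, substitute the ansatz $f=f^u\lpara u+f^\sharp$ into the resonant part, pull out $f^u(u\reso v)$ via the commutator, and replace the analytically ill-posed $u\reso v$ by the supplied datum $\eta$. The independence-from-the-partition argument via smooth approximants $f_n=f^u\lpara u_n+f^\sharp$, and the telescoping for Lipschitz continuity, are the right mechanisms.

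One small slip in the quantitative part: for $f\rpara v=v\lpara f$ the modulating factor is $v$, and since $\gamma<0$ you cannot use $\|v\|_{L^\infty}$; estimate~(\ref{eq:para-2}) gives $\|v\lpara f\|_{\alpha+\gamma}\lesssim\|v\|_{\gamma}\|f\|_{\alpha}$, not $\|f\|_{L^\infty}\|v\|_{\gamma}$. This is harmless for the final bound: using $\|f\|_{\alpha}\lesssim\|f^u\|_{L^\infty}\|u\|_{\alpha}+\|f^{\sharp}\|_{\alpha+\beta}\lesssim\|f\|_{\CD^{\beta}_T}(1+\|u\|_{C_T\CC^{\alpha}})$ one recovers precisely the $\|f\|_{\CD^{\beta}_T}(\|v\|_{C_T\CC^{\gamma}}+\|u\|_{C_T\CC^{\alpha}}\|v\|_{C_T\CC^{\gamma}})$ contribution claimed in the statement. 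Your aside about needing $\alpha+\gamma\leqslant\beta$ for the term $f^u\eta$ is well observed; in all intended applications $\alpha+\gamma<0<\beta$ so this is automatic, but stating it makes the estimate precise.
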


\begin{proof}
   Given Lemma~\ref{lemma:commutator} (and the paraproduct estimates Theorem~\ref{thm:paraproduct}), the proof is straightforward and we leave most of it as an exercise. Let us only comment on the independence of the partition of unity: Let $(u^n, v^n)$ be as announced and define $f_n \assign f^u \lpara u_n + f^\sharp$. Then
   \begin{align*}
      \lim_{n\to \infty} f_n v_n & = \lim_{n \to \infty} \big( f_n \lpara v_n + f_n \rpara v_n +  f^{\sharp} \reso v_n + C (f^u, u_n, v_n) + f^u (u_n \circ v_n) \big) \\
      & =  f \lpara v + f \rpara v +  f^{\sharp} \reso v + C (f^u, u, v) + f^u \zeta = f\cdot v.
   \end{align*}
   Since the pointwise product $f_n v_n$ does not depend on the partition of unity, also the limit must be independent.
   
   The bound on the difference is obtained by using the boundedness and multilinearity of all operators involved.
\end{proof}

From now on we will assume that there exist smooth functions $(u_n)$ and $(v_n)$ converging to $u$ and $v$ respectively for which $(u_n \reso v_n)$ converges to $\zeta$, so that the product does not depend on the partition of unity, and we will usually write $f v$ rather than $f \cdummy v$. Later we will see that the resonant term $(u_n \circ v_n)$ must often be renormalized by subtracting a large constant, but this will not affect the independence of the product from the partition of unity.

To solve equations involving general nonlinear functions, we need to examine
the stability of paracontrolled distributions under smooth functions.

\begin{theorem}\label{thm:paralinearization}
   Let $\alpha \in (0, 1)$ and $\beta \in (0,
  \alpha]$. Let $u \in \LL^{\alpha}$, $f \in \DD^{\alpha} (u)$, and $F \in
  C^{1 + \beta / \alpha}_b$. Then $F (f) \in \DD^{\beta}$ with derivative $(F
  (f))^u = F' (f) f^u$, and for all $T > 0$
  \[
     \| F (f) \|_{\DD^{\beta}_T} \lesssim \| F \|_{C^{1 + \beta / \alpha}_b} ( 1 + \| f \|^2_{\DD^{\alpha}_T} ) ( 1 + \| u  \|_{\LL_T^{\alpha}}^2 ) .
  \]
  Moreover, there exists a polynomial $P$ which satisfies for all $F \in C^{2
  + \beta / \alpha}_b$, $\tilde{u} \in \LL^{\alpha} \nocomma$, $\tilde{f} \in
  \DD^{\alpha} (\tilde{u})$, and
  \[ M \assign \max \left\{ \| u \|_{\LL^{\alpha}_T}, \| \tilde{u}
     \|_{\LL^{\alpha}_T}, \|f\|_{\DD^{\alpha}_T (u)}, \| \tilde{f}
     \|_{\DD^{\alpha}_T (\tilde{u})} \right\} \]
  the bound
  \[ d_{\DD^{\beta}_T} (F (f), F (\tilde{f})) \leqslant P (M) \| F \|_{C^{2 +
     \beta / \alpha}_T} ( d_{\DD^{\alpha}_T} (f, \tilde{f}) + \| u -
     \tilde{u} \|_{\LL^{\alpha}_T} ) . \]
\end{theorem}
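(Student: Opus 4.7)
The strategy is to apply the paralinearization Lemma \ref{lemma:paralinearization} to $F(f)$, substitute the paracontrolled ansatz $f = f^u \lpara u + f^{\sharp}$, and then re-associate the resulting nested paraproduct to expose the $u$-paracontrolled structure of $F(f)$. Concretely, Lemma \ref{lemma:paralinearization} gives
\[
F(f) \;=\; F'(f) \lpara f + R_F(f), \qquad R_F(f) \in C_T\CC^{\alpha+\beta},
\]
and inserting $f = f^u \lpara u + f^{\sharp}$ yields
\[
F(f) \;=\; F'(f) \lpara (f^u \lpara u) + F'(f) \lpara f^{\sharp} + R_F(f).
\]
A Bony-type associativity identity (which follows from Lemma \ref{lemma:commutator} after rewriting $F'(f) \lpara (f^u \lpara u) - (F'(f) f^u)\lpara u$ as a commutator of paraproducts) lets me pull the factor $F'(f)$ through the inner paraproduct, losing regularity in the error but not in the leading term:
\[
F(f) \;=\; \bigl(F'(f)\, f^u\bigr) \lpara u + F(f)^{\sharp},
\]
with
\[
F(f)^{\sharp} \;=\; \bigl[F'(f)\lpara(f^u\lpara u) - (F'(f)f^u)\lpara u\bigr] + F'(f) \lpara f^{\sharp} + R_F(f).
\]
This identifies the candidate derivative $(F(f))^u = F'(f)\,f^u$, leaving me to check that it lies in $\LL^{\beta}$ and that $F(f)^{\sharp}$ lies in $C_T\CC^{\alpha+\beta}\cap C_T^{\beta/2}L^{\infty}$.

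For the derivative, since $F'\in C^{\beta/\alpha}_b$ and $f\in C_T\CC^{\alpha}$, composition gives $F'(f)\in C_T\CC^{\beta}$ with norm controlled by $\|F\|_{C^{1+\beta/\alpha}_b}(1+\|f\|_{C_T\CC^{\alpha}})^{\beta/\alpha}$; the time regularity $F'(f)\in C_T^{\beta/2}L^{\infty}$ follows analogously from the time Hölder norm of $f$ contained in $\|f\|_{\DD^{\beta}_T}$. Multiplying by $f^u\in\LL^{\alpha}\subset\LL^{\beta}$ using the paraproduct/resonant decomposition (Theorem \ref{thm:paraproduct}, applied with $\beta+\beta>0$, which is fine because $\beta\leq\alpha$ and $\alpha+\beta<2$) keeps $F'(f)\,f^u$ in $\LL^{\beta}$, with the bound quadratic in $\|f\|_{\DD^{\alpha}_T}$ and $\|u\|_{\LL^{\alpha}_T}$ that appears in the statement.

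For $F(f)^{\sharp}$ I estimate the three pieces separately. The remainder $R_F(f)$ is controlled by Lemma \ref{lemma:paralinearization} directly. The middle term $F'(f)\lpara f^{\sharp}$ is controlled by the paraproduct estimate \eqref{eq:para-1}, using $F'(f)\in L^{\infty}$ and $f^{\sharp}\in C_T\CC^{\alpha+\beta}$ (which is exactly the regularity encoded in $\|f\|_{\DD^{\alpha}_T}$ since $\beta\le\alpha$). For the associativity error I use that $F'(f)\in C_T\CC^{\beta}$ and $f^u\in C_T\CC^{\beta}$, together with the identity $F'(f)\lpara(f^u\lpara u) - (F'(f)f^u)\lpara u = C(F'(f),f^u,u) + (F'(f)\rpara f^u + F'(f)\reso f^u)\lpara u - \text{lower-order pieces}$, so that Lemma \ref{lemma:commutator} and Theorem \ref{thm:paraproduct} give the required $\CC^{\alpha+\beta}$ bound. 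Summing up produces the claimed inequality $\|F(f)\|_{\DD^{\beta}_T}\lesssim \|F\|_{C^{1+\beta/\alpha}_b}(1+\|f\|_{\DD^{\alpha}_T}^2)(1+\|u\|_{\LL^{\alpha}_T}^2)$.

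The Lipschitz estimate is obtained by writing the difference $F(f)-F(\tilde{f})$ via the same decomposition, and invoking the Lipschitz versions of Lemma \ref{lemma:paralinearization} and of the composition estimate for $F'$ (both requiring $F\in C^{2+\beta/\alpha}_b$), together with the trilinearity of the commutator and paraproduct. The time regularity of each piece is handled in parallel with the spatial regularity, using the norm $\|\cdot\|_{C_T^{\cdot /2}L^{\infty}}$ present in the definition of $\LL^{\alpha}$ and $\DD^{\beta}$; here one exploits that paraproducts and compositions with smooth $F$ preserve Hölder-in-time norms of sufficiently small exponent. The main obstacle is the bookkeeping in the associativity step: one must carefully split $F'(f)\lpara(f^u\lpara u) - (F'(f)f^u)\lpara u$ into commutators handled by Lemma \ref{lemma:commutator} and paraproducts handled by Theorem \ref{thm:paraproduct}, and check that the sum of regularities in each resulting term is positive, which uses precisely the assumption $\beta\leq\alpha$ and $\alpha+\beta<2$.
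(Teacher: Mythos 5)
The paper does not present a proof of this theorem (it declares the argument "not very complicated but rather lengthy" and refers to \cite{Gubinelli2012}), so I am assessing your sketch on its own merits. The overall plan — apply Lemma~\ref{lemma:paralinearization} to $F(f)$, substitute the ansatz $f = f^u\lpara u + f^\sharp$, re-associate the nested paraproduct to expose the derivative $F'(f)f^u$, and then estimate the three resulting pieces of $F(f)^\sharp$ separately — is the correct and natural one, and the identification $(F(f))^u = F'(f)f^u$, the treatment of $R_F(f)$ and of $F'(f)\lpara f^\sharp$, the composition estimate for $F'(f)\in\LL^{\beta}$, and the passage to the Lipschitz bound by repeating the decomposition with the Lipschitz variants of the lemmas are all sound.

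There is, however, one genuine gap, and it sits precisely at the key step. You claim the associativity error $F'(f)\lpara(f^u\lpara u) - (F'(f)f^u)\lpara u$ "follows from Lemma~\ref{lemma:commutator} after rewriting as a commutator of paraproducts," and you write the identity
\[
F'(f)\lpara(f^u\lpara u) - (F'(f)f^u)\lpara u = C(F'(f),f^u,u) + \bigl(F'(f)\rpara f^u + F'(f)\reso f^u\bigr)\lpara u - \text{lower-order},
\]
but this cannot hold: the commutator $C(a,b,c)=(a\lpara b)\reso c - a(b\reso c)$ of Lemma~\ref{lemma:commutator} is built around the \emph{resonant} product $\reso$ in the third slot, whereas your left-hand side contains only paraproducts $\lpara$ in $u$. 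Expanding $(F'(f)f^u)\lpara u$ via Bony's decomposition of $F'(f)f^u$ produces terms of the form $(F'(f)\lpara f^u)\lpara u$, $(F'(f)\rpara f^u)\lpara u$ and $(F'(f)\reso f^u)\lpara u$, none of which has the shape of $C$; there is simply no such algebraic identity. The correct tool is the double-paraproduct estimate
\[
\| a\lpara(b\lpara c) - (ab)\lpara c \|_{\alpha+\beta}\ \lesssim\ \|a\|_{\alpha}\,\|b\|_{\alpha}\,\|c\|_{\beta},
\]
which the paper records (citing \cite{Bony1981}) in Section~5 just before the discussion leading to this theorem, and which is a \emph{separate} result from Lemma~\ref{lemma:commutator} even though it is proved by a similar dyadic-block argument. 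Applied with $a = F'(f)\in C_T\CC^{\beta}$, $b = f^u\in C_T\CC^{\beta}$ (embedding from $\CC^{\alpha}$), $c = u\in C_T\CC^{\alpha}$, it gives the required $\CC^{\alpha+\beta}$ bound in a single stroke. You should invoke that estimate directly (or reprove it block-by-block) rather than try to reduce it to Lemma~\ref{lemma:commutator}, whose content is different.
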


The proof is not very complicated but rather lengthy, and we do not present it
here. The reader can find it in~\cite{Gubinelli2012}.

\paragraph{Schauder estimate for paracontrolled distributions}The Schauder
estimate Lemma~\ref{lem:schauder} is not quite sufficient: we also need to
understand how the heat kernel acts on the paracontrolled structure.

\begin{theorem}\label{thm:schauder paracontrolled}Let $\alpha \in (0, 1)$ and $\beta \in
  (0, \alpha]$. Let $u \in C \CC^{\alpha - 2}$ and $\LL U = u$ with $U (0) =
  0$. Let $f^u \in \LL^{\beta}$, $f^{\sharp} \in C \CC^{\alpha + \beta - 2}$,
  and $g_0 \in \CC^{\alpha + \beta}$. Then $(g, f^u) \in \CD^{\beta} (U)$,
  where $g$ solves
  \[ \LL g = f^u \lpara u + f^{\sharp}, \hspace{2em} g (0) = g_0, \]
  and we have the bound
  \[ \| g \|_{\CD^{\beta}_T (U)} \lesssim \| g_0 \|_{\alpha + \beta} + (1 + T)
     (\| f^u \|_{\LL^{\beta}_T} (1 + \| u \|_{C_T \CC^{\alpha - 2}}) + \|
     f^{\sharp} \|_{C_T \CC^{\alpha + \beta - 2}}) \]
  for all $T > 0$. If furthermore $\tilde{u}, \tilde{U}, \tilde{f}^{\tilde u},
  \tilde{f}^{\sharp}, \tilde{g}_0, \tilde{g}$ satisfy the same assumptions as
  $u, U, f^u, f^{\sharp}, g_0, g$ respectively, and if $M = \max \{ \| f^u
  \|_{\LL^{\beta}_T}, \| \tilde{u} \|_{C_T \CC^{\alpha - 2}}, 1\}$, then
  \begin{equation*}
    d_{\DD^{\beta}_T} (g, \tilde{g}) \lesssim \| g_0 - \tilde{g}_0
    \|_{\alpha + \beta} 
     + (1 + T) M (\|f^u - \tilde{f}^{\tilde u} \|_{\LL^{\beta}_T} +\|u - \tilde{u}
    \|_{C_T \CC^{\alpha - 2}} +\|f^{\sharp} - \tilde{f}^{\sharp} \|_{C_T
    \CC^{\alpha + \beta - 2}}) . 
  \end{equation*}
\end{theorem}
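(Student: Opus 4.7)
The plan is to verify the paracontrolled ansatz directly by computing the PDE satisfied by the remainder and then applying the two Schauder-type estimates already at our disposal. Set $g^{\sharp} := g - f^u \lpara U$ (so that automatically $g^\sharp(0) = g_0$ since $U(0)=0$) and compute
\[
\LL g^{\sharp} = \LL g - \LL(f^u \lpara U) = f^u \lpara u + f^{\sharp} - \LL(f^u \lpara U) = f^{\sharp} - [\LL,\, f^u \lpara] U,
\]
where in the last step we use $\LL U = u$ to rewrite $f^u \lpara u = f^u \lpara \LL U$. Thus $g^{\sharp}$ solves a linear inhomogeneous heat equation with initial data $g_0$ and forcing split into two natural pieces.

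Next, decompose $g^{\sharp} = t \mapsto P_t g_0 + J(f^{\sharp}) - H$, where $J$ is the Duhamel operator from Lemma \ref{lem:schauder} and $H$ is the distribution produced by Lemma \ref{lemma:comm-L-para} solving $\LL H = [\LL, f^u \lpara] U$ with $H(0)=0$. The Schauder estimate applied to the first two summands gives
\[
\| t \mapsto P_t g_0 \|_{\LL^{\alpha+\beta}_T} + \|J(f^{\sharp})\|_{\LL^{\alpha+\beta}_T} \lesssim \|g_0\|_{\alpha+\beta} + (1+T)\|f^{\sharp}\|_{C_T \CC^{\alpha+\beta-2}}.
\]
For $H$, Lemma \ref{lemma:comm-L-para} applied with the data $f^u \in \LL^{\beta}$ and $G = U$ (whose $C_T \CC^{\alpha}$ norm is controlled via Lemma \ref{lem:schauder} by $(1+T)\|u\|_{C_T \CC^{\alpha-2}}$, and $\LL U = u$) yields
\[
\|H\|_{C_T \CC^{\alpha+\beta}} + \|H\|_{C^{(\alpha\wedge\beta)/2}_T L^{\infty}} \lesssim \|f^u\|_{\LL^{\beta}_T}\bigl(\|U\|_{C_T \CC^{\alpha}} + \|u\|_{C_T \CC^{\alpha-2}}\bigr).
\]
Since $\beta \leq \alpha$, the time-regularity exponent $(\alpha\wedge\beta)/2$ equals $\beta/2$, which matches the exponent required in the $\DD^\beta$ norm. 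Summing the three contributions gives the claimed bound on $\|g^\sharp\|_{C_T\CC^{\alpha+\beta}} + \|g^\sharp\|_{C^{\beta/2}_T L^\infty}$. The remaining term $\|g\|_{C^{\alpha/2}_T L^\infty}$ in the $\DD^\beta$ norm is controlled by combining $g = f^u \lpara U + g^\sharp$ with the paraproduct estimate of Theorem \ref{thm:paraproduct} and the time regularity of $U$ (which inherits $\alpha/2$-Hölder continuity in $L^\infty$ from the Schauder estimate).

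For the local Lipschitz statement, the entire construction is linear in the triple $(g_0, f^u, f^{\sharp})$ and bilinear in $(f^u, U)$ through the paraproduct $f^u \lpara U$. Writing the difference $g - \tilde g = (f^u \lpara U - \tilde f^u \lpara \tilde U) + (g^\sharp - \tilde g^\sharp)$, telescoping $f^u \lpara U - \tilde f^u \lpara \tilde U = (f^u - \tilde f^u) \lpara U + \tilde f^u \lpara (U - \tilde U)$, and noting that $g^\sharp - \tilde g^\sharp$ solves the analogous equation with data replaced by the differences, we can apply the same three estimates term by term. The quadratic dependence on $M$ enters through the product $\|f^u\|_{\LL^\beta_T}\|U\|_{C_T\CC^\alpha}$ in the commutator lemma and its counterpart $\|\tilde f^u\|_{\LL^\beta_T}\|U - \tilde U\|_{C_T\CC^\alpha}$, where $\|U - \tilde U\|_{C_T\CC^\alpha} \lesssim (1+T)\|u - \tilde u\|_{C_T\CC^{\alpha-2}}$ by Schauder.

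The main obstacle is bookkeeping rather than conceptual: one must verify that the time-regularity exponent $\beta/2$ produced by Lemma \ref{lemma:comm-L-para} is exactly the one required by the definition of $\DD^\beta_T$, and that the $C^{\alpha/2}_T L^\infty$ term in the $\DD^\beta$ norm of $g$ is covered either by the paraproduct piece $f^u \lpara U$ (which picks up the time regularity of $U$, of order $\alpha/2$) or by $g^\sharp$ (whose time regularity $\beta/2 \geq $ is potentially weaker, but whose spatial gain $\alpha+\beta$ allows interpolation). Once these exponents are checked, the inequality is just the sum of the three estimates above, and the Lipschitz statement follows from the same estimates applied to the (schematically bilinear) differences.
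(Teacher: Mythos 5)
Your proof takes exactly the paper's route: set $g^\sharp = g - f^u\lpara U$, compute $\LL g^\sharp = f^\sharp - [\LL, f^u\lpara]U$ (the paper's displayed sign is in fact a typo; yours is consistent), bound the commutator part with Lemma~\ref{lemma:comm-L-para} and the rest with the Schauder estimate Lemma~\ref{lem:schauder}, and obtain the Lipschitz statement by telescoping. You are also right to flag the $\|g\|_{C^{\alpha/2}_T L^\infty}$ contribution to $\|g\|_{\DD^\beta_T}$, which the paper's proof passes over in silence.

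However, the mechanism you propose for that last term does not quite close. Interpolating $g^\sharp\in C_T\CC^{\alpha+\beta}\cap C^{\beta/2}_T L^\infty$ toward $C^{\alpha/2}_T L^\infty$ requires a weight $\theta$ with $(1-\theta)\beta/2 = \alpha/2$, i.e. $\theta = 1-\alpha/\beta\leq 0$ since $\beta\leq\alpha$; and feeding the $\alpha/2$-Hölder-in-$L^\infty$ continuity of $U$ through the paraproduct and summing Littlewood--Paley blocks produces a logarithmic loss at the crossover scale. The clean way to produce the $\LL^\alpha$ control on $g$ — and the one implicit in the paper — is to note that $f^u\lpara u + f^\sharp \in C_T\CC^{\alpha-2}$ (using $C_T\CC^{\alpha+\beta-2}\subset C_T\CC^{\alpha-2}$) and apply Lemma~\ref{lem:schauder} directly to $\LL g = f^u\lpara u + f^\sharp$, which gives $\|g\|_{\LL^\alpha_T}\lesssim\|g_0\|_\alpha + (1+T)\bigl(\|f^u\|_{\LL^\beta_T}\|u\|_{C_T\CC^{\alpha-2}}+\|f^\sharp\|_{C_T\CC^{\alpha+\beta-2}}\bigr)$ in one stroke, with no interpolation needed. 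Finally, the dependence on $M$ in the Lipschitz bound is linear, not quadratic: in each bilinear term of the telescoped difference, one factor is a difference and only the other contributes an $M$.
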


\begin{proof}
  Let us derive an equation for the remainder $g^{\sharp}$. We have
  \begin{align*}
    \LL g^{\sharp} & = \LL g - \LL (f' \lpara U) =  [f^u \lpara u + f^{\sharp}] - f^u \lpara \LL U - [ \LL (f^u \lpara U) - f^u \lpara \LL U ] \\
    & = f^{\sharp} -  [ \LL, f^u \lpara ] U .
  \end{align*}
  Since $\alpha \wedge \beta = \beta$ we can now apply Lemma~\ref{lemma:comm-L-para} to see that there exists $H \in C
  \CC^{\alpha + \beta} \cap C^{\beta / 2} L^{\infty}$ such that $\LL H =
  \left[ \LL, f^u \lpara \right] U$, so we can apply the standard Schauder
  estimates of Lemma~\ref{lem:schauder} to $\LL (g^{\sharp} + H) = 
  f^{\sharp}$ to get
  \[ \| g^{\sharp} \|_{C_T \CC^{\alpha + \beta}} + \| g^{\sharp}
     \|_{C_T^{(\alpha \wedge \beta) / 2} L^{\infty}} \lesssim \| f^u
     \|_{\LL^{\beta}_T} ( \| U \|_{C_T \CC^{\alpha}} + \| \LL U
     \|_{C_T \CC^{\alpha - 2}} ) + \| f^{\sharp} \|_{C_T
     \CC^{\alpha + \beta - 2}} . \]
  The estimate for $g^{\sharp} - \tilde{g}^{\sharp}$ can be derived in the same way.
\end{proof}

\tmtextbf{Bibliographic notes.} Paraproducts were introduced
in~{\cite{Bony1981}}, for a nice introduction see~{\cite{Bahouri2011}}. The
commutator estimate Lemma~\ref{lemma:commutator} is
from~{\cite{Gubinelli2012}}, but the proof here is new and the statement is
slightly different. In~{\cite{Gubinelli2012}}, we require the additional
assumption $\alpha \in (0, 1)$ under which $C$ maps $\CC^{\alpha} \times
\CC^{\beta} \times \CC^{\gamma}$ to $\CC^{\alpha + \beta + \gamma}$ and not
only to $\CC^{\beta + \gamma}$. Theorem~\ref{thm:paralinearization} is
from~{\cite{Gubinelli2012}}.

Theorem~\ref{thm:schauder paracontrolled} is new, but it is implicitly used
in~{\cite{Gubinelli2012}}. The estimates presented here will only allow us to
consider regular initial conditions. More general situations can be covered by
working on ``explosive spaces'' of the type
\[ \Big\{f \in C \left( (0, \infty), \CC^{\alpha} \right) : \sup_{t \in (0, T]} \|
   t^{- \gamma} f (t) \|_{\CC^{\alpha}} < \infty \tmop{for} \tmop{all} T > 0 \Big\}
\]
and similar for the temporal regularity. This is also done
in~{\cite{Gubinelli2012}}.

Of course it is easily possible to replace the Laplacian by more general
pseudo-differential operators. We only used two properties of $\Delta$: the
fact that $\Delta (f' \lpara U) - f' \lpara (\Delta U)$ is relatively regular, and that the
semigroup generated by $\Delta$ has a sufficiently strong regularization
effect. This is also true for fractional Laplacians and more generally for a
wide range of pseudo-differential operators.

\subsection{Fixpoint}\label{sec:fixpoint}

Let us now give the details for the solution to {\tmname{pam}} in the space of
paracontrolled distributions. Assume that $F : \mathbb{R} \rightarrow
\mathbb{R}$ is in $C^{1 + \varepsilon}_b$ for some $\varepsilon > 0$ such
that $(2 + \varepsilon) \gamma > 2$.

Let $Y \in C\CC^{\gamma}$ and let $u \in \CD^{\gamma} (Y)$. We will see below how to choose $Y$, for the moment it is an arbitrary $C \CC^\gamma$ function. From Theorem~\ref{thm:paralinearization} we know that $F (u) \in \CD^{\varepsilon\gamma} (Y)$:
\begin{equation}
  \label{eq:fixpoint map1} \CD^{\gamma} (Y) \xrightarrow{u \mapsto F (u)}
  \CD^{\varepsilon \gamma} (Y) .
\end{equation}
Assume now that $Y \reso \eta \in C \CC^{2 \gamma - 2}$ is given -- note that for the regularity assumptions we made, $Y \reso \eta$ is \emph{not a continuous functional} of $Y$ and $\eta$ but must be controlled using other means, say stochastic computations! Under this assumption, 
Theorem~\ref{thm:paracontrolled product} applied with $u = Y$, $v = \eta$, and $\zeta = Y \circ \eta$ shows that for all $f \in \CD^{\varepsilon \gamma}(Y)$ we have $f \eta = (f \eta)^{\sharp} + f \lpara
\eta$ with $(f \eta)^{\sharp} \in C \CC^{2 \gamma - 2}$ -- it is here that
we use $(2 + \varepsilon) \gamma > 2$. Integrating against the heat kernel and
assuming that $u_0 \in \CC^{2 \gamma}$, we obtain from
Theorem~\ref{thm:schauder paracontrolled} (with $u = \eta$, $f^u = f$, $f^\sharp = (f \eta)^\sharp$) that the solution $(J (f \eta) (t) + P_t u_0)_{t \geqslant 0}$ to $\LL (J (f \eta) + P_{\cdummy} u_0) = f \eta$, $J
(f \eta) (0) + P_0 u_0 = u_0$, is in $\CD^{\gamma} (X)$, where $X$ solves $\LL
X = \eta$ and $X (0) = 0$. In other words, we have a map
\begin{equation}
  \label{eq:fixpoint map2} \CD^{\varepsilon \gamma} (Y) \xrightarrow{f \mapsto
  P_\cdot u_0 + J (f \eta)} \CD^{\gamma} (X),
\end{equation}
and combining~(\ref{eq:fixpoint map1}) and (\ref{eq:fixpoint map2}) we get
\[ \CD^{\gamma} (Y) \xrightarrow{u \mapsto F (u)} \CD^{\varepsilon \gamma} (Y)
   \xrightarrow{F (u) \mapsto P_\cdot u_0 + J (F (u) \eta)} \CD^{\gamma} (X), \]
so that for all $T > 0$ we can define
\[ \Gamma_T : \CD_T^{\gamma} (Y) \rightarrow \CD_T^{\gamma} (X),
   \hspace{2em} \Gamma_T (u) = (P_\cdot u_0 + J (F (u) \eta)) |_{[0, T]} . \]
To set up a Picard iteration domain and image space should coincide which
means we should take $Y = X$. Refining the analysis, we obtain a scaling
factor $T^{\delta}$ \label{page:scaling factor} when estimating the $\CD_T^{\gamma} (X)$--norm of
$\Gamma_T (u)$. This allows us to show that for small $T > 0$, the map
$\Gamma_T$ leaves suitable balls in $\CD^{\gamma}_T (X)$ invariant, and
therefore we obtain the (local in time) \tmtextit{existence} of solutions to
the equation under the assumption $X \reso \eta \in C \CC^{2 \gamma - 2}$.

To obtain \tmtextit{uniqueness} we need to suppose that $F \in C^{2 +
\varepsilon}_b$. In that case Theorem~\ref{thm:paralinearization} gives the
local Lipschitz continuity of the map $u \mapsto F (u)$ from $\CD_T^{\gamma}
(X)$ to $\CD^{\varepsilon \gamma}_T (X)$, while
Theorem~\ref{thm:paracontrolled product} and Theorem~\ref{thm:schauder
paracontrolled} show that $f \mapsto u_0 + J (f \eta)$ defines a Lipschitz
continuous map from $\CD^{\varepsilon \gamma}_T (X)$ to $\CD^{\gamma}_T (X)$.
Again we can obtain a scaling factor $T^{\delta}$, so that $\Gamma_T$ defines
a contraction on a suitable ball of $\CD^{\gamma}_T (X)$ for some small $T >
0$.

Even better, $\Gamma_T$ not only depends locally Lipschitz continuously on
$u$, but also on the extended data $(u_0, \eta \comma X \reso \eta)$, and
therefore the solution to~(\ref{eq:pam-eta}) depends locally Lipschitz
continuously on $ (u_0, \eta, X \reso \eta)$.

\subsection{Renormalization}

So far we argued under the assumption that $X \reso \eta$ exists and has a
sufficient regularity. This should be understood via approximations as the
existence of a sequence of smooth functions $(\eta_n)$ that converges to
$\eta$, such that $(X_n \reso \eta_n)$ converges to $X \reso \eta$. However, as we will see below 
this hypothesis is questionable and actually not satisfied at all in the problem we are interested in. More
concretely, recall that we would like to take $\eta = \xi$ to be the two--dimensional space white noise. If then
$\varphi$ is a Schwartz function on $\mathbb{R}^2$ and if $\varphi_n = n
\varphi (n \cdummy)$ and
\[ \eta_n (x) = \varphi_n \ast \xi (x) = \int_{\mathbb{R}^2} \varphi_n (x -
   y) \xi (y) \mathd y = \sum_{k \in \mathbb{Z}^2} \langle \xi, \varphi_n (x
   + 2 \pi k - \cdummy) \rangle, \]
then we will see below that there exist constants $(c_n)$ with $\lim_n c_n =
\infty$, such that $(X_n \reso \eta_n - c_n)$ converges in $C_T \CC^{2 \gamma
- 2}$ for all $T > 0$.

\

This is not a problem with our specific approximation. The homogenization
setting shows that even for $\eta \rightarrow 0$ \ there are cases where the
limiting equation is nontrivial. In the paracontrolled setting we have a
continuous dependence of the solution on the data $(\eta, X \reso \eta)$, so this
non--triviality of the limit can only mean that it is $X \reso \eta$ which
does not converge to zero.

\

Another way to see that there is a problem is to consider the following
representation of the resonant term: use $\LL X = \eta$ to write
\[ X \reso \eta = X \reso \LL X = \frac{1}{2} \LL (X \reso X) + \partial_x X
   \reso \partial_x X = | \partial_x X |^2 + \frac{1}{2} \LL (X \reso X) - 2
   \partial_x X \lpara \partial_x X. \]
Integrating this equation over the torus and over $t \in [0, T]$, we get
\[
   \int_0^T \int_{\mathbb{T}^2} X \reso \eta \mathd x \mathd t  = \int_0^T \int_{\mathbb{T}^2} | \partial_x X |^2 \mathd x \mathd t + \frac{1}{2} \int_0^T \int_{\mathbb{T}^2} \LL (X \reso X ) \mathd x - 2 \int_0^T \int_{\mathbb{T}^2} (\partial_x X \lpara \partial_x X) \mathd x \mathd t.
\]
Writing $\LL = \partial_t - \Delta$ and using that $X(0) = 0$ and $\int_{\mathbb{T}^2} \Delta \psi \mathd x = 0$ for all $\psi$ (which can be seen using integration by parts and pulling the operator $\Delta$ on the constant function $1$), we thus get
\[
   \int_0^T \int_{\mathbb{T}^2} X \reso \eta \mathd x \mathd t  = \int_0^T \int_{\mathbb{T}^2} | \partial_x X |^2 \mathd x \mathd t + \frac{1}{2} \int_{\mathbb{T}^2} (X (T) \reso X (T)) \mathd x - 2 \int_0^T \int_{\mathbb{T}^2} (\partial_x X \lpara \partial_x X) \mathd x \mathd t
\]
So if $X \reso \eta \in C_T \CC^{2 \gamma - 2}$ and $X \in C_T
\CC^{\gamma}$, then all the terms should be well defined and finite (the integral over $\mathbb{T}^2$ corresponds to testing a distribution against to constant test function 1). This would mean that $\int_0^T \int_{\mathbb{T}^2} | \partial_x X |^2 \mathd x \mathd t < \noplus + \infty$, but on the other side a direct computation shows that
\[
   \int_{\mathbb{T}^2} | \partial_x X (t, \cdot) |^2 \mathd x = + \infty
\]
for any $t > 0$ almost surely if $\eta$ is the space white noise. Note also
that the problematic term $| \partial_x X |^2$ is exactly the correction term
appearing in the analysis of the linear homogenization problem in Section~\ref{sec:homogenization}.

\

In order to prove the convergence of the smooth solutions in general, we should
introduce corrections to the equation to remove the divergent constant $c_n$.
Let us see where the resonant product $X \reso \eta$ appears. We have
\begin{equation}\label{eq:paracontrolled product F(u)eta}
   (F (u) \eta)^{\sharp} = F (u) \rpara \eta + (F (u))^{\sharp} \reso \eta + C ((F (u))^X, X, \eta) + (F (u))^X (X \reso \eta).
\end{equation}
Now $(F (u))^X = F' (u) u^X$ by Theorem~\ref{thm:paralinearization}, and if
$u$ solves the equation $\LL u = F(u) \eta = F(u) \lpara \eta + (F(u)\eta)^\sharp$, then Theorem~\ref{thm:schauder paracontrolled} with $u = \eta$, $X = U$ shows that $u^X = F (u)$. So we should really consider the renormalized equation
\[
   \LL u_n = F (u_n) \renorm \eta_n := F (u_n) \xi_n - F' (u_n) F (u_n) c_n,
\]
where we recall that $(c_n)$ are the diverging constants for which $(X_n \reso \eta_n - c_n)$ converges. In that case we have
\[
   \LL u_n = F (u_n) \lpara \eta_n + F (u_n) \rpara \eta_n + (F (u_n))^{\sharp} \reso \eta_n + C (F'(u_n) F(u_n), X_n, \eta_n) + F'(u_n) F(u_n) (X_n \reso \eta_n - c_n),
\]
and now all the terms on the right hand side are under control and we can safely pass to the limit, for which we obtain the equation
\begin{equation}\label{eq:pam-renormalized}
   \LL u = F (u) \renorm \eta \assign (F (u)\renorm \eta)^{\sharp} + F (u) \lpara \eta,
\end{equation}
where $(F (u) \renorm \eta)^{\sharp}$ is
calculated using $X \renorm \eta = \lim_n (X_n \reso \eta_n - c_n)$ in the
place of $X \reso \eta$ in~\eqref{eq:paracontrolled product F(u)eta}. Formally, we also denote this product by
\[
   F (u) \renorm \eta = F (u) \eta - F' (u) F(u) \cdummy \infty,
\]
so that the solution $u$ will satisfy
\[ \LL u = F (u) - F' (u) F (u) \cdot \infty . \]
Note that the correction term has exactly the same form as the It{\^o}/Stratonovich corrector for SDEs. For the reader familiar with rough paths this will not come as a surprise: Changing the iterated integrals of a rough path $B$ from some given $\int_0^\cdot B_s \mathd B_s$ to $\int_0^\cdot B_s \mathd B_s + \varphi$ introduces a correction term $+F'(y) F(y) \partial_t \varphi$ in the ODE $\partial_t y = F(y) \partial_t B$. In our setting the resonant term takes the role of the iterated integrals, and since the structure of the ODE and {\tmname{gpam}} is very similar changing the resonant term has a similar effect as changing the iterated integrals in the ODE example.
%
%

\

\begin{remark}
  The convergence properties of $(X_n \reso \eta_n)$ are in stark contrast to the ODE setting: if we consider the equation $\partial_t u = F (u) \zeta$  rather than {\tmname{pam}}, then we should replace $X$ by $Z$ with $\partial_t Z = \zeta$. But then we have in one dimension $Z \reso \zeta = 1/ 2 \partial_t (Z \reso Z)$, so that the convergence of $(Z_n \reso \zeta_n)$ to $Z \reso \zeta$ comes for free with the convergence of $(Z_n)$ to $Z$. Indeed, $\partial_t$ is a bounded linear operator from $\CC^\gamma$ to $\CC^{\gamma-1}$ whenever $\gamma \in \R$, and $Z \mapsto Z \circ Z$ is continuous from $\CC^\gamma$ to $\CC^{2\gamma}$ whenever $\gamma > 0$. So if $(Z_n)$ converges to $Z$ in a H\"older space of positive regularity, then $(\partial_t (Z_n \circ Z_n))$ converges to $\partial_t (Z \circ Z)$. This specific representation of $Z \reso \zeta$ comes from the Leibniz rule for $\partial_t$ and it is the reason why rough path theory is trivial in one dimension, at least as long as one considers those rough paths which are limit of smooth paths. Of course, the argument breaks down as soon as $Z$ has at least two components. As we have discussed, for the second order differential operator $\LL$ we have different rules and obtain
  \[ (X \reso \eta) = (X \reso \LL X) = \frac{1}{2}  \LL (X \reso X) +
     (\partial_x X \reso \partial_x X), \]
  so that in our setting the nontrivial term is $\partial_x X \reso \partial_x X$.
\end{remark}

These considerations lead naturally to the following definition.

\begin{definition}
  \label{def:pam rough distribution}({\tmname{pam}}--enhancement) Let $\gamma
  \in (2 / 3, 1)$ and let
  \[ \mathcal{X}_{\tmop{pam}}^\gamma \subseteq \CC^{\gamma - 2} \times C \CC^{2
     \gamma - 2} \]
  be the closure of the image of the map
  \[ \Theta_{\tmop{pam}} : C^{\infty} \times C ([0, \infty), \mathbb{R})
     \rightarrow \mathcal{X}_{\tmop{pam}}^\gamma, \]
  given by
  \begin{equation}
    \Theta_{\tmop{pam}} (\theta, f) = (\theta, \Phi \renorm \theta) : =
    (\theta, \Phi \reso \theta - f), \label{eq:enhanced-pam}
  \end{equation}
  where $\Phi = J \theta$, that is $\LL \Phi = \theta$ and $\Phi (0) = 0$. We
  will call $\Theta_{\tmop{pam}} (\theta, f)$ the renormalized
  {\tmname{pam}}--enhancement of the driving distribution $\theta$. For $T >
  0$ we define $\mathcal{X}_{\tmop{pam}}^\gamma (T) = \mathcal{X}_{\tmop{pam}}^\gamma |_{[0,
  T]}$ and we write $\| \mathbb{X} \|_{\mathcal{X}_{\tmop{pam}}^\gamma (T)}$ for the
  norm of $\mathbb{X} \in \mathcal{X}_{\tmop{pam}}^\gamma (T)$ in the Banach space
  $\CC^{\gamma - 2} \times C_T \CC^{2 \gamma - 2}$. Moreover, we define the
  distance $d_{\mathcal{X}_{\tmop{pam}}^\gamma (T)} (\mathbb{X},
  \tilde{\mathbb{X}}) = \|\mathbb{X}- \tilde{\mathbb{X}}
  \|_{\mathcal{X}_{\tmop{pam}}^\gamma (T)}$.
\end{definition}

\begin{remark}
   In the homogenization example of Section~\ref{sec:homogenization} we would take $\theta = V_\varepsilon$ and $\Phi = X_\varepsilon$.
\end{remark}

\begin{remark}
  It would be more elegant to renormalize $\Phi \reso \theta$ with a constant
  and not with a time-dependent function, as we discussed above. Indeed this is possible, see Chapter~5 of~\cite{Gubinelli2012}. But since here we chose $\Phi (0) = 0$,
  we have $\Phi (0) \reso \theta = 0$ and therefore $(\Phi_n (0) \reso
  \theta_n - c_n)$ diverges for any diverging sequence of constants $(c_n)$. A
  simple way of avoiding this problem is to consider the stationary version
  $\tilde{\Phi}$ given by
  \[ \tilde{\Phi} (x) = \int_0^{\infty} P_t \Pi_{\neq 0} \theta (x) \mathd t,
  \]
  where $\Pi_{\neq 0}$ denotes the projection on the non-zero Fourier modes,
  $\Pi_{\neq 0} u = u - (2\pi)^{-d/2}
   \hat{u} (0)$. But then $\tilde{\Phi}$ does not depend
  on time and in particular $\tilde{\Phi} (0) \neq 0$, so that we have to
  consider irregular initial conditions in the paracontrolled approach which
  complicates the presentation. Alternatively, we could observe that in the
  white noise case there exist constants $(c_n)$ so that $(X_n (t) \reso \xi_n
  - c_n)$ converges for all $t > 0$, and while the limit $(X (t) \renorm
  \xi)$ diverges as $t \rightarrow 0$, it can be integrated against the heat
  kernel. Again, this would complicate the presentation and here we choose the
  simple (and cheap) solution of taking a time-dependent renormalization.
\end{remark}

\begin{theorem}
  \label{thm:pam}Let $\gamma \in (2 / 3, 1)$ and $\varepsilon > 0$ be such
  that $(2 + \varepsilon) \gamma > 2$. Let $\mathbb{X}= (\eta, X \diamond \eta) \in
  \mathcal{X}_{\tmop{pam}}^\gamma$, $F \in C^{2 + \varepsilon}_b$, and $u_0 \in
  \CC^{2 \gamma}$. Then there exists a unique solution $u \in \CD^{\gamma}
  (X)$ to the equation
  \[ \LL u = F (u) \renorm \eta, \hspace{2em} u (0) = u_0, \]
  up to the (possibly finite) explosion time $\tau = \tau (u) = \inf \{ t
  \geqslant 0 : \| u \|_{\CD^{\gamma}_t} = \infty \} > 0$.
  
  Moreover, $u$ depends on $(u_0, \mathbb{X}) \in \CC^{2 \gamma} \times
  \mathcal{X}_{\tmop{pam}}^\gamma$ in a locally Lipschitz continuous way: if $M, T >
  0$ are such that for all $(u_0, \mathbb{X})$ with $\| u_0 \|_{2 \gamma}
  \vee \| \mathbb{X} \|_{\mathcal{X}_{\tmop{pam}}^\gamma (T)} \leqslant M$, the
  solution $u$ to the equation driven by $(u_0, \mathbb{X})$ satisfies $\tau
  (u) > T$, and if $(\tilde{u}_0, \tilde{\mathbb{X}})$ is another set of data
  bounded in the above sense by $M$, then there exists $C (F, M) > 0$ for
  which
  \[ d_{\CD^{\gamma}_T} (u, \tilde{u}) \leqslant C (F, M) (\| u_0 -
     \tilde{u}_0 \|_{2 \gamma} + d_{\mathcal{X}_{\tmop{pam}}^\gamma (T)}
     (\mathbb{X}, \tilde{\mathbb{X}})) . \]
\end{theorem}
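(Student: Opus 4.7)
The plan is to execute the Picard iteration sketched in Section~\ref{sec:fixpoint}, now with the renormalized resonant product $X \renorm \eta$ in place of the ill-defined $X \reso \eta$. For fixed enhanced data $\mathbb{X} = (X, X \renorm \eta) \in \mathcal{X}_{\tmop{pam}}$ (where $\LL X = \eta$, $X(0) = 0$) and $u_0 \in \CC^{2\gamma}$, I would define
\[
\Gamma_T(u) \assign P_\cdummy u_0 + J\bigl(F(u) \renorm \eta\bigr)\big|_{[0,T]},
\]
where $F(u) \renorm \eta$ is the paracontrolled expansion from Theorem~\ref{thm:paracontrolled product}, but with $X \reso \eta$ replaced by $X \renorm \eta$. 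Any fixed point of $\Gamma_T$ in $\CD^\gamma_T(X)$ is, by construction, a solution to $\LL u = F(u) \renorm \eta$ with $u(0) = u_0$, and uniqueness of the fixed point yields uniqueness of the solution.

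The first step is to check that $\Gamma_T : \CD^\gamma_T(X) \to \CD^\gamma_T(X)$ is well-defined and locally Lipschitz. This follows by composing
\[
\CD^\gamma_T(X) \xrightarrow{F} \CD^{\varepsilon\gamma}_T(X) \xrightarrow{\,\cdot\,\renorm\,\eta\,} C_T \CC^{\gamma-2} \xrightarrow{\,P_\cdummy u_0 + J\,} \CD^\gamma_T(X),
\]
which applies Theorem~\ref{thm:paralinearization} (with $\alpha = \gamma$, $\beta = \varepsilon\gamma$, using $F \in C^{2+\varepsilon}_b$, yielding $(F(u))^X = F'(u) u^X$); then Theorem~\ref{thm:paracontrolled product} with $f = F(u)$, $v = \eta$ (the hypothesis $(2+\varepsilon)\gamma > 2$ is precisely $\alpha + \beta + (\gamma-2) > 0$, and the datum $X \renorm \eta \in C_T\CC^{2\gamma-2}$ supplies the ill-posed resonant piece); finally Theorem~\ref{thm:schauder paracontrolled} with driver $\eta$ and potential $X$ (using $u_0 \in \CC^{2\gamma}$). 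Each step also delivers a Lipschitz estimate in its input, so local Lipschitz continuity of $\Gamma_T$ in $(u, u_0, \mathbb{X})$ follows.

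For the contraction step I would refine the Schauder estimate to extract a small factor $T^\delta$ with $\delta > 0$ on the $\CD^\gamma_T(X)$-seminorm of $J(F(u)\renorm\eta) - J(F(\tilde u)\renorm\eta)$. This is possible because the time-Hölder seminorms defining $\CD^\gamma_T$ and $\LL^\gamma_T$ can be bounded by $T^{\varepsilon/4}$ times slightly stronger seminorms whenever the functions in question vanish at $t = 0$, which is the case for both $g := J(\cdots)$ and the commutator $H$ produced by Lemma~\ref{lemma:comm-L-para}. Working on a ball $B_R \subset \CD^\gamma_T(X)$ centered at $P_\cdummy u_0$, this yields invariance and contraction for $T$ small enough depending on $R, \|u_0\|_{2\gamma}, \|\mathbb{X}\|_{\mathcal{X}_{\tmop{pam}}(T)}$ and $\|F\|_{C^{2+\varepsilon}_b}$, so Banach's theorem gives unique local existence, and concatenation of successive intervals extends the solution up to the (possibly finite) explosion time $\tau$. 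For continuous dependence on an interval $[0,T]$ where both solutions are bounded by $M$ in the $\CD^\gamma$-norm, subtracting the fixed-point equations and applying the Lipschitz parts of Theorems~\ref{thm:paralinearization}, \ref{thm:paracontrolled product}, \ref{thm:schauder paracontrolled} gives
\[
d_{\CD^\gamma_t}(u, \tilde u) \leq C(F,M)\bigl(\|u_0 - \tilde u_0\|_{2\gamma} + d_{\mathcal{X}_{\tmop{pam}}(t)}(\mathbb{X}, \tilde{\mathbb{X}})\bigr) + C(F,M)\, t^\delta\, d_{\CD^\gamma_t}(u, \tilde u);
\]
absorbing the last term for $t$ small and iterating across a finite subdivision of $[0,T]$ yields the stated estimate.

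The main obstacle is the production of the factor $T^\delta$ in the Schauder estimate, since Theorem~\ref{thm:schauder paracontrolled} as stated has only a $(1+T)$ prefactor, which cannot close the contraction on a fixed ball. Resolving this requires re-examining the proof (and that of Lemma~\ref{lemma:comm-L-para}) to exploit the vanishing of $g^\sharp$ and $H$ at $t = 0$, combined with the parabolic trade-off $\|Jf\|_{C_T \CC^{\alpha + \delta'}} + \|Jf\|_{C_T^{\delta'/2} L^\infty} \lesssim T^{\varepsilon'} \|f\|_{C_T \CC^{\alpha - 2 + \delta' + 2\varepsilon'}}$ for small $\delta', \varepsilon' > 0$; a small loss of spatial regularity is affordable thanks to the strict surplus $(2+\varepsilon)\gamma - 2 > 0$. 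A secondary point is to check that the fixed point is the genuine limit of solutions to the regularized renormalized PDE $\LL u_n = F(u_n) \eta_n - c_n F'(u_n) F(u_n)$, but this reduces to continuity of $\Gamma_T$ in $\mathbb{X}$ together with the density of $\Theta_{\tmop{pam}}(C^\infty \times C([0,\infty), \mathbb{R}))$ in $\mathcal{X}_{\tmop{pam}}$ built into Definition~\ref{def:pam rough distribution}.
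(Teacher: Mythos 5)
Your proposal follows exactly the paper's strategy: Picard iteration for $\Gamma_T(u) = P_{\cdummy}u_0 + J(F(u)\renorm\eta)$ built on Theorems~\ref{thm:paralinearization}, \ref{thm:paracontrolled product} and \ref{thm:schauder paracontrolled}, with the small factor $T^\delta$ extracted from a (slightly lossy) parabolic scaling argument, concatenation to the explosion time, and continuous dependence by subtracting the fixed-point equations and absorbing the $T^\delta$ term — the paper likewise defers the $T^\delta$ refinement to the references, just as you flag it as the technical crux. The one point you gloss over and the paper's proof makes explicit is the restart step: after $[0,T_0]$ the data $u(T_0)$ lies only in $\CC^\gamma$, not $\CC^{2\gamma}$, and $X(T_0)\neq 0$, yet the iteration closes because the paracontrolled pair $(u^X(T_0), u^\sharp(T_0))$ already lies in $\CC^\gamma\times\CC^{2\gamma}$, which is all the Schauder step really consumes.
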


\begin{proof}
  We only have to turn the formal discussion of Section~\ref{sec:fixpoint}
  into rigorous mathematics. The small factor $T^{\delta}$ on page~\pageref{page:scaling factor} is obtained from a
  scaling argument and while this does not require any new insights it is
  somewhat lengthy and we refer to~{\cite{Gubinelli2012,Gubinelli2014}} for
  details.
  
  Let us just indicate how to iterate the construction to obtain the existence
  of solutions up to the explosion time $\tau$. Let us assume that we
  constructed the paracontrolled solution $(u, u^X)$ (with $u^X = F(u)$) on $[0, T_0]$ for some $T_0 > 0$. Now we no longer have $X
  (T_0) = 0$, and also the initial condition $u (T_0)$ is no longer in $\CC^{2
  \gamma}$. But we only used $X (0) = 0$ to see that the initial condition for $u^{\sharp}$ is $u^\sharp(0) = u_0$, and we only used $u_0 \in \CC^{2\gamma}$ to obtain a $\CC^{2 \gamma}$ initial condition for $u^{\sharp}$. On the next interval, the initial condition for $u^\sharp$ is $u^\sharp(T_0) = u(T_0) - F(u(T_0)) \lpara X(T_0)$ which is in $\CC^{2\gamma}$ by construction, since we already know that $u^\sharp \in C([0,T_0], \CC^{2\gamma})$.
  
  As for the continuity in $(u_0, \mathbb{X})$, let $(\tilde{u}_0,
  \tilde{\mathbb{X}})$ be another set of data also bounded by $M$. Then the
  solutions $u$ and $\tilde{u}$ both are bounded in $\CD^{\gamma}_T$ by some
  constant $c = c (F, M) > 0$. So by the continuity properties of the
  paracontrolled product (and the other operations involved), we can estimate
  \[ d_{\CD^{\gamma}_T} (u, \tilde{u}) \leqslant P (c) \left( \| u_0 -
     \tilde{u}_0 \|_{2 \gamma} + d_{\mathcal{X}_{\tmop{pam}}^\gamma (T)}
     (\mathbb{X}, \tilde{\mathbb{X}}) + T^{\delta} d_{\CD^{\gamma}_T} (u,
     \tilde{u}) \right) \]
  for a polynomial $P$. The local Lipschitz continuity on $[0, T]$ immediately
  follows if we choose $T > 0$ small enough. This can be iterated to obtain
  the local Lipschitz continuity on ``macroscopic'' intervals. 
\end{proof}

\begin{remark}
  For the local in time existence it is not necessary to assume $F \in C^{2 +
  \varepsilon}_b$, it suffices if $F \in C^{2 + \varepsilon}$. This can be
  seen by considering a ball containing $u_0 (x)$ for all $x \in
  \mathbb{T}^d$, a function $\tilde{F} \in C^{2 + \varepsilon}_b$ which
  coincides with $F$ on this ball, and by stopping $u$ upon exiting the ball.
  
  In the linear case $F (u) = u$ we have global in time solutions: in general
  we only get local in time solutions because we pick up a superlinear (polynomial)
  estimate when applying the paralinearization result
  Theorem~\ref{thm:paralinearization}. This step is not necessary if $F$ is
  linear, and all the other estimates are linear in $u$. 
\end{remark}

\subsection{Construction of the extended data}

In order to apply Theorem~\ref{thm:pam} to equation~(\ref{eq:pam-eta}) with
white noise perturbation, it remains to show that if $\xi$ is a spatial white
noise on $\mathbb{T}^2$, then $(\xi, X\diamond \xi)$ defines an element of
$\mathcal{X}_{\tmop{pam}}^\gamma$ whenever $\gamma \in (2/3,1)$. In other words, we need to construct $X \renorm
\xi$ and control its regularity.

Since $P_t \xi$ is a smooth function for every $t > 0$, the resonant term $P_t
\xi \reso \xi$ is a smooth function, and therefore we could formally set $X(t)
\reso \xi = \int_0^{t} (P_s \xi \reso \xi) \mathd s$. But we will see
that this expression does not make sense.

Recall that $(\hat{\xi} (k))_{k \in \mathbb{Z}^2}$ is a complex valued,
centered Gaussian process with covariance
\begin{equation}
  \label{eq:wn covariance} \mathbb{E} [\hat{\xi} (k) \hat{\xi} (k')] =
  \delta_{k + k' = 0},
\end{equation}
and such that $\hat{\xi} (k)^{\ast} = \hat{\xi} (- k)$.

\begin{lemma}
  \label{l:anderson area expectation} For any $x \in \mathbb{T}^2$ and $t > 0$
  we have
  \[ g_t =\mathbb{E} [(P_t \xi)(x) \xi (x)]  = \mathbb{E} [(P_t \xi \reso \xi) (x)] =\mathbb{E} [\Delta_{- 1}
     (P_t \xi \reso \xi) (x)] = (2 \pi)^{- 2} \sum_{k \in \mathbb{Z}^2} e^{- t
     |k|^2} . \]
  In particular, $g_t$ does not depend on the partition of unity used to
  define the $\reso$ operator, and $\int_0^t g_s \mathd s = \infty$ for all $t
  > 0$.
\end{lemma}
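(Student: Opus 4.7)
The plan is to compute the expectation by a direct Gaussian calculation in Fourier space. Write $\xi = \sum_{k \in \mathbb{Z}^2} \hat{\xi}(k) e_k^\ast$ and $P_t \xi = \sum_k e^{-t|k|^2} \hat{\xi}(k) e_k^\ast$. For any $i, j \geqslant -1$, linearity of expectation together with the white noise covariance $\mathbb{E}[\hat\xi(k)\hat\xi(\ell)] = \delta_{k+\ell=0}$ and radiality of $\rho_j$ give
\begin{equation*}
\mathbb{E}[\Delta_i P_t \xi(x)\, \Delta_j \xi(x)] = \sum_{k,\ell} \rho_i(k)\rho_j(\ell) e^{-t|k|^2} e_k^\ast(x) e_\ell^\ast(x)\, \delta_{k+\ell=0} = (2\pi)^{-2} \sum_{k} \rho_i(k)\rho_j(k) e^{-t|k|^2},
\end{equation*}
using $e_k^\ast(x) e_{-k}^\ast(x) = (2\pi)^{-2}$. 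In particular this expectation does not depend on $x$.

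Next I sum over pairs with $|i - j| \leqslant 1$. The support condition on a dyadic partition of unity forces $\rho_i(k)\rho_j(k) = 0$ as soon as $|i-j| \geqslant 2$, so
\begin{equation*}
\sum_{|i - j| \leqslant 1} \rho_i(k)\rho_j(k) = \sum_{i,j \geqslant -1} \rho_i(k)\rho_j(k) = \Big(\sum_{i \geqslant -1} \rho_i(k)\Big)^2 = 1.
\end{equation*}
Therefore $\mathbb{E}[(P_t \xi \reso \xi)(x)] = (2\pi)^{-2} \sum_k e^{-t|k|^2}$. The resulting formula makes no reference to $(\chi, \rho)$, proving the independence on the partition of unity. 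For the second equality I note that $\mathbb{E}[(P_t \xi \reso \xi)(x)] = g_t$ is constant in $x$, and $\Delta_{-1}$ acts as the identity on constants (since $\chi(0) = 1$ because $0$ lies in the support of $\chi$ and $\sum_{j \geqslant -1} \rho_j(0) = \chi(0) = 1$); hence $\mathbb{E}[\Delta_{-1}(P_t \xi \reso \xi)(x)] = \Delta_{-1} g_t = g_t$.

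Finally, for the divergence of the time integral I use Fubini to compute
\begin{equation*}
\int_0^t g_s\, \mathrm{d}s = (2\pi)^{-2} \Big( t + \sum_{k \in \mathbb{Z}_0^2} \frac{1 - e^{-t|k|^2}}{|k|^2}\Big),
\end{equation*}
and observe that in dimension $d = 2$ the lattice sum $\sum_{0 < |k| \leqslant N} |k|^{-2}$ diverges logarithmically as $N \to \infty$, while $1 - e^{-t|k|^2} \geqslant 1 - e^{-t} > 0$ uniformly on $|k| \geqslant 1$ for any fixed $t > 0$. So the sum is $+\infty$.

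There is no serious obstacle here: the only subtle points are recognizing that the sum $\sum_{|i-j|\leqslant 1} \rho_i \rho_j$ telescopes to $(\sum_i \rho_i)^2 = 1$ via the support conditions, and verifying that $\Delta_{-1}$ fixes constants, both of which are direct consequences of the definition of a dyadic partition of unity.
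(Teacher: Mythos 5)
Your computation is correct and follows essentially the same route as the paper's proof: a direct Gaussian calculation in Fourier space, using the support condition of the dyadic partition of unity to conclude that the resonant sum $\sum_{|i-j|\leqslant 1}\rho_i\rho_j$ recovers the full partition-of-unity identity $(\sum_i\rho_i)^2=1$. The only cosmetic difference is in the middle equality: the paper computes $\mathbb{E}[\Delta_\ell(P_t\xi\reso\xi)(x)]$ for every $\ell$ and reads off that only $\ell=-1$ survives because $\rho_\ell(0)=\delta_{\ell=-1}$, whereas you argue that the expectation is a constant in $x$ and $\Delta_{-1}$ fixes constants — equivalent, but the paper's version has the small advantage of explicitly recording $\mathbb{E}[\Delta_\ell\Xi_t(x)]=0$ for $\ell\geqslant 0$, which is reused in the variance estimate of Lemma~\ref{lem:pam area}. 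Your explicit verification that $\int_0^t g_s\,\mathrm{d}s=\infty$ (via $\sum_{k\neq 0}|k|^{-2}=\infty$ in $d=2$) fills in a detail the paper leaves implicit.
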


\begin{proof}
  Let $x \in \mathbb{T}^2$, $t > 0$, and $\ell \ge - 1$. Then
  \[ \mathbb{E} [\Delta_{\ell} (P_t \xi \reso \xi) (x)] = \sum_{|i - j|
     \leqslant 1} \mathbb{E} [\Delta_{\ell} (\Delta_i (P_t \xi) \Delta_j \xi)
     (x)], \]
  where exchanging summation and expectation is justified because it can be
  easily verified that the partial sums of $\Delta_{\ell}  (P_t \xi \reso \xi)
  (x)$ are uniformly $L^p$--bounded for any $p \ge 1$. Now $P_t = e^{- t
  \lvert \cdummy \rvert^2} (\mathD)$, and therefore we get from~\eqref{eq:wn
  covariance}
  \begin{align*}
    \mathbb{E} [\Delta_{\ell} (\Delta_i (P_t \xi) \Delta_j \xi) (x)] &= (2 \pi)^{- 1}  \sum_{k, k' \in \mathbb{Z}^2} e_{k +    k'}^{\ast} (x) \rho_{\ell}  (k + k') \rho_i (k) e^{- t |k|^2} \rho_j (k') \mathbb{E} [\hat{\xi} (k) \hat{\xi} (k')] \\
    & = (2 \pi)^{- 2} \sum_{k \in \mathbb{Z}^2} \rho_{\ell} (0)
    \rho_i (k) e^{- t |k|^2} \rho_j (k) = \delta_{\ell = - 1} (2 \pi)^{-2}
    \sum_{k \in \mathbb{Z}^2} \rho_i (k) \rho_j (k) e^{- t |k|^2} . & 
  \end{align*}
  For $|i - j| > 1$ we have $\rho_i (k) \rho_j (k) = 0$ and therefore
  \[ g_t =\mathbb{E} [(P_t \xi \reso \xi) (x)] = \mathbb{E} [(P_t \xi)(x) \xi (x)] = (2 \pi)^{- 2} \sum_{k \in
     \mathbb{Z}^2} \sum_{i, j} \rho_i (k) \rho_j (k) e^{- t |k|^2} = (2
     \pi)^{- 2} \sum_{k \in \mathbb{Z}^2} e^{- t |k|^2}, \]
  while $\mathbb{E} [(P_t \xi \reso \xi) (x) - \Delta_{- 1} (P_t \xi \reso
  \xi)) (x)] = 0$.
\end{proof}

\begin{exercise}
  Let $\varphi$ be a Schwartz function on $\mathbb{R}^2$ and set
  \[ \xi_n (x) = ((n^2 \varphi (n \cdot)) \ast \xi) (x) = \int_{\mathbb{R}^2}
     n^2 \varphi (n (x - y)) \xi (y) \mathd y = \sum_{k \in \mathbb{Z}^2}
     \langle \xi, n^2 \varphi (n (x + 2 \pi k - \cdummy)) \rangle \]
  for $x \in \mathbb{T}^2$. Write $\CF_{\mathbb{R}^2} \varphi (z) =
  \int_{\mathbb{R}^2} e^{- i \langle z, x \rangle} \varphi (x) \mathd x$.
  Show that
  \[ \mathbb{E} [(P_t \xi_n \reso \xi_n) (x)] =\mathbb{E} [\Delta_{- 1} (P_t
     \xi_n \reso \xi_n) (x)] = (2 \pi)^{- 2} \sum_{k \in \mathbb{Z}^2} e^{- t
     |k|^2}  | \CF_{\mathbb{R}^2} \varphi (k / n) |^2 . \]
  \tmtextbf{Hint:} Use Poisson summation.
\end{exercise}

The diverging time integral motivates us to study the renormalized product $X
\reso \xi - \int_0^{\cdot} g_s \mathd s$, where $\int_0^{\cdot} g_s \mathd s$
is an ``infinite function'':

\begin{lemma}
  \label{lem:pam area}Set
  \[ (X \renorm \xi) (t) = \int_0^t (P_s \xi \reso \xi - g_s) \mathd s. \]
  Then $\mathbb{E} [\|X \renorm \xi \|_{C_T \CC^{2 \gamma - 2}
  (\mathbb{T}^2)}^p] < \infty$ for all $\gamma < 1$, $p \ge 1$, $T > 0$.
  Moreover, if $\varphi$ is a Schwartz function on $\mathbb{R}^2$ with $\int
  \varphi (x) \mathd x = 1$, if $\xi_n = \varphi_n \ast \xi$ with $\varphi_n =
  n^2 \varphi (n \cdot)$ for $n \in \mathbb{N}$, and $X_n (t) =
  \int_0^{t} P_s \xi_n \mathd s$, then
  \[ \lim_{n \rightarrow \infty} \mathbb{E} [\|X \renorm \xi - (X_n \reso
     \xi_n - f_n)\|_{C_T \CC^{2 \gamma - 2} (\mathbb{T}^2)}^p] = 0 \]
  for all $p \ge 1$, where for all $x \in \mathbb{T}^2$
  \begin{align*}
    f_n (t) & =  \mathbb{E} [X_n (t, x) \xi_n (x)] =\mathbb{E} [(X_n (t)
    \reso \xi_n) (x)]  \\
    & =  (2 \pi)^{- 2} \sum_{k \in \mathbb{Z}^2 \setminus \{ 0 \}} \frac{|
    \CF_{\mathbb{R}^2} \varphi (k / n) |^2}{|k|^2} (1 - e^{- t | k |^2}) + (2
    \pi)^{- 2} t.  
  \end{align*}
\end{lemma}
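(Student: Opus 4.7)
The plan is to carry out a direct Wiener chaos computation for each Littlewood--Paley block of $X \renorm \xi$, promote $L^2$ bounds to $L^p$ bounds via Gaussian hypercontractivity (Theorem~3.50 of~\cite{Janson1997}), and then apply Besov embedding plus Kolmogorov's continuity criterion to conclude the space--time regularity statement. The same scheme, applied to the difference $X \renorm \xi - (X_n \reso \xi_n - f_n)$, yields the approximation result.

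First, I would write out $\Delta_q(X \renorm \xi)(t,x)$ as an explicit element of the second homogeneous Wiener chaos. Since $P_s \xi \reso \xi = \sum_{|i-j|\leq 1} \Delta_i(P_s \xi)\Delta_j \xi$ and $\widehat{\Delta_i f}(k) = \rho_i(k)\hat f(k)$, expanding the Fourier series gives
\[
\Delta_q(P_s \xi \reso \xi)(x) = (2\pi)^{-1} \sum_{k,k'} \rho_q(k+k')\Big(\sum_{|i-j|\leq 1}\rho_i(k)\rho_j(k')\Big) e^{-s|k|^2}\hat\xi(k)\hat\xi(k') e_{k+k'}^*(x).
\]
As shown in Lemma~\ref{l:anderson area expectation}, the expectation of this expression vanishes for $q \geq 0$ and equals $g_s$ (times $\delta_{q,-1}$) for $q = -1$, so after subtracting $g_s$ the expression becomes a genuine element of the second chaos in all cases, indexed by $k+k' \neq 0$ (and with $k \neq 0$, $k' \neq 0$ except possibly at the $q=-1$ level where those terms are absorbed into $g_s$). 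Integrating in time contributes the factor $h_t(k) = \int_0^t e^{-s|k|^2}\,ds = (1-e^{-t|k|^2})/|k|^2$ in front of $\hat\xi(k)\hat\xi(k')$.

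The main computation, and the step I expect to be the real obstacle, is the variance estimate
\[
\mathbb{E}|\Delta_q(X \renorm \xi)(t,x)|^2 \lesssim 2^{q(4 - 4\gamma + \varepsilon)} \cdot t^{\varepsilon'}
\]
for a sufficiently small $\varepsilon, \varepsilon' > 0$ (uniformly in $x \in \mathbb{T}^2$ by translation invariance). Using Wick's theorem for the fourth-order Gaussian moments, this reduces to a sum of two contractions of the kernel against itself, giving bounds of the form
\[
\sum_{k+k' : |k+k'|\sim 2^q} \frac{(1-e^{-t|k|^2})(1-e^{-t|k'|^2})}{|k|^2|k'|^2}.
\]
This sum is essentially the same one controlled in the analysis of $\tmop{Var}[\Delta_q(|\partial_x X^\varepsilon|^2)]$ done earlier in the text, and it can be split by whether $|k| \sim |k'| \gtrsim 2^q$ or $|k'| \lesssim |k| \sim 2^q$; in either regime one gets the desired scaling. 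Gaining a small power of $t$ for time continuity requires replacing $h_t(k) - h_s(k)$ by the elementary bound $(t-s)^{\kappa}|k|^{-2(1-\kappa)}$ for $\kappa \in [0,1]$, giving $\mathbb{E}|\Delta_q(X\renorm\xi)(t,x) - \Delta_q(X\renorm\xi)(s,x)|^2 \lesssim 2^{q(4-4\gamma)}(t-s)^{\kappa}$ for $\kappa$ small enough.

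Next, Gaussian hypercontractivity upgrades these $L^2$ bounds to $\mathbb{E}|\Delta_q(\cdot)|^{2p} \lesssim_p 2^{qp(4-4\gamma+\varepsilon)}(t-s)^{p\kappa}$ for any $p \geq 1$. Writing
\[
\mathbb{E}\|X \renorm \xi(t) - X \renorm \xi(s)\|_{B^{2\gamma - 2 - \varepsilon/2}_{2p,2p}}^{2p} \lesssim \sum_q 2^{q(2\gamma-2-\varepsilon/2)2p}\|\Delta_q\|_{L^{2p}(\mathbb{T}^2)}^{2p},
\]
and using translation invariance to replace $\|\Delta_q\|_{L^{2p}}^{2p}$ by $|\mathbb{T}^2|\mathbb{E}|\Delta_q(\cdot)(0)|^{2p}$, the sum over $q$ converges, and Besov embedding $B^{2\gamma-2-\varepsilon/2}_{2p,2p} \hookrightarrow \CC^{2\gamma-2-\varepsilon/2-1/p}$ together with Kolmogorov's continuity criterion (applied for $p$ large enough so that $p\kappa > 1$) gives $X \renorm \xi \in C([0,T],\CC^{2\gamma-2})$ almost surely, with finite $p$-th moments for every $p \geq 1$ and every $\gamma < 1$ (by choosing $\varepsilon$ and $1/p$ small).

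Finally, for the approximation statement, I would repeat exactly the same argument for $(X_n \reso \xi_n - f_n) - (X \renorm \xi)$. Its Wiener chaos kernel differs from that of $X \renorm \xi$ by the factor $|\CF_{\mathbb{R}^2}\varphi(k/n)\CF_{\mathbb{R}^2}\varphi(k'/n) - 1|$, which is bounded and tends to $0$ pointwise, and moreover satisfies $|\CF_{\mathbb{R}^2}\varphi(k/n)\CF_{\mathbb{R}^2}\varphi(k'/n) - 1| \lesssim (|k|+|k'|)^{\delta}/n^{\delta}$ for small $\delta > 0$ (using smoothness of $\CF\varphi$ at $0$). Absorbing the extra factor $(|k|+|k'|)^\delta$ costs a tiny amount of regularity in the variance estimate but yields a gain of $n^{-\delta}$, so the dominated-convergence-plus-Kolmogorov argument produces convergence in every $L^p(\Omega; C_T \CC^{2\gamma-2})$. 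The identification of $f_n$ with $\mathbb{E}[(X_n(t)\reso \xi_n)(x)]$ is carried out as in Lemma~\ref{l:anderson area expectation} using Poisson summation to convert the Fourier series of the convolution kernel $\varphi_n$ into the stated expression.
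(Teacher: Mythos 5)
Your proposal is correct and proceeds essentially as the paper does: second--chaos expansion of each Littlewood--Paley block, Wick's theorem for the variance, Gaussian hypercontractivity to pass to $L^p$, then Besov embedding and Kolmogorov's criterion. The one small organizational difference is that you time--integrate the chaos kernel first, obtaining $h_t(k)=(1-e^{-t|k|^2})/|k|^2$, and then use the elementary interpolation bound $|h_t(k)-h_s(k)|\lesssim (t-s)^\kappa|k|^{-2(1-\kappa)}$, while the paper establishes the pointwise--in--$r$ bound $\tmop{Var}(\Delta_\ell(P_r\xi\reso\xi)(x))\lesssim r^{-1}2^{2\ell}e^{-rc2^{2\ell}}$ and then integrates in $r$ inside an $L^1(\Omega)$ norm obtained by hypercontractivity; both routes yield the required Hölder modulus in $L^p$. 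For the approximation statement the paper simply invokes dominated convergence and leaves the details as an exercise, whereas your quantitative kernel estimate $|\CF_{\mathbb{R}^2}\varphi(k/n)\CF_{\mathbb{R}^2}\varphi(k'/n)-1|\lesssim (|k|+|k'|)^\delta n^{-\delta}$ fills that gap explicitly and gives a rate as a bonus.
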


\begin{proof}
  To lighten the notation, we will only show that $\mathbb{E} [\|X \renorm
  \xi \|_{C_T \CC^{2 \gamma - 2}}^p] < \infty$. The convergence of $(X_n
  \reso \xi_n - f_n)$ to $X \renorm \xi$ is shown by applying dominated
  convergence, and we leave it as an exercise. Let $t > 0$ and define $\Xi_t =
  P_t \xi \reso \xi - g_t$. Let us start by estimating $\mathbb{E} [| \Delta_{\ell} \Xi_t (x)
  |^2]$ for $\ell \geqslant - 1$ and $x \in \mathbb{T}^2$.
  Lemma~\ref{l:anderson area expectation} yields $\Delta_{\ell} g_t = 0
  =\mathbb{E} [\Delta_{\ell} (P_t \xi \reso \xi) (x)]$ for $\ell \ge 0$ and
  $x \in \mathbb{T}^2$, and $\Delta_{- 1} g_t = g_t =\mathbb{E} [\Delta_{- 1}
  (P_t \xi \reso \xi) (x)]$, so that $\mathbb{E} [| \Delta_{\ell} \Xi_t (x)
  |^2] = \tmop{Var} (\Delta_{\ell} (P_t \xi \reso \xi) (x))$. But
  \begin{align*}
    \Delta_{\ell} (P_t \xi \reso \xi) (x) & = \sum_{k \in \mathbb{Z}^2}
    e^{\ast}_k (x) \rho_{\ell} (k) \CF (P_t \xi \reso \xi)) (k)\\
    & = (2 \pi)^{- 1}  \sum_{k_1, k_2 \in \mathbb{Z}^2} \sum_{|i - j| \leqslant
    1} e^{\ast}_{k_1 + k_2} (x) \rho_{\ell} (k_1 + k_2) \rho_i (k_1) e^{- t
    |k_1 |^2} \hat{\xi} (k_1) \rho_j (k_2) \hat{\xi} (k_2),
  \end{align*}
  and therefore
  \begin{align*}
    & \tmop{Var} (\Delta_{\ell} (P_t \xi \reso \xi) (x))\\
    & \hspace{20pt} = (2 \pi)^{- 2} \sum_{k_1, k_2} \sum_{k'_1, k'_2}
    \sum_{|i - j| \leqslant 1} \sum_{|i' - j' | \leqslant 1} e^{\ast}_{k_1 +
    k_2} (x) \rho_{\ell} (k_1 + k_2) \rho_i (k_1) e^{- t |k_1 |^2} \rho_j
    (k_2)\\
    & \hspace{65pt} \times e^{\ast}_{k'_1 + k'_2} (x) \rho_{\ell} (k'_1 +
    k'_2) \rho_{i'} (k'_1) e^{- t |k'_1 |^2} \rho_{j'} (k'_2) \tmop{Cov}
    (\hat{\xi} (k_1) \hat{\xi} (k_2), \hat{\xi} (k'_1) \hat{\xi} (k'_2)),
  \end{align*}  
  where exchanging summation and expectation can be justified a posteriori by
  the uniform $L^p$--boundedness of the partial sums. Now Wick's theorem
  ({\cite{Janson1997}}, Theorem~1.28) gives
  \begin{align*}
     \tmop{Cov} (\hat{\xi} (k_1) \hat{\xi} (k_2), \hat{\xi} (k'_1) \hat{\xi}(k'_2)) & = \mathbb{E} [\hat{\xi} (k_1) \hat{\xi} (k_2) \hat{\xi} (k'_1) \hat{\xi} (k'_2)] -\mathbb{E} [\hat{\xi} (k_1) \hat{\xi} (k_2)] \mathbb{E} [\hat{\xi} (k'_1) \hat{\xi} (k'_2)] \\
     & =\mathbb{E} [\hat{\xi} (k_1) \hat{\xi} (k_2)] \mathbb{E} [\hat{\xi}(k'_1) \hat{\xi} (k'_2)] +\mathbb{E} [\hat{\xi} (k_1) \hat{\xi} (k_1')] \mathbb{E} [\hat{\xi} (k_2) \hat{\xi} (k_2')] \\
     &\hspace{10pt} + \mathbb{E} [\hat{\xi} (k_1) \hat{\xi} (k_2')] \mathbb{E} [\hat{\xi}(k_2) \hat{\xi} (k'_1)] -\mathbb{E} [\hat{\xi} (k_1) \hat{\xi} (k_2)] \mathbb{E} [\hat{\xi} (k'_1) \hat{\xi} (k'_2)] \\
     & = (\delta_{k_1 + k_1' = 0} \delta_{k_2 + k'_2 = 0} + \delta_{k_1 + k'_2 = 0} \delta_{k_2 + k_1' = 0}),
  \end{align*}
  which leads to
  \begin{align*}
    \tmop{Var} (\Delta_{\ell} (P_t \xi \reso \xi) (x)) & = (2 \pi)^{- 4}
    \sum_{k_1, k_2} \sum_{|i - j| \leqslant 1} \sum_{|i' - j' | \leqslant 1}
    \mathbb{I}_{\ell \lesssim i} \mathbb{I}_{\ell \lesssim i'} \rho^2_{\ell}
    (k_1 + k_2) \rho_i (k_1) \rho_j (k_2) \\
    &\hspace{70pt} \times [\rho_{i'} (k_1) \rho_{j'} (k_2) e^{- 2 t|k_1 |^2} + \rho_{i'}
    (k_2) \rho_{j'} (k_1) e^{- t|k_1 |^2 - t|k_2 |^2}] .
  \end{align*}
  Observe that there exists $c > 0$ such that $e^{- 2 t |k|^2} \lesssim e^{- tc 2^{2 i}}$ for all $k \in \tmop{supp} (\rho_i) \cup \tmop{supp} (\rho_j)$ with $i,j \ge - 1$ and $|i-j| \leqslant 1$. Thus
  \begin{align*}
    & \tmop{Var} (\Delta_{\ell} (P_t \xi \reso \xi) (x)) \\
    & \hspace{40pt} \lesssim \sum_{i, j, i', j'} \mathbb{I}_{\ell \lesssim i} \mathbb{I}_{i \sim j \sim i' \sim j'} \sum_{k_1, k_2}  \mathbb{I}_{\tmop{supp} (\rho_{\ell})} (k_1 + k_2) \mathbb{I}_{\tmop{supp} (\rho_i)} (k_1) \mathbb{I}_{\tmop{supp} (\rho_j)} (k_2) e^{- 2 tc 2^{2 i}} \\
    & \hspace{40pt} \lesssim \sum_{i : i \gtrsim \ell} 2^{2 i} 2^{2 \ell} e^{- tc 2^{2 i}} \lesssim \frac{2^{2 \ell}}{t} \sum_{i : i \gtrsim \ell} e^{- tc' 2^{2 i}} \lesssim \frac{2^{2 \ell}}{t} e^{- tc' 2^{2 \ell}}, & 
  \end{align*}
  where in the third step we used that $t 2^{2 i} \lesssim e^{t (c - c') 2^{2 i}}$ for all $0< c' <
  c$.
  
  Consider now $X \renorm \xi (t) = \int_0^t \Xi_s \mathd s$. We have for all
  $0 \leqslant s < t$
  \[ \mathbb{E} [\| X \renorm \xi (t) - X \renorm \xi (s) \|_{B^{2 \gamma -
     2}_{2 p, 2 p}}^{2 p}] = \sum_{\ell} 2^{2 p \ell (2 \gamma - 2)}
     \int_{\mathbb{T}^2} \mathbb{E} [| \Delta_{\ell} (X \renorm \xi (t) - X
     \renorm \xi (s)) (x) |^{2 p}] \mathd x. \]
  Since the random variable $\Delta_{\ell} (X \renorm \xi (t) - X \renorm \xi (s))
  (x)$ lives in the second non-homogeneous chaos generated by the Gaussian white noise $\xi$, we may use Gaussian hypercontractivity ({\cite{Janson1997}}, Theorem 3.50) to bound
  \begin{align*}
     \mathbb{E} [| \Delta_{\ell} (X \renorm \xi (t) - X \renorm \xi (s)) (x) |^{2 p}] & \lesssim \mathbb{E} [| \Delta_{\ell} (X \renorm \xi (t) - X \renorm \xi (s)) (x) |]^{2 p} \\
     & \leqslant \Big( \int_s^t \mathbb{E} [| \Delta_{\ell} \Xi_r (x) |] \mathd r \Big)^{2 p}.
  \end{align*}
  But we just showed that
  \begin{align*}
     \mathbb{E} [| \Delta_{\ell} \Xi_r (x) |] & \leqslant \mathbb{E} [| \Delta_{\ell} \Xi_r (x) |^2]^{1 / 2} = (\tmop{Var} (\Delta_{\ell} (P_r \xi \reso \xi) (x)))^{1 / 2} \\
     & \lesssim r^{- 1 / 2} 2^{\ell} e^{- \frac{1}{2} r c' 2^{2 \ell}} = r^{- 1 / 2} 2^{\ell} e^{- r c'' 2^{2 \ell}}
  \end{align*}
  for $c'' = c'/2 > 0$, and therefore
  \begin{align*}
    \Big(\mathbb{E} \Big[\| X \renorm \xi (t) - X \renorm \xi (s) \|_{B^{2 \gamma - 2}_{2 p, 2 p}}^{2 p}\Big]\Big)^{1 / 2 p} & \lesssim \Big( \sum_{\ell} \Big( 2^{\ell (2 \gamma - 2)} \int_s^t r^{- 1 / 2} 2^{\ell} e^{- r c'' 2^{2 \ell}} \mathd r \Big)^{2 p} \Big)^{1 / 2 p} \\
    & \leqslant \sum_{\ell} 2^{\ell (2 \gamma - 1)} \int_s^t r^{- 1 / 2} e^{-
    r c'' 2^{2 \ell}} \mathd r \\
    & \lesssim \int_s^t r^{- 1 / 2} \int_{- 1}^{\infty} (2^x)^{2  \gamma - 1} e^{- r c'' 2^{2 x}} \mathd x \mathd r.
  \end{align*}
  The change of variable $y = \sqrt{r} 2^x$ leads to
  \[
     \Big(\mathbb{E} \Big[\| X \renorm \xi (t) - X \renorm \xi (s) \|_{B^{2 \gamma - 2}_{2 p, 2 p}}^{2 p}\Big]\Big)^{1 / 2 p} \lesssim \int_s^t r^{- 1 / 2} r^{- (2 \gamma - 1) / 2}  \int_0^{\infty} y^{2 \gamma - 2} e^{- c'' y^2} \mathd y \mathd r.
  \]
  For $\gamma > 1 / 2$, the integral in $y$ is finite and we end up with
  \[
     \Big(\mathbb{E} \Big[\| X \renorm \xi (t) - X \renorm \xi (s) \|_{B^{2 \gamma - 2}_{2 p, 2 p}}^{2 p}\Big]\Big)^{1 / 2 p} \lesssim \int_s^t r^{- \gamma} \mathd r \lesssim | t - s |^{1 - \gamma}
  \]
  provided that $\gamma \in (1 / 2, 1)$. So for large enough $p$ we can use
  Kolmogorov's continuity criterion to deduce that (modulo taking a modification of $X\diamond \xi)$ we have $\mathbb{E} [\| X \renorm
  \xi \|_{C_T B^{2 \gamma - 2}_{2 p, 2 p}}^{2 p}] < \infty$ for all $T > 0$. Since this holds for all $\gamma < 1$, the claim now follows from the Besov embedding theorem, Lemma~\ref{lem:besov embedding}.
\end{proof}

Combining Theorem~\ref{thm:pam} and Lemma~\ref{lem:pam area}, we are finally
able to solve~(\ref{eq:pam-eta}) driven by a space white noise.

\begin{corollary}
  Let $\varepsilon > 0$ and let $F \in C^{2 + \varepsilon}_b$ and assume that
  $u_0$ is a random variable that almost surely takes its values in $\CC^{2
  \gamma}$ for some $\gamma \in (2 / 3, 1)$ with $(2 + \varepsilon) \gamma >
  2$. Let $\xi$ be a spatial white noise on $\mathbb{T}^2$. Then there exists
  a unique solution $u$ to
  \[ \LL u = F (u) \renorm \xi, \hspace{2em} u (0) = u_0, \]
  up to the (possibly finite) explosion time $\tau = \tau (u) = \inf \{ t
  \geqslant 0 : \| u \|_{\CD^{\gamma}_t} = \infty \}$ which is almost
  surely strictly positive.
  
  If $(\varphi_n)$ and $(\xi_n)$ are as described in Lemma~\ref{lem:pam area},
  and if $(u_{0, n})$ converges in probability in $\CC^{2 \gamma}$ to $u_0$,
  then $u$ is the limit in probability of the solutions $u_n$ to
  \[ \LL u_n = F (u_n) \renorm \xi_n, \hspace{2em} u_n (0) = u_{0,n} . \]
\end{corollary}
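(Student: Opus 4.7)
The plan is to reduce the corollary to a direct application of Theorem~\ref{thm:pam} (the abstract paracontrolled solution theory) together with Lemma~\ref{lem:pam area} (the probabilistic construction of the enhanced noise). The first step is to verify that the white noise $\xi$ defines almost surely an element $\mathbb{X} = (\xi, X \renorm \xi)$ of the extended data space $\mathcal{X}_{\tmop{pam}}$. Lemma~\ref{lem:pam area} gives exactly this: by Exercise~\ref{exo:wn besov} we already know $\xi \in \CC^{-1-}$ almost surely, while Lemma~\ref{lem:pam area} provides $X \renorm \xi \in C_T \CC^{2\gamma - 2}$ for any $\gamma < 1$ and any $T>0$, together with $\mathbb{X}_n = (\xi_n, X_n \reso \xi_n - f_n) \to \mathbb{X}$ in $L^p(\Omega; \CC^{\gamma-2}\times C_T\CC^{2\gamma-2})$. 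In particular $\mathbb{X}$ lies in the closure from Definition~\ref{def:pam rough distribution}.

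Given this enhancement, I would invoke Theorem~\ref{thm:pam} pointwise in $\omega$: for almost every $\omega$ we have $u_0(\omega) \in \CC^{2\gamma}$ and $\mathbb{X}(\omega) \in \mathcal{X}_{\tmop{pam}}$, so the theorem produces a unique $u = u(\omega) \in \CD^{\gamma}(X)$ on $[0, \tau(u))$ with $\tau(u) > 0$. Measurability of $u$ and of $\tau(u)$ follows from the local Lipschitz continuity in Theorem~\ref{thm:pam}, which makes $(u_0,\mathbb{X}) \mapsto u$ a Borel map between the relevant Polish spaces; and $\tau(u) > 0$ almost surely because the fixed point construction of Section~\ref{sec:fixpoint} yields a strictly positive lifetime for every choice of data in $\CC^{2\gamma}\times \mathcal{X}_{\tmop{pam}}$.

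For the convergence statement, the key observation is that the renormalized smooth equation $\LL u_n = F(u_n) \xi_n - F'(u_n) F(u_n) f_n'$ (where $f_n' = \partial_t f_n$) is precisely the classical equation one obtains by applying Theorem~\ref{thm:pam} to the enhanced data $\mathbb{X}_n = (\xi_n, X_n \reso \xi_n - f_n)$, since for smooth $\xi_n$ the paracontrolled product coincides with the classical product and the time-dependent subtraction $f_n$ translates into exactly the counterterm $F'(u_n) F(u_n) \cdot \partial_t f_n$ appearing in the discussion preceding Definition~\ref{def:pam rough distribution}. Fix $\delta > 0$ and pick $M > 0$ large enough that the event $A_M = \{\|u_0\|_{2\gamma} \vee \|\mathbb{X}\|_{\mathcal{X}_{\tmop{pam}}(T)} \leqslant M/2\}$ has probability at least $1-\delta$ and such that on $A_M$ the solution $u$ satisfies $\tau(u) > T$ (using that $\tau > 0$ depends only on the size of the data). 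Then the Lipschitz estimate in Theorem~\ref{thm:pam} gives, on the event $A_M \cap \{\|u_{0,n} - u_0\|_{2\gamma} + d_{\mathcal{X}_{\tmop{pam}}(T)}(\mathbb{X}_n,\mathbb{X}) \leqslant M/2\}$,
\[
d_{\CD^{\gamma}_T}(u_n, u) \leqslant C(F, M)\left(\|u_{0,n} - u_0\|_{2\gamma} + d_{\mathcal{X}_{\tmop{pam}}(T)}(\mathbb{X}_n, \mathbb{X})\right).
\]
Since the right-hand side tends to $0$ in probability by Lemma~\ref{lem:pam area} and the hypothesis on $u_{0,n}$, we conclude $u_n \to u$ in probability in $\CD^{\gamma}_T$, as desired.

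The only genuinely delicate point is bookkeeping at the probabilistic cut-off: one must truncate in $\omega$ to get a uniform $M$ before applying the deterministic Lipschitz bound, and one must check that the classical PDE for smooth $\xi_n$ (with the explicit counterterm $F'(u_n)F(u_n) \partial_t f_n$) matches the paracontrolled equation driven by $\mathbb{X}_n$. The remaining obstacle I anticipate is a pure measurability check: confirming that the solution map, which is continuous on the data space where it is defined, yields a jointly measurable $(u, \tau)$; this is standard once one notes that both are limits of the measurable Picard iterates produced in Section~\ref{sec:fixpoint}.
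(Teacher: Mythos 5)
Your overall strategy is exactly the one the paper uses (it disposes of this corollary in a single sentence: combine Theorem~\ref{thm:pam} with Lemma~\ref{lem:pam area}), and your fleshed-out version of it — verify $\mathbb{X}\in\mathcal{X}_{\tmop{pam}}$ almost surely via Lemma~\ref{lem:pam area}, apply Theorem~\ref{thm:pam} $\omega$-by-$\omega$, then truncate on a large-probability event to get a uniform bound before invoking the local Lipschitz estimate — is the right way to turn that one-liner into a complete argument.

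There is, however, a concrete error in your identification of the renormalized smooth equation. You claim that the paracontrolled product driven by $\mathbb{X}_n=(\xi_n,\,X_n\reso\xi_n-f_n)$ corresponds to the classical PDE $\LL u_n = F(u_n)\xi_n - F'(u_n)F(u_n)\,\partial_t f_n$. This is wrong: replacing the enhancement $X_n\reso\xi_n$ by $X_n\reso\xi_n - f_n$ in the product formula of Theorem~\ref{thm:paracontrolled product} shifts the term $f^u\eta$ by $-(F(u_n))^{X_n}f_n$, and with $(F(u_n))^{X_n}=F'(u_n)F(u_n)$ one obtains the counterterm $-F'(u_n)F(u_n)\,f_n(t)$, with $f_n$ itself, not its time derivative. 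No time derivative can appear, since $\eta$ enters the paracontrolled product as a multiplier at each fixed time and the Duhamel formula integrates the right-hand side against the heat semigroup in $t-s$, not against $\mathd s$. This is also exactly what the paper writes before Definition~\ref{def:pam rough distribution}: $\LL u_n = F(u_n)\xi_n - F'(u_n)F(u_n)c_n$ with $c_n$ the diverging quantity (later the time-dependent $f_n$). The distinction matters quantitatively: $\partial_t f_n(t) = \mathbb{E}[(P_t\xi_n\reso\xi_n)(x)]$ converges to the finite $g_t$ for every $t>0$ as $n\to\infty$, whereas $f_n(t)=\int_0^t\mathbb{E}[(P_s\xi_n\reso\xi_n)(x)]\mathd s\to\infty$; only the latter produces the expected divergent renormalization, so your version would not even regularize the equation. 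Correct this and the rest of your argument goes through as stated.
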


\begin{remark}
  We even have a stronger result: We can fix a null set outside of which $X
  \renorm \xi$ is regular enough, and once we dispose of that null set we can
  solve all equations for any regular enough $u_0$ and $F$ simultaneously,
  without ever having to worry about null sets again. This is for example
  interesting when studying stochastic flows or when studying equations with
  random $u_0$ and $F$.
  
  The pathwise continuous dependence on the signal is also powerful in several
  other applications, for example support theorems and large deviations. For
  examples in the theory of rough paths see~{\cite{Friz2010}}.
\end{remark}

\section{The stochastic Burgers equation}\label{sec:rbe}

Let us now return to the stochastic Burgers equation {\tmname{sbe}}
\begin{equation}
  \LL u = \partial_x u^2 + \partial_x \xi, \hspace{2em} u (0) = u_0,
  \label{eq:sbe-theta}
\end{equation}
where $u : [0, \infty) \times \mathbb{T} \rightarrow \mathbb{R}$, $\xi$ is a
space-time white noise, and $\partial_x$ denotes the spatial derivative. As we
argued before, the solution $u$ cannot be expected to behave better than the
Ornstein--Uhlenbeck process $X$, the solution of the linear equation $\LL X =
\partial_x \xi$, and as we saw in Section~\ref{sec:energy} $X (t)$ is for all
$t > 0$ a smooth function of the space variable plus a space white noise. By
Exercise~\ref{exo:wn besov}, the white noise in dimension $1$ has regularity
$\CC^{- 1 / 2 -}$. Thus $X \in C \CC^{- 1 / 2 -}$, and in particular $u^2$ is
the square of a distribution and a priori not well defined.

What raises some hope is that in Lemma~\ref{lem:OU square} we were able to
show that $\partial_x X^2$ exists as a space--time distribution. So as in the
previous examples there are stochastic cancellations going into $\partial_x
X^2$. The energy solution approach was designed to take those cancellations
into account in the full solution $u$, but while it allowed us to work under
rather weak assumptions which easily gave us existence of solutions, it did
not give us sufficient control to have uniqueness of solutions. On the other
side, a suitable paracontrolled ansatz for the solution $u$ will allow us to
transfer the cancellation properties of $X$ to $u$ and it will allow us to
construct $\partial_x u^2$ as a \tmtextit{continuous} bilinear map, from where
existence and uniqueness of solutions easily follows.

\subsection{Structure of the solution}

In this discussion we consider the case of zero initial condition and smooth
noise $\xi$, and we analyze the structure of the solution. Let us expand $u$
around the Ornstein--Uhlenbeck process $X$ with $\LL X = \partial_x \xi$, $X
(0) = 0$. Setting $u = X + u^{\geqslant 1}$, we have
\[ \LL u^{\geqslant 1} = \partial_x (u^2) = \partial_x (X^2) + 2 \partial_x
   (Xu^{\geqslant 1}) + \partial_x ((u^{\geqslant 1})^2) . \]
Let us define the bilinear map
\[ B (f, g) = J \partial_x (fg) = \int_0^{\cdot} P_{\cdot - s} \partial_x (f
   (s) g (s)) \mathd s. \]
Then we can proceed by performing a further change of variables in order to
remove the term $\partial_x (X^2)$ from the equation by setting
\begin{equation}
  u = X + B (X, X) + u^{\geqslant 2} \label{eq:naive-exp} .
\end{equation}
Now $u^{\geqslant 2}$ satisfies
\begin{equation}
  \begin{array}{lll}
    \LL u^{\geqslant 2} & = & 2 \partial_x (XB (X, X)) + \partial_x (B (X, X)
    B (X, X))\\
    &  & + 2 \partial_x (Xu^{\geqslant 2}) + 2 \partial_x (B (X, X)
    u^{\geqslant 2}) + \partial_x ((u^{\geqslant 2})^2) .
  \end{array} \label{eq:naive-exp-2}
\end{equation}
We can imagine to make a similar change of variables to get rid of the term
\[ 2 \partial_x (XB (X, X)) = \LL 2 B (X, B (X, X)) . \]
As we proceed in this inductive expansion, we generate a number of explicit
terms, obtained by various combinations of $X$ and $B$. Since we will have to
deal explicitly with at least some of these terms, it is convenient to
represent them with a compact notation involving binary trees. A binary tree
$\tau \in \mathcal{T}$ is either the root $\bullet$ or the combination of two
smaller binary trees $\tau = (\tau_1 \tau_2)$, where the two edges of the root
of $\tau$ are attached to $\tau_1$ and $\tau_2$ respectively. For example
\[
   (\bullet \bullet) = \text{\resizebox{.8em}{!}{\includegraphics{trees-1.eps}}} \comma  \hspace{1em}
   (\text{\resizebox{.8em}{!}{\includegraphics{trees-1.eps}}} \bullet) = \text{\resizebox{.8em}{!}{\includegraphics{trees-2.eps}}} \comma \hspace{1em} 
   (\text{\resizebox{.8em}{!}{\includegraphics{trees-2.eps}}} \bullet) = \text{\resizebox{1em}{!}{\includegraphics{trees-3.eps}}} \comma \hspace{1em} 
   (\text{\resizebox{.8em}{!}{\includegraphics{trees-1.eps}}} \resizebox{.8em}{!}{\includegraphics{trees-1.eps}}) = \text{\resizebox{1.25em}{!}{\includegraphics{trees-4.eps}}} \comma \hspace{1em} \ldots
\]
Then we define recursively
\[ X^{\bullet} = X, \hspace{2em} X^{(\tau_1 \tau_2)} = B (X^{\tau_1},
   X^{\tau_2}), \]
giving
\[
   X^{\zzone} = B (X, X), \hspace{1em} X^{\zztwo} = B (X^{\zzone}, X), \hspace{1em} X^{\zzthree} = B (X^{\zztwo}, X), \hspace{1em} X^{\zzfour} = B(X^{\zzone}, X^{\zzone}),
\]
and so on. In this notation the
expansion~(\ref{eq:naive-exp})--(\ref{eq:naive-exp-2}) reads
\begin{equation}
  u = X + X^{\zzone} + u^{\geqslant 2}, \label{eq:u-expansion}
\end{equation}
\begin{equation}
  u^{\geqslant 2} = 2 X^{\zztwo} + X^{\zzfour} + 2 B (X, u^{\geqslant 2}) + 2
  B (X^{\zzone}, u^{\geqslant 2}) + B (u^{\geqslant 2}, u^{\geqslant 2}) .
  \label{eq:u-ge-2}
\end{equation}
\begin{remark}
  We observe that formally the solution $u$ of {\tmname{sbe}} can be expanded
  as an infinite sum of terms labelled by binary trees:
  \[ u = \sum_{\tau \in \mathcal{T}} c (\tau) X^{\tau}, \]
  where $c (\tau)$ is a combinatorial factor counting the number of planar
  trees which are isomorphic (as graphs) to $\tau$. For example $c (\bullet) =
  1$, $c ( \text{\resizebox{.8em}{!}{\includegraphics{trees-1.eps}}}
  ) = 1$, $c (
  \text{\resizebox{.8em}{!}{\includegraphics{trees-2.eps}}} ) = 2$, $c
  (\text{\resizebox{1em}{!}{\includegraphics{trees-3.eps}}}) = 4$, $c (
  \text{\resizebox{1.2em}{!}{\includegraphics{trees-4.eps}}} ) = 1$ and
  in general $c (\tau) = \sum_{\tau_1, \tau_2 \in \mathcal{T}}
  \mathbb{I}_{(\tau_1 \tau_2) = \tau} c (\tau_1) c (\tau_2)$. Alternatively,
  we may truncate the summation at trees of degree at most $n$ and set
  \[ u = \sum_{\tau \in \mathcal{T}, d (\tau) < n} c (\tau) X^{\tau} +
     u^{\geqslant n}, \]
  where we denote by $d (\tau) \in \mathbb{N}_0$ the degree of the tree
  $\tau$, given by $d (\bullet) = 0$ and then inductively $d ((\tau_1 \tau_2))
  = 1 + d (\tau_1) + d (\tau_2)$. For example $d (
  \text{\resizebox{.8em}{!}{\includegraphics{trees-1.eps}}} ) = 1$, $d
  ( \text{\resizebox{.8em}{!}{\includegraphics{trees-2.eps}}} ) =
  2$, $d (\text{\resizebox{1em}{!}{\includegraphics{trees-3.eps}}}) = 3$, $d
  ( \text{\resizebox{1.2em}{!}{\includegraphics{trees-4.eps}}} ) =
  3$. We then obtain for the remainder
  \begin{align}\label{eq:truncated expansion} \nonumber
     u^{\geqslant n} & = \sum_{\tmscript{\begin{array}{c} \tau_1, \tau_2 : d (\tau_1) < n, d (\tau_2) < n\\ d ((\tau_1 \tau_2)) \geqslant n \end{array}}} c (\tau_1) c (\tau_2) X^{(\tau_1 \tau_2)} \\
     &\qquad + \sum_{\tau : d(\tau) < n} c (\tau) B (X^{\tau}, u^{\geqslant n}) + B (u^{\geqslant n}, u^{\geqslant n}) .
  \end{align}
\end{remark}

Our aim is to control the truncated expansion under the natural regularity
assumptions in the white noise case, $X \in C \CC^{- 1 / 2 -}$.
Since~(\ref{eq:truncated expansion}) contains the term $B (X, u^{\geqslant
n})$ which in turn contains the paraproduct $J \partial_x (u^{\geqslant n}
\lpara X)$, the remainder $u^{\geqslant n}$ will be at best in $C \CC^{1 / 2
-}$. But then the sum of the regularities of $X$ and $u^{\geqslant n}$ is
negative, and the term $B (X, u^{\geqslant n})$ is not well defined. We
therefore continue the expansion up to the point (turning out to be
$u^{\geqslant 3}$) where we can set up a paracontrolled ansatz for the
remainder, which will allow us to make sense of $\partial_x (X \reso
u^{\geqslant n})$ and thus of $B (X, u^{\geqslant n})$.

\subsection{Paracontrolled solution}

Inspired by the partial tree series expansion of $u$ we set up a
paracontrolled ansatz of the form
\begin{equation}
  u = X + X^{\zzone} + 2 X^{\zztwo} + u^Q, \hspace{2em} u^Q = u' \lpara Q +
  u^{\sharp}, \label{eq:paracontrolled-structure}
\end{equation}
where the functions $u', Q$ and $u^{\sharp}$ are for the moment arbitrary, but
we assume $u', Q \in \LL^{\gamma}$ and $u^{\sharp} \in \LL^{2 \gamma}$, where
from now on we fix $\gamma \in (1 / 3, 1 / 2)$. For such $u$, the nonlinear
term takes the form
\begin{align}\label{eq:u-square} \nonumber
   \partial_x u^2 & = \partial_x (X^2 + 2 X^{\zzone} X + (X^{\zzone})^2 + 4 X^{\zztwo} X) + 2 \partial_x (u^Q X) \\
  &\qquad + 2 \partial_x (X^{\zzone} (u^Q + 2 X^{\zztwo})) + \partial_x ((u^Q + 2 X^{\zztwo})^2),
\end{align}
which gives us an equation for $u^Q$:
\begin{align}\label{eq:uQ-equation} \nonumber
   \LL u^Q & = \partial_x ((X^{\zzone})^2 + 4 X^{\zztwo}X) + 2 \partial_x (u^Q X) + 2 \partial_x (X^{\zzone} (u^Q + 2 X^{\zztwo})) + \partial_x ((u^Q + 2 X^{\zztwo})^2)\\
   & = \LL X^{\zzfour} + 4 \LL X^{\zzthree} + 2 \partial_x (u^Q X) + 2 \partial_x (X^{\zzone} (u^Q + 2 X^{\zztwo})) + \partial_x ((u^Q + 2 X^{\zztwo})^2) .
\end{align}
In Lemma~\ref{lemma:ou-regularity} we showed that $X \in C H^{-1/2-}$. But now we understand Besov spaces and Gaussian hypercontractivity well enough so that we can return to the proof and modify the argumentation in order to show that $X \in C \CC^{-1/2-}$. If we then formally apply the paraproduct estimate Theorem~\ref{thm:paraproduct} (which is of course not possible since the regularity requirements for the
resonant term are not satisfied), we obtain $X^2 \in C \CC^{-1-}$ and then $\partial_x X^2 \in C \CC^{-2-}$. Therefore, $X^\zzone = J (\partial_x X^2)$ should be in $C \CC^{0-}$. Note that Lemma~\ref{lem:schauder} does not apply here, because $-2-$ is not in $(-2,0)$. But we only needed this requirement to control the temporal regularity in $L^\infty$ of the image of $J$. For arbitrary $\alpha \in \R$ we have $J u \in C \CC^\alpha$ whenever $u \in C \CC^{\alpha-2}$, see for example Lemma~A.9 in~\cite{Gubinelli2012}. Similarly we derive the formal regularities of the remaining driving terms: $X^{\zztwo} \in \LL^{1 / 2 -}$, $X^{\zzthree} \in \LL^{1 / 2 -}$, and
$X^{\zzfour} \in \LL^{1 -}$. In terms of $\gamma \nocomma$, we can encode this
as
\[ X \in C \CC^{\gamma - 1}, \hspace{1em} X^{\zzone} \in C \CC^{2 \gamma - 1},
   \hspace{1em} X^{\zztwo} \in \LL^{\gamma}, \hspace{1em} X^{\zzthree} \in
   \LL^{\gamma}, \hspace{1em} X^{\zzfour} \in \LL^{2 \gamma} . \]
Under these regularity assumptions the term $2 \partial_x (X^{\zzone} (u^Q +
X^{\zztwo})) + \partial_x ((u^Q + X^{\zztwo})^2)$ is well defined and the only
problematic term in~(\ref{eq:uQ-equation}) is $\partial_x (u^Q X)$. Using the
paracontrolled structure of $u^Q$, we can make sense of $\partial_x (u^Q X)$
as a bounded operator provided that $Q \reso X \in C \CC^{2 \gamma - 1}$ is
given. In other words, the right hand side of~(\ref{eq:uQ-equation}) is well
defined for paracontrolled distributions.

Next, we should specify how to choose $Q$ and which form $u'$ will take for
the solution $u^Q$. We have formally
\begin{align*}
   \LL u^Q & = \LL X^{\zzfour} + 4 \LL X^{\zzthree} + 2 \partial_x (u^Q X) + 2  \partial_x (X^{\zzone} (u^Q + 2 X^{\zztwo})) + \partial_x ((u^Q + 2  X^{\zztwo})^2) \\
   & = 4 \partial_x (X^{\zztwo} X) + 2 \partial_x (u^Q X) + C \CC^{2 \gamma - 2}\\
   & = 4 X^{\zztwo} \lpara \partial_x X + 2 u^Q \lpara \partial_x X + C \CC^{2\gamma - 2},
\end{align*}
where we assumed that not only $\LL X^{\zzthree} \in C \CC^{\gamma - 2}$, but
that $\partial_x (X^{\zztwo} \reso X) \in C \CC^{2 \gamma - 1}$ (which implies
$\LL X^{\zzthree} \in C \CC^{\gamma - 2}$, but also the stronger statement
$\LL X^{\zzthree} - X^{\zztwo} \lpara \partial_x X \in C \CC^{2 \gamma - 2}$).
By Theorem~\ref{thm:schauder paracontrolled}, $u^Q$ is paracontrolled by $J
(\partial_x X)$, and in other words we should set $Q = J (\partial_x X)$. The
derivative $u'$ of the solution $u^Q$ will then be given by $u' = 4 X^{\zztwo}
+ 2 u^Q$.

Unlike for {\tmname{pam}}, here we do not need to introduce a renormalization.
This is due to the fact that we differentiate after taking the square: to
construct $u^2$, we would have to subtract an infinite constant and formally
consider $u^{\renorm 2} = u^2 - \infty$, or at the level of the approximation
$u_n^2 - c_n$. But then
\[ \partial_x u^{\renorm 2} = \lim_{n \rightarrow \infty} \partial_x (u_n^2 -
   c_n) = \lim_{n \rightarrow \infty} \partial_x u_n^2 = \partial_x u^2 . \]
So we obtain the following description of the driving data for the stochastic
Burgers equation.

\begin{definition}
  \label{def:burgers rough distribution}({\tmname{sbe}}--enhancement) Let
  $\gamma \in (1 / 3, 1 / 2)$ and let
  \[ \mathcal{X}_{\tmop{sbe}} \subseteq C \CC^{\gamma - 1} \times C \CC^{2
     \gamma - 1} \times \LL^{\gamma} \times \LL^{2 \gamma} \times C \CC^{2
     \gamma - 1} \times C \CC^{2 \gamma - 1} \]
  be the closure of the image of the map $\Theta_{\tmop{sbe}} : C
  (\mathbb{R}_+, C^{\infty} (\mathbb{T})) \rightarrow
  \mathcal{X}_{\tmop{sbe}}$ given by
  \begin{equation}
    \Theta_{\tmop{sbe}} (\theta) = (X (\theta), X^{\zzone} (\theta),
    X^{\zztwo} (\theta), X^{\zzfour} (\theta), (X^{\zztwo} \reso X) (\theta),
    (Q \reso X) (\theta)), \label{eq:enhanced-theta}
  \end{equation}
  where
  \begin{equation}
    \begin{array}{rll}
      X (\theta) & = & J (\partial_x \theta),\\
      X^{\zzone} (\theta) & = & B (X (\theta), X (\theta)),\\
      X^{\zztwo} (\theta) & = & B (X^{\zzone} (\theta), X (\theta)),\\
      X^{\zzfour} (\theta) & = & B (X^{\zzone} (\theta), X^{\zzone}
      (\theta)),\\
      Q (\theta) & = & J (\partial_x X (\theta)) .
    \end{array} \label{eq:pol-X}
  \end{equation}
  We will call $\Theta_{\tmop{sbe}} (\theta)$ the {\tmname{sbe}}--enhancement
  of the driving distribution $\theta$. For $T > 0$ we define
  $\mathcal{X}_{\tmop{sbe}} (T) =\mathcal{X}_{\tmop{sbe}} |_{[0, T]}$ and we
  write $\| \mathbb{X} \|_{\mathcal{X}_{\tmop{sbe}} (T)}$ for the norm of
  $\mathbb{X}$ in the Banach space $C_T \CC^{\gamma - 1} \times C_T \CC^{2
  \gamma - 1} \times \LL^{\gamma}_T \times \LL_T^{2 \gamma} \times C_T \CC^{2
  \gamma - 1} \times C_T \CC^{2 \gamma - 1}$. Moreover, we define the distance
  $d_{\mathcal{X}_{\tmop{sbe}} (T)} (\mathbb{X}, \tilde{\mathbb{X}}) = \|
  \mathbb{X}- \tilde{\mathbb{X}} \|_{\mathcal{X}_{\tmop{sbe}} (T)}$.
\end{definition}

For every $\mathbb{X} \in \mathcal{X}_{\tmop{sbe}}$, there is an associated
space of paracontrolled distributions:

\begin{definition}
  Let $\mathbb{X} \in \mathcal{X}_{\tmop{sbe}}$. Then the space of
  paracontrolled distributions $\CD^{\gamma} (\mathbb{X})$ is defined as the
  set of all $(u, u') \in C \CC^{\gamma - 1} \times \LL^{\gamma}$ with
  \[ u = X + X^{\zzone} + 2 X^{\zztwo} + u' \lpara Q + u^{\sharp}, \]
  where $u^{\sharp} \in \LL^{2 \gamma}$. For $T > 0$ we define
  \[ \| u \|_{\CD^{\gamma}_T} = \| u' \|_{\LL^{\gamma}_T} + \| u^{\sharp}
     \|_{C_T \CC^{2 \gamma}} . \]
  If $\tilde{\mathbb{X}} \in \mathcal{X}_{\tmop{sbe}}$ and $(\tilde{u},
  \tilde{u}') \in \DD^{\gamma} (\tilde{\mathbb{X}}) \nocomma$, then we also
  write
  \[ d_{\DD^{\gamma}_T} (u, \tilde{u}) = \| u' - \tilde{u}'
     \|_{\LL^{\gamma}_T} + \|u^{\sharp} - \tilde{u}^{\sharp} \|_{C_T \CC^{2
     \gamma}_T} . \]
\end{definition}

We now have everything in place to solve {\tmname{sbe}} driven by $\mathbb{X}
\in \mathcal{X}_{\tmop{sbe}}$.

\begin{theorem}
  \label{thm:rbe}Let $\gamma \in (1 / 3, 1 / 2)$. Let $\mathbb{X} \in
  \mathcal{X}_{\tmop{sbe}}$, write $\partial_x \theta = \LL X$, and let $u_0
  \in \CC^{2 \gamma}$. Then there exists a unique solution $u \in \CD^{\gamma}
  (\mathbb{X})$ to the equation
  \begin{equation}
    \label{eq:rbe theorem} \LL u = \partial_x u^2 + \partial_x \theta,
    \hspace{2em} u (0) = u_0,
  \end{equation}
  up to the (possibly finite) explosion time $\tau = \tau (u) = \inf \{ t
  \geqslant 0 : \| u \|_{\CD^{\gamma}_t} = \infty \} > 0$.
  
  Moreover, $u$ depends on $(u_0, \mathbb{X}) \in \CC^{2 \gamma} \times
  \mathcal{X}_{\tmop{sbe}}$ in a locally Lipschitz continuous way: if $M, T >
  0$ are such that for all $(u_0, \mathbb{X})$ with $\| u_0 \|_{2 \gamma}
  \vee \| \mathbb{X} \|_{\mathcal{X}_{\tmop{sbe}} (T)} \leqslant M$, the
  solution $u$ to the equation driven by $(u_0, \mathbb{X})$ satisfies $\tau
  (u) > T$, and if $(\tilde{u}_0, \tilde{\mathbb{X}})$ is another set of data
  bounded in the above sense by $M$, then there exists $C (M) > 0$ for which
  \[ d_{\CD^{\gamma}_T} (u, \tilde{u}) \leqslant C (M) (\| u_0 - \tilde{u}_0
     \|_{2 \gamma} + d_{\mathcal{X}_{\tmop{sbe}} (T)} (\mathbb{X},
     \tilde{\mathbb{X}})) . \]
\end{theorem}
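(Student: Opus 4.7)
The plan is to follow the same template as for {\tmname{pam}}: build a solution map on $\CD^{\gamma}_T(\mathbb{X})$ and apply Banach's fixed point theorem on a short time interval, exploiting a scaling factor $T^{\delta}$ to close the contraction. Concretely, given $(u, u') \in \CD^{\gamma}_T(\mathbb{X})$ with $u = X + X^{\zzone} + 2X^{\zztwo} + u' \lpara Q + u^{\sharp}$, I would define $\Gamma_T(u)$ as the solution of $\LL v = \partial_x u^2 + \partial_x \theta$, $v(0) = u_0$, where $\partial_x u^2$ is expanded exactly as in~(\ref{eq:u-square}) and~(\ref{eq:uQ-equation}). The first step is to check that each summand in this expansion is a well-defined space--time distribution under the assumed regularities: $\partial_x((X^{\zzone})^2)$, $\partial_x(X^{\zzone}(u^Q + 2X^{\zztwo}))$ and $\partial_x((u^Q + 2X^{\zztwo})^2)$ are directly controlled by the paraproduct estimates of Theorem~\ref{thm:paraproduct}, the pieces $\LL X^{\zzfour}$ and $4\LL X^{\zzthree}$ are handled using that $X^{\zzfour} \in \LL^{2\gamma}$ and that $X^{\zzthree}$ is paracontrolled by $Q$ with remainder in $C\CC^{2\gamma-2}$ thanks to the enhanced datum $X^{\zztwo} \reso X$.

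The delicate term is $2 \partial_x(u^Q X)$. I would decompose $u^Q X = u^Q \lpara X + u^Q \rpara X + u^Q \reso X$, where only the resonant piece poses a problem, and then invoke the paracontrolled ansatz $u^Q = u' \lpara Q + u^{\sharp}$ to write
\[ u^Q \reso X = u'(Q \reso X) + C(u', Q, X) + u^{\sharp} \reso X. \]
Here $u^{\sharp} \reso X \in C\CC^{3\gamma-1}$ by Theorem~\ref{thm:paraproduct} (using $2\gamma + (\gamma-1) > 0$, i.e.\ $\gamma > 1/3$), the commutator satisfies $C(u', Q, X) \in C\CC^{2\gamma-1}$ by Lemma~\ref{lemma:commutator} (applied with regularities $\gamma$, $\gamma$, $\gamma-1$, whose sum $3\gamma - 1 > 0$), and the genuinely ill-posed product $Q \reso X$ is postulated as part of $\mathbb{X} \in \mathcal{X}_{\tmop{sbe}}$. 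Taking the spatial derivative costs one degree of regularity, so altogether $\partial_x(u^Q X) - \partial_x(u^Q \lpara X) \in C\CC^{2\gamma-2}$.

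The next step is to identify the paracontrolled structure of $\Gamma_T(u)$. Combining the expansion above with $\LL X^{\zzthree} = X^{\zztwo} \lpara \partial_x X + C\CC^{2\gamma-2}$ (using the enhanced datum $X^{\zztwo} \reso X$) and the Leibniz rule $\partial_x(f \lpara g) = f \lpara \partial_x g + $ smoother terms, the lowest regularity piece of $\partial_x u^2 + \partial_x \theta$ is $(4 X^{\zztwo} + 2 u^Q) \lpara \partial_x X$ modulo $C\CC^{2\gamma-2}$, with $\LL Q = \partial_x X$. Hence Theorem~\ref{thm:schauder paracontrolled} gives that $\Gamma_T(u) - (X + X^{\zzone} + 2X^{\zztwo})$ is paracontrolled by $Q$ with derivative $v' = 4X^{\zztwo} + 2u^Q$ and remainder $v^{\sharp} \in \LL^{2\gamma}_T$, so $\Gamma_T : \CD^{\gamma}_T(\mathbb{X}) \to \CD^{\gamma}_T(\mathbb{X})$. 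The estimates furnished by the paraproduct bounds, the commutator lemma and the Schauder estimate are all quadratic (or bilinear) in $\|u\|_{\CD^{\gamma}_T}$ but each application of $J$ through Lemma~\ref{lem:schauder} provides a factor $(1 + T)$, and a standard scaling/short-time argument improves this to a small multiplier $T^{\delta}$ for the nonlinear part of $\Gamma_T$. Invariance of a suitable ball and contraction on small $T$ follow, giving local existence and uniqueness; iteration of the fixed-point construction from $(u'(T_0), u^{\sharp}(T_0))$ whenever $T_0 < \tau$ (exactly as in the proof of Theorem~\ref{thm:pam}) yields existence up to the blow-up time $\tau$.

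For the local Lipschitz dependence on $(u_0, \mathbb{X})$, I would repeat the above estimates in bilinear form, applying Lemma~\ref{lemma:commutator}, Theorem~\ref{thm:paraproduct} and Theorem~\ref{thm:schauder paracontrolled} to differences; the small-time contraction argument then absorbs the difference of the solutions into the left-hand side and leaves a bound of the advertised form. The main technical obstacle is this continuity step, since two different solutions are paracontrolled by different references $Q$ and $\tilde Q$ coming from different enhancements $\mathbb{X}$, $\tilde{\mathbb{X}}$: comparing them requires carefully tracking commutators that mix the two data sets (e.g.\ $C(u' - \tilde u', Q, X)$ and $C(\tilde u', Q - \tilde Q, X)$) and polynomial bounds in $\|\mathbb{X}\|_{\mathcal{X}_{\tmop{sbe}}(T)} \vee \|\tilde{\mathbb{X}}\|_{\mathcal{X}_{\tmop{sbe}}(T)}$, but no new conceptual ingredient is needed beyond the trilinear estimates already established.
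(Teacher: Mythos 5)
Your proposal is correct and follows essentially the same route as the paper: the paper reduces the equation to the one for $u^Q$ given in~(\ref{eq:uQ-equation}) and then invokes the analogy with the {\tmname{pam}} fixed-point argument, leaving the details to the reader, while you fill in exactly those details (paraproduct and commutator estimates for each term, the paracontrolled Schauder step identifying the derivative $v'=4X^{\zztwo}+2u^Q$, the small-time contraction and iteration to the blow-up time, and the Lipschitz estimate on differences of enhancements). One small phrasing slip: you say ``$X^{\zzthree}$ is paracontrolled by $Q$ with remainder in $C\CC^{2\gamma-2}$''; the precise claim used in the paper is that $\LL X^{\zzthree}-X^{\zztwo}\lpara\partial_x X\in C\CC^{2\gamma-2}$ (equivalently, after Schauder, $X^{\zzthree}$ is paracontrolled by $Q$ with remainder in $C\CC^{2\gamma}$), but your subsequent use of the enhanced datum $X^{\zztwo}\reso X$ makes clear you mean the right thing.
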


\begin{proof}
  By definition of the term $\partial_x u^2$, the distribution $u \in
  \CD^{\gamma} (\mathbb{X})$ solves~(\ref{eq:rbe theorem}) if and only if
  $u^Q = u - X - X^{\zzone} - 2 X^{\zztwo}$ solves
  \[ \LL u^Q = \LL X^{\zzfour} + 4 \partial_x (X^{\zztwo} X) + 2 \partial_x
     (u^Q X) + 2 \partial_x (X^{\zzone} (u^Q + 2 X^{\zztwo})) + \partial_x
     ((u^Q + 2 X^{\zztwo})^2) \]
  with initial condition $u^Q (0) = u_0$. This equation is structurally very
  similar to {\tmname{pam}}~(\ref{eq:pam-eta}) and can be solved using the
  same arguments, which we do not reproduce here.
\end{proof}

For this result to be of any use we still have to show that if $\xi$ is the
space-time white noise, then there is almost surely an element of
$\mathcal{X}_{\tmop{sbe}}$ associated to $\partial_x \xi$. While for
{\tmname{pam}} we needed to construct only one term, here we have to construct
five terms: $X^{\zzone}, X^{\zztwo}, X^{\zzfour}, X^{\zztwo} \reso X, Q \reso
X$. For details we refer to~{\cite{Gubinelli2014}}. Alternatively we can
simply differentiate the extended data which Hairer constructed for the KPZ
equation in Chapter 5 of~{\cite{Hairer2013b}}.

The same approach allows us to solve the KPZ equation $\LL h = (\partial_x
h)^{\renorm 2} + \xi$, and if we are careful how to interpret the product $w
\renorm \xi$, then also the linear heat equation $\LL w = w \renorm \xi$. In
both cases the solution depends continuously on some suitably extended
data that is constructed from $\xi$ in a similar way as described in
Definition~\ref{def:burgers rough distribution}. Moreover, the formal links
between the three equations that we discussed in Section~\ref{sec:rbe,kpz,she}
can be made rigorous. These results are included in~{\cite{Gubinelli2014}}.

\ \

\end{document}